\documentclass[a4paper,11pt]{amsart}
\usepackage[latin1]{inputenc}
\usepackage[francais]{babel}
\usepackage[T1]{fontenc}
\usepackage{latexsym}
\usepackage{amsmath,amsthm}
\usepackage{amsfonts}
\usepackage{epsfig}
\usepackage{amssymb}
\usepackage{amscd}
\usepackage[all]{xy}
\input xy
\usepackage{hyperref}

\usepackage{geometry}

\newtheorem{Lemma}{Lemme}[section]
\newtheorem{Theo}[Lemma]{Th\'eor\`eme}
\newtheorem{Cor}[Lemma]{Corollaire}
\newtheorem{Pro}[Lemma]{Proposition}

\newtheorem{Def}[Lemma]{D\'efinition}

\newtheorem{Examples}[Lemma]{Exemples}

\theoremstyle{remark}
\newtheorem{Rem}[Lemma]{Remarque}
\theoremstyle{remark}
\newtheorem{Notation}[Lemma]{Notation}

\newcommand{\NN}{\mathbb{N}}
\newcommand{\RR}{\mathbb{R}}
\newcommand{\CC}{\mathbb{C}}

\def\dar[#1]{\ar@<2pt>[#1]\ar@<-2pt>[#1]}
\title{Moyennabilit\'e \`a l'infini et exactitude d'un groupo\"{\i}de \'etale}
%    Information for first author
\author{Ivan Lassagne}
%    Address of record for the research reported here

\address{Ivan {\sc Lassagne} : Universite de Lorraine.}

\email{ivan.lassagne@math.u-psud.fr}

\keywords{groupoids, $C^*$-algebras, amenability, exactness}

\begin{document}
\maketitle
\begin{abstract}
Soit $\mathcal{G}$ un groupo\"{\i}de \'etale localement compact, $\sigma$-compact et s\'epar\'e dont l'espace des unit\'es est not\'e $X$ 
%est localement compact, $\sigma$-compact et s\'epar\'e
. On donne la d\'efinition de la moyennabilit\'e \`a l'infini pour un tel groupo\"{\i}de en g\'en\'eralisant celle d'un groupe discret et on \'etudie dans certains cas la relation entre l'exactitude de la $C^*$-alg\`ebre r\'eduite du groupo\"{\i}de $C^*_r(\mathcal{G})$ et la moyennabilit\'e \`a l'infini de $\mathcal{G}$.
\\

\vspace{2mm}
\noindent{\sc {\large a}bstract.} 
Let $\mathcal{G}$ be a locally compact, $\sigma$-compact and Hausdorff \'etale groupoid 
%with a locally compact, $\sigma$-compact and Hausdorff 
and $X$ its space of units. We give a definition of amenability at infinity of such groupoid. We study in some cases the relation  between the exactness of the reduced $C^*$-algebra $C^*_r(\mathcal{G})$ and the amenability at infinity of $\mathcal{G}$.
\\

\vspace{5mm}
\end{abstract}
\tableofcontents

\section{{Introduction}}
Un groupe est dit moyennable si l'ensemble de ses parties admet une mesure finie qui est finiment additive et invariante par translation. Il existe de nombreuses d\'efinitions de la moyennabilit\'e pour les groupes localement compacts en termes de moyennes invariantes sur certains espaces, en termes de point fixes, en termes de repr\'esentations, en termes de suite de F{\o}lner etc....On trouve de nombreuses r\'ef\'erences litt\'eraires sur la moyennabilit\'e des groupes parmi lesquels \cite{Pier84}, \cite{PatersonBook88}, \cite{Runde02}, \cite{BekkaHarpeValette08} ou encore \cite{AnanRenault00} pour la g\'en\'eralisation de la moyennabilit\'e au cas des groupo\"{\i}des. On utilise la d\'efinition de moyennabilit\'e d'un groupe localement compact en termes de moyenne invariante : pour tout groupe $G$ localement compact, on appelle moyenne sur $G$ une forme lin\'eaire $m : L^\infty(G)\to\RR$  positive, c'est-\`a-dire v\'erifiant $m(f)\geq{0}$ si $f\geq{0}$ et de norme un.
\begin{Def}
Un groupe $G$ localement compact est moyennable s'il existe une moyenne sur $G$, not\'ee $m : L^\infty(G)\to\RR$, invariante \`a gauche c'est-\`a-dire v\'erifiant pour tout $g$ dans $G$ et toute fonction $f$ dans $L^\infty(G)$, on a $m(g.f)=m(f)$ et telle que $m\big(1_{L^\infty(G)}\big)=1$.
\vspace{+2mm}
\end{Def}

De nombreux travaux sur les groupes portent sur la relation entre la moyennabilit\'e d'un groupe localement compact $G$ et la nucl\'earit\'e des $C^*$-alg\`ebres pleine $C^*(G)$ et r\'eduite $C^*_r(G)$ associ\'ees au groupe $G$. On rappelle qu'une $C^*$-alg\`ebre $A$ est nucl\'eaire si pour toute $C^*$-alg\`ebre $B$, on a l'identification $A\otimes_{\textrm{min}}B=A\otimes_{\textrm{max}}B$. Dans le cas d'un groupe discret $G$, Lance prouve dans \cite{Lance73} l'\'equivalence entre la moyennabilit\'e de $G$ et la nucl\'earit\'e de $C^*_r(G)$. 
\\

Dans \cite{Effros75}, l'auteur donne une d\'efinition plus faible de la moyennabilit\'e pour un groupe localement compact et appel\'ee moyennabilit\'e int\'erieure :  on donne la d\'efinition de moyennabilit\'e int\'erieure des articles \cite{BedosHarpe86}, \cite{PatersonBook88}, \cite{LauPaterson91} ou encore \cite{Stalder06} qui  diff\`ere de celle de \cite{Effros75}
\begin{Def}
Un groupe localement compact $G$ est dit int\'erieurement moyennable s'il existe une moyenne $m : L^\infty(G)\to\RR$ sur $G$ qui soit invariante par automorphismes int\'erieurs, c'est-\`a-dire telle que pour tout $g$ dans $G$ et tout $f$ dans $L^\infty(G)$, la moyenne $m$ v\'erifie $m(gfg^{-1})=m(f)$.
\vspace{-1mm}
\end{Def}
Lau et Paterson prouvent dans \cite{LauPaterson91} que tout groupe $G$ localement compact et int\'erieurement moyennable, tel que  la $C^*$-alg\`ebre $C^*_r(G)$ est nucl\'eaire, est alors un groupe moyennable.
\\

Dans l'article \cite{Anantharaman2}, l'auteur \'etudie le cas de groupe de transformation $(X,G)$ c'est-\`a-dire le cas d'un espace localement compact muni d'une action d'un groupe localement compact. Un des objectifs principaux de l'article \cite{Anantharaman2} est la g\'en\'eralisation du r\'esultat de Lau et Paterson \'enonc\'e auparavant au cas des groupes de transformation. Pour un groupe de transformation $(X,G)$, la notion de moyennabilit\'e utilis\'ee est celle de moyennabilit\'e (topologique) du groupo\"{\i}de $X\rtimes{G}$ : Anantharaman-Delaroche donne la d\'efinition de moyennabilit\'e du groupe de transformation $(X,G)$ en termes de moyenne continue invariante approch\'ee de la mani\`ere suivante 
\begin{Def}{\cite{Anantharaman2}}
On dit que le groupe de transformation $(X,G)$ est moyennable s'il existe une suite $(m_i)_{i\in{I}}$ d'applications continues $x\mapsto{m_i^x}$ de $X$ dans l'espace $Prob(G)$ telles que 
$$\lim_{i}\lVert{s.m_i^x-m_i^{sx}}\lVert_1=0$$
uniform\'ement sur tout sous espace compact de $X\times{G}$. Une telle suite $(m_i)_{i\in{I}}$ est appel\'ee moyenne continue invariante approch\'ee.
\end{Def}
En adaptant les d\'efinitions de groupo\"{\i}des moyennables de {\cite{AnantharamanRenault}} , Anantharaman-Delaroche %adapte la proposition \ref{ProAnanRen} au cas des groupes de transformation pour obtenir 
donne une caract\'erisation de la moyennabilit\'e d'un groupe de transformations en termes de suite de fonctions. La notion de groupe int\'erieurement moyennable n'admet \`a priori pas de g\'en\'eralisation pour les groupes de transformation. Anantharaman-Delaroche remplace cette condition par une notion plus faible qui est la propri\'et\'e (W) pour un groupe de transformation $(X,G)$ exprim\'ee \`a l'aide de fonctions de type positif sur  le $G\times{G}$-espace $X\times{X}$ : 
\begin{Def}
Soient $G$ un groupe localement compact et s\'epar\'e et $X$ un $G$-espace localement compact et s\'epar\'e. Le groupe de transformation $(X,G)$ a la propri\'et\'e (W) si pour tout sous espace compact $K$ de $X\times{G}$ et pour tout r\'eel $\varepsilon$ strictement positif, il existe une fonction $h$ continue, born\'ee de type positif, \`a support $\pi$-propre sur $(X\times{G})\times(X\times G)$ telle que $\lvert{h(x,t,x,t)-1}\lvert\leq\varepsilon$, pour tout $(x,t)$ dans $K$.
\end{Def}
Tout groupe de transformation $(X,G)$ moyennable poss\`ede la propri\'et\'e (W). On dit qu'un groupe poss\`ede la propri\'et\'e (W) si le groupe de transformation $\big(\{.\},G\big)$ poss\`ede la propri\'et\'e (W), o\`u le groupe $G$ agit trivialement sur l'espace \{p\} constitu\'e d'un seul point. Dans ce cas, il s'agit d'une notion plus faible que la moyennabilit\'e int\'erieure d'un groupe localement compact. 
On montre assez facilement que tout groupe discret poss\`ede la propri\'et\'e (W) ou encore que si un groupe $G$ poss\`ede la propri\'et\'e (W), alors tout groupe de transformation $(X,G)$ poss\`ede ausi la propri\'et\'e (W). 
%\end{itemize}
Anantharaman-Delaroche prouve le th\'eor\`eme suivant qui g\'en\'eralise celui de Lau et Paterson \cite{LauPaterson91}
\begin{Theo}{\cite{Anantharaman2}}
Soit $(X,G)$ un groupe de transformation avec $X$ un $G$-espace localement compact et $G$ un groupe localement compact et s\'epar\'e. Les assertions suivantes sont \'equivalentes
\begin{itemize}
\item[(a)] $(X,G)$ est moyennable
\item[(b)] $C^*_r(X\rtimes{G})$ est nucl\'eaire et $(X,G)$ a la propri\'et\'e (W). 
\end{itemize}
\end{Theo}
Comme les groupes discrets poss\`edent la propri\'et\'e (W), on retrouve l'\'equivalence entre la moyennabilit\'e du groupe $G$ et la nucl\'earit\'e de $C_r^*(G)$ d\'emontr\'e dans \cite{Lance73}.
De m\^eme, dans le cas d'un groupe de transformation $(X,G)$ avec $G$ un groupe discret, le th\'eor\`eme de Anantharaman-Delaroche prouve qu'il y a \'equivalence entre la moyennabilit\'e du groupe de transformation $(X,G)$ et la nucl\'earit\'e de $C^*_r(X\rtimes{G})$. Le cas des groupes discrets attire l'attention notamment pour les liens avec la conjecture de Novikov qui est le sujet de nombreux travaux comme \cite{Higson00}, \cite{Yu00} ou \cite{RoeHigson00}. Dans ces articles, une propri\'et\'e importante pour un groupe discret est la moyennabilit\'e \`a l'infini :
\begin{Def}
 Un groupe localement compact $G$ est moyennable \`a l'infini s'il existe un $G$-espace compact $X$ pour lequel le groupe de transformation est moyennable.
 \end{Def}

Pour les groupes discrets on a une caract\'erisation des groupes moyennables \`a l'infini. On note $\beta{G}$ le compactifi\'e de Stone-C\v{e}ch de $G$, qui coincide avec le spectre de $C_b(G)$, alg\`ebre des fonctions continues et born\'ees sur $G$. Dans l'article \cite{Ozawa00}, l'auteur prouve le th\'eor\`eme suivant
\begin{Theo}[\cite{Ozawa00}]
Soit ${G}$ un groupe discret. Le groupe ${G}$ est exact  si et seulement si le groupe ${G}$ agit moyennablement sur l'espace $\beta{G}$.
%\vspace{-2mm}
\end{Theo}
Pour d\'emontrer ce th\'eor\`eme, Ozawa utilise un r\'esultat de \cite{RoeHigson00} qui dit que l'action d'un groupe discret $G$ par translation \`a gauche sur son compactifi\'e de Stone-C\v{e}ch est moyennable si et seulement si la $C^*$-alg\`ebre de Roe est nucl\'eaire. La $C^*$-alg\`ebre $C^*_r(G)$ \'etant une sous $C^*$-alg\`ebre de la $C^*$-alg\`ebre de Roe,  alors $C^*_r(G)$ est exacte. Ozawa prouve alors la r\'eciproque du r\'esultat de \cite{RoeHigson00} de Higson et Roe et montre que pour tout groupe $G$ discret dont la $C^*$-alg\`ebre $C^*_r(G)$ est exacte, la $C^*$-alg\`ebre de Roe du groupe est nucl\'eaire.
\\

Dans le chapitre 7 de l'article \cite{Anantharaman2} , Anantharaman-Delaroche \'etudie la moyennabilit\'e \`a l'infini des groupes localement compacts. L'auteur d\'efinit pour tout groupe $G$ localement compact, un $G$-espace compact, not\'e $\beta^uG$, comme le spectre d'une alg\`ebre et qui coincide dans le cas d'un groupe discret avec le compactifi\'e de Stone-C\v{e}ch $\beta{G}$. Elle prouve dans la proposition 3.4, que pour tout groupe localement compact $G$, la moyennabilit\'e \`a l'infinie de $G$ \'equivaut \`a la moyennabilit\'e du groupe de transformation $(\beta^uG,G)$. Elle d\'emontre le th\'eor\`eme suivant 
\begin{Theo}[\cite{Anantharaman2}]\label{ProEquMoyInf}
Soit ${G}$ un groupe localement compact. On consid\`ere les assertions suivantes  :
\begin{enumerate}
\item[$(1)$] le groupe ${G}$ est moyennable \`a l'infini  
\item[$(2$)] le groupe ${G}$ est exact
\item[$(3)$] Pour toute $G$-$C^*$-alg\`ebre exacte $B$, la $C^*$-alg\`ebre de produit crois\'e r\'eduit  $C_r^*(G,B)$ est une $C^*$-alg\`ebre exacte. 
\item[$(4)$] la $C^\ast$-alg\`ebre $C^\ast_r({G})$ est exacte  
\end{enumerate} 
Alors $(1)\Rightarrow(2)\Rightarrow(3)\Rightarrow(4)$. Si le groupe $G$ poss\`ede la propri\'et\'e (W), alors $(4)\Rightarrow(1)$ et les assertions sont toutes \'equivalentes.
\vspace{+3mm}
\end{Theo}
 
Le travail qui suit consiste en l'\'etude des groupo\"{\i}des \'etales localement compacts $\sigma$-compacts et s\'epar\'es qui g\'en\'eralisent les groupes discrets, les groupes de transformations $(X,G)$ avec $G$ un groupe discret et $X$ un $G$-espace localement compact $\sigma$-compact et s\'epar\'e ou encore qui apparaissent dans l'\'etude des groupo\"{\i}des d'holonomie de feuilletages. On s'int\'eresse principalement \`a des propri\'et\'es de moyennabilit\'e; la moyennabilit\'e dans le cadre des groupo\"{\i}des localement compacts et s\'epar\'es joue un r\^ole dans la th\'eorie des alg\`ebres d'op\'erateurs : on a vu auparavant les liens entre l'existence d'une action moyennable d'un groupe $G$ localement compact sur un espace compact et l'exactitude de sa $C^*$-alg\`ebre r\'eduite.

On verra notamment les relations entre des propri\'et\'es de moyennabilit\'e du groupo\"{\i}de, des propri\'et\'es d'exactitude du groupo\"{\i}de et d'exactitude de la $C^*$-alg\`ebre r\'eduite du groupo\"{\i}de. 
\begin{itemize}
\item[$\bullet$] La d\'efinition $\ref{DefMoyInf}$ donne une version de la moyennabilit\'e \`a l'infini dans le cas des groupo\"{\i}des \'etales localement compacts et s\'epar\'es.
\item[$\bullet$] L'espace $\beta_X\mathcal{G}$ est d\'efini comme le spectre d'une certaine $\mathcal{G}$-alg\`ebre, not\'ee $C_0^s(\mathcal{G})$ (d\'efinition dans la section $\ref{Galgebre}$ ) et dans le cas o\`u $\mathcal{G}$ est un groupe discret, l'espace $\beta_X\mathcal{G}$ est exactement le compactifi\'e de Stone-Cech $\beta\mathcal{G}$.
\item[$\bullet$] Un groupo\"{\i}de \'etale localement compact $\mathcal{G}$ est dit exact si le foncteur "$\rtimes_r\mathcal{G}$" est exact dans la cat\'egorie des $\mathcal{G}$-alg\`ebres et des $\mathcal{G}$-homomorphismes. 
\end{itemize}

Les r\'esultats que nous obtenons pour les groupo\"{\i}des \'etales localement compact peuvent \^etre sch\'ematis\'es par le diagramme suivant :
$$
\xymatrix{  
\mathcal{G}\textrm{ moyennable \`a l'infini}  \ar@{=>}[d] & & 
\\
\beta_X\mathcal{G}\rtimes\mathcal{G}\textrm{ moyennable }\ar@{=>}[rr] & & \mathcal{G}\textrm{ exact } \ar@{=>}[d] 
\\ 
& &  C^*_r(\mathcal{G})\textrm{ exact }  \ar@/^2pc/@{=>}[llu]^{\textrm{ }+\textrm{{(Cond)}}}
}
$$
L'implication "$C^*_r(\mathcal{G})\textrm{ exact }\Rightarrow{(\beta_X\mathcal{G},\mathcal{G})\textrm{ moyennable }}$" n'a \'et\'e obtenue dans notre cas qu'en imposant certaines conditions sur le groupo\"{\i}de $\mathcal{G}\rightrightarrows{X}$ : la premi\`ere est une version pour les groupo\"{\i}des de la Propri\'et\'e (W) introduite dans {\cite{Anantharaman2}}. La seconde intervient dans l'\'etude de la $C^*$-alg\`ebre $C^*_r(\mathcal{G})$ \`a laquelle on associe un champ de $C^*$-alg\`ebres sur $X$ que l'on supposera continu.
\\
Dans toute la section on consid\`ere $\mathcal{G}\rightrightarrows{X}$ un groupo\"{\i}de \'etale localement compact, $\sigma$-compact et s\'epar\'e. On utilise comme r\'ef\'erence {\cite{Williams07}} pour ce qui est des notions de $C_0(X)$-alg\`ebres et de champ (semi-)continu (sup\'erieurement) de $C^*$-alg\`ebres et leur relation. \\

\noindent{\bf Remerciments :} Ce travail constitue une partie de ma th\`ese de doctorat. Je tiens \`a remercier mon directeur de th\`ese Jean Louis TU pour m'avoir propos\'e ce sujet et pour ces nombreux conseils.

\section{{Pr\'eliminaires}}
%\section{D\'efinitions g\'en\'erales et constructions pour les groupo\"{\i}des}

\subsection{Groupo\"{\i}des topologiques}
Un groupo\"{\i}de peut \^etre vu comme une g\'en\'eralisation des groupes et des espaces. Il s'agit d'une petite cat\'egorie dans laquelle tous les morphismes sont inversibles. Il est courant d'adopter les notations $\mathcal{G}^{(0)}$ pour d\'efinir les objets et $\mathcal{G}$ (parfois not\'e $\mathcal{G}^{(1)}$) pour d\'efinir les morphismes (souvent appel\'es fl\`eches). On donne ci-dessous une d\'efinition plus concr\`ete d'un groupo\"{\i}de
\begin{Def}
Un groupo\"{\i}de consiste en la donn\'ee de deux espaces $\mathcal{G}$ et $\mathcal{G}^{(0)}$ et d'applications 
\begin{enumerate}
\item[a)] $s,r : \mathcal{G}\rightarrow{\mathcal{G}^{(0)}}$ respectivement applications source et but, 
\item[b)] $m : \mathcal{G}^{(2)}\rightarrow\mathcal{G}$, l'application produit, o\`u $\mathcal{G}^{(2)}=\{(\gamma,\eta)\in{\mathcal{G}\times\mathcal{G}}\textrm{ : }s(\gamma)=r(\eta)\}$, 
\end{enumerate}
ainsi que d'une application unit\'e not\'ee $u : \mathcal{G}^{(0)}\rightarrow\mathcal{G}$ et d'une application inverse not\'ee $i : \mathcal{G}\rightarrow\mathcal{G}$ v\'erifiant, si l'on note $m(\gamma,\eta)=\gamma.\eta$, $u(x)=x$ et $i(\gamma)=\gamma^{-1}$, les relations suivantes
\begin{enumerate}
\item[$\bullet$] $r(\gamma.\eta)=r(\gamma)$ et $s(\gamma.\eta)=s(\eta)$ 
\item[$\bullet$] $\gamma.(\eta.\delta)=(\gamma.\eta).\delta$,$\quad$ pour tout ${(\gamma,\eta)}$ et ${(\eta,\delta)}$ dans ${\mathcal{G}^{(2)}}$
\item[$\bullet$] $\gamma.x=\gamma$ et $x.\eta$,$\quad$ pour tout $({\gamma,\eta})$ dans $\mathcal{G}^{(2)}\textrm{ : }s(\gamma)=x=r(\eta)$
\item[$\bullet$] $\gamma.\gamma^{-1}=r(\gamma)$ et $\gamma^{-1}.\gamma=s(\gamma)$,$\quad$ pour tout $\gamma$ dans $\mathcal{G}$
\end{enumerate}
\end{Def}
\begin{Notation}
Il est assez courant de simplifier les notations en occultant la fonction $u$ et en consid\'erant $\mathcal{G}^{(0)}$ comme un sous espace de $\mathcal{G}$. De m\^eme, on pourra se permettre d'\'ecrire $\gamma.\eta$ voire $\gamma\eta$ pour $m(\gamma,\eta)$ afin d'all\'eger les formules.  
\vspace{+3mm}
\end{Notation}

On consid\`ere par la suite des groupo\"{\i}des topologiques c'est-\`a-dire les groupo\"{\i}des $\mathcal{G}\rightrightarrows\mathcal{G}^{(0)}$ pour lesquels les espaces $\mathcal{G}$ et $\mathcal{G}^{(0)}$ sont munis d'une topologie et les applications de structure du groupo\"{\i}de, $r, s, m, u, i$ sont continues. 

\begin{Pro}[\cite{Tu04}]
Si $\mathcal{G}\rightrightarrows\mathcal{G}^{(0)}$ est un groupo\"{\i}de s\'epar\'e localement compact (respectivement localement compact $\sigma$-compact) alors son espace des unit\'es $\mathcal{G}^{(0)}$ est localement compact (respectivement localement compact $\sigma$-compact).
\end{Pro}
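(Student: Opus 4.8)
Le plan est de r\'ealiser $\mathcal{G}^{(0)}$ comme un sous-espace ferm\'e de $\mathcal{G}$ gr\^ace \`a l'application unit\'e $u$, puis d'utiliser le fait que la compacit\'e locale et la $\sigma$-compacit\'e se transmettent aux sous-espaces ferm\'es.

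D'abord, je montrerais que $u : \mathcal{G}^{(0)}\to\mathcal{G}$ est un plongement topologique : $u$ est continue par hypoth\`ese, et la relation $s\circ u=\mathrm{id}_{\mathcal{G}^{(0)}}$ (cons\'equence des axiomes) entra\^ine que $u$ est injective et que sa r\'eciproque sur l'image $u\big(\mathcal{G}^{(0)}\big)$ n'est autre que la restriction de $s$ \`a cette image, donc est continue. Ensuite, je v\'erifierais que $u\big(\mathcal{G}^{(0)}\big)$ est ferm\'e dans $\mathcal{G}$. En effet, on a
$$u\big(\mathcal{G}^{(0)}\big)=\big\{\gamma\in\mathcal{G}\ :\ u\big(s(\gamma)\big)=\gamma\big\},$$
l'inclusion $\subseteq$ \'etant imm\'ediate et, r\'eciproquement, tout $\gamma$ v\'erifiant $u(s(\gamma))=\gamma$ \'etant dans l'image de $u$ ; comme les applications $\gamma\mapsto\gamma$ et $\gamma\mapsto u\big(s(\gamma)\big)$ sont continues de $\mathcal{G}$ dans $\mathcal{G}$ et que $\mathcal{G}$ est s\'epar\'e, cet \'egalisateur est ferm\'e.

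Il ne reste plus qu'\`a conclure. Si $\mathcal{G}$ est localement compact et s\'epar\'e, tout sous-espace ferm\'e de $\mathcal{G}$ est localement compact, donc $\mathcal{G}^{(0)}\cong u\big(\mathcal{G}^{(0)}\big)$ l'est aussi. Si de surcro\^it $\mathcal{G}=\bigcup_{n\in\NN}K_n$ avec chaque $K_n$ compact, alors $u\big(\mathcal{G}^{(0)}\big)=\bigcup_{n\in\NN}\big(K_n\cap u(\mathcal{G}^{(0)})\big)$, et chaque $K_n\cap u(\mathcal{G}^{(0)})$, ferm\'e dans le compact $K_n$, est compact ; donc $\mathcal{G}^{(0)}$ est $\sigma$-compact. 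Il n'y a pas ici d'obstacle s\'erieux : le seul point demandant un peu de soin est la v\'erification que $u$ est un plongement \`a image ferm\'ee, le reste n'\'etant que la stabilit\'e classique de ces deux propri\'et\'es topologiques par passage aux ferm\'es.
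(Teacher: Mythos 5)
Votre preuve est correcte et suit essentiellement la m\^eme strat\'egie que celle du texte : on r\'ealise $\mathcal{G}^{(0)}$ comme sous-espace ferm\'e de $\mathcal{G}$ (le texte l'\'ecrit comme $(id,r)^{-1}(\Delta_\mathcal{G})$, vous comme l'\'egalisateur de $\mathrm{id}$ et $u\circ s$, ce qui revient au m\^eme gr\^ace \`a la s\'eparation de $\mathcal{G}$), puis on invoque l'h\'er\'edit\'e de la compacit\'e locale et de la $\sigma$-compacit\'e par passage aux ferm\'es. La seule variante cosm\'etique est que, pour la $\sigma$-compacit\'e, le texte pousse les compacts $K_n$ en avant par $r$ tandis que vous les intersectez avec l'image ferm\'ee de $u$ ; les deux arguments sont valides.
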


\begin{proof}
On note $\Delta_\mathcal{G}$ la diagonale de l'espace $\mathcal{G}\times\mathcal{G}$. Le groupo\"{\i}de $\mathcal{G}$ \'etant s\'epar\'e, $\Delta_\mathcal{G}$ est un sous espace ferm\'e de $\mathcal{G}\times\mathcal{G}$. Alors $\mathcal{G}^{(0)}=(id,r)^{-1}(\Delta_\mathcal{G})$ est ferm\'e dans $\mathcal{G}$, donc localement compact.
\vspace{+2mm}
\\
\indent
Soit $(K_n)_{n\in\NN}$ une suite de sous espaces compacts dans $\mathcal{G}$, telle que $\mathcal{G}=\cup_{n\in\NN}K_n$. L'application $r : \mathcal{G}\rightarrow\mathcal{G}^{(0)}$ \'etant continue, $(r(K_n))_{n\in\NN}$ est une suite de sous espaces compacts de $\mathcal{G}^{(0)}$ telle que $\cup_{n\in\NN}r(K_n)=\mathcal{G}^{(0)}$. Alors $\mathcal{G}^{(0)}$ est localement compact $\sigma$-compact.
\\
\end{proof}

\begin{Def}
 Soit $\mathcal{G}\rightrightarrows\mathcal{G}^{(0)}$ un groupo\"{\i}de topologique et $X$ et $Y$ des sous espaces de $\mathcal{G}^{(0)}$. On appelle $\mathcal{G}_X$ l'ensemble des fl\`eches de $\mathcal{G}$ qui "commencent dans $X$" et $\mathcal{G}^Y$ l'ensemble des fl\`eches qui "terminent dans $Y$" c'est-\`a-dire les sous espaces de $\mathcal{G}$ d\'efinis par 
$$\mathcal{G}_X:=\{\gamma\in\mathcal{G}\textrm{ : }s(\gamma)\in{X}\}\quad\textrm{et}\quad\mathcal{G}^Y:=\{\gamma\in\mathcal{G}\textrm{ : }r(\gamma)\in{Y}\}$$ 
On note $\mathcal{G}_X^Y:=\mathcal{G}_X\cap\mathcal{G}^Y$ l'ensemble des fl\`eches qui commencent dans $X$ et finissent dans $Y$. Lorsqu'on consid\`ere $A:=\{x\}$ r\'eduit \`a un seul \'el\'ement, on dit que $\mathcal{G}_x$ (resp. $\mathcal{G}^x$) est la $s$-fibre (resp. $r$-fibre) au point $x$ et l'ensemble $\mathcal{G}_x^x$ est muni d'une structure de groupe qu'on appelle le groupe d'isotropie en $x$ du groupo\"{\i}de $\mathcal{G}$.
\vspace{+0.5cm}
\end{Def}

\begin{Examples}
 On donne quelques exemples de groupo\"{\i}des topologiques et on montre comment certains ensembles classiques peuvent \^etre vu comme des groupo\"{\i}des :
\\
\begin{itemize}
\item[(1)] \textbf{Groupe : } tout groupe topologique $G$ peut \^etre consid\'er\'e comme le groupo\"{\i}de topologique $G\rightrightarrows\{e\}$ o\`u les applications but et source s'identifient \`a la projection continue sur l'\'el\'ement neutre $e$ du groupe, la multiplication (resp. inversion) pour le groupo\"{\i}de s'identifie \`a la multiplication (resp. inversion) du groupe. Lorsque le groupe est de Lie, alors le groupo\"{\i}de induit est aussi de Lie.
\vspace{+3mm}
\item[(2)] \textbf{Espace : } tout espace topologique (resp. vari\'et\'e) $M$ peut \^etre conid\'er\'e comme un groupo\"{\i}de topologique (resp. de Lie) $\mathcal{G}\rightrightarrows{M}$, o\`u l'ensemble $\mathcal{G}$ est l'espace topologique (resp. la vari\'et\'e) $M$, c'est-\`a-dire l'espace m\^eme des unit\'es. Toutes les applications de structures s'identifient \`a l'identit\'e. On l'appelle le groupo\"{\i}de identit\'e associ\'e \`a l'espace topologique (resp. la vari\'et\'e) $M$. 
\vspace{+3mm}
\item[(3)] \textbf{Groupo\"{\i}de paire : } pour tout espace topologique $M$, on peut construire un autre groupo\"{\i}de topologique not\'e $Pair(M)\rightrightarrows{M}$ et appel\'e le groupo\"{\i}de paire de l'espace topologique $M$, dont l'espace des fl\`eches est l'espace topologique produit $Pair(M):=M\times{M}$. Les applications but et source sont d\'efinies $s(x,y):=y$ et $r(x,y):=x$, et la multiplication $(x,y).(y,z):=(x,z)$, pour tout couple $(x,y)$ et $(y,z)$ de $M\times{M}$. Les applications inverse et unit\'e sont $i(x,y):=(x,y)^{-1}=(y,x)$ et $u(x):=(x,x)$. 
\vspace{+3mm}
\item[(4)] \textbf{Action de groupe : } on consid\`ere un groupe de Lie $G$ agissant \`a gauche par diff\'eomorphismes sur une vari\'et\'e $M$. On appelle groupo\"{\i}de de transformation le groupo\"{\i}de not\'e $G\ltimes{M}\rightrightarrows{M}$, o\`u l'espace des fl\`eches $G\ltimes{M}$ est la vari\'et\'e produit $G\times{M}$. 
\vspace{+3mm}
\item[(5)]\textbf{Pullback : } soient $M$ un espace topologique, $\mathcal{G}$ un groupo\"{\i}de topologique et $\varphi : M\rightarrow\mathcal{G}^{(0)}$ une application continue et surjective. L'espace 
$$\varphi^\ast\mathcal{G}:=\{(m,\gamma,m')\in{M\times\mathcal{G}\times{M}}\textrm{ : }r(\gamma)=m{\textrm{ , }}s(\gamma)=m'\}=M\times_{\varphi,r}\mathcal{G}\times_{s,\varphi}M$$  
est muni d'une structure de groupo\"{\i}de topologique o\`u l'espace des unit\'es est $M$ et les fl\`eches reliant $m'$ et $m$ dans $M$ sont les fl\`eches de $\mathcal{G}$ reliant $\varphi(m)$ et $\varphi(m')$.
L'application but \'etant donn\'ee par la premi\`ere projection, l'application source par la troisi\`eme projection. Le produit est d\'efini par $(m_1,\gamma,m_2).(m_2,\gamma',m_3)=(m_1,\gamma\gamma',m_3)$ et l'inverse par $(m,\gamma,m')^{-1}=(m',\gamma^{-1},m)$. 
\vspace{+3mm}
%\item[(7)]
\end{itemize}
\end{Examples}

\begin{Def}
On consid\`ere deux groupo\"{\i}des topologiques $\mathcal{G}\rightrightarrows\mathcal{G}^{(0)}$ et $\mathcal{H}\rightrightarrows\mathcal{H}^{(0)}$ et on note $s_\mathcal{G}$ (respectivement $s_\mathcal{H}$) l'application source de $\mathcal{G}$ (respectivement $\mathcal{H}$) et $r_\mathcal{G}$ (respectivement $r_\mathcal{H}$) l'application but de $\mathcal{G}$ (respectivement $\mathcal{H}$). Un homomorphisme de groupo\"{\i}des $\varphi : \mathcal{G}\to\mathcal{H}$ est d\'efini par deux applications continues 
$$\varphi : \mathcal{G}\to\mathcal{H}\quad\textrm{ et }\quad{\varphi^{(0)} : \mathcal{G}^{(0)}\to\mathcal{H}^{(0)} }$$
telles que les relations suivantes sont satisfaites : pour tout $(\gamma,\eta)$ dans $\mathcal{G}^{(2)}$
\begin{itemize}
\item[$\bullet$] $\quad \varphi(\gamma\eta)=\varphi(\gamma)\circ\varphi(\eta)\textrm{, }\quad$
\item[$\bullet$] $\quad s_\mathcal{H}\circ{\varphi}=\varphi^{(0)}\circ{s_\mathcal{G}}$
\item[$\bullet$] $\quad r_\mathcal{H}\circ{\varphi}=\varphi^{(0)}\circ{r_\mathcal{G}}$
\item[$\bullet$] $\quad \varphi(\gamma^{-1})=\varphi(\gamma)^{-1}$
\vspace{+2mm}
\end{itemize}
\end{Def}

%Si on consid\`ere les groupo\"{\i}des du point de vue des cat\'egories, un homomorphisme entre groupo\"{\i}des est un foncteur continu $\phi : \mathcal{G}\to\mathcal{H}$ c'est-\`a-dire la donn\'ee d'une application continue entre les espaces des objets et d'une application continue entre les espaces de fl\`eches compatibles avec les applications de structures des groupo\"{\i}des.
%
%
%\begin{Rem} 
%On rencontre parfois le terme d'homomorphisme strict de groupo\"{\i}des pour caract\'eriser de tels homomorphismes pour \'eviter les confusions avec les homomorphismes g\'en\'eralis\'es de groupo\"{\i}des que l'on d\'efinira plus tard.
%\vspace{+2mm}
%\end{Rem}
%
\begin{Examples}
On donne deux exemples basiques d'homomorphismes stricts de groupo\"{\i}des : 
\begin{itemize}
\vspace{+1mm}
\item[$\mathrm{(1)}$] Il est clair qu'un homomorphisme de groupes topologique $f : G\to{H}$ est un homomorphisme (strict) de groupo\"{\i}des lorsqu'on se place du point de vue des groupo\"{\i}des.
\vspace{-1mm}
\\
\item[$\mathrm{(2)}$] Toute application continue $f : X\to{Y}$ entre espaces topologiques d\'efinit un homomorphisme strict entre les groupo\"{\i}des identit\'es induits par les espaces topologiques. Il en va de m\^eme pour les groupo\"{\i}des paires associ\'es aux espaces topologiques.
\vspace{+2mm}
\end{itemize}
\end{Examples}

\begin{Def}
Un groupo\"{\i}de topologique $\mathcal{G}\rightrightarrows\mathcal{G}^{(0)}$ est dit \'etale si les applications but $r$ et source $s$ sont des hom\'eomorphismes locaux. Dans le cas o\`u $\mathcal{G}$ est un groupo\"{\i}de de Lie, on demande \`a ce que les applications but $r$ et source $s$ soient des diff\'eomorphismes locaux.
\end{Def}
\begin{Rem}
On remarque que tout groupo\"{\i}de \'etale propre poss\`ede des groupes d'isotropie qui sont finis.
\vspace{+0.5cm}
\end{Rem}
On donne la d\'efinition d'une action \`a droite d'un groupo\"{\i}de sur un espace (l'action \`a gauche \'etant d\'efinie de mani\`ere analogue).

\begin{Def}
Soit $\mathcal{G}\rightrightarrows\mathcal{G}^{(0)}$ un groupo\"{\i}de topologique et $Z$ un espace localement compact, s\'epar\'e et $\sigma$-compact. On appelle action \`a droite du groupo\"{\i}de $\mathcal{G}$ sur l'espace $Z$, la donn\'ee d'une application continue $\sigma : Z\rightarrow{\mathcal{G}^{(0)}}$ (souvent appel\'ee \textit{application momentum})et d'une application continue de $Z\times_{\sigma,r}\mathcal{G}$ \`a valeurs dans $Z$, not\'e $(z,\gamma)=z.\gamma$ et v\'erifiant pour tout $z\in{Z}$
\begin{enumerate}
\item[a)] $\sigma(z.\gamma)=s(\gamma)$, pour tout ${\gamma}$ dans $\mathcal{G}\textrm{ tel que }\sigma(z)=r(\gamma)$.
\item[b)] $z.x=z$, pour tout ${x}$ dans ${\mathcal{G}^{(0)}}$ tel que $\sigma(z)=x$.
\item[c)] $(z.\gamma).\gamma'=z.(\gamma.\gamma')$, pour tout ${(\gamma,\gamma')}$ dans ${\mathcal{G}^{(2)}}\textrm{ tel que }\sigma(z)=r(\gamma)$
\end{enumerate}
\end{Def}

\begin{Rem}
L'application $\sigma : Z\rightarrow{\mathcal{G}^{(0)}}$ est appel\'ee parfois application momentum de l'action de $\mathcal{G}$ sur $Z$.
\\
L'action d'un groupo\"{\i}de $\mathcal{G}$ sur un espace $Z$ permet de construire un groupo\"{\i}de produit crois\'e de $Z$ par $\mathcal{G}$, not\'e $Z\rtimes\mathcal{G}$, compos\'e de l'ensemble des triplets $(z,\gamma,y)$ dans $Z\times\mathcal{G}\times{Z}$, tels que $z.\gamma=y$. Les \'el\'ements de $(Z\rtimes\mathcal{G})^{(2)}$ sont les paires $\big((x,\gamma,y),(y,\gamma',z)\big)$ et leur multiplication est d\'efinie par $(x,\gamma,y).(y,\gamma',z)=(x,\gamma.\gamma',z)$. L'inverse de $(x,\gamma,y)^{-1}=(y,\gamma^{-1},x)$.
\vspace{+0.5cm}
\end{Rem}

\subsection{Alg\`ebres associ\'ees \`a un groupo\"{\i}de}

Dans cette section, on rappelle les constructions des $C^*$-alg\`ebres pleines et r\'eduites associ\'ees \`a un groupo\"{\i}de localement compact et s\'epar\'e muni d'un syst\`eme de Haar.

\subsubsection{Alg\`ebres de convolution et repr\'esentations}
On d\'efinit, pour un groupo\"{\i}de $\mathcal{G}$ muni d'un syst\`eme de Haar, une structure $\ast$-alg\'ebrique sur l'espace vectoriel complexe $C_c(\mathcal{G})$ des fonctions continues et \`a support compact sur le groupo\"{\i}de. On d\'efinit tout d'abord le produit de convolution et l'involution sur $C_c(\mathcal{G})$; on \'etudie les $\ast$-repr\'esentations afin d'obtenir des $C^*$-normes qui nous permettront de construire des $C^*$-alg\`ebres.
\vspace{+2mm}
\\
Soit $\mathcal{G}$ un groupo\"{\i}de s\'epar\'e localement compact muni d'un syst\`eme de Haar $\nu=\{\nu^x\textrm{, }x\in\mathcal{G}^{(0)}\}$ et $C_c(\mathcal{G})$ l'espace vectoriel complexe des fonctions continues \`a support compact sur le groupo\"{\i}de et \`a valeurs complexes.
\begin{Def}
On d\'efinit une involution $\ast$ et un produit de convolution $\star$ sur $C_c(\mathcal{G})$ de la mani\`ere suivante
\begin{itemize}
\item[(i)] Convolution : pour toutes fonctions ${f,g}$ dans ${C_c(\mathcal{G})}$, on pose 
$$f\star{g}:=\int_{\mathcal{G}^{r(\gamma)}}f(\gamma')g(\gamma\gamma')d\nu^{r(\gamma)}(\gamma')$$
\item[(ii)] Involution : pour toute fonction ${f}$ dans ${C_c(\mathcal{G})}\textrm{ et tout }\gamma$ dans ${\mathcal{G}}$, on pose 
$$f^*(\gamma):=\overline{f(\gamma^{-1})}$$
\end{itemize}
\vspace{+2mm}
\end{Def}
\begin{Rem}
On peut munir la $\ast$-alg\`ebre $C_c(\mathcal{G})$ de la topologie induite par la famille de semi-normes $\{\lVert{.}\lVert_K\textrm{ : }K\subset{\mathcal{G}}\textrm{ compact}\}$, d\'efinie pour tout $K$ compact dans $\mathcal{G}$ et toute fonction $f$ de $C_c(\mathcal{G})$ par :
$$\lVert{f}\lVert_K:=\sup_{x\in{K}}\lvert{f(x)}\lvert$$

La structure d\'epend du choix du syst\`eme de Haar $\nu$ : on la note g\'en\'eralement $C_c(\mathcal{G},\nu)$ ou $C_c(\mathcal{G})$ s'il n'y a pas d'ambiguit\'e.
\end{Rem}

%\begin{Examples} On donne quelques exemples de $\ast$-alg\`ebres involutives associ\'ees \`a des groupo\"{\i}des classiques :  
%\begin{itemize}
%\item[(a)] Soit $G$ un groupe localement compact et s\'epar\'e, alors le groupo\"{\i}de $G$ admet un unique syst\`eme de Haar $\{\lambda_G\}$ o\`u $\lambda_G$ est la mesure de Haar du groupe $G$. 
%\item[(b)] Soit $X$ un espace localement compact et s\'epar\'e, alors la famille $\{\delta_x\textrm{ : }x\in{X}\}$ o\`u $\delta_x$ est la mesure de Dirac en $x$, forme un syst\`eme de Haar pour $X$ vu comme un groupo\"{\i}de. 
%\item[(c)] Soit $X$ un espace localement compact et s\'epar\'e et $G$ un groupe localement compact et s\'epar\'e agissant de mani\`ere continue sur $X$ alors la famille $\{\delta_x\times\lambda\textrm{ : }x\in{X}\}$ o\`u $\lambda$ est une mesure de Haar de $G$, forme un syst\`eme de Haar pour le groupo\"{\i}de $X\times{G}\rightrightarrows{X}$.
%\end{itemize}
%\end{Examples}

On d\'efinit une norme sur $C_c(\mathcal{G},\nu)$ de la mani\`ere suivante :
\begin{Def}
On note $\|.\|_I$ la norme sur la $\ast$-alg\`ebre topologique $C_c(\mathcal{G})$ d\'efinie, pour toute fonction ${f}$ dans ${C_c(\mathcal{G})}$ par
$$\lVert{f}\lVert_I:=\max\bigg\{\sup_{x\in\mathcal{G}^{(0)}}\int_{\mathcal{G}^{x}}|f(\gamma)|d\nu^{x}(\gamma)\textrm{ , }\sup_{x\in\mathcal{G}^{(0)}}\int_{\mathcal{G}_{x}}|f(\gamma)|d\nu_{x}(\gamma)\bigg\}$$
\end{Def}
\begin{Rem}
On montre facilement que $\|.\|_I$ v\'erifie les propri\'et\'es d'une norme sur $C_c(\mathcal{G})$. De plus, pour toutes fonctions ${f,g}$ dans ${C_c(\mathcal{G})}$, on a 
$$\lVert{f}\lVert_I=\lVert{f^*}\lVert_I\quad\textrm{ et }\quad\lVert{f\star{g}}\lVert_I\leqslant\lVert{f}\lVert_I\lVert{g}\lVert_I$$
\end{Rem}
%\begin{Rem}
%Munies des op\'erations d'involution et de convolution d\'efinies auparavant, les espaces vectoriels complexes suivants sont des $\ast$-alg\`ebres :
%\begin{itemize}
%\item[1)] $B_b(\mathcal{G})$ l'espace des fonctions bor\'eliennes born\'ees \`a valeurs complexes et \`a support compact dans $\mathcal{G}$.
%\item[2)] $B_I(\mathcal{G})$ l'espace des fonctions bor\'eliennes $f$ sur $\mathcal{G}$ \`a valeurs complexes telles que $\|f\|_I<+\infty$.
%\end{itemize}
%\end{Rem}
%\paragraph{Exemples} piuyuzhezbbekev
On rappelle quelques d\'efinitions sur les repr\'esentations de $C^*$-alg\`ebres :
\begin{Def}
Soit $B$ une $C^*$-alg\`ebre et $H$ un espace de Hilbert. Une $\ast$-repr\'esentation de $B$ dans $H$ est un $\ast$-homomorphisme $\pi : B\to{L(H)}$. Une $\ast$-repr\'esentation est dite fid\`ele si elle est injective.
\end{Def}
D'apr\`es le th\'eor\`eme de Gelfand, pour toute $C^*$-alg\`ebre $B$, il existe une espace de Hilbert $H$ et une repr\'esentation fid\`ele $\pi : B\to{L(H)}$. Si la $C^*$-alg\`ebre $B$ est s\'eparable, alors $H$ peut \^etre l'unique (\`a isom\'etrie pr\`es) espace de Hilbert s\'eparable de dimension infinie. Ce th\'eor\`eme signifie que toute $C^*$-alg\`ebre peut \^etre vue comme une sous-alg\`ebre involutive ferm\'ee (pour la norme des op\'erateurs) de $L(H)$.

\begin{Def}
Soient $B$ une $\ast$-alg\`ebre de Banach et $(\pi_\alpha)_{\alpha\in\Lambda}$ une famille de $\ast$-repr\'esentations continues de $B$. Le compl\'et\'e de l'alg\`ebre de Banach involutive $B$ par la semi-norme 
$$\lVert{x}\lVert:=\sup_{\alpha\in\Lambda}\lVert{\pi_\alpha(x)}\lVert$$
est une $C^*$-alg\`ebre appel\'ee la $C^*$-alg\`ebre enveloppante de $B$. 
\end{Def}

\begin{Rem}
On s'int\'eresse aux $\ast$-repr\'esentations de $C_c(\mathcal{G})$ dans un espace de Hilbert $H$ c'est-\`a-dire aux $\ast$-homomorphismes
$$\pi : C_c(\mathcal{G})\longrightarrow{\mathcal{B}(H)}$$
continus pour la topologie faible sur $\mathcal{B}(H)$ et tels que l'espace vectoriel engendr\'e par $\{\pi(f)\xi\textrm{, }f\in{C_c(\mathcal{G})}\textrm{, }\xi\in{H}\}$ est dense dans $H$.
\end{Rem}

\begin{Def}
Une $\ast$-repr\'esentation $\pi$ est continue pour la norme $\lVert{.}\lVert_I$  s'il existe $M>0$ tel que pour toute fonction ${f}$ de ${C_c(\mathcal{G})}$ on a 
$$\lVert{\pi(f)}\lVert\leq{M}\lVert{f}\lVert_I$$
\end{Def}

\subsubsection{$C^*$-alg\`ebres r\'eduites et pleines d'un groupo\"{\i}de}{\label{Rappel}}

On consid\`ere $(\pi_\alpha)_{\alpha\in\Lambda(\mathcal{G})}$ l'ensemble des $\ast$-repr\'esentations continues et $I$-born\'ees
de $C_c(\mathcal{G})$. On utilise ces $\ast$-repr\'esentations pour d\'efinir des $C^*$-normes. 

\begin{Def}
Soit $\mathcal{G}$ un groupo\"{\i}de s\'epar\'e localement compact muni d'un syst\`eme de Haar $\nu$ et $C_c(\mathcal{G})$ la $\ast$-alg\`ebre topologique associ\'ee.
\\
La $C^*$-alg\`ebre pleine $C^*(\mathcal{G})$ est l'alg\`ebre enveloppante de $\overline{C_c(\mathcal{G})}^{\lVert{.}\lVert_I}$ c'est-\`a-dire le compl\'et\'e de l'alg\`ebre de Banach involutive $\overline{C_c(\mathcal{G})}^{\lVert{.}\lVert_I}$ pour la semi-norme d\'efinie pour toute fonction ${f}$ dans $\overline{C_c(\mathcal{G})}^{\lVert{.}\lVert_I}$ par 
$$\lVert{f}\lVert:=\sup_{\alpha\in{\Lambda(\mathcal{G})}}\lVert{\pi_\alpha(f)}\lVert$$
\vspace{+3mm}
\end{Def}
Pour d\'efinir la $C^*$-alg\`ebre r\'eduite du groupo\"{\i}de, on a besoin des quelques remarques qui suivent : on consid\`ere $x$ un \'el\'ement de $\mathcal{G}^{(0)}$ et $L^2\big(\mathcal{G}^x,\nu^x\big)$ l'espace de Hilbert obtenu en compl\'etant $C_c(\mathcal{G}^x)$ pour la norme 
$$\lVert{\xi}\lVert_2=\bigg(\int_{\mathcal{G}^{x}}\lvert\xi(\gamma)\lvert^2d\nu^x(\gamma)\bigg)^{1/2}$$
On note $\pi_x : C_c(\mathcal{G})\rightarrow{\mathcal{L}(L^2(\mathcal{G}^x,\nu^x))}$ la $\ast$-repr\'esentation non-d\'eg\'en\'er\'ee d\'efinie pour tout ${f}$ dans ${C_c(\mathcal{G})}\textrm{ et tout }{g}$ dans ${L^2(\mathcal{G}^x,\nu^x)}$ par
$$(\pi_x(f)(g))(\gamma):=\int_{\mathcal{G}^x}f(\gamma')g(\gamma'\gamma)d\nu^x(\gamma')=f\star{g}(\gamma)$$
Pour tout $x$ dans $\mathcal{G}^{(0)}$, la $\ast$-repr\'esentation $\pi_x$ v\'erifie 
$$\lVert{\pi_x(f)}\lVert\leq{\lVert{f}\lVert_I}$$

\begin{Def}
La $C^*$-alg\`ebre r\'eduite $C^*_r(\mathcal{G})$ est la completion de $C_c(\mathcal{G})$ pour la $C^*$-norme r\'eduite $\|.\|_r$ d\'efinie pour toute fonction ${f}$ de ${C_c(\mathcal{G})}$ par
 $$\|f\|_r:=\sup_{x\in{\mathcal{G}^{(0)}}}\|\pi_x(f)\|$$
\vspace{+1mm}
\end{Def}

\subsection{Modules de Hilbert et repr\'esentation r\'eguli\`ere}

On fait un rappel succinct sur les modules de Hilbert. On trouve de nombreuses r\'ef\'erences sur ce sujet notamment les ouvrages \cite{Lance95}, \cite{ManTro05} ou encore \cite{Kasparov80}. On rappelle quelques d\'efinitions concernant les op\'erateurs sur les modules de Hilbert ainsi que la construction du produit tensoriel interne.
\begin{Def}
Soit $B$ une $C^*$-alg\`ebre et $\mathcal{E}$ et $\mathcal{F}$ des $B$-modules de Hilbert. 
\begin{itemize}
\item[$\bullet$] Un op\'erateur $T : \mathcal{E}\rightarrow{\mathcal{F}}$ est une application qui admet un adjoint s'il existe une application $T^\ast : \mathcal{F}\rightarrow\mathcal{E}$ telle que,   $$\langle{T}\xi,\eta\rangle=\langle\xi,T^\ast\eta\rangle$$ pour tout $\xi\in\mathcal{E}$ et tout $\eta\in\mathcal{F}$.
$T^\ast$ est appel\'e l'adjoint de $T$. 
\item[$\bullet$] On note $\mathcal{L}(\mathcal{E},\mathcal{F})$ l'ensemble des applications de $\mathcal{E}$ dans $\mathcal{F}$ qui admettent un adjoint. On note $\mathcal{L}(\mathcal{E})$ pour $\mathcal{L}(\mathcal{E},\mathcal{E})$.
\end{itemize}
\end{Def}
\begin{Rem}
Une application qui admet un adjoint est automatiquement $\CC$-lin\'eaire et m\^eme $B$-lin\'eaire et est un op\'erateur born\'e.
\vspace{+2mm}
\end{Rem}
Comme dans le cas des espaces de Hilbert, il existe des op\'erateurs de rang un dans $\mathcal{L}({E,F})$. Soient $\xi$ dans $E$ et $\eta$ dans $F$, on pose  $\theta_{\xi,\eta}$ l'op\'erateur de $\mathcal{L}(E,F)$ d\'efini pour tout $\zeta$ dans $E$ par $\theta_{\xi,\eta}'(\zeta)=\xi\langle{\eta,\zeta}\rangle$ et tel que $\theta^\ast_{\xi,\eta}=\theta_{\eta,\xi}$. Si $T$ est un op\'erateur dans $\mathcal{L}(F,G)$, alors on a $T\circ\theta_{\xi,\eta}=\theta_{T\xi,\eta}$ et si $T$ est dans $\mathcal{L}(G,E)$, alors $\theta_{\xi,\eta}\circ{T}=\theta_{\xi,T^\ast\eta}$. On note $\mathcal{K}_0(E,F)$  l'espace vectoriel engendr\'e par les op\'erateurs de rang un et la fermeture de $\mathcal{K}_0(E,F)$ est appel\'ee l'ensemble des op\'erateurs "compacts" du module de Hilbert $E$ vers le module de Hilbert $F$ (m\^eme s'ils ne sont pas compacts comme op\'erateurs entre espaces de Banach) et est not\'e $\mathcal{K}(E,F)$. On note $\mathcal{K}(E)$ pour l'id\'eal ferm\'e $\mathcal{K}(E,E)$ de la $C^*$-alg\`ebre $\mathcal{L}(E)$.  
\\

Soient $B_1$ et $B_2$ des $C^*$-alg\`ebres et pour $i$ dans $\{1,2\}$, on consid\`ere $E_i$ un $B_i$-module de Hilbert muni du produit scalaire $\langle{-,-}\rangle_i$ \`a valeurs dans $B_i$. On suppose qu'il existe un $\ast$-homomorphisme $\phi : B_1\to\mathcal{L}_{B_2}(E_2)$. Le produit tensoriel interne est une construction qui permet d'associer \`a $E_1$, $E_2$ et $\phi$ un $B_2$-module de Hilbert not\'e $E_1\otimes_\phi{E_2}$ : pour cela on consid\`ere $E_2$ muni de la structure de $B_1$-module d\'efinie pour tout $b_1$ dans $B_1$ et $e_2$ dans $E_2$ par $b_1.e_2=\phi(b_1)(e_2)$ et on note $E_1\odot_{B_1}E_2$ le produit tensoriel alg\'ebrique muni d'une structure de $B_2$-module \`a droite. Le produit tensoriel interne $E_1\otimes_\phi{E_2}$ est le compl\'et\'e du produit tensoriel alg\'ebrique $E_1\odot{E_2}$ pour le produit scalaire $\langle{-,-}\rangle$ \`a valeur dans $B_2$ d\'efini pour tout $e_1$ et $e_1'$ dans $E_1$ et tout $e_2$ et $e_2'$ dans $E_2$ par
$$\langle{e_1\otimes{e_2},e_1'\otimes{e_2'}}\rangle=\big\langle{e_2,\langle{e_1,e_1'}\rangle_1.e'_2}\big\rangle_2$$ 
avec $\langle{e_1,e_1'}\rangle_1.e'_2=\phi\big(\langle{e_1,e_1'}\rangle_1\big)(e'_2)$.
\vspace{+2mm}
\\
%
%Si $T$ est un op\'erateur dans $\mathcal{L}(E)$, alors $T\otimes{id_F}$ d\'efinit un op\'erateur de $\mathcal{L}(E\otimes_\phi{F})$ et on obtient un $\ast$-homomorphisme unital $\phi_\ast : \mathcal{L}(E)\to\mathcal{L}(E\otimes_\phi{F})$ injectif si $\phi$ est injectif. En g\'en\'eral, l'image par $\phi_\ast$ des op\'erateurs compacts $\mathcal{K}(E)$ n'est pas incluse dans l'ensemble des op\'erateurs compacts  $\mathcal{K}(E\otimes_\phi{F})$, cependant, il existe certains cas dans lesquels ce r\'esultat est possible comme le montre la proposition suivante :
%\begin{Pro}\label{ProLanCom}
%Soient $A$ et $B$ des $C^*$-alg\`ebres, $E$ un $A$-module de Hilbert, $F$ un $B$-module de Hilbert et $\phi : A\to{\mathcal{K}_B(F)}$ un $\ast$-homomorphisme. Alors la restrictions de $\phi_\ast$ \`a $\mathcal{K}_A(E)$ est \`a image dans $\mathcal{K}_B\big(E\otimes_\phi{F}\big)$ et si $\phi$ est injective (resp. surjective) alors $\phi_\ast$ est injective (resp. surjective).
%\end{Pro}
%\begin{proof}
%
%\end{proof}
%$$$$
Soit $\mathcal{G}\rightrightarrows{X}$ un groupo\"{\i}de localement compact et s\'epar\'e muni d'un syst\`eme de Haar $\nu$. On consid\`ere sur l'espace vectoriel complexe $C_c(\mathcal{G})$ le produit scalaire \`a valeurs dans $C_0(X)$ d\'efini pour toutes fonction $\xi$ et $\zeta$ dans $C_c(\mathcal{G})$ par 
$$\langle{\xi,\zeta}\rangle : x\to\int_{\mathcal{G}^x}\xi^\ast(\gamma)\zeta(\gamma^{-1})d\nu^x(\gamma)$$
qui correspond en fait \`a la restriction \`a l'espace $X$ de l'application $\xi^\ast\star\zeta$. On d\'efinit une structure de $C_0(X)$-module (\`a gauche) sur $C_c(\mathcal{G})$ en posant, $\xi.f(\gamma)=\xi(\gamma)f\big(s(\gamma)\big)$, pour toutes fonctions $\xi$ dans $C_c(\mathcal{G})$, $f$ de $C_0(X)$ et tout $\gamma$ dans $\mathcal{G}$. On obtient alors un $C_0(X)$-module pr\'ehilbertien et on note $L^2(\mathcal{G},\nu)$ le $C_0(X)$-module de Hilbert obtenu en quotientant $C_c(\mathcal{G})$ par le sous-module des \'el\'ements de norme nulle.
\vspace{+2mm}
\\

%Comme  $C_c(\mathcal{G})$ agit sur lui-m\^eme par convolution, on a un homomorphisme injectif $\lambda : C_c(\mathcal{G})\hookrightarrow{\mathcal{L}_{C_0(X)}\big(L^2(\mathcal{G},\nu)\big)}$ qui s'\'etend en une $\ast$-repr\'esentation de $\lambda : C^*(\mathcal{G})\to\mathcal{L}_{C_0(X)}\big(L^2(\mathcal{G},\nu)\big)$ dont l'image est $\ast$-isomorphe \`a la $C^*$-alg\`ebre r\'eduite $C_r^\ast(\mathcal{G})$.

\begin{Pro}{\label{ProAlgMod}}
L'ensemble $\mathcal{L}_{C_0(X)}\big(L^2(\mathcal{G},\nu)\big)$ des op\'erateurs $C_0(X)$-lin\'eaires qui admettent un adjoint est muni d'une structure de $C_0(X)$-alg\`ebre. 
\vspace{+1mm}
\end{Pro}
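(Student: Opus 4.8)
Le plan consiste \`a exhiber explicitement le $\ast$-homomorphisme central et non d\'eg\'en\'er\'e de $C_0(X)$ qui munit la $C^*$-alg\`ebre $\mathcal{L}_{C_0(X)}\big(L^2(\mathcal{G},\nu)\big)$ d'une structure de $C_0(X)$-alg\`ebre au sens de {\cite{Williams07}}. Posons $E:=L^2(\mathcal{G},\nu)$ et $A:=\mathcal{L}_{C_0(X)}(E)$. On a d\'ej\`a rappel\'e que $A$ est une $C^*$-alg\`ebre; elle est de plus unif\`ere (l'identit\'e de $E$ en est l'unit\'e), de sorte que $M(A)=A$ et qu'il suffit de construire un $\ast$-homomorphisme non d\'eg\'en\'er\'e $\Phi : C_0(X)\to Z(A)$ \`a valeurs dans le centre de $A$.

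Premi\`erement, je d\'efinirais pour toute $f$ dans $C_0(X)$ l'op\'erateur de multiplication $m_f$ sur $C_c(\mathcal{G})$ par $m_f(\xi):=\xi.f$, o\`u $\xi.f(\gamma)=\xi(\gamma)f\big(s(\gamma)\big)$ est l'action de $C_0(X)$ d\'ej\`a introduite. Comme $s(\gamma)=x$ pour $\gamma\in\mathcal{G}_x$, un calcul direct avec le produit scalaire \`a valeurs dans $C_0(X)$ donne $\langle{m_f\xi,m_f\xi}\rangle=|f|^2\langle{\xi,\xi}\rangle$, d'o\`u $\lVert{m_f\xi}\lVert\leq\lVert{f}\lVert_\infty\lVert{\xi}\lVert$, ainsi que $\langle{m_f\xi,\zeta}\rangle=\langle{\xi,m_{\bar{f}}\zeta}\rangle$ pour toutes $\xi,\zeta$ dans $C_c(\mathcal{G})$. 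Par densit\'e, $m_f$ se prolonge en un \'el\'ement de $A$ de norme $\leq\lVert{f}\lVert_\infty$ avec $m_f^\ast=m_{\bar{f}}$ (et $m_f$ est bien $C_0(X)$-lin\'eaire par commutativit\'e de $C_0(X)$), et $f\mapsto m_f$ est lin\'eaire et multiplicatif, donc est un $\ast$-homomorphisme $\Phi : C_0(X)\to A$. Deuxi\`emement, je v\'erifierais la centralit\'e : par d\'efinition, tout $T$ dans $A$ est $C_0(X)$-lin\'eaire, c'est-\`a-dire v\'erifie $T(\xi.f)=T(\xi).f$ pour toutes $\xi$ dans $E$ et $f$ dans $C_0(X)$, ce qui s'\'ecrit exactement $Tm_f=m_fT$; ainsi $\Phi(C_0(X))\subseteq Z(A)$.

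Enfin, pour la non-d\'eg\'en\'erescence, je choisirais une unit\'e approch\'ee $(u_n)_{n\in\NN}$ de $C_0(X)$ (qui existe puisque $X$ est localement compact $\sigma$-compact) et montrerais que $m_{u_n}$ converge fortement vers l'identit\'e de $E$ : pour $\xi$ dans $C_c(\mathcal{G})$ on a $\lVert{m_{u_n}\xi-\xi}\lVert^2=\sup_{x\in{s(\mathrm{supp}\,\xi)}}|u_n(x)-1|^2\langle{\xi,\xi}\rangle(x)$, qui tend vers $0$ car $s(\mathrm{supp}\,\xi)$ est compact et $u_n\to 1$ uniform\'ement sur les compacts; la densit\'e de $C_c(\mathcal{G})$ dans $E$ jointe \`a $\sup_n\lVert{m_{u_n}}\lVert\leq 1$ \'etend cette convergence \`a tout $E$. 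Il s'ensuit $m_{u_n}T\to T$ pour tout $T$ dans $A$, donc $\overline{\Phi(C_0(X))A}=A$ et $\Phi$ est non d\'eg\'en\'er\'e, ce qui munit $A$ d'une structure de $C_0(X)$-alg\`ebre. L'\'etape qui demande le plus d'attention est cette derni\`ere : il faut s'assurer que l'action de $C_0(X)$ passe au quotient d\'efinissant $E$ et se comporte bien vis-\`a-vis du produit scalaire, le reste se r\'eduisant \`a un d\'eroulement des d\'efinitions. On pourrait aussi invoquer le fait g\'en\'eral que $\mathcal{K}_{C_0(X)}(E)$ est une $C_0(X)$-alg\`ebre et que $\mathcal{L}_{C_0(X)}(E)=M\big(\mathcal{K}_{C_0(X)}(E)\big)$, mais l'argument direct ci-dessus est plus transparent.
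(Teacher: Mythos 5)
Votre d\'emarche est pour l'essentiel celle du papier : on y d\'efinit le m\^eme homomorphisme $\phi(f)(T)(\xi)=T(\xi.f)$, la centralit\'e y est \'egalement tir\'ee de la $C_0(X)$-lin\'earit\'e des op\'erateurs, et la non-d\'eg\'en\'erescence y est simplement affirm\'ee. Vos deux premi\`eres \'etapes (construction de $m_f$, calcul $\langle m_f\xi,m_f\xi\rangle=\lvert f\rvert^2\langle\xi,\xi\rangle$, adjoint $m_{\bar f}$, centralit\'e via $Tm_f=m_fT$) sont correctes et m\^eme plus d\'etaill\'ees que la preuve du texte.

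En revanche, votre derni\`ere \'etape contient une inf\'erence qui ne tient pas : la convergence forte $m_{u_n}T\to T$ ne donne la densit\'e de $\Phi(C_0(X))A$ dans $A$ que pour la topologie forte (ou stricte), alors que la non-d\'eg\'en\'erescence au sens de la r\'ef\'erence cit\'ee exige $\overline{\Phi(C_0(X))A}=A$ pour la norme. Comme $A=\mathcal{L}_{C_0(X)}(E)$ est unif\`ere et qu'aucune fibre de $E$ n'est nulle (chaque $x$ appartient \`a $\mathcal{G}_x$), on a pour $X$ non compact $\lVert m_{u_n}-1\rVert=1$ pour tout $n$, et plus g\'en\'eralement $1\notin\overline{\Phi(C_0(X))A}$ : il suffit de localiser en un point situ\'e hors d'un compact en dehors duquel les fonctions $f_i$ d'une combinaison $\sum_i m_{f_i}T_i$ sont uniform\'ement petites. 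La conclusion est donc fausse telle quelle lorsque $X$ n'est pas compact --- c'est d'ailleurs un point que la preuve du papier passe elle aussi sous silence --- et elle ne devient triviale que si $X$ est compact (cas auquel le papier se restreint dans la derni\`ere section), puisque alors $m_1=\mathrm{id}_E$. Pour rendre l'argument correct, il faudrait soit supposer $X$ compact, soit adopter une d\'efinition de $C_0(X)$-alg\`ebre n'exigeant pas la non-d\'eg\'en\'erescence, soit remplacer $\mathcal{L}_{C_0(X)}(E)$ par la sous-alg\`ebre $\overline{\Phi(C_0(X))\mathcal{L}_{C_0(X)}(E)}$ sur laquelle l'action est effectivement non d\'eg\'en\'er\'ee.
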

\begin{proof}
%L'espace $\mathcal{L}\big(L^2(\mathcal{G},\nu)\big)$ \'etant une $C^*$-alg\`ebre et $L^2(\mathcal{G},\nu)$ \'etant un $C(X)$-module hilbertien, il existe un homomorphisme de $C^*$-alg\`ebres $\phi : C(X)\rightarrow{\mathcal{L}\big(L^2(\mathcal{G},\nu)\big)}$, o\`u, pour toute fonction $f$ de $C(X)$ et $\xi$ dans $C_c(\mathcal{G})$, $\phi(f)$ est d\'efinie par $\phi(f)(\xi)=\xi.f$. 
%\\
%L'homomorphisme $\phi$ est non-d\'eg\'en\'er\'e. 
L'ensemble $\mathcal{L}_{C_0(X)}\big(L^2(\mathcal{G},\nu)\big)$ des op\'erateurs $C_0(X)$-lin\'eaires sur $L^2(\mathcal{G},\nu)$ qui admettent un adjoint est muni d'une structure de $C^*$-alg\`ebre.
\\
Par d\'efinition, les op\'erateurs de $\mathcal{L}_{C_0(X)}\big(L^2(\mathcal{G},\nu)\big)$ sont $C_0(X)$-lin\'eaires, c'est-\`a-dire que 
$$T(\xi.f)=T(\xi).f$$
pour tout $T$ dans $\mathcal{L}_{C_0(X)}(L^2(\mathcal{G},\nu))$, $f$ dans ${C(X)}$ et $\xi$ dans ${L^2(\mathcal{G},\nu)}$. L'alg\`ebre des multiplicateurs  $M\big(\mathcal{L}_{C_0(X)}(L^2(\mathcal{G},\nu))\big)$ \'etant \'egal \`a $\mathcal{L}_{C_0(X)}(L^2(\mathcal{G},\nu))$, on pose
$$\phi : C_0(X)\rightarrow{\mathcal{Z}\big(M(\mathcal{L}_{C_0(X)}(L^2(\mathcal{G},\nu)))\big)}$$ 
d\'efinie par $\phi(f)(T)(\xi):=T(\xi.f)=T(\xi).f$, pour tout $T$ dans $\mathcal{L}_{C_0(X)}(L^2(\mathcal{G},\nu))$, $f$ dans ${C_0(X)}$ et $\xi$ dans ${L^2(\mathcal{G},\nu)}$.
\\
L'homomorphisme $\phi : C_0(X)\rightarrow\mathcal{L}_{C_0(X)}\big(L^2(\mathcal{G},\nu)\big)$ \'etant non-d\'eg\'en\'er\'e, il d\'efinit une structure de $C_0(X)$-alg\`ebre sur $\mathcal{L}_{C_0(X)}\big(L^2(\mathcal{G},\nu)\big)$.
\end{proof}

%Pour toutes fonctions $f$ et $\xi$ dans $C_c(\mathcal{G})$, on peut d\'efinir une application $\lambda : C_c(\mathcal{G})\rightarrow{L\big(C_c(\mathcal{G})\big)}$ par $\lambda(f)(\xi)=f\star\xi$ qui s'\'etend en une repr\'esentation $\lambda$ de $C^*(\mathcal{G})$ sur $L^2(\mathcal{G},\nu)$ dont l'image est $\ast$-isomorphe \`a la $C^*$-alg\`ebre r\'eduite du groupo\"{\i}de $C_r^*(\mathcal{G})$.
%
%
%\begin{Def}
%On appelle repr\'esentation r\'eguli\`ere, l'homomorphisme injectif de $C^*$-alg\`ebre $\lambda : C_r^*(\mathcal{G})\rightarrow{\mathcal{L}\big(L^2(\mathcal{G},\nu)\big)}$.
%\end{Def}
%
\begin{Rem}
Comme  $C_c(\mathcal{G})$ agit sur lui-m\^eme par convolution, on a un homomorphisme injectif $\lambda : C_c(\mathcal{G})\hookrightarrow{\mathcal{L}_{C_0(X)}\big(L^2(\mathcal{G},\nu)\big)}$ qui s'\'etend en une $\ast$-repr\'esentation de $\lambda : C^*(\mathcal{G})\to\mathcal{L}_{C_0(X)}\big(L^2(\mathcal{G},\nu)\big)$ dont l'image est $\ast$-isomorphe \`a la $C^*$-alg\`ebre r\'eduite $C_r^\ast(\mathcal{G})$. On obtient une repr\'esentation fid\`ele de la $C^*$-alg\`ebre $C^*_r(\mathcal{G})$ dans le $C_0(X)$-module de Hilbert $\mathcal{L}_{C_0(X)}\big(L^2(\mathcal{G},\nu)\big)$. L'image par la repr\'esentation $\lambda$ de $C_r^*(\mathcal{G})$ est une sous-$C^*$-alg\`ebre de $\mathcal{L}_{C_0(X)}\big(L^2(\mathcal{G},\nu)\big)$, qui n'est pas n\'ecessairement stable par la structure de $C_0(X)$-module de $\mathcal{L}_{C_0(X)}\big(L^2(\mathcal{G},\nu)\big)$.
%c'est-\`a-dire qu'elle n'est pas $C(X)$-lin\'eaire; en effet il n'y a pas n\'ecessairement d'\'egalit\'e entre $\lambda\big(\phi_A(f)a\big)$ et $\phi_{\mathcal{L}(E)}(f)\lambda(a)$, car on a
%\begin{align*}
%&\lambda\big(\phi_A(f).a\big)\xi(\gamma)=\int_{\mathcal{G}_x}\big(\phi_A(f)a\big)(\gamma\eta^{-1})\xi(\eta)d\nu_x(\eta)=\int_{\mathcal{G}_x}a(\gamma\eta^{-1})f\big(r(\eta)\big)\xi(\eta)d\nu_x(\eta)\\
%&\big(\phi_{\mathcal{L}(E)}(f) \lambda(a)\big)\xi(\gamma)=\lambda(a)\big(\xi.f\big)(\gamma)=\int_{\mathcal{G}_x}a(\gamma\eta^{-1})\xi(\eta)f\big(s(\eta)\big)d\nu_x(\eta)
%\end{align*}
\end{Rem}

\section{{Groupo\"{\i}de moyennable \`a l'infini et espace universel}}

Soit $\mathcal{G}\rightrightarrows{X}$ un groupo\"{\i}de \'etale localement compact et s\'epar\'e tel que l'espace des unit\'es $X$ est $\sigma$-compact. Le r\'esultat principal de cette section est la d\'emonstration du th\'eor\`eme {\ref{TheoPrinc3}} que l'on donne ci-dessous :
\newtheorem{Theopopop}{\bfseries{Th\'eor\`eme}}
\begin{Theopopop}
\itshape{Soit $\mathcal{G}$ un groupo\"{\i}de \'etale d'espace des unit\'es $X$. On consid\`ere les assertions suivantes :
\begin{enumerate}
\item[$(1)$] le groupo\"{\i}de $\mathcal{G}$ est moyennable \`a l'infini  
\item[$(2)$] le groupo\"{\i}de $\mathcal{G}$ agit moyennablement sur l'espace $\beta_X\mathcal{G}$
\item[$(3)$] le groupo\"{\i}de $\mathcal{G}$ est exact 
\item[$(4)$] la $C^\ast$-alg\`ebre $C^\ast_r(\mathcal{G})$ est exacte  
\end{enumerate} 
Alors on a $(1)\Longrightarrow{(2)}\Longrightarrow{(3)}\Longrightarrow{(4)}$}
\vspace{+3mm}
\end{Theopopop}

 Il s'agit d'un r\'esultat analogue \`a la proposition \ref{ProEquMoyInf} dans le cas plus g\'en\'eral des groupo\"{\i}des \'etales localement compacts et s\'epar\'es. Les parties \ref{Partie1} \`a \ref{Partie6} qui suivent permettent de d\'efinir une version de la moyennabilit\'e \`a l'infini pour les groupo\"{\i}des ainsi qu'un espace $\beta_X\mathcal{G}$ muni d'une action continue du groupo\"{\i}de $\mathcal{G}$ qui satisfait une propri\'et\'e universelle et qui joue le r\^ole du compactifi\'e de Stone-Cech dans le cas des groupes discrets.  On obtient alors le th\'eor\`eme suivant :

\begin{Theo}{\label{TheoEquivEspace}}
Soit $\mathcal{G}$ un groupo\"{\i}de \'etale localement compact et $X$ l'espace localement compact des unit\'es du groupo\"{\i}de $\mathcal{G}$. Si le groupo\"{\i}de $\mathcal{G}$ est moyennable \`a l'infini, alors le groupo\"{\i}de $\beta_X\mathcal{G}\rtimes\mathcal{G}$ est moyennable.
\end{Theo}

%\begin{Rem}
%La d\'emonstration se fait en utilisant la propri\'et\'e universelle associ\'ee \`a l'espace $\beta_X\mathcal{G}$. En effet on va voir (quitte \` a remplacer l'espace $Y$ par un espace de mesure de probabilit\'e sur $Y$ bien choisi) que l'on peut construire une section continue de $\pi$ et ainsi obtenir l'existence d'une application continue et $\mathcal{G}$-\'equivariante $\beta_X\mathcal{G}\rightarrow{Y}$. Enfin on pourra ``tirer en arri\`ere'' sur $\beta_X\mathcal{G}$ la propri\'et\'e de moyennabilit\'e de $Y$.
%\end{Rem}

On donne tout d'abord une d\'efinition de la moyennabilit\'e infinie pour les groupo\"{\i}des : 
\begin{Def}{\label{DefMoyInf}}
Soit $\mathcal{G}$ un groupo\"{\i}de \'etale localement compact et $X$ l'espace des unit\'es (localement compact) du groupo\"{\i}de $\mathcal{G}$. Le groupo\"{\i}de est dit moyennable \`a l'infini s'il existe un $\mathcal{G}$-espace localement compact et s\'epar\'e $Y$, pour lequel l'application $\pi : Y\rightarrow{X}$ est continue, surjective, ouverte et propre, et tel que le groupo\"{\i}de $Y\rtimes\mathcal{G}$ est moyennable.
\vspace{+2mm}
\end{Def}

Pour d\'emontrer le th\'eor\`eme \ref{TheoEquivEspace} , on raisonne de la mani\`ere suivante : 
\begin{itemize}
\vspace{+1mm}
\item[1)] il s'agit tout d'abord de d\'efinir l'espace $\beta_X\mathcal{G}$ comme \'etant le spectre d'une $C^\ast$-alg\`ebre commutative qui sera not\'ee $C_0^s(\mathcal{G})$
\vspace{+1mm}
\item[2)] on munit l'espace topologique $\beta_X\mathcal{G}$ d'une action continue du groupo\"{\i}de $\mathcal{G}$ et on prouve que $\beta_X\mathcal{G}$ v\'erifie une propri\'et\'e  universelle
\vspace{+1mm}
\item[3)] \`a l'aide de la propri\'et\'e universelle de $\beta_X\mathcal{G}$ et des propri\'et\'es des groupo\"{\i}des moyennables, on d\'emontre de th\'eor\`eme \ref{TheoEquivEspace} .
\vspace{+3mm}
\end{itemize}

La partie \ref{Partie1} de cette section est constitu\'ee de quelques r\'esultats g\'en\'eraux sur les $C^*$-alg\`ebres qui nous seront utiles dans la suite.
\\

La  partie \ref{Partie2} consiste \`a d\'efinir la $C^*$-alg\`ebre commutative $C_0^s(\mathcal{G})$ : il s'agit d'une sous-$C^*$-alg\`ebre de la $C^*$-alg\`ebre commutative des fonctions \`a valeurs complexes, continues et born\'ees sur $\mathcal{G}$. On peut alors d\'efinir une structure de $C_0(X)$-alg\`ebre sur $C_0^s(\mathcal{G})$ et la munir d'une action continue du groupo\"{\i}de $\mathcal{G}$. On obtient ainsi un syst\`eme dynamique de groupo\"{\i}de $\big(C_0^s(\mathcal{G}),\mathcal{G},\alpha\big)$.
\\

La partie \ref{Partie3} porte sur l'\'etude de l'espace $\beta_X\mathcal{G}$ qui est d\'efini comme le spectre de $C_0^s(\mathcal{G})$. Le fait que $C_0^s(\mathcal{G})$ soit une $\mathcal{G}$-alg\`ebre commutative induit une action continue de $\mathcal{G}$ sur l'espace localement compact et s\'epar\'e $\beta_X\mathcal{G}$. On termine cette partie en prouvant que l'espace $\beta_X\mathcal{G}$ v\'erifie la propri\'et\'e universelle suivante : soit $Z$ un $\mathcal{G}$-espace localement compact et s\'epar\'e tel que $Z\to{X}$ est continue, surjective, propre et admet une section continue (non n\'ecessairement \'equivariante), alors il existe une application $\tilde{\theta} : \beta_X\mathcal{G}\to{Z}$ continue, $\mathcal{G}$-\'equivariante et propre.
\\

La propri\'et\'e universelle v\'erifi\'ee par $\beta_X\mathcal{G}$ est le r\'esultat essentiel qui nous permet de d\'emontrer le th\'eor\`eme \ref{TheoEquivEspace} : en effet, si $\mathcal{G}$ est moyennable \`a l'infini, alors il existe un $\mathcal{G}$-espace $Y$ localement compact et s\'epar\'e tel que l'application $Y\to{X}$ est continue, surjective, ouverte et propre et sur lequel l'action de $\mathcal{G}$ est moyennable. On ne peut appliquer dans ce cas la propri\'et\'e universelle car la condition d'existence d'une section de $Y\to{X}$ n'est pas (n\'ecessairement) v\'erifi\'ee. C'est l'objet des parties \ref{Partie4} et \ref{Partie5} qui consiste \`a d\'efinir un $\mathcal{G}$-espace localement compact et s\'epar\'e $M$ obtenu en consid\'erant un sous espace des mesures de probabilit\'e de $Y$. On s'emploie alors \`a d\'emontrer l'existence d'une section continue pour l'application $M\to{X}$ et prouver ainsi que l'espace $M$ satisfait les conditions de la propri\'et\'e universelle de $\beta_X\mathcal{G}$ : on a alors une application $\tilde{\theta} : \beta_X\mathcal{G}\to{M}$ continue, $\mathcal{G}$-\'equivariante et propre. 
\\

Dans la partie \ref{Partie6} , on prouve que l'action de $\mathcal{G}$ sur $M$ est moyennable et on montre comment on peut "tir\'e en arri\`ere"  le caract\`ere moyennable de $M\rtimes\mathcal{G}$ sur $\beta_X\mathcal{G}\rtimes\mathcal{G}$ par le biais de l'application $\tilde{\theta} : \beta_X\mathcal{G}\to{M}$. Enfin on termine cette section en d\'emontrant dans la partie \ref{Partie7} le th\'eor\`eme \ref{TheoPrinc3}.

\subsection{Quelques propri\'et\'es des $C^*$-alg\`ebres commutatives}\label{Partie1}
On d\'emontre ici quelques r\'esultats g\'en\'eraux dans le cadre des $C^*$-alg\`ebres commutatives, qui seront utilis\'es dans cette section.
\begin{Def}
Soit $i : C_0(X)\rightarrow{C_0(Y)}$ un homomorphisme de $C^*$-alg\`ebres; on dit que $i$ est non d\'eg\'en\'er\'e si pour toute approximation de l'unit\'e, $(e_\lambda)_{\lambda\in\Lambda}$, de $C_0(X)$, la suite g\'en\'eralis\'ee $\big(i(e_\lambda)\big)_{\lambda\in\Lambda}$ est une approximation de l'unit\'e de $C_0(Y)$.
\end{Def}

\begin{Pro}{\label{ProNonDeg}}
Si un homomorphisme $i : C_0(X)\rightarrow{C_0(Y)}$ de $C^*$-alg\`ebres est non d\'eg\'en\'er\'e, alors il existe une application $i^* : Y\rightarrow{X}$, induite par $i$.
\end{Pro}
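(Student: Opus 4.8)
Le plan est de construire $i^*$ point par point via la dualit\'e de Gelfand. Pour chaque $y\in{Y}$, je noterais $\mathrm{ev}_y : C_0(Y)\to\CC$ le caract\`ere d'\'evaluation en $y$, qui correspond \`a $y$ dans l'identification $Y\cong\mathrm{Spec}\big(C_0(Y)\big)$, et je consid\'ererais le $\ast$-homomorphisme compos\'e
$$\chi_y := \mathrm{ev}_y\circ{i} : C_0(X)\longrightarrow\CC.$$
Un $\ast$-homomorphisme d'une $C^*$-alg\`ebre commutative vers $\CC$ est soit nul, soit un caract\`ere; la premi\`ere \'etape, qui est le point o\`u l'hypoth\`ese de non-d\'eg\'en\'erescence intervient de mani\`ere essentielle, consiste \`a montrer que $\chi_y\neq{0}$ pour tout $y$.

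Pour cela je fixerais une approximation de l'unit\'e $(e_\lambda)_{\lambda\in\Lambda}$ de $C_0(X)$. Par d\'efinition de la non-d\'eg\'en\'erescence de $i$, la suite g\'en\'eralis\'ee $\big(i(e_\lambda)\big)_{\lambda\in\Lambda}$ est une approximation de l'unit\'e de $C_0(Y)$; en choisissant $g\in{C_0(Y)}$ positive avec $g(y)=1$, on a $\lVert{i(e_\lambda)g-g}\lVert\to{0}$, d'o\`u en \'evaluant en $y$ on obtient $i(e_\lambda)(y)\to{1}$. Si l'on avait $\chi_y=0$, alors $i(e_\lambda)(y)=\chi_y(e_\lambda)=0$ pour tout $\lambda$, ce qui contredirait $i(e_\lambda)(y)\to{1}$. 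Donc $\chi_y$ est un caract\`ere de $C_0(X)$.

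Par le th\'eor\`eme de Gelfand, $X$ s'identifie au spectre de $C_0(X)$, donc il existe un unique point de $X$, que l'on note $i^*(y)$, tel que $\chi_y=\mathrm{ev}_{i^*(y)}$. Ceci d\'efinit une application $i^* : Y\to{X}$ caract\'eris\'ee par
$$i(f)(y)=f\big(i^*(y)\big)\qquad\text{pour tout }f\in{C_0(X)}\text{ et tout }y\in{Y},$$
c'est-\`a-dire $i(f)=f\circ{i^*}$, ce qui est exactement le sens de "$i^*$ induite par $i$". Pour la continuit\'e : la topologie de l'espace localement compact s\'epar\'e $X$ est la topologie initiale associ\'ee \`a la famille $C_0(X)$, et pour toute $f\in{C_0(X)}$ la fonction $f\circ{i^*}=i(f)$ appartient \`a $C_0(Y)$, donc est continue; la propri\'et\'e universelle de la topologie initiale donne alors la continuit\'e de $i^*$. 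Le seul point d\'elicat de l'argument est l'\'etape $2$, \`a savoir l'emploi de la non-d\'eg\'en\'erescence pour garantir $\chi_y\neq 0$; tout le reste est une application directe de la dualit\'e de Gelfand.
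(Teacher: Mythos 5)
Votre preuve est correcte et suit essentiellement la m\^eme d\'emarche que celle de l'article : composer le caract\`ere d'\'evaluation en $y$ avec $i$, utiliser l'approximation de l'unit\'e pour exclure le cas du caract\`ere nul (votre justification de $i(e_\lambda)(y)\to 1$ via une fonction $g$ avec $g(y)=1$ est m\^eme un peu plus soign\'ee que celle du texte), puis conclure par la dualit\'e de Gelfand. La v\'erification de la continuit\'e de $i^*$ que vous ajoutez \`a la fin n'est pas demand\'ee par l'\'enonc\'e (elle est \'etablie dans la proposition suivante de l'article), mais elle est correcte.
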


\begin{proof}
On consid\`ere $\chi$ un caract\`ere, non nul, de $C_0(Y)$ c'est-\`a-dire un homomorphisme non nul $\chi : C_0(Y)\rightarrow{\CC}$. Il existe alors un unique $y\in{Y}$ tel que pour toute fonction $f$ dans $C_0(Y)$, on a $\chi(f)=f(y)$. On pose $\chi':=\chi\circ{i}$. C'est un homomorphisme (car composition d'homomorphismes) de $C_0(X)$ dans $\CC$, qui est non nul; en effet, supposons que $\chi'$ est l'homomorphisme nul. On consid\`ere $(e_\lambda)_{\lambda\in\Lambda}$ une approximation de l'unit\'e de $C_0(X)$. Comme l'homomorphisme $i : C_0(X)\rightarrow{C_0(Y)}$ est non d\'eg\'en\'er\'e, la suite $\big(i(e_\lambda)\big)_{\lambda\in{\Lambda}}$ est une approximation de l'unit\'e de $C_0(Y)$, donc on a 
$$\lim_{\lambda\rightarrow\infty}\chi\big(i(e_\lambda)\big)=\lim_{\lambda\rightarrow\infty}i(e_\lambda)(y)=1$$
Or on a suppos\'e $\chi'=\chi\circ{i}$, d'o\`u $\chi(i(e_\lambda))=\chi'(e_\lambda)=0$, pour tout $\lambda$ dans $\Lambda$. On obtient une contradiction. Donc l'homomorphisme $\chi' : C_0(X)\rightarrow{\CC}$ est non nul. Il existe alors un unique $x$ dans $X$ v\'erifiant pour toute fonction $g$ de $C_0(X)$, l'\'egalit\'e $\chi'(g)=g(x)$.
\\
Ainsi, par l'homomorphisme $i : C_0(X)\rightarrow{C_0(Y)}$, non d\'eg\'en\'er\'e, \`a tout $y$ dans $Y$, on peut associer un unique \'el\'ement $x$ de $X$. On a donc une application $i^* : Y\rightarrow{X}$, induite par $i$.  
\\
\end{proof}
%\begin{Lemma}
%Soit $i : C_0(X)\hookrightarrow{C_0(Y)}$ un homomorphisme injectif et non-d\'eg\'en\'er\'e de $C^*$-alg\`ebres et $i^\ast : Y\rightarrow{X}$ l'application induite par $i$.
%L'application $i^\ast$ est \`a image dense dans l'espace $X$.
%Pour tout ouvert $U$ de $X$, il existe au moins un $x$ dans $U$ qui soit dans l'image de l'application $p : Y\rightarrow{X}$. 
%\end{Lemma}
%\begin{proof}
%On pose $Im(p)$ image de l'application $p$ dans $X$. Soit $U$ un ouvert non vide de $X$. Montrons qu'il existe $x$ dans ${U\cap{Im(p)}}$; en effet, si $Im(p)\cap{U}=\varnothing$, alors toute fonction $f$ de ${C_0(X)}$, non nulle, et \`a support compact dans $U$ a pour image par $i$ la fonction nulle sur $Y$; or ceci est en contradiction avec l'injectivit\'e de $i : C_0(X)\rightarrow{C_0(Y)}$. Ainsi pour tout ouvert $U$ de $X$, il existe un \'el\'ement $x$ qui soit dans l'image $Im(p)$. 
%\end{proof}
\begin{Pro}{\label{ProInjectiveNonDeg}}
Soit $i : C_0(X)\hookrightarrow{C_0(Y)}$ un homomorphisme de $C^*$-alg\`ebres et $i^\ast : Y\rightarrow{X}$ l'application induite par $i$.
Si $i$ est injectif et non d\'eg\'en\'er\'e, alors l'application $i^\ast : Y\rightarrow{X}$ est continue, surjective et propre.
\end{Pro}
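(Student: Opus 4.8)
\emph{Esquisse de d\'emonstration.} Le plan est de tout d\'eduire de la relation qui caract\'erise $i^\ast$ dans la d\'emonstration de la Proposition~\ref{ProNonDeg}: pour toute fonction $f\in C_0(X)$ et tout $y\in Y$, on a $i(f)(y)=f\big(i^\ast(y)\big)$, c'est-\`a-dire $i(f)=f\circ i^\ast$. La non-d\'eg\'en\'erescence ne sert qu'\`a garantir, via la Proposition~\ref{ProNonDeg}, que $i^\ast$ est bien d\'efinie sur $Y$ tout entier; la continuit\'e et la propret\'e n'utiliseront ensuite que cette formule et le fait que $i$ est \`a valeurs dans $C_0(Y)$, tandis que la surjectivit\'e reposera sur l'injectivit\'e de $i$.

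Je commencerais par la continuit\'e. Comme $X$ est localement compact et s\'epar\'e, sa topologie co\"{\i}ncide avec la topologie initiale associ\'ee \`a la famille $C_0(X)$ (les fonctions de $C_0(X)$ s\'eparent les points des ferm\'es, par un argument de type Urysohn valable pour les espaces localement compacts s\'epar\'es). Il suffit donc de v\'erifier que $f\circ i^\ast$ est continue pour toute $f\in C_0(X)$; or $f\circ i^\ast=i(f)\in C_0(Y)$, ce qui conclut. De fa\c{c}on \'equivalente, $i^\ast$ n'est autre que la restriction aux caract\`eres de l'application transpos\'ee entre les spectres, continue pour les topologies faible-$\ast$.

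Pour la propret\'e, soit $K\subseteq X$ un compact. Puisque $X$ est localement compact et s\'epar\'e, on choisit $f\in C_0(X)$ avec $0\leq f\leq 1$ et $f\equiv 1$ sur $K$. Alors $(i^\ast)^{-1}(K)\subseteq\{y\in Y : i(f)(y)=1\}\subseteq\{y\in Y : |i(f)(y)|\geq 1/2\}$, et ce dernier ensemble est compact puisque $i(f)\in C_0(Y)$. Comme $i^\ast$ est continue et $K$ ferm\'e, $(i^\ast)^{-1}(K)$ est un ferm\'e de ce compact, donc compact: l'application $i^\ast$ est propre.

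Enfin, pour la surjectivit\'e, la relation $i(f)=f\circ i^\ast$ donne $\ker i=\{f\in C_0(X) : f|_{i^\ast(Y)}=0\}=\{f\in C_0(X) : f|_{\overline{i^\ast(Y)}}=0\}$, la seconde \'egalit\'e venant de la continuit\'e des fonctions de $C_0(X)$. L'injectivit\'e de $i$ impose donc $\overline{i^\ast(Y)}=X$: dans le cas contraire, un point $x_0\notin\overline{i^\ast(Y)}$ fournirait, l'ouvert $X\setminus\overline{i^\ast(Y)}$ \'etant lui-m\^eme localement compact et s\'epar\'e, une fonction non nulle de $C_0(X)$ \`a support dans cet ouvert, donc un \'el\'ement non nul de $\ker i$. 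Par ailleurs, une application continue propre \`a valeurs dans un espace localement compact s\'epar\'e est ferm\'ee; ainsi $i^\ast(Y)$ est ferm\'e dans $X$, et \'etant \`a la fois dense et ferm\'e, $i^\ast(Y)=X$. L'essentiel de la difficult\'e est ici de nature comptable --- il n'y a pas d'obstacle profond ---, le point \`a surveiller \'etant d'articuler correctement le passage entre le langage de Gelfand et la topologie ponctuelle de $X$ et de $Y$, et de v\'erifier que l'argument de propret\'e n'utilise que l'inclusion $i\big(C_0(X)\big)\subseteq C_0(Y)$ et non la non-d\'eg\'en\'erescence, laquelle a d\'ej\`a \'et\'e utilis\'ee pour construire $i^\ast$.
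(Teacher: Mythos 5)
Votre esquisse est correcte et suit essentiellement la m\^eme strat\'egie que la d\'emonstration du texte : continuit\'e via la relation $i(f)=f\circ i^\ast$ (le texte la v\'erifie \`a la main sur des ouverts l\`a o\`u vous invoquez la topologie initiale, mais c'est le m\^eme fait), propret\'e via une fonction de $C_0(X)$ valant $1$ sur le compact, et surjectivit\'e en combinant la densit\'e de l'image (tir\'ee de l'injectivit\'e de $i$) avec sa fermeture (tir\'ee de la propret\'e). Rien \`a signaler.
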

\begin{proof}
Pour montrer que l'application est continue, on consid\`ere $U$ un ouvert de $X$. On pose $V=(i^\ast)^{-1}(U)$ l'image r\'eciproque de $U$ par l'application $i^\ast$. Soit $y_0$ un \'el\'ement de $V$ et $x_0=i^\ast(y_0)$. On consid\`ere une fonction $f$ dans $C_0(X)$, \`a valeurs r\'eelles positives et \`a support compact dans $U$ telle que $f(x_0)=1$. On a alors une fonction $i(f)$ dans $C_0(Y)$, \`a valeurs r\'eelles positives et \`a support dans $V$. Soit $\varepsilon>0$, le sous espace $U_\varepsilon:=\{x\in{X}\textrm{ / }1-\varepsilon<f(x)\}$ est un sous espace ouvert dans $U$ contenant $x_0$. On a alors $({i^\ast})^{-1}(U_\varepsilon)=\{y\in{V}\textrm{ / }1-\varepsilon<i(f)(y)\}$, qui est un ouvert dans $V$ contenant $y_0$. Ainsi $V$ est un ouvert de $Y$ et l'application $i^\ast : Y\rightarrow{X}$ est continue.
\\

Pour montrer que l'application $i^\ast : Y\rightarrow{X}$ est propre, on consid\`ere $K\subset{X}$ un sous espace compact et on montre que $(i^\ast)^{-1}(K)$ est compact dans $Y$. On consid\`ere $f$ une fonction dans ${C_0(X)}$ telle que $f(x)=1$, pour tout $x$ dans $K$. L'application $i(f)$ est alors dans ${C_0(Y)}$ et on a, par d\'efinition, $i(f)(y)=1$ pour tout $y$ dans $(i^\ast)^{-1}(K)$.
\\
De plus, $i(f)$ est dans ${C_0(Y)}$, alors il existe $\tilde{K}$ un sous espace compact de $Y$ tel que pour tout $y$ dans ${Y\setminus\tilde{K}}$, on a $\lvert{f(y)}\lvert<1$; le sous espace $(i^\ast)^{-1}(K)$ est ferm\'e par continuit\'e de $i^\ast$ et contenu dans le compact ${\tilde{K}}$, donc $(i^\ast)^{-1}(K)$ est compact. L'application $i^\ast : Y\rightarrow{X}$ est donc propre.
\\

Pour montrer que l'application $i^\ast$ est surjective, on prouve tout d'abord que son image $Im(i^\ast)$ est dense dans $X$. Soit $U$ un ouvert non vide de $X$. Montrons qu'il existe $x$ dans ${U\cap{Im(i^\ast)}}$; en effet, si $Im(i^\ast)\cap{U}=\varnothing$, alors toute fonction $f$ de ${C_0(X)}$, non nulle, et \`a support compact dans $U$ a pour image par $i$ la fonction nulle sur $Y$; or ceci est en contradiction avec l'injectivit\'e de $i : C_0(X)\rightarrow{C_0(Y)}$. Ainsi pour tout ouvert $U$ de $X$, il existe un \'el\'ement $x$ qui soit dans l'image $Im(i^\ast)$. Donc $i^\ast$ est \`a image dense dans $X$. L'application $i^\ast$ \'etant propre, son image $Im(i^\ast)$ est ferm\'ee. L'image de $i^\ast$ \'etant ferm\'ee et dense, alors l'application $i^\ast$ est surjective. 
\end{proof}

%On consid\`ere $\mathcal{G}$ un groupo\"{\i}de de Hausdorff localement compact muni d'un syst\`eme de Haar. On suppose qu'il est \'etale et on pose $X=\mathcal{G}^{(0)}$ l'espace localement compact des unit\'es du groupo\"{\i}de.
%\\
%On construit un espace universel $\beta_X\mathcal{G}$ qui soit un espace localement compact muni d'une action continue du groupo\"{\i}de $\mathcal{G}$ et tel que l'application $\beta_X\mathcal{G}\rightarrow{X}$ est continue, surjective, ouverte et propre. De plus, sa propri\'et\'e universelle associe \` a tout espace $Y$ localement compact muni d'une action continue de $\mathcal{G}$ pour lequel l'application $Y\rightarrow{X}$ est continue, surjective, ouverte et propre, une application continue, $\mathcal{G}$-\'equivariante et propre $\beta_X\mathcal{G}\rightarrow{Y}$ 
%\\
%On d\'efinira cet espace universel comme le spectre d'une $C^*$-alg\`ebre.

\subsection  {$\mathcal{G}$-alg\`ebre}{\label{Galgebre}}\label{Partie2}

Dans cette section, on consid\`ere $\mathcal{G}\rightrightarrows{X}$ un groupo\"{\i}de \'etale localement compact et s\'epar\'e. On suppose que l'espace des unit\'es $X$ est $\sigma$-compact. On note $r : \mathcal{G}\rightarrow{X}$ et $s : \mathcal{G}\rightarrow{X}$ les applications but et source. On note $C_b(\mathcal{G})$ l'alg\`ebre de Banach des fonctions continues et born\'ees sur $\mathcal{G}$, o\`u l'involution et la multiplication sont d\'efinies de la mani\`ere suivante : 
$$f^\ast(\gamma):=\overline{f(\gamma)}\quad\textrm{et}\quad f.g(\gamma):=f(\gamma)g(\gamma)$$
pour toutes fonctions $f$ et $g$ dans $C_b(\mathcal{G})$ et tout $\gamma$ dans $\mathcal{G}$. On note $\lVert{.}\lVert_\infty$ la norme sup\'erieure, d\'efinie, pour tout $f$ dans $C_b(\mathcal{G})$, par $\lVert{f}\lVert_\infty:=\sup_{\gamma\in\mathcal{G}}\lvert{f(\gamma)}\lvert$. Munie de cette norme, l'alg\`ebre $C_b(\mathcal{G})$ est une $C^\ast$-alg\`ebre.
\\

Dans la d\'efinition qui suit, on introduit  un sous ensemble de $C_b(\mathcal{G})$ que l'on note $C_0^s(\mathcal{G})$ et on montre que c'est une sous-$C^*$-alg\`ebre de $C_b(\mathcal{G})$. L'objectif dans cette partie est de munir la $C^*$-alg\`ebre $C_0^s(\mathcal{G})$ d'une action continue du groupo\"{\i}de $\mathcal{G}$.  Pour cela, on d\'efinit tout d'abord une approximation de l'unit\'e $(e_n)_{n\in\NN}$ de la $C^*$-alg\`ebre $C_0(X)$ en utilisant la $\sigma$-compacit\'e de $X$. A l'aide de l'application source $s : \mathcal{G}\to{X}$, on construit ensuite un $\ast$-homomorphisme $s^\ast : C_0(X)\to\mathcal{Z}{\big(M(C_0^s)\big)}$ et on se sert de l'approximation de l'unit\'e $(e_n)_{n\in\NN}$ pour montrer que $s^\ast$ est non-d\'eg\'en\'er\'ee et donc que $C_0^s(\mathcal{G})$ est une $C_0(X)$-alg\`ebre. Pour finir, on d\'efinit une action continue du groupo\"{\i}de $\mathcal{G}$ sur la $C_0(X)$-alg\`ebre $C_0^s(\mathcal{G})$ en utilisant les propri\'et\'es d\^ues au caract\`ere \'etale du groupo\"{\i}de $\mathcal{G}$.

\begin{Def}
On note $C_0^s(\mathcal{G})$ le sous ensemble de $C_b(\mathcal{G})$ form\'e des fonctions $f$ continues et born\'ees telles que pour tout r\'eel $\varepsilon$ strictement positif, il existe un sous espace compact $K_\varepsilon$ de ${X}$ tel que pour tout $\gamma$ dans ${\mathcal{G}}$, tel que $s(\gamma)$ est dans $X\smallsetminus{K_\varepsilon}$, on a $\lvert f(\gamma)\lvert<\varepsilon$. 
\end{Def}

\begin{Pro}
$C^s_0(\mathcal{G})$ est une sous $C^\ast$-alg\`ebre de $C_b(\mathcal{G})$. 
\end{Pro}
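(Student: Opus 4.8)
The plan is to verify in turn the four defining properties of a sub-$C^\ast$-algebra of $C_b(\mathcal{G})$: that $C_0^s(\mathcal{G})$ is a linear subspace, that it is stable under the pointwise product, that it is stable under the involution $f\mapsto f^\ast$, and finally that it is closed for the uniform norm $\lVert\cdot\rVert_\infty$. In each case the argument reduces to combining, for a given $\varepsilon>0$, the compact subsets of $X$ provided by the definition of $C_0^s(\mathcal{G})$ for the functions at hand; since $C_b(\mathcal{G})$ is already a $C^\ast$-algebra for $\lVert\cdot\rVert_\infty$, once these stability properties are established we are done.

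For the vector space structure, given $f,g\in C_0^s(\mathcal{G})$ and $\lambda\in\CC$, I would choose compact sets $K$ and $K'$ in $X$ such that $\lvert f(\gamma)\rvert<\varepsilon/2$ whenever $s(\gamma)\in X\smallsetminus K$ and $\lvert g(\gamma)\rvert<\varepsilon/2$ whenever $s(\gamma)\in X\smallsetminus K'$; then $K\cup K'$ is compact and, by the triangle inequality, witnesses $f+g\in C_0^s(\mathcal{G})$, while a compact set associated to $f$ for the threshold $\varepsilon/(1+\lvert\lambda\rvert)$ witnesses $\lambda f\in C_0^s(\mathcal{G})$. Stability under the involution is immediate, since $\lvert f^\ast(\gamma)\rvert=\lvert\overline{f(\gamma)}\rvert=\lvert f(\gamma)\rvert$, so exactly the same compact sets serve for $f^\ast$ as for $f$.

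For the product, I would use that elements of $C_0^s(\mathcal{G})\subset C_b(\mathcal{G})$ are bounded: given $f,g\in C_0^s(\mathcal{G})$ with $\lVert g\rVert_\infty\leq M$ (the case $M=0$ being trivial, as then $g=0$), I choose a compact $K\subset X$ with $\lvert f(\gamma)\rvert<\varepsilon/M$ for $s(\gamma)\in X\smallsetminus K$; then $\lvert f(\gamma)g(\gamma)\rvert\leq\lvert f(\gamma)\rvert\,M<\varepsilon$ for $s(\gamma)\in X\smallsetminus K$, so $fg\in C_0^s(\mathcal{G})$. Together with the previous point this shows $C_0^s(\mathcal{G})$ is a $\ast$-subalgebra of $C_b(\mathcal{G})$.

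It remains to see that $C_0^s(\mathcal{G})$ is norm-closed, the only step needing an $\varepsilon/2$ approximation argument. If $(f_n)_{n\in\NN}$ is a sequence in $C_0^s(\mathcal{G})$ converging uniformly to some $f\in C_b(\mathcal{G})$, then for $\varepsilon>0$ I would fix $n$ with $\lVert f-f_n\rVert_\infty<\varepsilon/2$ and a compact $K\subset X$ with $\lvert f_n(\gamma)\rvert<\varepsilon/2$ whenever $s(\gamma)\in X\smallsetminus K$; then $\lvert f(\gamma)\rvert\leq\lvert f(\gamma)-f_n(\gamma)\rvert+\lvert f_n(\gamma)\rvert<\varepsilon$ for $s(\gamma)\in X\smallsetminus K$, so $f\in C_0^s(\mathcal{G})$. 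None of these steps presents a genuine obstacle: the statement is of the routine "functions vanishing at infinity along the source map" type, and the only mild care required is handling the degenerate cases $\lambda=0$ or $M=0$ and using halved thresholds where two functions interact.
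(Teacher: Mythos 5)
Your proof is correct and follows essentially the same route as the paper's: one checks stability under linear combinations, involution, and product, then norm-closedness via an $\varepsilon/2$ argument on a convergent sequence. The only cosmetic difference is in the product step, where you exploit boundedness of one factor ($\lvert fg\rvert\leq\lVert g\rVert_\infty\lvert f\rvert$) while the paper uses $\sqrt{\varepsilon}$ thresholds for both factors; both work equally well.
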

\begin{proof}
Il est clair que $C_0^s(\mathcal{G})$ est un sous espace vectoriel de $C_b(\mathcal{G})$ stable par l'involution. Pour toute fonction $f$ (respectivement $g$) de $C_0^s(\mathcal{G})$ et pour tout r\'eel $\varepsilon$ strictement positif, il existe un sous espace compact $K_{\varepsilon}$ (respectivement $K'_\varepsilon$) de $X$ tel que pour tout $\gamma$ dans $\mathcal{G}$ tel que $s(\gamma)$ est dans $X\smallsetminus{K_\varepsilon}$ (respectivement $X\smallsetminus{K'_\varepsilon}$) on a $\lvert{f(\gamma)}\lvert<\sqrt\varepsilon$ (respectivement $\lvert{g(\gamma)}\lvert<\sqrt\varepsilon$). Ainsi pour tout $\gamma$ de $\mathcal{G}$ tel que $s(\gamma)$ est dans $X\smallsetminus{(K_\varepsilon\cup{K'_\varepsilon})}$, on a $\lvert{f.g(\gamma)}\lvert<\varepsilon$ et $C_0^s(\mathcal{G})$ est stable par le produit. 
\\
On consid\`ere $(f_n)_{n\in\NN}$ une suite de fonctions de $C_0^s(\mathcal{G})$ qui converge vers une fonction $f$ de $C_b(\mathcal{G})$. Soit $\varepsilon$ un r\'eel strictement positif : il existe un entier $n_1$ tel que, pour tout $n>n_1$, on a $\lVert{f-f_n}\lVert_\infty<\varepsilon/2$. De plus il existe un entier $n_2$, tel que pour tout entier $p,q>n_2$, on a $\lVert{f_p-f_q}\lVert_\infty<\varepsilon/2$ et il existe donc un compact $K_\varepsilon$ dans $X$ tel que pour tout $\gamma$ dans $\mathcal{G}$ pour lequel $s(\gamma)$ est dans $X\smallsetminus{K_\varepsilon}$, on a $\lvert{f_p(\gamma)}\lvert<\varepsilon/2$, pour tout $p>n_2$. On a alors pour tout r\'eel $\varepsilon$ strictement positif, il existe un entier $n=\max\{n_1,n_2\}$ et un compact $K_\varepsilon$ de $X$ tel que pour tout $\gamma$ de $\mathcal{G}$ avec $s(\gamma)$ dans $X\smallsetminus{K_\varepsilon}$, on a
$$\lvert{f(\gamma)}\lvert=\lvert{f(\gamma)-f_p(\gamma)+f_p(\gamma)}\lvert\leq{\lvert{f(\gamma)-f_p(\gamma)}\lvert+\lvert{f_p(\gamma)}\lvert}<\varepsilon$$
pour $p>n$. L'alg\`ebre $C_0^s(\mathcal{G})$ est ferm\'ee pour la norme $\lVert{.}\lVert_\infty$ et est une sous $C^\ast$-alg\`ebre de $C_b(\mathcal{G})$.
\\
\end{proof}
\begin{Rem}
A partir de maintenant, on \'ecrit $C_0^s$ pour la $C^*$-alg\`ebre $C_0^s(\mathcal{G})$, ceci afin de simplifier les notations.  
\end{Rem}

On construit tout d'abord une approximation de l'unit\'e pour la $C^*$-alg\`ebre $C_0(X)$. L'espace $X$ \'etant consid\'er\'e $\sigma$-compact, on choisit $(K_n)_{n\in\NN}$, une suite croissante de sous espaces compacts de $X$, tels que $K_n\subset{\mathring{K}_{n+1}}$. On consid\`ere $(e_n)_{n\in\NN}$ une suite de fonctions continues \`a support compact dans $X$ telles que pour tout $n\in\NN$
$$
e_n(x) = \left\{
    \begin{array}{ll}
        1 & \mbox{si } x\in{K_n} \\
        0 & \mbox{si } x\notin{\mathring{K}_{n+1}}
    \end{array}
\right.
$$ 
$(e_n)_{n\in\NN}$ est une approximation de l'unit\'e de $C_0(X)$, puisque chaque fonction $e_n$ est positive et dans la boule unit\'e de $C_0(X)$; de plus la suite $(e_n)_{n\in\NN}$ est croissante et on a, pour toute fonction $f$ de $C_0(X)$, 
$$\lim_{n\rightarrow\infty}{\lVert{f-f.e_n}\lVert}=0$$
\\

On utilise l'approximation de l'unit\'e $(e_n)_{n\in\NN}$ construite auparavant, pour d\'efinir une structure de $C_0(X)$-alg\`ebre sur $C_0^s$. Pour cela, on consid\`ere une fonction $f$ de $C_0(X)$ et on pose $s^\ast(f)$ la fonction sur $\mathcal{G}$ d\'efinie, pour tout $\gamma$ dans $\mathcal{G}$, par $s^\ast\big(f\big)(\gamma):=f\big(s(\gamma)\big)$. La fonction $s^\ast(f)$ est clairement continue et born\'ee. Pour tout r\'eel $\varepsilon$ strictement positif, il existe $K_\varepsilon$ sous espace compact de $X$ tel que pour tout $x$ dans $X\smallsetminus{K_\varepsilon}$, on a $\lvert{f(x)}\lvert<\varepsilon$. Ainsi pour tout $\gamma$ dans $\mathcal{G}$ pour lequel $s(\gamma)$ est dans $X\smallsetminus{K_\varepsilon}$, on a $\lvert{s^\ast(f)(\gamma)}\lvert=\lvert{f(s(\gamma))}\lvert<\varepsilon$. La fonction $s^\ast(f)$ est donc dans la $C^\ast$-alg\`ebre $C_0^s$. On obtient une application $s^\ast : C_0(X)\rightarrow{C_0^s}$ dont il est facile de prouver qu'elle est un homomorphisme de $C^*$-alg\`ebres.
\begin{Pro}{\label{InjectifNonDeg}}
L'homomorphisme $s^\ast : C_0(X)\rightarrow{C_0^s}$ est injectif et non-d\'eg\'en\'er\'e.
\end{Pro}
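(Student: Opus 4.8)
The plan is to dispatch the two assertions separately: injectivity is immediate from the fact that $X$ sits inside $\mathcal{G}$ as the unit space, while non-degeneracy follows from a direct uniform estimate using the specific approximate unit $(e_n)_{n\in\NN}$ constructed just above.

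For injectivity, I would recall that the unit space $X$ is identified with a subspace of $\mathcal{G}$ via the unit map $u$, and that $s\circ u = \mathrm{id}_X$, i.e. $s(u(x)) = x$ for all $x \in X$. Hence if $f \in C_0(X)$ satisfies $s^\ast(f) = 0$, then in particular $0 = s^\ast(f)(u(x)) = f\big(s(u(x))\big) = f(x)$ for every $x \in X$, so $f = 0$ and $s^\ast$ is injective.

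For non-degeneracy, since $C_0^s$ is commutative, non-degeneracy of $s^\ast$ is equivalent to the density of $s^\ast\big(C_0(X)\big)\,C_0^s$ in $C_0^s$, and for this it is enough to check that \emph{one} convenient approximate unit of $C_0(X)$ — namely $(e_n)_{n\in\NN}$, built from the exhaustion $(K_n)$ of $X$ — has the property that $s^\ast(e_n)\, f \to f$ for every $f \in C_0^s$. So I would fix $f \in C_0^s$ and $\varepsilon > 0$. By definition of $C_0^s$ there is a compact $K_\varepsilon \subset X$ with $|f(\gamma)| < \varepsilon$ whenever $s(\gamma) \in X \smallsetminus K_\varepsilon$. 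Since the open sets $\mathring{K}_n$ increase and cover $X$, compactness of $K_\varepsilon$ gives an $N$ with $K_\varepsilon \subset \mathring{K}_N \subset K_N$. Then for $n \geq N$ and $\gamma \in \mathcal{G}$: if $s(\gamma) \in K_n$ one has $e_n(s(\gamma)) = 1$, so the $\gamma$-value of $f - s^\ast(e_n)f$ is zero; if $s(\gamma) \notin K_n$, then $s(\gamma) \notin K_\varepsilon$ (as $K_\varepsilon \subset K_N \subset K_n$), so $|f(\gamma)| < \varepsilon$, while $0 \leq e_n(s(\gamma)) \leq 1$, giving $\big|f(\gamma) - e_n(s(\gamma))f(\gamma)\big| = |f(\gamma)|\,\big(1 - e_n(s(\gamma))\big) \leq |f(\gamma)| < \varepsilon$. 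Thus $\|f - s^\ast(e_n)\,f\|_\infty \leq \varepsilon$ for all $n \geq N$, so $s^\ast(e_n)\,f \to f$ in $C_0^s$, and since $f$ was arbitrary, $s^\ast$ is non-degenerate.

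There is no real obstacle here; the only point that deserves a word of care is the reduction from the abstract definition of non-degeneracy (quantified over all approximate units) to the verification of a single convenient one — this is the standard equivalence between non-degeneracy of $i : C_0(X) \to C_0^s$ and $\overline{i(C_0(X))\,C_0^s} = C_0^s$ — together with the clean case split on whether $s(\gamma)$ lies in $K_n$, which is exactly what makes the particular choice of $(e_n)$ (being identically $1$ on $K_n$) convenient.
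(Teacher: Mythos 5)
Your proof is correct and follows essentially the same route as the paper's: injectivity via $s\circ u=\mathrm{id}_X$, and non-degeneracy by checking directly that $(s^\ast(e_n))_{n\in\NN}$ acts as an approximate unit on any $f\in C_0^s$, with the same case split on whether $s(\gamma)\in K_n$. Your explicit remark that verifying a single approximate unit suffices (via the equivalence with density of $s^\ast(C_0(X))\,C_0^s$) is a small point of extra care that the paper leaves implicit.
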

\begin{proof}
On sait que $s^\ast : C_0(X)\rightarrow{C_0^s}$ est un homomorphisme de $C^\ast$-alg\`ebres.
\\
On consid\`ere $f$ une fonction de $C_0(X)$ v\'erifiant $s^\ast(f)=0$, alors pour tout $\gamma\in{\mathcal{G}}$, on a $s^\ast(f)(\gamma)=0=f(s(\gamma))$. Or pour tout $x$ dans $X$, on a $u(x)\in{\mathcal{G}}$, et 
$$f(x)=f(s(u(x)))=s^\ast(f)(u(x))=0$$
La fonction $f$ est nulle sur $X$. Ainsi $s^\ast$ est injectif. 
\\
Pour prouver que l'homomorphisme $s^\ast : C_0(X)\rightarrow{C_0^s}$ est non-d\'eg\'en\'er\'e, il suffit de montrer que la suite $(s^\ast(e_n))_{n\in\NN}$ est une approximation de l'unit\'e de $C_0^s$. Chaque fonction $s^\ast(e_n)$ est positive et se trouve dans la boule unit\'e de $C_0^s$, car $s^\ast$ est un homomorphisme injectif donc une application positive et isom\'etrique. De plus, la suite $(s^\ast(e_n))_{n\in\NN}$ est croissante. 
\\
Soit $f$ une fonction de $C_0^s$ alors pour tout $\varepsilon>0$, il existe $n'$ dans $\NN$ tel que pour tout $n\geq{n'}$ et pour tout ${\gamma}$ hors de $\mathcal{G}_{K_{n}}$, on a $\lvert{f(\gamma)}\lvert<\varepsilon$. On a donc pour tout $n\geq{n'}$ et tout $\gamma$ dans $\mathcal{G}_{K_n}$
\begin{align*}
\lvert{f(\gamma)-f(\gamma)\theta(e_n)(\gamma)}\lvert&=\lvert{f(\gamma)-f(\gamma)e_n(s(\gamma))}\lvert 
\\
& = \lvert{f(\gamma)-f(\gamma)}\lvert
\\
& =0
\end{align*}
et pour tout $n\geq{n'}$ et tout $\gamma$ en dehors de $\mathcal{G}_{K_n}$, on a
\begin{align*}
\lvert{f(\gamma)-f(\gamma)s^\ast(e_n)(\gamma)}\lvert&=\lvert{f(\gamma)-f(\gamma)e_n(s(\gamma))}\lvert 
\\
&\leq\vert{f(\gamma)}\lvert\lvert{1-e_n(s(\gamma))}\lvert
\\
&<{\varepsilon}
\end{align*}
Ainsi pour toute fonction $f$ dans $C_0^s$ et pour tout r\'eel $\varepsilon$ strictement positif, il existe un entier $n'$ tel que pour tout $n\geq{n'}$ et tout pour tout $\gamma$ de $\mathcal{G}$, on a 
$$\lvert{f(\gamma)-f(\gamma)s^\ast(e_n)(\gamma)}\lvert<\varepsilon$$
c'est-\`a-dire que pour toute fonction $f$ de $C_0^s$, on a 
$$\lim_{n\rightarrow\infty}\lVert{f-fs^\ast(e_n)}\lVert=0$$ 
Donc $(s^\ast(e_n))_{n\in\NN}$ est une approximation de l'unit\'e dans $C_0^s$. L'homomorphisme $s^\ast$ est donc non-d\'eg\'en\'er\'e.
\\
\end{proof}

\begin{Rem}
$C_0^s$ \'etant une $C^*$-alg\`ebre commutative, on a une injection de $C_0^s$ dans $\mathcal{Z}\big(\mathcal{M}(C_0^s)\big)$ , le centre de l'alg\`ebre des multiplicateurs.
\\
\end{Rem}

\begin{Pro}
La $C^*$-alg\`ebre commutative $C_0^s$ est munie d'une structure de $C_0(X)$-alg\`ebre par l'homomorphisme $s^\ast : C_0(X)\rightarrow{\mathcal{Z}\big(M(C_0^s)\big)}$.
\end{Pro}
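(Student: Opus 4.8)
Le plan consiste \`a assembler les r\'esultats d\'ej\`a \'etablis. On dispose de l'homomorphisme injectif et non d\'eg\'en\'er\'e $s^\ast : C_0(X)\to C_0^s$ (Proposition \ref{InjectifNonDeg}) et, puisque $C_0^s$ est commutative, de l'inclusion canonique $C_0^s\hookrightarrow \mathcal{Z}\big(M(C_0^s)\big)$ rappel\'ee dans la remarque pr\'ec\'edente ; on notera que pour une $C^*$-alg\`ebre commutative $A$, tout \'el\'ement de $A$ commute dans $M(A)$ avec l'ensemble des multiplicateurs, de sorte que $A\subseteq\mathcal{Z}(M(A))$. En composant ces deux fl\`eches on obtient un $\ast$-homomorphisme, encore not\'e $s^\ast : C_0(X)\to\mathcal{Z}\big(M(C_0^s)\big)$, qui est bien \`a valeurs dans le centre de l'alg\`ebre des multiplicateurs.

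Il reste \`a v\'erifier que ce $\ast$-homomorphisme est non d\'eg\'en\'er\'e, c'est-\`a-dire que l'espace vectoriel engendr\'e par $s^\ast\big(C_0(X)\big)\cdot C_0^s$ est dense dans $C_0^s$ (ce qui \'equivaut \`a dire que pour toute approximation de l'unit\'e $(f_\lambda)_{\lambda\in\Lambda}$ de $C_0(X)$, la suite g\'en\'eralis\'ee $\big(s^\ast(f_\lambda)\big)_{\lambda}$ converge strictement vers $1_{M(C_0^s)}$). Or cela d\'ecoule directement de la Proposition \ref{InjectifNonDeg} : celle-ci affirme que la suite $\big(s^\ast(e_n)\big)_{n\in\NN}$ est une approximation de l'unit\'e de $C_0^s$, donc pour toute $f\in C_0^s$ on a $\lim_n f\, s^\ast(e_n)=f$, ce qui place $f$ dans l'adh\'erence de $s^\ast\big(C_0(X)\big)\cdot C_0^s$. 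On conclut, conform\'ement \`a la d\'efinition d'une $C_0(X)$-alg\`ebre donn\'ee dans \cite{Williams07}, que la structure annonc\'ee fait de $C_0^s$ une $C_0(X)$-alg\`ebre.

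Je n'attends pas de v\'eritable obstacle : l'essentiel du travail a \'et\'e fait dans la Proposition \ref{InjectifNonDeg}. Le seul point \`a soigner est la formulation correcte de la non-d\'eg\'en\'erescence pour un homomorphisme \`a valeurs dans une alg\`ebre de multiplicateurs, et le fait qu'il suffit de la tester sur l'approximation de l'unit\'e particuli\`ere $(e_n)_{n\in\NN}$ construite plus haut, le cas d'une approximation de l'unit\'e arbitraire s'en d\'eduisant une fois que l'on sait $s^\ast\big(C_0(X)\big)\cdot C_0^s$ dense. Accessoirement, l'injectivit\'e de $s^\ast$ (\'egalement fournie par la Proposition \ref{InjectifNonDeg}) garantit que le champ semi-continu sup\'erieurement de $C^*$-alg\`ebres sur $X$ associ\'e \`a cette $C_0(X)$-alg\`ebre a des fibres non nulles, ce qui sera utile pour la d\'efinition de $\beta_X\mathcal{G}$.
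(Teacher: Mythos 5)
Votre preuve est correcte et suit exactement la d\'emarche du texte : composer $s^\ast$ avec l'inclusion $C_0^s\hookrightarrow\mathcal{Z}\big(M(C_0^s)\big)$ (valable car $C_0^s$ est commutative), puis d\'eduire la non-d\'eg\'en\'erescence du fait, \'etabli dans la proposition \ref{InjectifNonDeg}, que $\big(s^\ast(e_n)\big)_{n\in\NN}$ est une approximation de l'unit\'e de $C_0^s$, d'o\`u la densit\'e de $s^\ast\big(C_0(X)\big)C_0^s$. Seule votre remarque accessoire est \`a nuancer : l'injectivit\'e de $s^\ast$ ne suffit pas \`a elle seule \`a garantir que toutes les fibres sont non nulles (c'est plut\^ot la surjectivit\'e de $\tilde{s}$, obtenue via la propret\'e, qui le fait), mais cela n'affecte en rien la d\'emonstration de la proposition.
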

\begin{proof}
Comme $C_0^s$ s'injecte dans le centre $\mathcal{Z}\big(M(C_0^s)\big)$ de l'alg\`ebre des multiplicateurs de $C_0^s$, alors $s^\ast$ d\'efinit un homomorphisme de $C_0(X)$ dans $\mathcal{Z}\big(M(C_0^s)\big)$. De plus, l'image $(s^\ast(e_n))_{n\in\NN}$ de l'approximation de l'unit\'e $(e_n)_{n\in\NN}$ de $C_0(X)$ \'etant une approximation de l'unit\'e de $C_0^s$, on a alors $s^\ast(C_0(X))C_0^s$ est dense dans $C_0^s$; donc $C_0^s$ est une $C_0(X)$-alg\`ebre par l'homomorphisme $s^\ast$.   
\\
\end{proof}

On construit ci-dessous un syst\`eme dynamique de groupo\"{\i}de $(C_0^s,\mathcal{G},\alpha)$ : pour cela, le caract\`ere \'etale du groupo\"{\i}de nous permet de d\'efinir une action $\alpha$ du groupo\"{\i}de $\mathcal{G}$ sur la $C_0(X)$-alg\`ebre $C_0^s$ et on prouve que cette action est continue. On rappelle quelques notations concernant les $C_0(X)$-alg\`ebres que l'on utilise ici :  pour tout $x$ dans $X$, on note $I_x:=\overline{s^\ast(C_x(X))C_0^s}$  id\'eal ferm\'e de $C_0^s$ et $C_0^s(x):=C_0^s/{I_x}$, la fibre de $C_0^s$ sur $x$. 
Pour tout $x$ dans $X$ et toute fonction $f$ de $C_0^s$, on notera $f(x)$ l'image de $f$ dans la fibre $C_0^s(x)$.
\\
%\end{Rem}

On d\'efinit une action du groupo\"{\i}de $\mathcal{G}$ sur la $C_0(X)$-alg\`ebre $C_0^s$ c'est-\`a-dire qu'on construit une famille $(\alpha_\gamma)_{\gamma\in\mathcal{G}}$ telle que pour tout $\gamma$ dans $\mathcal{G}$, l'application 
$$\alpha_\gamma: C_0^s\big(s(\gamma)\big)\rightarrow{C_0^s\big(r(\gamma)\big)}$$
est un isomorphisme v\'erifiant, pour tout $(\gamma,\eta)$ dans $\mathcal{G}^{(2)}$ l'\'egalit\'e $\alpha_{\gamma.\eta}=\alpha_\gamma\circ\alpha_\eta$. 

Soit $\gamma$ un \'el\'ement de $\mathcal{G}$ et $U_\gamma$ un voisinage ouvert (que l'on peut supposer relativement compact) de $\gamma$ dans $\mathcal{G}$ tel que les restrictions des applications but et source \`a $U_\gamma$ sont des hom\'eomorphismes sur leur image respective $r(U_\gamma)$ et $s(U_\gamma)$. On note $x={s(\gamma)}$ et $y=r(\gamma)$ dans $X$ et on pose $\rho_x : s(U_\gamma)\rightarrow{U_\gamma}$ et $\rho_y : r(U_\gamma)\rightarrow{U_\gamma}$ les hom\'eomorphismes r\'eciproques des restrictions sur $U_\gamma$ respectivement des applications source et but. Il existe alors un hom\'eomorphisme $R_\gamma : r(U_{\gamma})\rightarrow{s(U_\gamma)}$ d\'efini, pour tout $z$ dans $r(U_\gamma)$, par $R_\gamma(z):=s\big(\rho_y(z)\big)$. On consid\`ere $K_x$ un voisinage compact de $x$ dans $s(U_\gamma)$ et $\varphi_x$ une fonction continue et \`a support dans $s(U_\gamma)$ v\'erifiant $\chi_{K_x}\leq\varphi_x\leq{1}$. 
%De plus, on pose $g_x$ une fonction continue \`a support dans $s(U_\gamma)$ telle que $g_x(x)=0$. Les fonctions $\varphi_x$ et $g_x$ sont \`a support compact dans $X$.
\\

Soit $f$ une fonction de $C_0^s$ et $f(x)$ son image dans la fibre $C_0^s(x)$. On a alors \'egalit\'e dans la $C^\ast$-alg\`ebre $C_0^s(x)$ des \'el\'ements $f(x)$ et $\big(s^\ast(\varphi_x)f\big)(x)$ %-\theta(g_x.\varphi_x)f](x)$.
\\
Pour tout $\eta$ dans $\mathcal{G}_{r(U_\gamma)}$, on pose 
\begin{align}{\label{AlignAction}}
\tilde{f}(\eta):=\big(s^\ast(\varphi_x)f\big)\big(\eta.\rho_{y}(s(\eta))\big)
\end{align}
%-\big(\theta(\varphi_x.g_x)f\big)\big(\eta.\rho_{y}(s(\eta))\big)$$
qui d\'efinit une fonction continue sur $\mathcal{G}_{r(U_\gamma)}$ et pour tout $\eta$ tel que $s(\eta)$ est en dehors de l'ouvert $r(U_\gamma)$ relativement compact, on a $\tilde{f}(\eta)=0$. Ainsi $\tilde{f}$ est dans la $C_0(X)$-alg\`ebre $C_0^s$.  
\\
Pour toute fonction $f$ dans $C_0^s$, on pose $\alpha_\gamma\big(f(x)\big):=\tilde{f}(y)$. On montre ais\'ement que pour toute fonction $f$ dans $C_0^s$ et toute paire $(\gamma,\eta)$ dans $\mathcal{G}^{(2)}$, on a l'\'egalit\'e
$$\alpha_{\gamma.\eta}(f)=\alpha_\gamma\big(\alpha_\eta(f)\big)$$
On a ainsi d\'efini une action du groupo\"{\i}de $\mathcal{G}$ sur la $C_0(X)$-alg\`ebre $C_0^s$.
\begin{Rem}
Sur la $C_0(X)$-alg\`ebre $C_0(\mathcal{G})$, il existe une action continue du groupo\"{\i}de $\mathcal{G}$, not\'ee $a$, o\`u pour tout $\gamma$ dans $\mathcal{G}$, on a 
$$a_\gamma : C_0(\mathcal{G})(s(\gamma))\cong{C_0(\mathcal{G}_{s(\gamma)})}\longrightarrow{C_0(\mathcal{G})(r(\gamma))\cong{C_0(\mathcal{G}_{r(\gamma)})}}$$
d\'efinie pour toute fonction $f$ dans $C_0(\mathcal{G}_{s(\gamma)})$ par $a_\gamma(f)(\eta)=f(\eta.\gamma)$, pour tout $\eta$ dans $\mathcal{G}_{r(\gamma)}$. On voit alors que la restriction de l'action $\alpha$, d\'efinie ci-dessus, \`a la sous $C_0(X)$-alg\`ebre $C_0(\mathcal{G})$ coincide avec l'action continue $a$ de $\mathcal{G}$ sur $C_0(\mathcal{G})$. 
\end{Rem}
%$$CONTINUITE?????$$

Pour montrer que l'action de $\mathcal{G}$ sur $C_0^s$ est continue, on se place dans le contexte du champ continu de $C^*$-alg\`ebres $\mathcal{C}_0^s$ associ\'e \`a $C_0^s$ et on montre que l'application
$$\mathcal{C}_0^s\ast\mathcal{G}\longrightarrow\mathcal{C}_0^s$$ d\'efinie par l'action de $\mathcal{G}$ sur l'espace topologique $\mathcal{C}^s_0$ est continue. On utilise la proposition C.20 de {\cite{Williams07}}. 
\\
On consid\`ere une suite $(a_i,\gamma_i)_{i\in{I}}$ dans $\mathcal{C}_0^s\ast_{p,s}\mathcal{G}$ convergeant vers un \'el\'ement $(a,\gamma)$ de $\mathcal{C}_0^s\ast_{p,s}\mathcal{G}$. Pour tout $i$ dans $I$, on note $x_i:=p(a_i)=s(\gamma_i)$ et $x:=p(a)=s(\gamma)$. Pour tout $i$ dans $I$, il existe une fonction $f_i$ dans $C_0^s$ telle que $f_i(x_i)=a_i$ et il existe une fonction $f$ dans $C_0^s$ telle que $f(x)=a$. On consid\`ere $U$ un voisinage ouvert de $\gamma$ dans $\mathcal{G}$ hom\'eomorphe \`a $V:=s(U)$, voisinage ouvert de $x$ et on peut supposer qu'il existe un compact $K_x$ d'int\'erieur non vide et inclus dans $V$. On note $\varphi_x : V\rightarrow\RR$ telle que $\chi_{K_x}\leq\varphi_x\leq{1}$. Il existe $i'$ dans $I$ tel que pour tout $i\geq{i'}$, l'\'el\'ement $x_i$ est dans $\mathring{K}_x$. Quitte \`a utiliser la fonction $\varphi_x$, on peut supposer alors qu'il existe que pour tout $i\geq{i'}$, le support de $f_i$ est inclus dans $s^{-1}(V)$. Suivant la construction $\ref{AlignAction}$, on obtient des fonctions $\tilde{f}_i$ et $\tilde{f}$ dans $C_0^s$ telles que, pour tout $i\geq{i'}$, on a $\tilde{f}_i(r(\gamma_i))=\alpha_{\gamma_i}(a_i)$ et $\tilde{f}(r(\gamma))=\alpha_\gamma(a)$. Pour tout $i\geq{i'}$, on note $u_i=\tilde{f}(r(\gamma_i))$ et la suite $(u_i)_{i\geq{i'}}$ converge vers $\alpha_\gamma(a)=\tilde{f}(r(\gamma))$ dans $\mathcal{C}_0^s$. Soit un r\'eel $\varepsilon$ strictement positif, il existe $i_\varepsilon$ dans $I$, tel que pour tout $i\geq{\max\{i',i_\varepsilon\}}$, on a $\lVert{\tilde{f}_i(r(\gamma_i))-\tilde{f}(r(\gamma_i))}\lVert<\varepsilon/2$ et alors $\alpha_{\gamma_i}(a_i)-\tilde{f}(r(\gamma_i))$ converge vers $0_{r(\gamma)}$ dans $\mathcal{C}_0^s$. On a alors
$$
\xymatrix{ \alpha_{\gamma_i}(a_i)=\big(\alpha_{\gamma_i}(a_i)-u_i\big)+u_i \ar[rr] & & {0_{r(\gamma)}+\alpha_\gamma(a)} & {}}
$$
L'action du groupo\"{\i}de $\mathcal{G}$ sur la $C_0(X)$-alg\`ebre $C_0^s$ est continue et on a un syst\`eme dynamique $(C_0^s,\mathcal{G},\alpha)$.

\subsection  {Espace $\beta_X\mathcal{G}$ et propri\'et\'e universelle}\label{Partie3}

Dans cette partie, on utilise la $\mathcal{G}$-alg\`ebre commutative $C_0^s$ construite auparavant pour d\'efinir l'espace topologique $\beta_X\mathcal{G}$. Une fois l'espace $\beta_X\mathcal{G}$ d\'efini, on montre qu'il est localement compact et s\'epar\'e et muni d'une action continue du groupo\"{\i}de $\mathcal{G}$. 
\\
Enfin on prouve que le $\mathcal{G}$-espace $\beta_X\mathcal{G}$ v\'erifie une propri\'et\'e universelle.

\begin{Def}
On d\'efinit l'espace topologique $\beta_X\mathcal{G}$ comme le spectre de la $C^*$-alg\`ebre commutative $C_0^s$. 
\end{Def}

\begin{Rem}{\label{Gelfand}}
La $C^\ast$-alg\`ebre $C_0(\mathcal{G})$ \'etant un id\'eal (bilat\`ere) ferm\'e de la $C^\ast$-alg\`ebre $C_0^s$, l'espace $\mathcal{G}=\widehat{C_0(\mathcal{G})}$ est un sous espace ouvert de $\beta_X\mathcal{G}$.
Par la transform\'ee de Gelfand, il existe isomorphisme $\ast$-isom\'etrique de $C^*$-alg\`ebres entre $C_0^s$ et $C_0(\beta_X\mathcal{G})$ et on a alors un homomorphisme $s^\ast : C_0(X)\rightarrow{C_0(\beta_X\mathcal{G})}$. On note 
\begin{eqnarray*}
\xymatrix{ \tilde{s} : \beta_X\mathcal{G}=\widehat{C_0(\beta_X\mathcal{G})} \ar[rr] & &  {\widehat{C_0(X)}}=X}
\end{eqnarray*}
%$$\tilde{s} : \beta_X\mathcal{G}=\widehat{C_0(\beta_X\mathcal{G})}\longrightarrow{\widehat{C_0(X)}}=X$$
l'application induite par l'homomorphisme $s^\ast$. L'application $\tilde{s}$ restreinte \`a l'ouvert $\mathcal{G}$ coincide avec l'application source du groupo\"{\i}de $s : \mathcal{G}\rightarrow{X}$.
\end{Rem}

\begin{Pro}
L'application $\tilde{s} : \beta_X\mathcal{G}\rightarrow{X}$ induite par l'homomorphisme $s^\ast : C_0(X)\rightarrow{C_0(\beta_X\mathcal{G})}$ est continue, propre et surjective. 
\end{Pro}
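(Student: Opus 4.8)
Le plan est de d\'eduire cet \'enonc\'e, de mani\`ere essentiellement formelle, des propositions \ref{InjectifNonDeg} et \ref{ProInjectiveNonDeg}, une fois faites les identifications de la remarque \ref{Gelfand}.

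Je commencerais par me ramener au c\^ot\'e spectral. Par la transform\'ee de Gelfand, la $C^*$-alg\`ebre commutative $C_0^s$ est $\ast$-isom\'etriquement isomorphe \`a $C_0(\beta_X\mathcal{G})$, et $\beta_X\mathcal{G}$, \'etant le spectre d'une $C^*$-alg\`ebre commutative, est un espace localement compact et s\'epar\'e. Via cet isomorphisme, l'homomorphisme $s^\ast : C_0(X)\to C_0^s$ correspond \`a l'homomorphisme $s^\ast : C_0(X)\to C_0(\beta_X\mathcal{G})$ de la remarque \ref{Gelfand}; l'injectivit\'e et la non-d\'eg\'en\'erescence de ce dernier r\'esultent de celles \'etablies dans la proposition \ref{InjectifNonDeg}, ces deux propri\'et\'es \'etant invariantes par isomorphisme de $C^*$-alg\`ebres (un $\ast$-isomorphisme \'etant isom\'etrique, il transforme une approximation de l'unit\'e en une approximation de l'unit\'e).

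J'appliquerais alors la proposition \ref{ProInjectiveNonDeg} avec $Y:=\beta_X\mathcal{G}$ et $i:=s^\ast$ : l'application $i^\ast : \beta_X\mathcal{G}\to X$ induite par $s^\ast$ au sens de la proposition \ref{ProNonDeg} est continue, surjective et propre. Il ne resterait plus qu'\`a constater que $i^\ast$ est bien l'application $\tilde{s}$ de l'\'enonc\'e : les deux s'obtiennent en envoyant un caract\`ere $\chi$ de $C_0(\beta_X\mathcal{G})$ sur le caract\`ere $\chi\circ s^\ast$ de $C_0(X)$, donc elles co\"{\i}ncident.

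Il n'y a pas ici de v\'eritable obstacle : tout le contenu non trivial se trouve d\'ej\`a dans les propositions \ref{InjectifNonDeg} et \ref{ProInjectiveNonDeg}. Le seul point demandant un soup\c{c}on d'attention est la compatibilit\'e des deux descriptions de l'application induite (celle issue de la transform\'ee de Gelfand et celle de la construction de la proposition \ref{ProNonDeg}), mais il s'agit d'une v\'erification imm\'ediate sur les caract\`eres. On notera enfin que l'\'enonc\'e ne demande pas que $\tilde{s}$ soit ouverte, propri\'et\'e pourtant requise de l'application $\pi : Y\to X$ dans la d\'efinition \ref{DefMoyInf} : cette propri\'et\'e suppl\'ementaire, si on la souhaite, devrait \^etre obtenue par un argument distinct, par exemple en exploitant le fait que la source $s : \mathcal{G}\to X$ est un hom\'eomorphisme local, donc une application ouverte.
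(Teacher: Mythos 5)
Votre démonstration est correcte et suit exactement la même démarche que celle du papier : on transporte l'injectivité et la non-dégénérescence de $s^\ast$ établies dans la proposition \ref{InjectifNonDeg} via l'isomorphisme de Gelfand de la remarque \ref{Gelfand}, puis on applique la proposition \ref{ProInjectiveNonDeg}. Vos remarques supplémentaires (identification des deux descriptions de l'application induite, et le fait que l'ouverture de $\tilde{s}$ n'est pas affirmée ici) sont pertinentes mais ne changent pas l'argument.
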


\begin{proof}
D'apr\`es la proposition {\ref{InjectifNonDeg}} , on sait que l'homomorphisme $s^\ast : C_0(X)\rightarrow{C_0^s}$ est injectif et non-d\'eg\'en\'er\'e et d'apr\`es la remarque $\ref{Gelfand}$, $C_0^s$ est isomorphe \`a $C_0(\beta_X\mathcal{G})$. L'homomorphisme de $C^*$-alg\`ebres $s^\ast : C_0(X)\rightarrow{C_0(\beta_X\mathcal{G})}$ est injectif et non d\'eg\'en\'er\'e. En appliquant la proposition $\ref{ProInjectiveNonDeg}$ , l'application $\tilde{s} : \beta_X\mathcal{G}\rightarrow{X}$ est continue, propre et surjective. 

%Pour montrer que l'application $\tilde{s}$ est ouverte, on consid\`ere $U$ un ouvert de $\beta_X\mathcal{G}$ et $z$ un \'el\'ement de $U$. On note $x=\tilde{s}(z)$ image dans $X$ de $z$ par $\tilde{s}$. On consid\`ere $f$ une fonction dans $C_0(\beta_X\mathcal{G})$ dont le support est contenu dans l'ouvert $U$ et telle que $f(z)=1$. Il existe un voisinage ouvert $V_z$ de $z$ dans $\beta_X\mathcal{G}$ tel que pour tout $y$ dans $V_z$, on a $f(y)>1/2$. Comme $\mathcal{G}$ v\'erifie 
%$$????????$$
%il existe $\gamma$ dans $\mathcal{G}_x$ tel que $f(\gamma)>1/2$. L'application source $s : \mathcal{G}\rightarrow{X}$ \'etant ouverte et l'espace $V_z\cap\mathcal{G}$ \'etant un sous esapce ouvert de $\mathcal{G}$, l'espace $s(V_z\cap\mathcal{G})$ est ouvert dans $X$. Or $\tilde{s}(V_z\cap\mathcal{G})$ coincide avec ${s}(V_z\cap\mathcal{G})$ donc $\tilde{s}(V_z\cap\mathcal{G})$ est un voisinage ouvert de $x$ inclus dans $\tilde{s}(U)$. L'application $\tilde{s}$ est ouverte.
\end{proof}

\begin{Pro}
L'espace topologique $\beta_X\mathcal{G}$ est localement compact et muni d'une action continue du groupo\"{\i}de $\mathcal{G}$.
\end{Pro}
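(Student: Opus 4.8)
The plan is to establish the two assertions separately, since the local compactness of $\beta_X\mathcal{G}$ is essentially a consequence of the Gelfand correspondence while the continuity of the $\mathcal{G}$-action requires transporting the dynamical system $(C_0^s,\mathcal{G},\alpha)$ through the Gelfand transform. For local compactness, I would simply invoke the fact that $\beta_X\mathcal{G}$ was \emph{defined} as the spectrum of the commutative $C^\ast$-algebra $C_0^s$; by Gelfand duality the spectrum of any commutative $C^\ast$-algebra is a locally compact Hausdorff space, and $C_0^s \cong C_0(\beta_X\mathcal{G})$. Moreover, since $C_0(\mathcal{G})$ is a closed two-sided ideal of $C_0^s$ (Remark \ref{Gelfand}), $\mathcal{G} = \widehat{C_0(\mathcal{G})}$ embeds as an open subset of $\beta_X\mathcal{G}$; one can also observe that $\beta_X\mathcal{G}$ is $\sigma$-compact, being mapped by the proper surjection $\tilde{s}$ onto the $\sigma$-compact space $X$ (the preimage of an exhausting sequence of compacts is an exhausting sequence of compacts by properness).

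For the $\mathcal{G}$-action, the idea is that the action $\alpha$ of $\mathcal{G}$ on the $C_0(X)$-algebra $C_0^s$ constructed in the previous subsection dualizes to an action of $\mathcal{G}$ on the space $\beta_X\mathcal{G}$. Concretely: the momentum map is $\tilde{s} : \beta_X\mathcal{G} \to X$ (from Remark \ref{Gelfand}), and for $\gamma \in \mathcal{G}$ the isomorphism of fibres $\alpha_\gamma : C_0^s(s(\gamma)) \to C_0^s(r(\gamma))$ dualizes to a homeomorphism between the corresponding fibres of $\tilde{s}$, namely $\tilde{s}^{-1}(r(\gamma)) \to \tilde{s}^{-1}(s(\gamma))$; patching these together over all $\gamma$ gives a map
$$
\beta_X\mathcal{G} \times_{\tilde{s},r} \mathcal{G} \longrightarrow \beta_X\mathcal{G}, \qquad (\omega,\gamma) \mapsto \omega\cdot\gamma.
$$
The groupoid action axioms (compatibility with the momentum map, triviality on units, associativity) follow directly from the corresponding identities satisfied by $\alpha$, in particular $\alpha_{\gamma\eta} = \alpha_\gamma\circ\alpha_\eta$, which was established in the construction of $(C_0^s,\mathcal{G},\alpha)$.

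The main obstacle is the \emph{continuity} of this action map, and here I would appeal precisely to the fact — proved at the end of the preceding subsection — that the action of $\mathcal{G}$ on $C_0^s$ is continuous, i.e.\ that $\mathcal{C}_0^s \ast \mathcal{G} \to \mathcal{C}_0^s$ is continuous for the upper semicontinuous field structure. Under the correspondence between $C_0(X)$-algebras and upper semicontinuous fields of $C^\ast$-algebras over $X$ (for which we use \cite{Williams07}), the total space $\mathcal{C}_0^s$ of the field associated to $C_0^s$ is naturally identified with $\beta_X\mathcal{G}$ itself in the commutative case: a point of $\mathcal{C}_0^s$ over $x$ is a character of the fibre $C_0^s(x)$, hence a character of $C_0^s$ lying over $x$, i.e.\ a point of $\tilde{s}^{-1}(x)$. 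Thus the continuity of $\mathcal{C}_0^s \ast \mathcal{G} \to \mathcal{C}_0^s$ is \emph{literally} the continuity of the action of $\mathcal{G}$ on $\beta_X\mathcal{G}$ being claimed, and the proposition follows. The only point requiring a little care is checking that this identification of $\mathcal{C}_0^s$ with $\beta_X\mathcal{G}$ is a homeomorphism (not merely a bijection) and that it intertwines the two candidate actions; both are routine consequences of the construction of the field topology and of the formula \eqref{AlignAction} defining $\alpha_\gamma$ on representatives.
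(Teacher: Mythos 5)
Your treatment of local compactness and of the construction of the action is exactly the paper's: $\beta_X\mathcal{G}$ is locally compact as the spectrum of the commutative $C^\ast$-algebra $C_0^s$, and the right action is obtained by dualizing the fibre isomorphisms $\alpha_\gamma$, i.e.\ $\chi\cdot\gamma(f):=\chi\bigl(\alpha_\gamma(f)\bigr)$, with the groupoid axioms inherited from $\alpha_{\gamma\eta}=\alpha_\gamma\circ\alpha_\eta$. Up to that point the proposal is fine.

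The flaw is in your justification of continuity. The total space $\mathcal{C}_0^s$ of the upper semicontinuous field associated to $C_0^s$ is \emph{not} $\beta_X\mathcal{G}$: its fibre over $x$ is the $C^\ast$-algebra $C_0^s(x)\cong C_0\bigl(\tilde{s}^{-1}(x)\bigr)$, so a point of $\mathcal{C}_0^s$ over $x$ is an \emph{element} of that algebra (a function on $\tilde{s}^{-1}(x)$), not a \emph{character} of it. The characters of the fibre form $\tilde{s}^{-1}(x)$, which is the spectrum of the fibre, not the fibre itself. Hence the continuity of $\mathcal{C}_0^s\ast\mathcal{G}\to\mathcal{C}_0^s$ is not ``literally'' the continuity of the action on $\beta_X\mathcal{G}$; the two statements live on different spaces and the claimed identification is false. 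The conclusion is nevertheless reachable from the same hypothesis by a short duality argument, which is what is implicitly used in the paper: given a net $(\chi_i,\gamma_i)\to(\chi,\gamma)$ in $\beta_X\mathcal{G}\ast_r\mathcal{G}$ and $f\in C_0^s$, the continuity of the field action gives $\alpha_{\gamma_i}\bigl(f(s(\gamma_i))\bigr)\to\alpha_\gamma\bigl(f(s(\gamma))\bigr)$ in $\mathcal{C}_0^s$; choosing a global section $g\in C_0^s$ with $g(r(\gamma))=\alpha_\gamma\bigl(f(s(\gamma))\bigr)$, one gets $\bigl\lvert\chi_i\cdot\gamma_i(f)-\chi_i(g)\bigr\rvert\le\bigl\lVert\alpha_{\gamma_i}\bigl(f(s(\gamma_i))\bigr)-g(r(\gamma_i))\bigr\rVert\to 0$, while $\chi_i(g)\to\chi(g)=\chi\cdot\gamma(f)$ by weak-$\ast$ convergence of characters. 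You should replace the identification by this argument.
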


\begin{proof}
L'espace topologique $\beta_X\mathcal{G}$ \'etant d\'efini comme le spectre de la $C^*$-alg\`ebre $C_0^s$, il est donc localement compact.
\\  
On a montr\'e auparavant l'existence d'une application $\tilde{s} : \beta_X\mathcal{G}\rightarrow{X}$ continue, propre, surjective et ouverte. On pose alors
$$\beta_X\mathcal{G}\ast_r\mathcal{G}=\bigsqcup_{x\in{X}}\widehat{C_0^s(x)}\ast_r\mathcal{G}:=\bigg\{(\chi,\gamma)\in{\bigsqcup_{x\in{X}}\widehat{C_0^s(x)}\times\mathcal{G}}\textrm{ : }\tilde{s}(\chi)=r(\gamma)\bigg\}$$
Pour $(\chi,\gamma)$ dans $\beta_X\mathcal{G}\ast_r\mathcal{G}$, la relation $\tilde{s}(\chi)=r(\gamma)$ implique que $\chi$ est dans $\widehat{C_0^s(r(\gamma))}$. 
\\
On pose $\phi : \beta_X\mathcal{G}\ast_r\mathcal{G}\rightarrow{\beta_X\mathcal{G}}$ une application telle que, pour tout couple $(\chi,\gamma)$ dans $\beta_X\mathcal{G}\ast_r\mathcal{G}$, on a $\phi(\chi,\gamma):=\chi.\gamma$ o\`u $\chi.\gamma\in{C_0^s(s(\gamma))}$ est tel que 
$$\forall{f}\in{C_0^s\big(s(\gamma)\big)}\textrm{,}\qquad \chi.\gamma(f):=\chi\big(\alpha_\gamma(f)\big) \qquad$$
On obtient donc une action \`a droite continue du groupo\"{\i}de $\mathcal{G}$ sur l'espace $\beta_X\mathcal{G}$.
\\
\end{proof}

La fin de cette partie consiste \`a d\'emontrer que l'espace $\beta_X\mathcal{G}$ est un espace universel pour la propri\'et\'e suivante :
\begin{Pro}{\label{UnivSpace}}
Soit $Z$ un $\mathcal{G}$-espace localement compact pour lequel l'application $\pi : Z\rightarrow{X}$ est continue, propre, surjective et admet une section continue (qui n'est pas n\'ecessairement $\mathcal{G}$-\'equivariante) $\sigma : X\rightarrow{Z}$, 
alors il existe une application $\tilde\theta : \beta_X\mathcal{G}\rightarrow{Z}$ continue, $\mathcal{G}$-\'equivariante et propre.
\end{Pro}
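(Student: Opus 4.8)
\emph{Proposal.} The plan is to produce $\tilde\theta$ by Gelfand duality. Since $\beta_X\mathcal{G}=\widehat{C_0^s}$ with $C_0^s\cong C_0(\beta_X\mathcal{G})$, a continuous, proper, $\mathcal{G}$-\'equivariante map $\tilde\theta:\beta_X\mathcal{G}\to Z$ is the same datum as a non d\'eg\'en\'er\'e, $\mathcal{G}$-\'equivariant $\ast$-homomorphism $\theta^\ast:C_0(Z)\to C_0^s$. Using the section $\sigma:X\to Z$ and the (right) action of $\mathcal{G}$ on $Z$, I would set, for $g\in C_0(Z)$ and $\gamma\in\mathcal{G}$,
$$\theta^\ast(g)(\gamma):=g\big(\sigma(r(\gamma))\cdot\gamma\big),$$
which makes sense because $\pi(\sigma(r(\gamma)))=r(\gamma)$ and then $\pi(\sigma(r(\gamma))\cdot\gamma)=s(\gamma)$. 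The function $\theta^\ast(g)$ is continuous and bounded on $\mathcal{G}$ (a composition of continuous maps, with $\lVert\theta^\ast(g)\rVert_\infty\le\lVert g\rVert_\infty$), and $\theta^\ast$ is a $\ast$-homomorphism since every operation is pointwise. One also checks directly that $\theta^\ast\circ\pi^\ast=s^\ast$, so that $\pi\circ\tilde\theta=\tilde s$.

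First I would verify that $\theta^\ast(g)$ lies in $C_0^s$. Given $\varepsilon>0$, the set $L=\{z\in Z:|g(z)|\ge\varepsilon\}$ is compact, hence $K_\varepsilon:=\pi(L)\subseteq X$ is compact; if $s(\gamma)\notin K_\varepsilon$ then $\sigma(r(\gamma))\cdot\gamma\notin L$ (otherwise $s(\gamma)=\pi(\sigma(r(\gamma))\cdot\gamma)\in\pi(L)$), so $|\theta^\ast(g)(\gamma)|<\varepsilon$, which is exactly the defining condition of $C_0^s$. Next I would prove that $\theta^\ast$ is non d\'eg\'en\'er\'e: take an approximation of the unit $(u_\lambda)$ of $C_0(Z)$ with $0\le u_\lambda\le 1$, and show $\lVert f-f\,\theta^\ast(u_\lambda)\rVert\to 0$ for each $f\in C_0^s$. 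Fix $\varepsilon>0$ and the associated compact $K_\varepsilon\subseteq X$ with $|f(\gamma)|<\varepsilon$ whenever $s(\gamma)\notin K_\varepsilon$; on $\{s(\gamma)\notin K_\varepsilon\}$ the product is already $<\varepsilon$ because $0\le\theta^\ast(u_\lambda)\le1$, while on $\{s(\gamma)\in K_\varepsilon\}$ the point $\sigma(r(\gamma))\cdot\gamma$ ranges in the compact set $\pi^{-1}(K_\varepsilon)$ (here the properness of $\pi$ is essential), on which $u_\lambda\to1$ uniformly, so $|f(\gamma)(1-\theta^\ast(u_\lambda)(\gamma))|\le\lVert f\rVert\,\sup_{\pi^{-1}(K_\varepsilon)}|1-u_\lambda|<\varepsilon$ for $\lambda$ large. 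Thus $\theta^\ast$ is non d\'eg\'en\'er\'e, and by the arguments in the proofs of Propositions \ref{ProNonDeg} and \ref{ProInjectiveNonDeg} (the continuity and properness parts do not use injectivity) it induces a continuous, proper map $\tilde\theta:\beta_X\mathcal{G}\to Z$.

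It remains to establish $\mathcal{G}$-\'equivariance. Identifying $\gamma\in\mathcal{G}$ with the character $\mathrm{ev}_\gamma$ on $C_0^s$ (recall $\mathcal{G}=\widehat{C_0(\mathcal{G})}$ is an open dense subset of $\beta_X\mathcal{G}$, as $C_0(\mathcal{G})$ is an id\'eal essentiel of $C_0^s$), one computes $\tilde\theta(\gamma)=\sigma(r(\gamma))\cdot\gamma$, and then, for $(\gamma,\eta)\in\mathcal{G}^{(2)}$, using associativity of the action and $r(\gamma\eta)=r(\gamma)$,
$$\tilde\theta(\gamma\cdot\eta)=\sigma(r(\gamma))\cdot(\gamma\eta)=\big(\sigma(r(\gamma))\cdot\gamma\big)\cdot\eta=\tilde\theta(\gamma)\cdot\eta,$$
so $\tilde\theta$ is \'equivariante on the dense subset $\mathcal{G}$. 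Since $\tilde s:\beta_X\mathcal{G}\to X$ and $r:\mathcal{G}\to X$ are ouvertes (the latter because $\mathcal{G}$ is \'etale), $\mathcal{G}^{(2)}$ is dense in the produit fibr\'e $\beta_X\mathcal{G}\ast_r\mathcal{G}$; as $Z$ is s\'epar\'e and the two sides of $\tilde\theta(\chi\cdot\gamma)=\tilde\theta(\chi)\cdot\gamma$ are continuous functions of $(\chi,\gamma)\in\beta_X\mathcal{G}\ast_r\mathcal{G}$ (well-defined because $\pi(\tilde\theta(\chi))=\tilde s(\chi)=r(\gamma)$), the equality extends from this dense set to all of $\beta_X\mathcal{G}\ast_r\mathcal{G}$, which is the required $\mathcal{G}$-\'equivariance.

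I expect the principal obstacle to be the non d\'eg\'en\'erescence of $\theta^\ast$ — this is where properness of $\pi$ genuinely enters and one has to split $\mathcal{G}$ according to whether $s(\gamma)$ lies in the relevant compact of $X$ — together with the point that $\mathcal{G}^{(2)}$ is dense in $\beta_X\mathcal{G}\ast_r\mathcal{G}$, which relies on the openness of $\tilde s$ and of $r$. An alternative to the density argument would be to check $\mathcal{G}$-\'equivariance directly at the level of $\theta^\ast$ and the actions fibr\'ees $\alpha_\gamma$ on $C_0^s$ and on $C_0(Z)$, but that route forces one to unwind the local charts used to define $\alpha$ in Section \ref{Galgebre}.
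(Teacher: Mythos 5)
Your construction is the same as the paper's: you define $\theta^\ast(g)(\gamma)=g\big(\sigma(r(\gamma))\cdot\gamma\big)$, check membership in $C_0^s$ via the compact $\pi(L)$, prove non-d\'eg\'en\'erescence by splitting $\mathcal{G}$ according to $s(\gamma)\in K_\varepsilon$ and using uniform convergence of the approximate unit on the compact $\pi^{-1}(K_\varepsilon)$, and obtain $\tilde\theta$ by Gelfand duality — all of which matches the paper, except that the paper re-derives continuity and properness of $\tilde\theta$ by hand where you correctly observe that the arguments of Propositions \ref{ProNonDeg} and \ref{ProInjectiveNonDeg} go through without injectivity. The genuine divergence is the \'equivariance. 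The paper works at the algebra level: it introduces the fibre maps $\theta^\ast_x:C_0(Z)(x)\to C_0^s(x)$, verifies $\rho_\gamma\circ\theta^\ast_{r(\gamma)}=\theta^\ast_{s(\gamma)}\circ\delta_\gamma$ by a pointwise computation on $\mathcal{G}_{r(\gamma)}$, and then transports this to $\tilde\theta(p\cdot\gamma)=\tilde\theta(p)\cdot\gamma$ by evaluating arbitrary $f\in C_0(Z)$. You instead check \'equivariance only on the copy of $\mathcal{G}$ sitting inside $\beta_X\mathcal{G}$ (where $\tilde\theta=\theta$ and the identity is the associativity of the action) and extend by density and continuity. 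This is valid and avoids unwinding the local charts defining $\alpha$ in Section \ref{Galgebre}, but it rests on two facts you should attribute precisely: $\mathcal{G}$ is dense in $\beta_X\mathcal{G}$ because $C_0(\mathcal{G})$ is an \emph{essential} ideal of $C_0^s$ (every $f\in C_0^s$ with $f\cdot C_0(\mathcal{G})=0$ vanishes pointwise on $\mathcal{G}$, hence is $0$), and the lift of a net $s(\eta_i)\to r(\gamma)$ to a net $\gamma_i\to\gamma$ with $r(\gamma_i)=s(\eta_i)$ uses that $r$ is a \emph{local hom\'eomorphisme} (continuous local sections), not merely that $r$ and $\tilde s$ are open. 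With that attribution corrected, the density of $\mathcal{G}^{(2)}$ in $\beta_X\mathcal{G}\ast_r\mathcal{G}$ holds and your argument is complete; the trade-off is that the paper's fibrewise computation is self-contained and does not need $Z$ s\'epar\'e for the extension step, while yours is shorter and reuses the already-established continuity of the two actions.
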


 Soit $Z$ un $\mathcal{G}$-espace localement compact pour lequel l'application $\pi : Z\rightarrow{X}$ est continue, propre, surjective et admet une section continue (non n\'ecessairement $\mathcal{G}$-\'equivariante) not\'ee $\sigma : X\rightarrow{Z}$. On pose $\theta : \mathcal{G}\rightarrow{Z}$ l'application d\'efinie pour tout $\gamma\in\mathcal{G}$ par
$$\theta(\gamma):=\sigma(r(\gamma)).\gamma$$
Cette application est bien d\'efinie car pour tout $\gamma\in\mathcal{G}$, on a $\sigma(r(\gamma))\in{Z_{r(\gamma)}}$, et en appliquant l'action de $\gamma$ sur $\sigma(r(\gamma))$, on obtient bien $\sigma(r(\gamma)).\gamma$ un \'el\'ement de ${Z_{s(\gamma)}}$.
\\
L'application $\theta$ est continue car compos\'ee d'applications continues et $\mathcal{G}$-\'equivariante car pour toute paire $(\gamma_1,\gamma_2)$ de $\mathcal{G}^{(2)}$, on a 
\begin{align*}
\theta(\gamma_1\gamma_2)&=\sigma(r(\gamma_1\gamma_2)).(\gamma_1\gamma_2)
\\
&=\big(\sigma(r(\gamma_1)).\gamma_1\big).\gamma_2
\\
&=\theta(\gamma_1).\gamma_2
\end{align*}
Pour $\gamma$ dans $\mathcal{G}$, on a $\theta(\gamma)=\sigma\big(r(\gamma)\big).\gamma$ est dans le sous espace $Z_{s(\gamma)}:=\pi^{-1}\big(s(\gamma)\big)$ compact dans $Z$. 

\begin{Lemma}
L'application continue $\theta : \mathcal{G}\rightarrow{Z}$ induit un homomorphisme de $C^*$-alg\`ebres $\theta^* : C_0(Z)\rightarrow{C_0^s}$.
\end{Lemma}

\begin{proof}
L'application $\theta : \mathcal{G}\rightarrow{Z}$ induit un homomorphisme de $C^*$-alg\`ebres $\theta^* : C_0(Z)\rightarrow{C_b(\mathcal{G})}$ d\'efini par 
$$\theta^*(f)(\gamma):=f\circ{\theta}(\gamma)=f\big(\sigma(r(\gamma)).\gamma\big)$$
pour toute fonction $f$ de $C_0(Z)$ et tout $\gamma$ de $\mathcal{G}$. 
\\
Soient une fonction $f$ dans $C_0(Z)$ et $\varepsilon>0$, il existe alors un sous espace compact $K_\varepsilon$ dans $Z$ tel que $\lvert{f(z)}\lvert<\varepsilon$ pour tout $z$ en dehors de $K_\varepsilon$. 
\\
En posant $K_\varepsilon':=\pi(K_\varepsilon)$, on obtient un sous espace compact de $X$ et pour tout $\gamma$ n'appartenant pas \`a l'espace ${\mathcal{G}_{K_\varepsilon'}}$, l'\'el\'ement $\theta(\gamma)=\sigma(r(\gamma)).\gamma$ se trouve en dehors de ${K_\varepsilon}$, par cons\'equent 
$$\big\lvert{\theta^*(f)(\gamma)}\big\lvert=\big\lvert{f\big(\sigma(r(\gamma)).\gamma\big)}\big\lvert<\varepsilon$$
\\
 On a montr\'e que pour tout $\varepsilon>0$, il existe un sous espace $K_\varepsilon'$ compact dans $X$ tel que pour tout $\gamma$ dans $\mathcal{G}$ avec $s(\gamma)\notin{K_\varepsilon'}$, on a $\lvert\theta^*(f)(\gamma)\lvert<\varepsilon$. Donc $\theta^*(f)$ est dans $C_0^s$. On obtient alors un homomorphisme de $C^*$-alg\`ebre, $\theta^* : C_0(Z)\rightarrow{C_0^s}$. 
 \\
\end{proof}

\begin{Lemma}
L'homomorphisme $\theta^* : C_0(Z)\rightarrow{C_0^s}$ est $\mathcal{G}$-\'equivariant.
\end{Lemma}

\begin{proof}
Les $C^*$-alg\`ebres $C_0(Z)$ et $C_0^s$ sont des $\mathcal{G}$-alg\`ebres dont les actions sont donn\'ees par les familles d'isomorphismes $C^*$-alg\`ebres $(\delta_\gamma)_{\gamma\in\mathcal{G}}$ avec $\delta_\gamma : C_0(Z)(s(\gamma))\rightarrow{C_0(Z)(r(\gamma))}$ et $(\rho_\gamma)_{\gamma\in\mathcal{G}}$, avec $\rho_\gamma : C_0^s(s(\gamma))\rightarrow{C_0^s(r(\gamma))}$. 
\\
Pour tout $x$ dans $X$, on pose $\theta^*_x : C_0(Z)(x)\rightarrow{C_0^s(x)}$ l'homomorphisme de $C^*$-alg\`ebres induit par $\theta^* : C_0(Z)\rightarrow{C_0^s}$ sur les fibres en $x$.
\\
On a pour toute fonction $g$ de $C_0(Z)(r(\gamma))$ et tout $\eta$ dans $\mathcal{G}_{r(\gamma)}$
\begin{align*}
\rho_\gamma\circ\theta^*_{r(\gamma)}(g)(\eta)&=\rho_\gamma\big(\theta^*_{r(\gamma)}(g)\big)(\eta)=\theta^*_{r(\gamma)}(g)(\eta.\gamma)=g\big(\theta(\eta.\gamma)\big)
\\
&=g\big(\theta(\eta).\gamma\big)=\delta_\gamma(g)\big(\theta(\eta)\big)
\\
&=\theta^*_{s(\gamma)}\circ\delta_\gamma(g)(\eta)
\end{align*}  
Ainsi pour tout $\gamma$ de $\mathcal{G}$, on a $\rho_\gamma\circ\theta^*_{r(\gamma)}=\theta^*_{s(\gamma)}\circ\delta_\gamma$, par cons\'equent, l'homomorphisme $\theta^* : C_0(Z)\rightarrow{C_0^s}$ est $\mathcal{G}$-\'equivariant.
\\
\end{proof}

%\begin{Pro}{\label{UnivSpace}}
%Soit $Z$ un $\mathcal{G}$-espace localement compact pour lequel l'application $\pi : Z\rightarrow{X}$ est continue, propre, surjective et admet une section continue (qui n'est pas n\'ecessairement $\mathcal{G}$-\'equivariante) $\sigma : X\rightarrow{Z}$, 
%alors il existe une application $\tilde\theta : \beta_X\mathcal{G}\rightarrow{Z}$ continue, $\mathcal{G}$-\'equivariante et propre.
%\end{Pro} 
%

\begin{proof}[D\'emonstration de la propri\'et\'e \ref{UnivSpace} ]
En \'etudiant la $C^*$-alg\`ebre $C_0^s$, on a montr\'e qu'il existe un $\ast$-isomorphisme isom\'etrique de $C_0^s$ dans $C_0(\beta_X\mathcal{G})$. On obtient un homomorphisme de $C^*$-alg\`ebres $\theta^* : C_0(Z)\rightarrow{C_0(\beta_X\mathcal{G})}$. 
\\
On consid\`ere $(e_n)_{n\in\NN}$ une approximation de l'unit\'e de $C_0(Z)$ et on veut montrer que $(\theta^*(e_n))_{n\in\NN}$ est une approximation de l'unit\'e de $C_0(\beta_X\mathcal{G})$. Soit $f$ une fonction dans $C_0(\beta_X\mathcal{G})$; pour tout $\varepsilon>0$, il existe un sous espace $K$ compact dans $X$, tel que pour tout $\gamma$ dans $\mathcal{G}\setminus\mathcal{G}_{K}$, on a $\lvert{f(\gamma)}\lvert<\varepsilon$. En posant $K':=\overline{\big\{\sigma\big(r(\gamma)\big).\gamma\textrm{ / }s(\gamma)\in{K}\big\}}\subset{\pi^{-1}(K)}$, on obtient un sous espace compact de $Z$. Il existe $n_0$ dans $\NN$ tel que pour tout $n\geq{n_0}$, on a pour tout $z$ dans $\pi^{-1}(K)$  $1\leq{e_n(z)}\leq{1-\varepsilon}$. 
Par cons\'equent, on a pour tout $\gamma$ dans $\mathcal{G}_{K}$, $1\leq{\theta^*(e_n)(\gamma)}\leq{1-\varepsilon}$ ce qui implique 
$$\big\lvert{f(\gamma)-\theta^*(e_n)(\gamma)f(\gamma)}\big\lvert\leq\varepsilon\big\lvert{f(\gamma)}\big\lvert$$
De m\^eme, pour $n\geq{n_0}$ pour $\gamma$ n'appartenant pas \`a $\mathcal{G}_K$, puisque $\lvert{f(\gamma)}\lvert\leq\varepsilon$ et ${\theta^*(e_n)(\gamma)}\in{[0,1]}$, on a 
$$\big\lvert{f(\gamma)-\theta^*(e_n)(\gamma)f(\gamma)}\big\lvert<\varepsilon$$
Ainsi on a montr\'e que pour $f$ fonction de $C_0(\beta_X\mathcal{G})$, pour $\varepsilon>0$, il existe $n_0$ dans $\NN$ tel que pour tout $n\geq{n_0}$, on a 
$$\big\lVert{f-\theta^*{(e_n)}f}\big\lVert_\infty<\varepsilon\big\lVert{f}\big\lVert_\infty$$
On en conclut que $(\theta^*(e_n))_{n\in\NN}$ est une approximation de l'unit\'e de $C_0(\beta_X\mathcal{G})$ et l'homomorphisme $\theta^* : C_0(Z)\rightarrow{C_0(\beta_X\mathcal{G})}$ est non d\'eg\'en\'er\'e.
\\

D'apr\`es la proposition {\ref{ProNonDeg}}, l'homomorphisme non d\'eg\'en\'er\'e $\theta^* : C_0(Z)\rightarrow{C_0(\beta_X\mathcal{G})}$ induit une application $\tilde\theta : \beta_X\mathcal{G}\rightarrow{Z}$. 
\\

L'application $\tilde\theta : \beta_X\mathcal{G}\rightarrow{Z}$ est continue : en effet, on consid\`ere $U$ un ouvert de $Z$ et on pose $V=\tilde\theta^{-1}(U)$. Si $V$ est vide, alors c'est un ouvert de $\beta_X\mathcal{G}$. S'il existe $p_0$ dans $V$, on pose $z_0=\tilde\theta(p_0)$, son image dans $U$. Soit $f$ une fonction de $C_0(Z)$ \`a valeurs dans $[0,1]$, \`a support dans $U$ et pour laquelle $f(z_0)=1$. La fonction $\theta^*(f)$ de $C_0(\beta_X\mathcal{G})$ est \`a valeurs dans $[0,1]$, non nulle car $\theta^*(f)(p_0)=1$ et \`a support dans V. On a alors $\theta^*(f)^{-1}\big(]0,1]\big)$ est un ouvert dans $\beta_X\mathcal{G}$ contenant $p_0$ et contenu dans $V$. Donc $V=\tilde\theta^{-1}(U)$ est un ouvert de $\beta_X\mathcal{G}$. L'application $\tilde{\theta} : \beta_X\mathcal{G}\rightarrow{Z}$ est continue.  
\\

L'application $\tilde\theta : \beta_X\mathcal{G}\rightarrow{Z}$ est $\mathcal{G}$-\'equivariante : en effet pour tout \'el\'ement $(p,\gamma)$ dans $\beta_X\mathcal{G}\ast_{\tilde{s},r}\mathcal{G}$ et toute fonction $f$ dans $C_0(Z)$, on a 
\begin{align*}
f\big(\tilde\theta(p.\gamma)\big)&=\theta^*(f)(p.\gamma)=\theta^*_{r(\gamma)}(f)(p.\gamma)=\rho_\gamma\big(\theta^*_{r(\gamma)}(f)\big)(p)
\\
&=\theta^*_{s(\gamma)}\big(\delta_\gamma(f)\big)(p)=\delta_\gamma(f)\big(\tilde\theta(p)\big)=f\big(\tilde\theta(p).\gamma\big)
\end{align*} 
Pour tout couple $(p,\gamma)$ dans $\beta_X\mathcal{G}$, on a l'\'egalit\'e $\tilde\theta(p.\gamma)=\tilde\theta(p).\gamma$, ce qui prouve que l'application $\tilde\theta$ est $\mathcal{G}$-\'equivariante.
\\

Soit $K$ un sous espace compact de $Z$, montrons que $\tilde\theta^{-1}(K)$ est un sous espace compact de $\beta_X\mathcal{G}$. 
Le sous espace $\tilde\theta^{-1}(K)$ est ferm\'e dans $\beta_X\mathcal{G}$ et $\tilde\theta^{-1}(K)$ est inclus dans ${\tilde{s}^{-1}(\pi(K))}$ car pour tout ${p}$ dans ${\tilde\theta^{-1}(K)}$ il existe ${z}$ dans ${K}$ tel que  : 
$$\tilde\theta(p)=z\quad\textrm{et}\quad\tilde{s}(p)=\pi(\tilde{\theta}(p))$$
donc $p\in{\tilde{s}^{-1}(\pi(K))}$. Or l'application $\pi$ \'etant continue et l'application $\tilde{s}$ \'etant propre, alors $\tilde{s}^{-1}(\pi(K))$ est un sous espace compact de $\beta_X\mathcal{G}$. On en d\'eduit que $\tilde\theta^{-1}(K)$ est un sous espace ferm\'e du compact $\tilde{s}^{-1}(\pi(K))$, donc $\tilde\theta^{-1}(K)$ est compact. L'application $\tilde\theta$ est donc propre.
\\
\end{proof}

%\begin{Def}{\label{DefMoyInf}}
%Soit $\mathcal{G}$ un groupo\"{\i}de \'etale localement compact et $X$ l'espace localement compact des unit\'es du groupo\"{\i}de $\mathcal{G}$. Le groupo\"{\i}de est dit moyennable \`a l'infini s'il existe un $\mathcal{G}$-espace localement compact $Y$, pour lequel l'application $\pi : Y\rightarrow{X}$ est continue, surjective, ouverte et propre, et tel que le groupo\"{\i}de $Y\rtimes\mathcal{G}$ est moyennable.
%\vspace{+2mm}
%\end{Def}
%

%L'objectif du travail qui suit est de montrer le th\'eor\`eme suivant
%\begin{Theo}{\label{TheoEquivEspace}}
%Soit $\mathcal{G}$ un groupo\"{\i}de \'etale localement compact et $X$ l'espace localement compact des unit\'es du groupo\"{\i}de $\mathcal{G}$. Si le groupo\"{\i}de $\mathcal{G}$ est moyennable \`a l'infini, alors le groupo\"{\i}de $\beta_X\mathcal{G}\rtimes\mathcal{G}$ est moyennable.
%\end{Theo}

%\begin{Rem}
%La d\'emonstration se fait en utilisant la propri\'et\'e universelle associ\'ee \`a l'espace $\beta_X\mathcal{G}$. En effet on va voir (quitte \` a remplacer l'espace $Y$ par un espace de mesure de probabilit\'e sur $Y$ bien choisi) que l'on peut construire une section continue de $\pi$ et ainsi obtenir l'existence d'une application continue et $\mathcal{G}$-\'equivariante $\beta_X\mathcal{G}\rightarrow{Y}$. Enfin on pourra ``tirer en arri\`ere'' sur $\beta_X\mathcal{G}$ la propri\'et\'e de moyennabilit\'e de $Y$.
%\end{Rem}

\subsection  {Espace de mesure}\label{Partie4}

La moyennabilit\'e \`a l'infinie du groupo\"{\i}de $\mathcal{G}$ se caract\'erise par l'existence d'un $\mathcal{G}$-espace localement compact et s\'epar\'e $Y$ tel que $\pi : Y\rightarrow{X}$ une application continue, propre, surjective et ouverte pour lequel l'action est moyennable. Pour d\'emontrer le th\'eor\`eme \ref{TheoEquivEspace} , \`a savoir que la moyennabilit\'e infinie du groupo\"{\i}de $\mathcal{G}$ implique la moyennabilit\'e du groupo\"{\i}de $\beta_X\mathcal{G}\rtimes\mathcal{G}$, l'outil essentiel est la propri\'et\'e universelle de $\beta_X\mathcal{G}$. Le premier r\'eflexe serait d'appliquer cette propri\'et\'e universelle au $\mathcal{G}$-espace $Y$ mais ceci n'est pas possible car l'application $\pi : Y\to{X}$ n'admet en g\'en\'eral pas de section continue.
\\
 L'objectif de cette partie et de la suivante vise \`a remplacer l'espace $Y$ par un $\mathcal{G}$-espace $M$ de mesures de probabilit\'e sur $Y$ judicieusement choisi de sorte que l'espace $M$ poss\`ede les m\^emes caract\'eristiques que l'espace $Y$ et que l'on puisse construire une section continue pour l'application $\tilde{\pi} : M\to{X}$ induite par l'action de $\mathcal{G}$. 
\\

Dans cette partie, on s'emploie tout d'abord \`a d\'efinir l'espace $M$ de mesures sur $Y$ puis on montre que l'espace $M$ est muni d'une action continue du groupo\"{\i}de $\mathcal{G}$ pour lequel l'application $\tilde{\pi} : M\to{X}$ est continue, surjective, ouverte et propre.
\\

Pour tout $x\in{X}$, on note $Y_x=\pi^{-1}(x)$. On note $R(Y)$ l'espace des mesures de Radon sur $Y$.
On pose alors 
$$M=\big\{\mu\in{R(Y)}\textrm{ : }\exists{x}\in{X}\textrm{, }Supp(\mu)\subset{Y_x}\textrm{ et }\mu(Y)=1\big\}$$
 
L'ensemble $M$ est un sous espace de la boule unit\'e de l'espace $R(Y)$. Pour toute mesure $\mu$ dans $M$, il existe un unique $x_\mu$ dans $X$ tel que $Supp(\mu)$ est inclus dans $Y_{x_\mu}$. On note $\tilde{\pi} : M\rightarrow{X}$ l'application canonique qui \`a toute mesure $\mu$ de $M$ associe l'\'el\'ement $\tilde{\pi}(\mu)=x_\mu$ de $X$. 

%\begin{Rem}
%L'int\'er\^et d'introduire l'espace $M$ r\'eside dans le fait que l'application $\tilde{\pi} : M\rightarrow{X}$ est continue, surjective, propre et ouverte, comme le prouve la proposition {\ref{tildepicanonique}} qui suit. L'espace des mesures de Radon $R(Y)$ \'etant un espace de Banach, on peut, par le th\'eor\`eme de s\'election de Michael, en d\'eduire l'existence d'une section continue de $\tilde{\pi}$. En munissant l'espace $M$ d'une structure de $\mathcal{G}$-espace localement compact, on peut ainsi appliquer la propri\'et\'e universelle de l'espace $\beta_X\mathcal{G}$ et d\'efinir une application continue et $\mathcal{G}$-\'equivariante $\tilde{\theta} : \beta_X\mathcal{G}\rightarrow{M}$.
%\end{Rem}

\begin{Pro}{\label{tildepicanonique}}
L'application canonique $\tilde\pi : M\rightarrow{X}$ est continue, surjective, ouverte et propre.
\end{Pro}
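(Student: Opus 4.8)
On munit $R(Y)$ de la topologie vague, pour laquelle une suite g\'en\'eralis\'ee $\mu_i\to\mu$ si et seulement si $\int_Y f\,d\mu_i\to\int_Y f\,d\mu$ pour toute $f\in C_c(Y)$. Le plan est de traiter s\'epar\'ement les quatre propri\'et\'es, les trois premi\`eres \'etant \'el\'ementaires. On commencera par observer que, $\pi$ \'etant propre, chaque fibre $Y_x=\pi^{-1}(\{x\})$ est compacte, donc toute $\mu\in M$ est port\'ee par un compact et $f\circ\pi\in C_c(Y)$ pour toute $f\in C_c(X)$. Pour la \textbf{continuit\'e}, \'etant donn\'ee $\mu_i\to\mu$ dans $M$ de projections $x_i$ et $x$, je calculerais $\int_Y(f\circ\pi)\,d\mu_i=f(x_i)$ et $\int_Y(f\circ\pi)\,d\mu=f(x)$ pour $f\in C_c(X)$ (car $\mu_i$ est une probabilit\'e port\'ee par $Y_{x_i}$ sur laquelle $\pi$ vaut $x_i$), d'o\`u $f(x_i)\to f(x)$ pour toute $f\in C_c(X)$; en choisissant $f$ valant $1$ au voisinage de $x$ on confine les $x_i$, \`a partir d'un certain rang, dans un compact fixe de $X$, puis la s\'eparation des points dans ce compact donne $x_i\to x$. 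La \textbf{surjectivit\'e} est imm\'ediate : pour $x\in X$, $Y_x\neq\varnothing$ par surjectivit\'e de $\pi$, et toute masse de Dirac $\delta_y$ avec $y\in Y_x$ est dans $M$ d'image $x$.

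Pour la \textbf{propret\'e}, je fixerais $K\subset X$ compact et $L=\pi^{-1}(K)$ (compact dans $Y$), et j'\'ecrirais $\tilde\pi^{-1}(K)=\{\mu\in M : \mathrm{Supp}(\mu)\subset L\}$; cet ensemble est contenu dans l'espace $P(L)$ des mesures de probabilit\'e sur le compact $L$, qui est vaguement compact, et il restera \`a v\'erifier qu'il y est ferm\'e. Si $\mu_i\to\mu$ dans $P(L)$ avec $\mu_i$ port\'ee par $Y_{x_i}$, $x_i\in K$, on extrait $x_i\to x\in K$ et on montre $\mathrm{Supp}(\mu)\subset Y_x$ : pour $h\in C_c(Y)$ de support disjoint de $Y_x$, le compact $\pi(\mathrm{Supp}(h))$ \'evite $x$, donc $Y_{x_i}\cap\mathrm{Supp}(h)=\varnothing$ pour $i$ grand, d'o\`u $\int h\,d\mu_i=0$ puis $\int h\,d\mu=0$; comme $\mu$ est encore une probabilit\'e port\'ee par $L$, elle appartient \`a $M$ et $\tilde\pi(\mu)=x\in K$.

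L'\textbf{ouverture} est le point central et la principale difficult\'e, car c'est l\`a seulement qu'intervient l'hypoth\`ese que $\pi$ est \emph{ouverte}. Je montrerais que pour $\mu\in M$ de projection $x$ et tout voisinage de base $N=\{\nu\in M : |\int f_k\,d\nu-\int f_k\,d\mu|<\varepsilon,\ k\le m\}$ (avec $f_k\in C_c(Y)$), l'image $\tilde\pi(N)$ contient un voisinage de $x$. L'id\'ee est de discr\'etiser $\mu$ sur un recouvrement fin : recouvrir le compact $\mathrm{Supp}(\mu)\subset Y_x$ par des ouverts relativement compacts $V_1,\dots,V_n$ de $Y$ sur chacun desquels l'oscillation de tous les $f_k$ est $<\varepsilon/2$ (possible par continuit\'e des $f_k$ et compacit\'e du support), prendre une partition de l'unit\'e $(\phi_j)$ de $C_c(Y)$ subordonn\'ee avec $\sum_j\phi_j=1$ sur $\mathrm{Supp}(\mu)$, et poser $c_j=\int\phi_j\,d\mu\ge 0$ (donc $\sum_j c_j=1$). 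Comme $\pi$ est ouverte, $V=\bigcap_j\pi(V_j)$ est un voisinage ouvert de $x$; pour $x'\in V$ on rel\`eve en choisissant $y'_j\in V_j\cap Y_{x'}$ et on pose $\mu'=\sum_j c_j\delta_{y'_j}\in M$, d'image $x'$. Le contr\^ole de l'oscillation donne $|\int f_k\,d\mu-\sum_j c_j f_k(y'_j)|\le\sum_j\int|f_k-f_k(y'_j)|\,\phi_j\,d\mu\le\frac{\varepsilon}{2}\sum_j c_j<\varepsilon$ pour tout $k$, donc $\mu'\in N$ et $V\subset\tilde\pi(N)$. L'essentiel du travail technique sera la justification soigneuse du recouvrement fin \`a oscillation contr\^ol\'ee et du rel\`evement continu des masses de Dirac via l'ouverture de $\pi$.
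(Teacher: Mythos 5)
Votre preuve est correcte et, pour le point central (l'ouverture), elle suit essentiellement la m\^eme strat\'egie que celle du texte : recouvrement fin de $Supp(\mu)\subset Y_x$ par des ouverts \`a oscillation contr\^ol\'ee, partition de l'unit\'e, puis rel\`evement en une combinaison convexe de masses de Dirac $\sum_j c_j\delta_{y'_j}$ au-dessus de tout $x'\in\bigcap_j\pi(V_j)$ -- c'est exactement l\`a que l'ouverture de $\pi$ intervient dans les deux r\'edactions; vous le faites directement l\`a o\`u le texte raisonne par l'absurde, ce qui est plus propre. Deux diff\'erences r\'eelles m\'eritent d'\^etre soulign\'ees. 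Pour la continuit\'e, vous tirez en arri\`ere les fonctions de $C_c(X)$ via $\pi$ (l\'egitime car $f\circ\pi\in C_c(Y)$ gr\^ace \`a la propret\'e de $\pi$) et obtenez $f(x_i)\to f(x)$ pour toute $f\in C_c(X)$, alors que le texte construit une fonction bosse sur $Y$ valant $1$ sur la fibre; les deux marchent, la v\^otre est plus directe. Pour la propret\'e, votre argument est localis\'e : $\tilde\pi^{-1}(K)$ est un ferm\'e de $P(L)$ avec $L=\pi^{-1}(K)$ compact, et la compacit\'e vague de $P(L)$ conclut. Le texte, lui, affirme que $M$ tout entier est ferm\'e dans la boule unit\'e de $R(Y)$ donc compact par Banach--Alaoglu; cet \'enonc\'e global est d\'elicat (une limite vague de probabilit\'es sur un espace non compact peut perdre de la masse, et $M$ ne peut pas \^etre compact si $X$ ne l'est pas, puisque $\tilde\pi$ est continue surjective). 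Votre version localis\'ee \'evite enti\`erement ce probl\`eme, puisque toutes les mesures consid\'er\'ees sont port\'ees par le compact fixe $L$, ce qui emp\^eche la fuite de masse \`a l'infini : c'est l'argument qu'il faut retenir.
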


\begin{proof} On montre successivement que l'application est surjective, continue, ouverte et propre.
\\

La surjectivit\'e est assez directe : pour toute mesure $\mu\in{M}$, il existe $x_\mu\in{X}$ tel que $Supp\textrm{ }\mu\subset{Y_{x_\mu}}$. L'application canonique $\tilde\pi : M\rightarrow{X}$ est d\'efinie par $\tilde\pi(\mu)=x_\mu$. Elle est clairement surjective. 
\\

 Pour montrer la continuit\'e de l'application $\tilde\pi : M\rightarrow{X}$, on consid\`ere $U$ un ouvert dans $X$ et on cherche \`a montrer que $\tilde\pi^{-1}(U)$ est un ouvert de $M$; soit $\mu_0$ un \'el\'ement de $\tilde\pi^{-1}(U)$ et $x_0:=\tilde\pi(\mu)$. Soit $U_0$ un ouvert dans $U$ contenant $x_0$; puisque l'application $\pi : Y\rightarrow{X}$ est continue alors $\pi^{-1}(U_0)$ est un ouvert de $Y$; soit $f\in{C_c(\pi^{-1}(U_0))}$ une fonction continue \`a support compact dans $\pi^{-1}(U_0)$ et \`a valeurs dans $[0,1]$ telle que $f\lvert_{Y_{\mu_0}}\equiv{1}$. Alors l'ensemble
$$\big\{\mu\in{M}\textrm{ : }\lvert{\mu(f)-\mu_0(f)}\lvert<{1}\big\}$$
est un ouvert dans $\tilde\pi^{-1}(U)$ contenant $\mu_0$. Ainsi $\tilde\pi^{-1}(U)$ est un ouvert de $M$. L'application $\tilde\pi : M\rightarrow{X}$ est donc continue.
\\

 Supposons maintenant que l'application $\tilde\pi$ n'est pas ouverte c'est-\`a-dire qu'il existe un ouvert $U_0$ de $M$ tel que $\tilde\pi{(U_0)}$ n'est pas ouvert dans $X$. On peut supposer que $U_0$ est un voisinage ouvert de $\mu_0$ de la forme 
$$U_0=\big\{\mu\in{M}\textrm{ : }\lvert{\mu(f_i)-\mu_0(f_i)}\lvert<a\textrm{, }\forall{i}\in\{1,..,n\}\big\}$$ 
avec $a$ un r\'eel strictement positif et $f_i$ est une fonction dans ${C_0(Y)}$ pour tout $i$ dans $\{1,..,n\}$. Comme $\tilde\pi(U_0)$ n'est pas ouvert dans $X$, il existe alors une mesure $\mu'$ dans ${U_0}$ avec $x'=\tilde\pi(\mu')$ tel que pour tout voisinage $V'$ de $x'$ dans $X$, on a $V'\nsubseteq{\tilde\pi(U_0)}$. On peut donc construire une suite g\'en\'eralis\'ee $(x_\lambda)_{\lambda\in\Lambda}$ dans $X$ qui converge vers $x'$ et telle que pour tout $\lambda$ dans $\Lambda$, l'\'el\'ement $x_\lambda$ n'est pas dans ${\tilde\pi(U_0)}$, c'est-\`a-dire pour tout $\mu_\lambda$ dans ${\tilde\pi^{-1}(x_\lambda)}$, il existe $i$ dans $\{1,...,n\}$ tel que $\lvert{\mu_\lambda(f_i)-\mu_0(f_i)}\lvert\geq{a}$.
\\
On consid\`ere $\varepsilon$ un r\'eel strictement positif tel que pour tout $i$ dans $\{1,...,n\}$ on a 
$$\lvert{\mu'(f_i)-\mu_0(f_i)}\lvert<a-2\varepsilon$$
Pour tout $z$ dans ${Y_{x'}}$, on pose 
$$U_{z}=\cap_{i=1}^n\big\{y\in{Y}\textrm{ : }\lvert{f_i(y)-f_i(z)}\lvert<\varepsilon\big\}$$
qui est un ouvert de $z$ dans $Y$. On obtient un recouvrement d'ouverts du compact $Y_{x'}$ duquel on peut extraire un sous recouvrement fini et on a $Y_{x'}\subset{\cup_{j=1}^m}U_{z_j}$. 
\\
L'application $\pi : Y\rightarrow{X}$ \'etant ouverte, l'espace $\cap_{j=1}^m\pi(U_{z_j})$ est un ouvert de $X$ contenant $x'$. Ainsi il existe $\lambda'\in\Lambda$, tel que pour tout $\lambda>\lambda'$, on a $x_\lambda$ appartient \`a l'ouvert $\cap_{j=1}^m\pi(U_{j})$.
\\
On pose $(\phi_j)_{1\leq{j}\leq{m}}$, une partition de l'unit\'e associ\'ee au recouvrement de l'espace $Y_{x'}$ par les ouverts $\{U_{z_j}\}_{j=1}^m$ . 
\\
On consid\`ere $x_\lambda$ dans $\cap_{j=1}^m\pi(U_{z_j})$. Pour tout $j\in\{1,..,m\}$, il existe $y_j$ dans $U_{z_j}\cap{Y_{x_\lambda}}$ et on construit une mesure $\mu_\lambda$ d\'efinie par $\mu_\lambda=\sum_{j=1}^{m}\omega_j\delta_{j}$, o\`u $\delta_{j}$ est la mesure de Dirac en $y_j$ et $\omega_j=\mu'(\phi_j)$. On obtient une mesure \`a support dans $Y_{x_\lambda}$ et comme
$$\mu_\lambda(Y)=\sum_{j=1}^m\omega_j\delta_{j}(M)=\sum_{j=1}^m\mu'(\phi_j)=\mu'(\sum_j\phi_j)=\mu'(1_{Y_{x'}})=1$$  
la mesure $\mu_\lambda$ est dans l'espace $M$. Pour tout $i$ dans $\{1,...,n\}$, on a 
\begin{align*}
\big\lvert{\mu_\lambda(f_i)-\mu'(f_i)}\big\lvert&=\bigg\lvert{\sum_{j=1}^m\omega_j\delta_j(f_i)-\mu\big(\sum_{j=1}^m\phi_jf_i\big)}\bigg\lvert
\\
&=\bigg\lvert{\sum_{j=1}^m\mu'(\phi_j)f_i(z_j)-\sum_{j=1}^m\mu'(\phi_jf_i)}\bigg\lvert
\end{align*}
et, pour tout $1\leq{j}\leq{m}$, on a 
$(f_i(y_j)-\varepsilon)\phi_j\leq\phi_jf_j\leq{(f_i(y_j)+\varepsilon)\phi_j}$.
\\
Par cons\'equent,
\begin{align*}
\sum_{j=1}^m\mu'(\phi_j)\big[f_i(z_j)-(f_i(y_j)+\varepsilon)\big]\leq{\mu_\lambda(f_i)-\mu'(f_i)}\leq{\sum_{j=1}^m\mu'(\phi_j)\big[f_i(z_j)-(f_i(y_j)-\varepsilon)\big]}
\end{align*}
Par d\'efinition des ouverts $U_j$ construits, pour tout $i$ dans $\{1,...,n\}$, on a 
$$-\varepsilon\leq f_i(z_j)-f_i(y_j)\leq\varepsilon$$
On en d\'eduit alors que pour tout $\lambda\geq\lambda'$, pour tout $j$ dans $\{1,...,m\}$ et tout $i$ dans $\{1,...,n\}$,
$$\big\lvert{\mu_\lambda(f_i)-\mu'(f_i)}\big\lvert\leq{2\varepsilon}$$
Ainsi on a pour tout $i$ dans $\{1,...,n\}$, tout r\'eel $\varepsilon$ strictement positif, il existe $\lambda'$ dans $\Lambda$ tel que 
\begin{align*}
\big\lvert{\mu'(f_i)-\mu_0(f_i)}\big\lvert&=\big\lvert{\mu'(f_i)-\mu_\lambda(f_i)+\mu_\lambda(f_i)-\mu_0(f_i)}\big\lvert
\\
&\geq\bigg\lvert{\big\lvert{\mu_\lambda(f_i)-\mu'(f_i)}\big\lvert-\big\lvert{\mu_\lambda(f_i)-\mu_0(f_i)}\big\lvert}\bigg\lvert
\\
&\geq{a-2\varepsilon}
\end{align*}
On obtient alors une contradiction. L'application $\tilde\pi : M\rightarrow{X}$ est donc ouverte.
\\

Montrons maintenant que l'application est propre : l'espace $M$ est un sous espace ferm\'e de $R(Y)$ pour la topologie $\ast\sigma(R(Y),C_0(Y))$. En effet, soit $(\mu_n)_{n\in\NN}$ est une suite de mesure dans $M$ qui converge vers une mesure $\mu$ de $R(Y)$ pour la toplogie $\ast\sigma(R(Y),C_0(Y))$. On note $S$ le support de $\mu$ qui est non vide (car $\mu$ est dans la boule unit\'e de $R(Y)$) et $(x_n)_{n\in\NN}$ la suite dans $X$ o\`u, pour tout entier $n$, on a $x_n=\tilde{\pi}(\mu_n)$. Soit $x$ un \'el\'ement de $\tilde{\pi}(S)$, alors $x$ est dans l'adh\'erence de $\{x_n : n\in\NN\}$. Supposons (on raisonne par l'absurde) qu'il existe deux \'el\'ements $x$ et $x'$ distincts dans $\tilde{\pi}(S)$. On consid\`ere $U$ et $U'$ des voisinages ouverts et disjoints respectivement de $x$ et $x'$. On consid\`ere $f$ (respectivement $f'$) une fonction \`a support compact dans l'ouvert $\pi^{-1}(U)$ (respectivement $\pi^{-1}(U')$) telle que $\mu(f)=1$ (respectivement $\mu(f')=1$). Pour tout r\'eel $\varepsilon$ strictement positif, il existe un entier $n_0$ tel que pour tout $n>n_0$, on a 
$$\lvert{\mu(f)-\mu_n(f)}\lvert<\varepsilon\quad\textrm{et}\quad{\lvert{\mu(f')-\mu_n(f')}\lvert<\varepsilon}$$
Pour tout $n>n_0$, l'espace $Supp(\mu_n)$ est contenu dans $\pi^{-1}(x_n)$. L'espace $\pi^{-1}(x_n)$ est inclus et dans $\pi^{-1}(U)$ et dans $\pi^{-1}(U')$, ce qui est contradictoire. Donc $\tilde{\pi}(S)$ est r\'eduit \`a un singleton $\{x\}$ dans $X$ et $\mu$ est dans $M$. L'espace $M$ est ferm\'e dans la boule unit\'e de $R(Y)$ qui est compacte par le th\'eor\`eme de Banach-Alaoglu. L'espace $M$ est donc compact et l'application $\tilde{\pi} : M\rightarrow{X}$ est propre. 
%L'application $\tilde\pi$ est propre car pour tout sous espace $K$ compact dans $X$, par continuit\'e, on a $\tilde\pi^{-1}(K)$ est ferm\'e dans $M$. Or $M$ est dans $B$ la boule unit\'e ferm\'ee de $C_c(Y)^\ast$ qui est compacte pour la topologie $\ast$-faible. Il existe donc $F$ ferm\'ee dans $B$ tel que $\tilde\pi^{-1}(K)=F\cap{M}$. Comme $F$ est compact alors $\tilde\pi^{-1}(K)$ est compact dans $M$.
\\
\end{proof}

\begin{Pro}{\label{EspaceEquivLocComp}}
L'espace topologique $M$ est un $\mathcal{G}$-espace localement compact.
\end{Pro}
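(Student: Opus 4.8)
The plan is to first equip $M$ with a canonical continuous (right) action of the groupo\"{\i}de $\mathcal{G}$, with application momentum $\tilde\pi : M\to X$, and then to dispose of the topological requirements, which are already almost in hand. Indeed, in the proof of Proposition~\ref{tildepicanonique} the space $M$ was identified with a closed subset of the unit ball of $R(Y)$ for the topology $\ast\sigma(R(Y),C_0(Y))$, so $M$ is compact, and it is s\'epar\'e as a subspace of a s\'epar\'e space; hence $M$ is localement compact, s\'epar\'e et $\sigma$-compact.

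To build the action, fix $\mu\in M$ and $\gamma\in\mathcal{G}$ with $\tilde\pi(\mu)=r(\gamma)$. Since $Y$ is a $\mathcal{G}$-espace, $y\mapsto y.\gamma$ is a hom\'eomorphisme from $Y_{r(\gamma)}$ onto $Y_{s(\gamma)}$, and I set $\mu.\gamma$ to be the image measure of $\mu$ under this hom\'eomorphisme: it is a mesure de Radon de probabilit\'e with support in $Y_{s(\gamma)}$, so $\mu.\gamma\in M$ and $\tilde\pi(\mu.\gamma)=s(\gamma)$. The relations $\mu.x=\mu$ for $x\in X$ and $(\mu.\gamma).\gamma'=\mu.(\gamma\gamma')$ follow at once from the axioms of the action of $\mathcal{G}$ on $Y$ together with the functoriality of the push-forward of measures.

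The substantive point is continuity of $(\mu,\gamma)\mapsto\mu.\gamma$ on $M\ast_{\tilde\pi,r}\mathcal{G}$. Let $(\mu_i,\gamma_i)_{i\in I}$ be a suite g\'en\'eralis\'ee converging to $(\mu,\gamma)$, with $x_i:=\tilde\pi(\mu_i)=r(\gamma_i)$ and $x:=\tilde\pi(\mu)=r(\gamma)$; since the measures involved are of probabilit\'e and $C_c(Y)$ is dense in $C_0(Y)$, it suffices to show $(\mu_i.\gamma_i)(f)\to(\mu.\gamma)(f)$ for every $f\in C_c(Y)$. Here I use that $\mathcal{G}$ is \'etale: choose an open neighbourhood $U$ of $\gamma$ on which $r$ and $s$ induce hom\'eomorphismes onto open subsets of $X$, set $V=r(U)$ and let $\rho : V\to U$ be the inverse of $r|_U$. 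As $\gamma_i\to\gamma$, one has $\gamma_i\in U$ for $i$ large, and then $\gamma_i$ is the unique element of $U$ with but $x_i$, that is $\gamma_i=\rho(x_i)$. Define $g : \pi^{-1}(V)\to Y$ by $g(y)=y.\rho(\pi(y))$; this is continuous, being the composite of $y\mapsto(y,\rho(\pi(y)))\in Y\ast_{\pi,r}\mathcal{G}$ with the action $Y\ast_{\pi,r}\mathcal{G}\to Y$, and for $x'$ in $V$ its restriction to $Y_{x'}$ is $y\mapsto y.\rho(x')$.

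Finally, pick $\psi\in C_c(V)$ with $\psi\equiv1$ on a voisinage of $x$, and set $h:=(\psi\circ\pi)\cdot(f\circ g)$ on the open set $\pi^{-1}(V)$; since $Supp(h)\subset\pi^{-1}(Supp(\psi))$ and $\pi$ est propre, this support is compact and $h$ extends by $0$ to a function of $C_c(Y)$. For $i$ large enough that $x_i\in\{\psi=1\}$ and $\gamma_i=\rho(x_i)$, the support of $\mu_i$ lies in $Y_{x_i}$, where $h=f\circ g$ and $g$ coincides with $y\mapsto y.\gamma_i$, whence $\int h\,d\mu_i=\int_{Y_{x_i}}f(y.\gamma_i)\,d\mu_i(y)=(\mu_i.\gamma_i)(f)$; likewise $\int h\,d\mu=(\mu.\gamma)(f)$ since $\rho(x)=\gamma$. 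The convergence $\mu_i\to\mu$ for the topology $\ast\sigma(R(Y),C_0(Y))$ then yields $(\mu_i.\gamma_i)(f)\to(\mu.\gamma)(f)$, which establishes continuity. I expect this last step to be the main obstacle: the function $y\mapsto f(y.\gamma_i)$ to be integrated lives on the moving fibre $Y_{x_i}$ and depends on $\gamma_i$, and the whole point of the \'etale structure is to trivialise this family over $V$ into one fixed test function $h$ in $C_c(Y)$ — the caract\`ere propre of $\pi$ ensuring the compact support — to which the convergence of the $\mu_i$ can be applied.
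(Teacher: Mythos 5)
Your proof is correct and takes essentially the same route as the paper: local compactness from the compactness of $M$ already established in the properness argument for $\tilde\pi$, the action by push-forward of measures along $y\mapsto y.\gamma$, and continuity by using the local inverse $\rho$ of $r$ on an \'etale neighbourhood of $\gamma$ to replace the moving test functions $y\mapsto f(y.\gamma_i)$ by a single fixed function of $C_c(Y)$ (your $h$ is the paper's $\tilde f_j$). The only cosmetic difference is your cutoff $\psi\circ\pi$, which lets you handle an arbitrary $f\in C_c(Y)$ where the paper restricts to test functions supported in $\pi^{-1}(s(U))$.
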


\begin{proof}
L'application $\tilde\pi : M\rightarrow{X}$ \'etant propre, l'espace $M$ est alors localement compact. On munit $M$ d'une action du groupo\"{\i}de $\mathcal{G}$ : on consid\`ere l'espace $M\ast\mathcal{G}$ comme \'etant l'ensemble $\big\{(\mu,\gamma)\in{M\times\mathcal{G}}\textrm{ : }\tilde\pi(\mu)=r(\gamma)\big\}$
et on note $\theta : M\ast\mathcal{G}\rightarrow{M}$ l'application qui \`a tout $(\mu,\gamma)$ de $M\ast\mathcal{G}$, associe $\theta((\mu,\gamma))=\mu.\gamma\in{M}$ o\`u $\mu.\gamma$ est d\'efinie pour toute fonction $f$ dans ${C_0(Y)}$ par
\[\mu.\gamma(f)=\int_Yf(y)d\mu.\gamma(y)=\int_{Y_{s(\gamma)}}f(y)d\mu.\gamma(y)=\int_{Y_{r(\gamma)}}f(z\gamma)d\mu(z)\]
On obtient bien une mesure de probabilit\'e et comme $\tilde\pi(\mu.\gamma)=s(\gamma)$, le support de $\mu.\gamma$ est un sous espace de ${Y_{s(\gamma)}}$. Ainsi pour tout \'el\'ement $(\mu,\gamma)$ dans ${M\ast\mathcal{G}}$, la mesure de probabilit\'e $\mu.\gamma$ est dans l'espace ${M}$.
\\
De plus, pour tout $(\gamma,\eta)$ dans ${\mathcal{G}^{(2)}}$ et toute fonction $f$ dans ${C_0(Y)}$, on a 
\begin{align*}
(\mu.\gamma\eta)(f)&=\int_{Y_{s(\eta)}}f(y)d(\mu.\gamma\eta)(y)=\int_{Y_{r(\gamma)}}f(z\gamma\eta)d\mu(z) 
\\
&=\int_{Y_{r(\eta)}}f(z\eta)d\mu.\gamma(z)=\int_{Y_{s(\eta)}}f(y)d(\mu.\gamma).\eta(y) 
\\
&=(\mu.\gamma).\eta(f)
\end{align*}
Enfin, pour tout $\mu\in{M}$ tel que $\tilde\pi(\mu)=x$ et toute fonction $f\in{C_0(Y)}$, on a \[\mu.\tilde\pi(\mu)(f)=\int_{Y_x}f(y)d\mu.\varepsilon(x)(y)=\int_{Y_x}f(y\varepsilon(x))d\mu(y)=\mu(f)\]
Ainsi on a muni l'espace localement compact $M$ d'une action du groupo\"{\i}de $\mathcal{G}$.
\\

L'action est continue si l'application (induite par l'action) $M\ast_{\tilde{\pi},r}\mathcal{G}\rightarrow{M}$ est continue. On consid\`ere une suite $(\mu_i)_{i\in{I}}$ de mesures dans $M$ convergeant vers une mesure $\mu$ dans $M$. Pour tout $i$ dans $I$, on note $x_i:=\tilde{\pi}(\mu_i)$ et $x:=\tilde{\pi}(\mu)$ et on a une suite $(x_i)_{i\in{I}}$ dans $X$ qui converge vers $x$. On consid\`ere une suite $(\gamma_i)_{i\in{I}}$ dans $\mathcal{G}$ telle que pour tout $i$ dans $I$, $\gamma_i$ est dans $\mathcal{G}^{x_i}$ et qui converge vers un \'el\'ement $\gamma$ dans $\mathcal{G}^x$. On montre que la suite $(\mu_i.\gamma_i)_{i\in{I}}$ converge dans $M$ vers $\mu.\gamma$. 
\\
On consid\`ere $U$ un voisinage ouvert de $\gamma$ dans $\mathcal{G}$ tel que les applications but et source sont des hom\'eomorphismes sur leur image respective. On note $\rho_x : r(U)\rightarrow{U}$ l'hom\'eomorphisme inverse de la restriction $r : U\rightarrow{r(U)}$. 
\\
On consid\`ere $V$ un voisinage ouvert de $\mu.\gamma$ dans $M$ que l'on peut supposer de la forme
$$V=\big\{\mu\in{M}\textrm{ : }\lvert{\mu(f_j)-\mu.\gamma(f_j)}\lvert<\varepsilon\textrm{ , }\forall{j}\in\{1,...,m\}\big\}$$ 
o\`u $\varepsilon$ est un r\'eel strictement positif et pour tout $j$ dans $\{1,...,m\}$, $f_j$ est une fonction dans $C_0(Y)$ telle que $Supp(f_{j})$ est inclus dans l'ouvert $\pi^{-1}(s(U))$. Pour tout $j$ dans $\{1,...m\}$, on note $\tilde{f}_j$ la fonction de $C_0(Y)$ obtenue par l'action de $\gamma$ sur $f_j$ telle que le support $Supp\big(\tilde{f}_j\big)$ est inclus dans $\pi^{-1}\big(r(U)\big)$. On note $\tilde{V}$ le voisinage ouvert de $\mu$ dans $M$ d\'efini par 
$$\tilde{V}:=\big\{\mu'\in{M}\textrm{ : }\big\lvert{\mu'\big(\tilde{f}_j\big)-\mu\big(\tilde{f}_j\big)}\big\lvert<\varepsilon\textrm{ , }\forall{j}\in\{1,...,m\}\big\}$$
et la suite $(\mu_i)_{i\in{I}}$ convergeant vers $\mu$, il existe $i_\varepsilon$ dans $I$ tel que pour tout $i\geq{i_\varepsilon}$, l'\'el\'ement $\gamma_i$ est dans $U$ et on a, pour tout $j$ dans $\{1,...,m\}$, 
$$\big\lvert{\mu_i(\tilde{f}_j)-\mu(\tilde{f}_j)}\big\lvert<\varepsilon$$
Or on a pour tout $j$ dans $\{1,...,m\}$
\begin{align*}
\mu\big(\tilde{f}_j\big)&=\int_Y\tilde{f}_j(y)d\mu(y)=\int_{Y_{x}}\tilde{f}_j(y)d\mu(y)=\int_{Y_x}f_j\big(y.\rho_{x}(\pi(y))\big)d\mu(y)
\\
&=\int_{Y_x}f_j(y.\gamma)d\mu(y)=\int_{Y_{s(\gamma)}}f_j(z)d(\mu.\gamma)(z)=\int_Yf_j(z)\d(\mu.\gamma)(z)
\\
&=\big(\mu.\gamma\big)(f_j)
\end{align*}
On obtient de m\^eme pour tout $i\geq{i_\varepsilon}$ dans $I$ et tout $j$ dans $\{1,...,m\}$, l'\'egalit\'e 
$$\mu_i\big(\tilde{f}_j\big)=\mu_i.\gamma_i(f_j)$$
Ainsi pour tout $i\geq{i_\varepsilon}$, on a, pour tout $j$ dans $\{1,...,m\}$
$$\big\lvert{\mu_i.\gamma_i(f_j)-\mu.\gamma(f_j)}\big\lvert=\big\lvert{\mu_i\big(\tilde{f}_j\big)-\mu\big(\tilde{f}_j\big)}\big\lvert<\varepsilon$$
donc $\mu_i.\gamma_i$ est dans l'ouvert $V$ pour tout $i\geq{i_\varepsilon}$. L'action du groupo\"{\i}de $\mathcal{G}$ sur $M$ est continue.
\\
\end{proof}

\subsection  {Existence d'une section continue}\label{Partie5}

Pour d\'emontrer que l'application $\tilde{\pi} : M\to{X}$ continue, surjective, propre et ouverte poss\`ede une section continue (non n\'ecessairement \'equivariante), on utilise le th\'eor\`eme de s\'election de Michael \cite{Michael56} dont on donne un rappel succinct pour la commodit\'e du lecteur. 
\\
 
On consid\`ere $A$ et $B$ deux espaces topologiques (A \'etant suppos\'e s\'epar\'e) et on note $2^B$ l'ensemble des parties non vide de $B$. On consid\`ere $\Omega : A\rightarrow{2^B}$ une application qui sera dite semi-continue inf\'erieurement si, pour tout ouvert $V$ de $B$, l'ensemble $\{a\in{A}\textrm{ : }\Omega(a)\cap{V}\neq\emptyset\}$ est un ouvert de $A$.

\begin{Def}
Soit $A$ et $B$ des espaces topologiques et $\Omega : A\rightarrow{2^B}$ une application. On appelle s\'election pour l'application $\Omega$ une application continue $f : A\rightarrow{B}$ telle que, pour tout $a$ dans $A$, l'\'el\'ement $f(a)$ est dans $\Omega(a)$.
\end{Def}

On consid\`ere \`a partir de maintenant que $B$ est un espace de Banach et on note dans la suite de cette sous section $\mathfrak{F}(B):=\big\{S\subset{B}\textrm{ : }S\textrm{ convexe et ferm\'ee}\big\}$ qui est une sous famille de $2^B$. Michael donne alors une caract\'erisation des applications $\Omega : A\rightarrow\mathfrak{F}(B)$ qui admettent une s\'election par le th\'eor\`eme qui suit :
\begin{Theo}[\cite{Michael56}]{\label{TheoSelecMicha}}
Les assertions suivantes sont \'equivalentes :
\begin{enumerate}
\item[a)] A est un espace paracompact
\item[b)] toute application $\Omega : A\rightarrow\mathfrak{F}(B)$ semi-continue inf\'erieurement admet une s\'election.
\\
\end{enumerate}
\vspace{+1mm}
\end{Theo}
\begin{Rem}
L'int\'er\^et d'introduire l'espace $M$ r\'eside dans le fait que l'application $\tilde{\pi} : M\rightarrow{X}$ est continue, surjective, propre et ouverte, comme le prouve la proposition {\ref{tildepicanonique}}. L'espace des mesures de Radon $R(Y)$ \'etant un espace de Banach, on va, par le th\'eor\`eme de s\'election de Michael \ref{TheoSelecMicha}, en d\'eduire l'existence d'une section continue de $\tilde{\pi}$. En munissant l'espace $M$ d'une structure de $\mathcal{G}$-espace localement compact, on peut ainsi appliquer la propri\'et\'e universelle de l'espace $\beta_X\mathcal{G}$ et d\'efinir une application continue et $\mathcal{G}$-\'equivariante $\tilde{\theta} : \beta_X\mathcal{G}\rightarrow{M}$.
\vspace{+2mm}
\end{Rem}

\begin{Pro}{\label{SectionCont}}
L'application $\tilde{\pi} : M\rightarrow{X}$ continue, surjective, propre et ouverte admet une section continue (non n\'ecessairement $\mathcal{G}$-\'equivarainte).
\end{Pro}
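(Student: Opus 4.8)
Le plan est d'appliquer le théorème de sélection de Michael (théorème \ref{TheoSelecMicha}) à l'application multivoque $\Omega : X \to \mathfrak{F}\big(R(Y)\big)$ définie, pour $x$ dans $X$, par $\Omega(x) := \tilde\pi^{-1}(x)$, c'est-à-dire l'ensemble des mesures de probabilité de Radon à support dans la fibre compacte $Y_x = \pi^{-1}(x)$. Une sélection de $\Omega$ étant exactement une section continue de $\tilde\pi : M \to X$, le résultat suivra de la vérification des hypothèses du théorème de Michael.

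Je commencerais par remarquer que $X$ est paracompact : étant localement compact, séparé et $\sigma$-compact, il s'écrit $X = \bigcup_{n \in \NN} K_n$ avec les $K_n$ compacts et $K_n \subset \mathring{K}_{n+1}$ (c'est la suite déjà utilisée pour construire l'approximation de l'unité $(e_n)_{n \in \NN}$), donc $X$ est de Lindel\"of et régulier, donc paracompact. Ensuite je vérifierais que $\Omega$ prend bien ses valeurs dans $\mathfrak{F}\big(R(Y)\big)$ : pour tout $x$, l'ensemble $\Omega(x)$ est non vide (la fibre $Y_x$ est un compact non vide qui porte des mesures de Dirac, ce qui redonne d'ailleurs la surjectivité de $\tilde\pi$), convexe (une combinaison convexe de mesures de probabilité à support dans $Y_x$ est de la même forme), et fermé dans $R(Y)$ (une limite en norme de mesures positives de masse $1$ à support dans le fermé $Y_x$ est encore positive, de masse $1$ et à support dans $Y_x$).

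Le point essentiel sera la semi-continuité inférieure de $\Omega$. On a, pour tout ouvert $V$, l'égalité $\big\{x \in X \textrm{ : } \Omega(x) \cap V \neq \varnothing\big\} = \tilde\pi\big(V \cap M\big)$, car $\Omega(x) \cap V \neq \varnothing$ signifie exactement qu'il existe $\mu$ dans $M$ vérifiant $\mu \in V$ et $\tilde\pi(\mu) = x$. Or $V \cap M$ est un ouvert de $M$ et l'application $\tilde\pi : M \to X$ est ouverte d'après la proposition \ref{tildepicanonique} ; l'ensemble $\tilde\pi\big(V \cap M\big)$ est donc ouvert dans $X$, et $\Omega$ est semi-continue inférieurement. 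Le théorème \ref{TheoSelecMicha} fournit alors une sélection continue $\sigma : X \to R(Y)$ de $\Omega$, qui vérifie $\sigma(x) \in \Omega(x) \subset M$ et $\tilde\pi\big(\sigma(x)\big) = x$ pour tout $x$ : c'est la section continue cherchée, sans qu'on puisse en général lui imposer d'être $\mathcal{G}$-équivariante.

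Le travail vraiment difficile a en fait déjà été accompli dans la proposition \ref{tildepicanonique} : c'est le caractère ouvert de $\tilde\pi$, dont la preuve construit, pour $x$ proche de $x_0$, une mesure de probabilité sur $Y_x$ arbitrairement proche (au sens faible-$\ast$) d'une mesure prescrite sur $Y_{x_0}$, au moyen de partitions de l'unité sur le compact $Y_{x_0}$ ; ceci acquis, la semi-continuité inférieure de $\Omega$ est formelle. La seule précaution à prendre est d'ordre topologique : $M$ est muni de la topologie $\ast\sigma(R(Y),C_0(Y))$ et les fibres $\Omega(x) = \mathrm{Prob}(Y_x)$ (fermées dans le compact $M$) sont alors compactes et convexes, de sorte que c'est la variante à valeurs compactes convexes du théorème de sélection qu'il convient d'invoquer, la continuité de $\sigma$ étant entendue pour cette topologie faible-$\ast$. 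Une fois cette section construite, $M$ satisfera toutes les hypothèses de la propriété universelle de la proposition \ref{UnivSpace}, ce qui fournira l'application continue $\mathcal{G}$-équivariante $\tilde\theta : \beta_X\mathcal{G} \to M$ attendue pour la suite.
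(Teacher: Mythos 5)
Votre d\'emonstration est correcte et suit le m\^eme sch\'ema global que celle du texte (th\'eor\`eme de s\'election de Michael appliqu\'e \`a la m\^eme application multivoque $\Omega(x)=\tilde\pi^{-1}(x)$, \`a valeurs non vides, convexes et ferm\'ees, avec $X$ paracompact), mais la v\'erification du point cl\'e diff\`ere r\'eellement. Le texte \'etablit la semi-continuit\'e inf\'erieure de $\Omega$ par l'absurde, en reprenant int\'egralement l'argument de la proposition \ref{tildepicanonique} (suite g\'en\'eralis\'ee $(x_\lambda)_{\lambda\in\Lambda}$, recouvrement ouvert fini de $Y_{x'}$, partition de l'unit\'e, mesures $\mu_\lambda=\sum_j\omega_j\delta_j$), tandis que vous la d\'eduisez formellement de l'identit\'e $\big\{x\in{X}\textrm{ : }\Omega(x)\cap{V}\neq\varnothing\big\}=\tilde\pi(V\cap{M})$ combin\'ee au caract\`ere ouvert de $\tilde\pi$ d\'ej\`a d\'emontr\'e ; votre r\'eduction est plus \'economique, \'evite de dupliquer un argument d\'elicat et rend transparent le fait que toute la substance de la proposition r\'eside dans l'ouverture de $\tilde\pi$. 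Vous avez par ailleurs raison de soulever la pr\'ecaution topologique : l'\'enonc\'e \ref{TheoSelecMicha} est formul\'e pour un espace de Banach, donc pour la topologie de la norme de $R(Y)$, alors que l'ouverture de $\tilde\pi$ et l'identit\'e ci-dessus ne fournissent la semi-continuit\'e inf\'erieure que pour la topologie $\ast$-faible $\sigma\big(R(Y),C_0(Y)\big)$ ; il faut donc bien invoquer la variante du th\'eor\`eme de s\'election \`a valeurs convexes compactes dans un espace localement convexe, la section obtenue n'\'etant continue que pour cette topologie --- ce qui suffit, puisque c'est pr\'ecis\'ement celle de $M$. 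Notez que la d\'emonstration du texte, qui ne traite que des ouverts \'el\'ementaires $\ast$-faibles, est sujette \`a la m\^eme r\'eserve, que vous \^etes le seul \`a expliciter.
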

\begin{proof}
Il s'agit de r\'eunir les conditions pour appliquer le th\'eor\`eme de s\'election de Michael ${\ref{TheoSelecMicha}}$. L'espace $M$ est un sous espace de l'espace des mesures de Radon $R(Y)$ qui est un espace de Banach. Pour tout $x$ dans $X$, on note $M_x:=\tilde{\pi}^{-1}\big(\{x\}\big)$ et on a $M=\{M_x\}_{x\in{X}}$ qui est une sous famille de $2^{R(Y)}$. On voit clairement que pour tout $x$ dans $X$, l'espace $M_x$ est ferm\'e et convexe et $M$ est donc une sous famille de $\mathfrak{F}\big(R(Y)\big)$. On pose $\Omega : X\rightarrow\mathfrak{F}\big(R(Y)\big)$ l'application d\'efinie, pour tout $x$ dans $X$, par $\Omega(x)=M_x$. Pour montrer que l'application $\Omega : X\rightarrow\mathfrak{F}\big(R(Y)\big)$ est semi-continue inf\'erieurement, on raisonne par l'absurde : on suppose qu'il existe un ouvert $V$ dans $R(Y)$ pour lequel l'ensemble $\{x\in{X}\textrm{ : }\Omega(x)\cap{V}\neq\emptyset\}$ n'est pas ouvert dans $X$. On peut supposer qu'il existe $a$ un r\'eel strictement positif, $\mu_0$ dans $R(Y)$ et une famille $(f_i)_{i=1}^m$ de fonctions dans $C_c(Y)$ tels que $V$ est de la forme 
$$\big\{\mu\in{R(Y)}\textrm{ : }\lvert{\mu(f_i)-\mu_0(f_i)}\lvert<a\textrm{, }\forall{i}\in{\{1,...,m\}}\big\}$$ 
L'espace $\{x\in{X}\textrm{ : }\Omega(x)\cap{V}\neq\emptyset\}$ \'etant suppos\'e non ouvert dans $X$ (et donc non vide), il existe $x'$ dans $X$ et $\mu'$ dans $M\cap{V}$ avec $\tilde{\pi}(\mu')=x'$, tels que pour tout voisinage ouvert $U'$ de $x'$ dans $X$, $U'$ n'est pas contenu dans $\{x\in{X}\textrm{ : }\Omega(x)\cap{V}\neq\emptyset\}$. 
\\
On consid\`ere $\varepsilon$ un r\'eel dans $]0,a/2[$, v\'erifiant, pour tout $i$ dans $\{1,...,m\}$, l'in\'egalit\'e 
$$\lvert{\mu'(f_i)-\mu_0(f_i)}\lvert<a-2\varepsilon$$
On applique un raisonnement similaire \`a celui de la d\'emonstration de ${\ref{tildepicanonique}}$ et on construit alors
\begin{enumerate}
\item[$\bullet$] une suite g\'en\'eralis\'ee $(x_\lambda)_{\lambda\in\Lambda}$ convergeant dans $X$ vers $x'$ tel que pour tout $\lambda$ dans $\Lambda$, l'intersection de $M_{\mu_\lambda}$ avec $V$ est vide
\item[$\bullet$] un recouvrement ouvert fini $\{U_{z_j}\}_{j=1}^n$ de $Y_{x'}$ pour lequel il existe $\lambda_n$ dans $\Lambda$ tel que, pour tout $\lambda>\lambda_n$, l'\'el\'ement $x_\lambda$ est dans $\cap_{j=1}^n\pi(U_{z_j})$.
\item[$\bullet$] pour $\lambda>\lambda_1$, une mesure $\mu_\lambda$ dans $M_{x_\lambda}$ telle que pour tout $i$ dans $\{1,...,m\}$, on a $\lvert{\mu_\lambda(f_i)-\mu'(f_i)}\lvert\leq2\varepsilon$
\end{enumerate}
On aboutit alors \`a la contradiction $\lvert{\mu'(f_i)-\mu_0(f_i)}\lvert\geq{a-2\varepsilon}$. Donc l'application $\Omega : X\rightarrow\mathfrak{F}\big(R(Y)\big)$ est semi-continue inf\'erieurement. L'espace $X$ \'etant paracompact, l'application $\Omega$ admet une s\'election et il existe donc une section continue de l'application $\tilde{\pi} : M\rightarrow{X}$ que l'on note $\sigma : X\rightarrow{M}$.
\\
\end{proof}

Le $\mathcal{G}$-espace localement compact et s\'epar\'e $M$ satisfaisant les conditions de la propri\'et\'e universelle, on obtient alors la proposition suivante : 
\begin{Pro}{\label{ProSectionCont}}
Il existe une application $\tilde{\theta} : \beta_X\mathcal{G}\rightarrow{M}$ continue, propre et $\mathcal{G}$-\'equivariante.
\end{Pro}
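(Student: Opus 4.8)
Le plan est d'appliquer directement la propri\'et\'e universelle de $\beta_X\mathcal{G}$, \`a savoir la proposition \ref{UnivSpace}, au $\mathcal{G}$-espace de mesures $M$ construit aux parties \ref{Partie4} et \ref{Partie5}. Il n'y a donc rien de v\'eritablement nouveau \`a d\'emontrer : tout le travail consiste \`a v\'erifier que $M$, muni de son application momentum $\tilde{\pi} : M \to X$, satisfait chacune des hypoth\`eses de la proposition \ref{UnivSpace}, puis \`a lire la conclusion.

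Concr\`etement, je rassemblerai les trois ingr\'edients d\'ej\`a \'etablis. D'une part, la proposition \ref{EspaceEquivLocComp} affirme que $M$ est un $\mathcal{G}$-espace localement compact, l'action de $\mathcal{G}$ \'etant continue (et $M$ est s\'epar\'e comme sous-espace de $R(Y)$ pour la topologie faible). D'autre part, la proposition \ref{tildepicanonique} montre que $\tilde{\pi} : M \to X$ est continue, surjective, ouverte et propre ; en particulier elle est continue, surjective et propre. Enfin, la proposition \ref{SectionCont} fournit, via le th\'eor\`eme de s\'election de Michael, une section continue $\sigma : X \to M$ de $\tilde{\pi}$ (non n\'ecessairement $\mathcal{G}$-\'equivariante).

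Ces points \'etant acquis, il suffit d'appliquer la proposition \ref{UnivSpace} avec $Z := M$ et $\pi := \tilde{\pi}$. Elle produit exactement une application $\tilde{\theta} : \beta_X\mathcal{G} \to M$ continue, $\mathcal{G}$-\'equivariante et propre, ce qui est l'\'enonc\'e cherch\'e ; concr\`etement $\tilde{\theta}$ est l'application induite, via la proposition \ref{ProNonDeg}, par l'homomorphisme non d\'eg\'en\'er\'e $\theta^\ast : C_0(M) \to C_0^s$ associ\'e \`a $\theta(\gamma) = \sigma(r(\gamma)).\gamma$.

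Je ne m'attends \`a aucun obstacle \`a cette \'etape : la difficult\'e a \'et\'e enti\`erement absorb\'ee en amont, d'une part dans l'\'etude de l'espace de mesures $M$ (compacit\'e de la boule unit\'e de $R(Y)$ via Banach-Alaoglu pour la propret\'e, argument de partition de l'unit\'e analogue \`a celui de \ref{tildepicanonique} pour la semi-continuit\'e inf\'erieure exig\'ee par Michael), d'autre part dans la preuve de la propri\'et\'e universelle elle-m\^eme. Le seul point qui m\'erite d'\^etre soulign\'e est que la section $\sigma$ n'est pas \'equivariante, mais la proposition \ref{UnivSpace} a pr\'ecis\'ement \'et\'e formul\'ee pour s'en passer, l'\'equivariance de $\tilde{\theta}$ \'etant r\'ecup\'er\'ee \emph{a posteriori} gr\^ace \`a l'\'equivariance de l'homomorphisme $\theta^\ast$.
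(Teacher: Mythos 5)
Votre démonstration est correcte et suit exactement la même démarche que celle du papier : on réunit les conclusions des propositions \ref{EspaceEquivLocComp}, \ref{tildepicanonique} et \ref{SectionCont}, puis on applique la propriété universelle \ref{UnivSpace} avec $Z=M$ et $\pi=\tilde{\pi}$. Rien à redire.
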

\begin{proof}
D'apr\`es $\ref{EspaceEquivLocComp}$, l'espace $M$ est un $\mathcal{G}$-espace localement compact. D'apr\`es $\ref{tildepicanonique}$ et $\ref{SectionCont}$ l'application $\tilde{\pi} : M\rightarrow{Z}$ est continue, surjective, propre et admet une section continue $\theta : X\rightarrow{M}$. Les conditions de la proposition $\ref{UnivSpace}$ \'etant r\'eunies, il existe une application $\tilde{\theta} : \beta_X\mathcal{G}\rightarrow{M}$ continue, propre et $\mathcal{G}$-\'equivariante.
\\
\end{proof}

\subsection  {Moyennabilit\'e}\label{Partie6}

La moyennabilit\'e \`a l'infini du groupo\"{\i}de $\mathcal{G}$ se caract\'erise par l'existence d'un $\mathcal{G}$-espace localement compact et s\'epar\'e $Y$ tel que $\pi : Y\rightarrow{X}$ une application continue, propre, surjective et ouverte pour lequel l'action est moyennable. On a construit \`a l'aide de mesures de probabilit\'e sur $Y$ le $\mathcal{G}$-espace localement compact $M$ pour lequel l'application $\tilde{\pi} : M\to{X}$ est continue, surjective, propre et admet une section continue (non n\'ecessairement \'equivariante). La proposition $\ref{ProSectionCont}$ donne l'existence d'une application $\tilde{\theta} : \beta_X\mathcal{G}\rightarrow{M}$ continue, propre et $\mathcal{G}$-\'equivariante. 
\\

Dans cette partie, on commence par prouver que la moyennabilit\'e du groupo\"{\i}de $Y\rtimes\mathcal{G}$ implique celle du groupo\"{\i}de $M\rtimes\mathcal{G}$. Une fois la moyennabilit\'e de $M\rtimes\mathcal{G}$ \'etablie, on s'attaque \`a la d\'emonstration du th\'eor\`eme \ref{TheoEquivEspace} : on va \'enoncer deux lemmes \ref{PositivePropre} et \ref{LemmeMoyContPropre} , qui nous permettront de "tir\'e en arri\`ere" la moyennabilit\'e  de $M\rtimes\mathcal{G}$ sur $\beta_X\mathcal{G}\rtimes\mathcal{G}$ en utilisant entre autre l'application $\tilde\theta : \beta_X\mathcal{G}\to{M}$.
\vspace{+2mm}
\\

\begin{Rem}\label{RemEspTopFib}
On consid\`ere $Y'$ un sous espace de $Y$ et $\mathcal{G}'$ un sous espace de $\mathcal{G}$. On note $Y'\ast_{\pi,r}\mathcal{G}'$ le sous espace de $Y\ast_{\pi,r}\mathcal{G}$ d\'efini par $Y'\ast_{\pi,r}\mathcal{G}':=\big\{(y,\gamma)\in{Y'\times\mathcal{G}'}\textrm{ : }\pi(y)=r(\gamma)\big\}$. Si les espaces $Y'$ et $\mathcal{G}'$ sont compacts respectivement dans $Y$ et $\mathcal{G}$, alors l'espace $Y'\ast_{\pi,r}\mathcal{G}'$ est compact dans $Y\ast_{\pi,r}\mathcal{G}$; en effet, l'espace $Y'\times\mathcal{G}'$ est compact dans $Y\times\mathcal{G}$, l'espace $Y\ast_{\pi,r}\mathcal{G}$ est ferm\'e dans $Y\times\mathcal{G}$ et on a $Y'\ast_{\pi,r}\mathcal{G}'=\big(Y'\times\mathcal{G}'\big)\cap\big({Y\ast_{\pi,r}\mathcal{G}}\big)$.
\\
\end{Rem}

On a la proposition suivante : 
\begin{Pro}{\label{ProMoy}}
Si $Y\rtimes\mathcal{G}$ est un groupo\"{\i}de moyennable, alors $M\rtimes\mathcal{G}$ est un groupo\"{\i}de moyennable.
\end{Pro}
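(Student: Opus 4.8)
The plan is to use the functional description of amenability of an \'etale groupoid (cf. \cite{AnantharamanRenault}): an \'etale groupoid $H$ is amenable if and only if there is a net $(g_i)$ of nonnegative functions in $C_c(H)$, with finite fibrewise sums, such that $\sum_{h\in{H^u}}g_i(h)\to{1}$ uniformly for $u$ in compact subsets of $H^{(0)}$ and $\sum_{h\in{H^{r(h_0)}}}\bigl\lvert{g_i(h_0^{-1}h)-g_i(h)}\bigr\rvert\to{0}$ uniformly for $h_0$ in compact subsets of $H$. Since $\mathcal{G}$ is \'etale, so are the action groupoids $Y\rtimes\mathcal{G}$ and $M\rtimes\mathcal{G}$; for $H=Y\rtimes\mathcal{G}$, whose unit space is $Y$ and whose fibre over $y$ is canonically $\mathcal{G}^{\pi(y)}$, such a net is a family of functions $(y,\gamma)\mapsto{g_i(y,\gamma)}$ on $Y\ast_{\pi,r}\mathcal{G}$, and similarly for $M\rtimes\mathcal{G}$ with $\tilde\pi$ in place of $\pi$.

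First I would transport a net $(g_i)$ witnessing amenability of $Y\rtimes\mathcal{G}$ to $M$ by fibrewise averaging against the measures: for $(\mu,\gamma)$ in $M\ast_{\tilde\pi,r}\mathcal{G}$ put
$$\tilde{g}_i(\mu,\gamma):=\int_{Y_{\tilde\pi(\mu)}}g_i(y,\gamma)\,d\mu(y),$$
which makes sense because $\tilde\pi(\mu)=r(\gamma)$ forces $\mathrm{Supp}(\mu)\subset{Y_{r(\gamma)}}$, where $g_i(\cdot,\gamma)$ is defined. Plainly $\tilde{g}_i\geq{0}$. Extending $g_i$ to a function in $C_c(Y\times\mathcal{G})$ (possible by Tietze's theorem, $Y\ast_{\pi,r}\mathcal{G}$ being closed in $Y\times\mathcal{G}$) one may rewrite $\tilde{g}_i(\mu,\gamma)=\int_{Y}g_i(y,\gamma)\,d\mu(y)$, and the continuity of $(\mu,\gamma)\mapsto\tilde{g}_i(\mu,\gamma)$ then follows from the uniform continuity of $g_i$ and from $M$ carrying the weak-$\ast$ topology $\ast\sigma(R(Y),C_0(Y))$; since $g_i$ has compact support in $Y\times\mathcal{G}$ and $\tilde\pi:M\to{X}$ is proper (Proposition \ref{tildepicanonique}), $\tilde{g}_i$ is compactly supported, so $\tilde{g}_i\in{C_c(M\rtimes\mathcal{G})}$.

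Next I would verify the two estimates. By Tonelli, $\sum_{\gamma\in\mathcal{G}^{\tilde\pi(\mu)}}\tilde{g}_i(\mu,\gamma)=\int_{Y}\bigl(\sum_{\gamma\in\mathcal{G}^{\pi(y)}}g_i(y,\gamma)\bigr)\,d\mu(y)$: as $\pi:Y\to{X}$ is proper, the supports of the $\mu$ with $\tilde\pi(\mu)$ in a fixed compact of $X$ all lie in one compact of $Y$, where the inner sum tends to $1$ uniformly, so $\sum_\gamma\tilde{g}_i(\mu,\gamma)\to{1}$ uniformly on compacts of $M$. For the invariance, the formula $\mu.\gamma_0(f)=\int_{Y_{r(\gamma_0)}}f(y\gamma_0)\,d\mu(y)$ of Proposition \ref{EspaceEquivLocComp} gives $\tilde{g}_i(\mu.\gamma_0,\gamma_0^{-1}\gamma)=\int_{Y_{r(\gamma_0)}}g_i(y\gamma_0,\gamma_0^{-1}\gamma)\,d\mu(y)$, whence
$$\sum_{\gamma\in\mathcal{G}^{r(\gamma_0)}}\bigl\lvert\tilde{g}_i(\mu.\gamma_0,\gamma_0^{-1}\gamma)-\tilde{g}_i(\mu,\gamma)\bigr\rvert\ \leq\ \int_{Y_{r(\gamma_0)}}\Bigl(\sum_{\gamma\in\mathcal{G}^{r(\gamma_0)}}\bigl\lvert{g_i(y\gamma_0,\gamma_0^{-1}\gamma)-g_i(y,\gamma)}\bigr\rvert\Bigr)\,d\mu(y).$$
The inner sum is exactly the invariance defect of $(g_i)$ at the element $(y,\gamma_0)$ of $Y\rtimes\mathcal{G}$, hence tends to $0$ uniformly for $(y,\gamma_0)$ in compacts of $Y\rtimes\mathcal{G}$; since $\mu$ is a probability measure and, by Remark \ref{RemEspTopFib}, a compact $L\subset{M\rtimes\mathcal{G}}$ confines the relevant $(y,\gamma_0)$ to the compact $\pi^{-1}\bigl(\tilde\pi(\mathrm{pr}_M(L))\bigr)\ast_{\pi,r}\mathrm{pr}_\mathcal{G}(L)$ of $Y\rtimes\mathcal{G}$, the left-hand side tends to $0$ uniformly on compacts of $M\rtimes\mathcal{G}$. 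Thus $(\tilde{g}_i)$ witnesses amenability of $M\rtimes\mathcal{G}$.

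I expect the main obstacle to lie in the passage from ``uniform on compacts of $Y\rtimes\mathcal{G}$'' to ``uniform on compacts of $M\rtimes\mathcal{G}$''; the decisive inputs are the properness of $\pi:Y\to{X}$ — which confines the supports of a compact subset of $M$ to a single compact of $Y$ — and the elementary stability of fibred products under compactness recorded in Remark \ref{RemEspTopFib}. The continuity and the compact support of $\tilde{g}_i$ also require a short argument, but it reduces to the observation that, after the Tietze extension, $g_i$ is uniformly continuous and compactly supported on $Y\times\mathcal{G}$.
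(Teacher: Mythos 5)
Votre preuve est correcte et repose sur la m\^eme id\'ee centrale que celle du texte : on transporte le t\'emoin de moyennabilit\'e de $Y\rtimes\mathcal{G}$ sur $M\rtimes\mathcal{G}$ en moyennant fibre \`a fibre contre les mesures, $\tilde{g}_i(\mu,\gamma)=\int_Y g_i(y,\gamma)\,d\mu(y)$, et les deux ingr\'edients de compacit\'e (propret\'e de $\pi$ et de $\tilde\pi$, remarque \ref{RemEspTopFib}) sont exactement ceux qu'utilise le texte. La diff\'erence r\'eelle est le choix du t\'emoin : vous partez de la caract\'erisation de la moyennabilit\'e par densit\'es approximativement invariantes (le filet $(g_i)$ positif dans $C_c$ avec $\sum_{h\in H^u}g_i(h)\to 1$ et d\'efaut d'invariance $\ell^1$ tendant vers $0$), tandis que le texte part directement d'une suite $(\phi_i)$ de fonctions continues de type positif, \`a support compact, avec $\phi_i^{(0)}\leq 1$ et $\phi_i\to 1$ uniform\'ement sur les compacts, et v\'erifie que $\psi_i(\mu,\gamma)=\int_Y\phi_i(y,\gamma)\,d\mu(y)$ reste de type positif (un calcul direct utilisant $d(\mu.\gamma_p)$). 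Votre route rend les estimations plus concr\`etes (in\'egalit\'e triangulaire et Tonelli dans $\ell^1$, comme dans le cas des groupes) mais exige d'invoquer l'\'equivalence des deux caract\'erisations pour les groupo\"{\i}des \'etales, \'equivalence \'etablie dans \cite{AnantharamanRenault} et que vous feriez bien de citer pr\'ecis\'ement dans les deux sens (pour extraire le filet $(g_i)$ de l'hypoth\`ese sur $Y\rtimes\mathcal{G}$ et pour conclure sur $M\rtimes\mathcal{G}$); la route du texte \'evite ce d\'etour au prix de la v\'erification du caract\`ere de type positif des $\psi_i$. Les v\'erifications de continuit\'e (via une troncature ou une extension de Tietze et la topologie $\ast$-faible de $M$) et de support compact sont identiques dans les deux arguments.
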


\begin{proof}
On suppose que $Y\rtimes\mathcal{G}$ est un groupo\"{\i}de moyennable c'est-\`a-dire qu'il existe une suite $(\phi_i)_{i\in{I}}$ de fonctions continues, de type positif et \`a support compact dans $Y\rtimes{\mathcal{G}}$ telle que 
\begin{itemize}
\item[$\bullet$] $\forall{i}\in{I}\textrm{, }\phi_i^{(0)}\leq{1}$ o\`u $\phi_i^{(0)}$ est la restriction de $\phi_i$ \`a $\big(Y\rtimes\mathcal{G}\big)^{(0)}$
\item[$\bullet$] $\lim_i\phi_i=1$ uniform\'ement sur tout sous espace compact de $Y\rtimes\mathcal{G}$
\\
\end{itemize}

Pour tout $i$ dans $I$, on pose $\psi_i$ la fonction d\'efinie, pour tout $(\mu,\gamma)$ dans $M\rtimes\mathcal{G}$, par 
$$\psi_i(\mu,\gamma):=\int_{Y}\phi_i(y,\gamma)d\mu(y)$$
On obtient ainsi une suite de fonctions $(\psi_i)_{i\in{I}}$ d\'efinie sur $M\rtimes\mathcal{G}$ et on montre successivement que les fonctions $\psi_i$ sont continues, de type positif, \`a support compact et qu'elles v\'erifient un crit\`ere de convergence uniforme sur les compacts ce qui implique la moyennabilit\'e du groupo\"{\i}de $M\rtimes\mathcal{G}$.
\\

On prouve tout d'abord la continuit\'e des fonctions $\psi_i$ : soit $i$ dans ${I}$ et $(\mu_0,\gamma_0)$ un \'el\'ement de ${M\rtimes\mathcal{G}}$. Le groupo\"{\i}de $\mathcal{G}$ \'etant \'etale et localement compact, on consid\`ere $U_0$ un voisinage ouvert de $\gamma_0$ hom\'eomorphe \`a l'ouvert $s(U_0)$ de $X$ et $K_0$ un voisinage compact de $\gamma_0$ inclus dans $U_0$. Il existe une fonction continue $\varphi$ \`a support dans $U_0$ telle que $\varphi\lvert_{K_0}\equiv{1}$. On d\'efinit une fonction $\tilde\phi_i$ sur Y par
$$\tilde\phi_i(y)=\left\{
    \begin{array}{ll}
        \varphi(\gamma)\phi_i(y,\gamma), & \mbox{si } y\in{\pi^{-1}(r(U_0))} \mbox{, } \gamma\in{U_0}\\
        0, & \mbox{si }  y\notin{\pi^{-1}(r(U_0))}    \end{array}
\right.
 $$
qui est continue sur $Y$. On a, pour tout $(\mu,\gamma)$ dans $M\ast_{\tilde{\pi},r}{U_0}$,
\begin{align*}
\big\lvert{\psi_i(\mu,\gamma)-\psi_i(\mu_0,\gamma_0)}\big\lvert&=\bigg\lvert{\int_Y\phi_i(y,\gamma)d\mu(y)-\int_Y\phi_i(y,\gamma_0)d\mu_0(y)}\bigg\lvert \\
&=\bigg\lvert{\int_Y\tilde\phi_i(y)d\mu(y)-\int_Y\tilde\phi_i(y)d\mu_0(y)}\bigg\lvert
\end{align*}
Soit $((\mu_\lambda,\gamma_\lambda))_{\lambda\in\Lambda}$ une suite d'\'el\'ements de $M\rtimes\mathcal{G}$ qui converge vers $(\mu_0,\gamma_0)$. Pour tout r\'eel $\varepsilon$ strictement positif, il existe $V_0$ un voisinage ouvert de $\mu_0$ dans $M$ tel que, pour tout $\mu$ dans ${V_0}$, on a 
$$\bigg\lvert{\int_Y\tilde\phi_i(y)d\mu(y)-\int_Y\tilde\phi_i(y)d\mu_0(y)}\bigg\lvert<\varepsilon$$
Ainsi il existe $\lambda'$ dans $\Lambda$ tel que pour tout $\lambda\geq\lambda'$, l'\'el\'ement $(\mu_\lambda,\gamma_\lambda)$ est dans ${V_0\ast_{\tilde{\pi},r}{U_0}}$ et donc
\begin{align*}
\big\lvert{\psi_i(\mu_\lambda,\gamma_\lambda)-\psi_i(\mu_0,\gamma_0)}\big\lvert&=\bigg\lvert{\int_Y\tilde\phi_i(y)d\mu_\lambda(y)-\int_Y\tilde\phi_i(y)d\mu_0(y)}\bigg\lvert<\varepsilon
\end{align*}
Ainsi pour tout $i$ dans $I$, la fonction $\psi_i$ est continue sur $M\rtimes\mathcal{G}$.
\\

On montre que les fonctions $\psi_i$ sont de type positif : pour tout $i$ dans $I$, la fonction $\psi_i$ est de type positif : soient ${n}$ dans $\NN\textrm{, }\lambda_1,...,\lambda_n$ dans $\CC\textrm{, }\mu$ dans ${M}\textrm{ et }{\gamma_1,...,\gamma_n}$ dans $\mathcal{G}^{\tilde\pi(\mu)}$, on a 

\begin{align*}
A&=\sum_{p,q=1}^n\overline{\lambda_p}\lambda_q\psi_i(\mu\gamma_p,\gamma_p^{-1}\gamma_q)= \sum_{p,q=1}^n\overline{\lambda_p}\lambda_q\int_{Y_{s(\gamma_p)}}\phi_i(y,\gamma_p^{-1}\gamma_q)d\mu.\gamma_p(y)
\\
&=\sum_{p,q=1}^n\int_{Y_{r(\gamma_p)}}\overline{\lambda_p}\lambda_q\phi_i(z.\gamma_p,\gamma_p^{-1}\gamma_q)d\mu(z)
\\
&=\int_{Y_{r(\gamma_p)}}\sum_{p,q=1}^n\overline{\lambda_p}\lambda_q\phi_i(z.\gamma_p,\gamma_p^{-1}\gamma_q)d\mu(z)\geq{0}
\end{align*}
car chaque fonction $\phi_i$ est de type positif donc $\sum_{p,q=1}^n\overline{\lambda_p}\lambda_q\phi_i(z.\gamma_p,\gamma_p^{-1}\gamma_q)\geq0$. Pour tout $i$ de $I$, la fonction $\psi_i$ est de type positif.
\\

On montre maintenant que les fonctions $\psi_i$ sont \`a support compact : soit $i\in{I}$, on pose $K_i:=Supp(\psi_i)$. La fonction $\phi_i$ \'etant \`a support compact, l'espace $Q_i:=Supp(\phi_i)$ est un sous espace compact de $Y\rtimes{\mathcal{G}}$; les projections $p_1 : Y\rtimes\mathcal{G}\rightarrow{Y}$ et $p_2 : Y\rtimes\mathcal{G}\rightarrow\mathcal{G}$ \'etant continues, les espaces $p_1(Q_i)$ et $p_2(Q_i)$ sont compacts respectivement dans Y et $\mathcal{G}$ et $C_i:=\pi(p_1(Q_i))=r(p_2(Q_i))$ est un sous espace compact de $X$; d'apr\`es la remarque $\ref{RemEspTopFib}$, $\tilde\pi^{-1}(C_i)\ast_{\tilde{\pi},r}{p_2(Q_i)}$ est compact dans $M\rtimes\mathcal{G}$. Le support de $\psi_i$ est donc un ferm\'e contenu dans le sous espace compact $\tilde\pi^{-1}(C_i)\rtimes{p_2(Q_i)}$.  Pour tout $i$ dans $I$, la fonction $\psi_i$ est \`a support compact dans $M\rtimes\mathcal{G}$.
\\

On montre le crit\`ere de convergence uniforme : soit $K$ un sous espace compact de $M\rtimes\mathcal{G}$, alors on a pour tout $(\mu,\gamma)$ dans ${K}$
\begin{align*}
\lvert{\psi_i(\mu,\gamma)-1}\lvert&=\bigg\lvert{\int_Y{\phi_i(y,\gamma)}d\mu(y)-1}\bigg\lvert=\bigg\lvert{\int_Y{\big(\phi_i(y,\gamma)-1\big)}d\mu(y)}\bigg\lvert
\\
&\leq\int_Y\big\lvert{{\phi_i(y,\gamma)-1\big\lvert}d\mu(y)}
\end{align*}
On pose $p_1(K)$ l'image de $K$ par la premi\`ere projection sur $M$. L'espace $p_1(K)$ est un sous espace compact de $M$ et $\tilde\pi(p_1(K))$ est un sous espace compact de $X$. Pour $(\mu,\gamma)$ dans ${K}$, il existe $x$ dans ${\tilde\pi(p_1(K))}$ tel que $Supp(\mu)\subset{Y_x}$ et on a 
$$\big\{y\in{Y}\textrm{ : }\exists{(\mu,\gamma)\in{K}}\textrm{, }y\in{Supp(\mu)}\big\}\subset{\pi^{-1}\big(\tilde\pi(p_1(K))\big)}$$
o\`u $\tilde{K}:=\pi^{-1}\big(\tilde\pi(p_1(K))\big)$ est un sous espace compact de $Y$, car l'application $\pi : Y\rightarrow{X}$ est propre. De plus $p_2(K)$ est un sous espace compact de $\mathcal{G}$, donc d'apr\`es la remarque $\ref{RemEspTopFib}$, l'espace $\tilde{K}\ast_{\pi,r}{p_2(K)}$ est un sous espace compact de $Y\rtimes\mathcal{G}$. 
Alors pour tout r\'eel $\varepsilon$ strictement positif, il existe $i_\varepsilon$ dans ${I}$ tel que pour tout $i\geq{i_\varepsilon}$ on a pour tout ${(y,\gamma)}$ dans ${\tilde{K}\ast_{\pi,r}{p_2(K)}}$
$$\lvert{\phi_i(y,\gamma)}-1\lvert<\varepsilon$$
ainsi pour tout $(\mu,\gamma)$ dans ${K}$ et tout $i\geq{i_\varepsilon}$, on a
$$\big\lvert{\psi_i(\mu,\gamma)-1}\big\lvert\leq\int_Y\big\lvert{\phi_i(y,\gamma)-1}\big\lvert{d\mu}(y)<\varepsilon\int_Yd\mu(y)<\varepsilon$$ 
Donc $\lim_i\psi_i=1$ uniform\'ement sur tout compact $K$ de $M\rtimes\mathcal{G}$.
\\

Enfin pour tout $i\in{I}$, on note $\psi_i^{(0)}$ la restriction de $\psi_i$ \`a $(M\rtimes\mathcal{G})^{(0)}$. On a pour tout $x\in{X}$ et tout $\mu\in{M_x}$
\begin{align*} 
\big\lvert{\psi_i^{(0)}}\big\vert&=\bigg\lvert{\int_{Y_x}}\phi_i(y,\varepsilon(x))d\mu(y)\bigg\lvert=\bigg\lvert\int_{Y_x}{\phi_i^{(0)}(y)}d\mu(y)\bigg\lvert
\\
&\leq\int_{Y_x}\lvert{\phi_i^{(0)}(y)}\lvert d\mu(y)\leq\int_{Y_x}1d\mu(y)\leq{1}
\end{align*}

On a montr\'e l'existence d'une suite $(\psi_i)_{i\in{I}}$ de fonctions continues, de type positif et \`a support compact dans $M\rtimes{\mathcal{G}}$ telle que 
\begin{itemize}
\item[$\bullet$] pour tout $i\in{I}$, $\phi_i^{(0)}\leq{1}$ o\`u $\phi_i^{(0)}$ est la restriction de $\phi_i$ \`a $\big(M\rtimes\mathcal{G}\big)^{(0)}$
\item[$\bullet$] $\phi_i$ converge vers un sur tout sous espace compact de $M\rtimes\mathcal{G}$
\end{itemize}
ce qui prouve que le groupo\"{\i}de $M\rtimes\mathcal{G}$ est moyennable.
\\
\end{proof}

\begin{Lemma}{\label{PositivePropre}}
Soient $\mathcal{H}_1$ et $\mathcal{H}_2$ des groupo\"{\i}des topologiques. Soient $f : \mathcal{H}_1\rightarrow\mathcal{H}_2$ un homomorphisme de groupo\"{\i}de continu et $\varphi$ une fonction continue sur $\mathcal{H}_2$ et de type positif. 
\begin{itemize}
\item[$(a)$] la fonction $\varphi\circ{f}$ est continue  sur $\mathcal{H}_1$ et de type positif
\item[$(b)$] si $f$ est propre et $\varphi$ \`a support compact dans $\mathcal{H}_2$, alors $\varphi\circ{f}$ est \`a support compact dans $\mathcal{H}_1$
\end{itemize} 
\end{Lemma}
\begin{proof}
$(a)$ Il est clair que la composition $\varphi\circ{f}$ est continue sur $\mathcal{H}_1$. La fonction $\varphi$ \'etant de type positif, on a pour tout entier $n$, $x$ dans $\mathcal{H}_2^{(0)}$,  $\gamma_1,...,\gamma_n$ dans $\mathcal{H}_2^{x}$ et tout complexe $\lambda_1,...,\lambda_n$
$$\sum_{i,j=1}^n\varphi(\gamma_i^{-1}\gamma_j)\overline{\lambda_i}\lambda_j\geq{0}$$
Ainsi on a pour tout $n$ entier, $x$ dans $\mathcal{H}_1^{(0)}$,  $\gamma_1,...,\gamma_n$ dans $\mathcal{H}_1^{x}$ et tout complexe $\lambda_1,...,\lambda_n$
\begin{align*}
A&=\sum_{i,j=1}^n\varphi\circ{f}(\gamma_i^{-1}\gamma_j)\overline{\lambda_i}\lambda_j=\sum_{i,j=1}^n\varphi\big(f(\gamma_i)^{-1}f(\gamma_j)\big)\overline{\lambda_i}\lambda_j
\\
&=\sum_{i,j=1}^n\varphi(\eta_i^{-1}\eta_j)\overline{\lambda_i}\lambda_j\geq{0}
\end{align*}
donc la fonction $\varphi\circ{f}$ est de type positif.
\\

$(b)$ On suppose que la fonction $\varphi$ est \`a support compact et que $f$ est propre. On note $K_1$ le support de la fonction $\varphi\circ{f}$, qui est ferm\'e de $\mathcal{H}_1$ et $K_2$ le support de $\varphi$, qui est compact dans $\mathcal{H}_2$. Soit $\gamma$ dans $\mathcal{H}_1$ tel que $\lvert{\varphi(f(\gamma))}\lvert>0$, alors $f(\gamma)$ est dans le support de $\varphi$ et $\gamma$ se trouve dans $f^{-1}(K_2)$. Comme $f$ est propre, on a $f^{-1}(K_2)$ est compact et $K_1$ est un sous espace ferm\'e de $f^{-1}(K_2)$, donc lui aussi compact. La fonction $\varphi\circ{f}$ est \`a support compact.
\\
\end{proof}

\begin{Lemma}{\label{LemmeMoyContPropre}}
Soit $f : \mathcal{H}_1\rightarrow\mathcal{H}_2$ un homomorphisme entre groupo\"{\i}des topologiques continu et propre. Si $\mathcal{H}_2$ est moyennable, alors $\mathcal{H}_1$ est moyennable.  
\end{Lemma}
\begin{proof}
Le groupo\"{\i}de $\mathcal{H}_2$ \'etant moyennable il existe une suite $(\varphi_i)_{i\in{I}}$ de fonctions continues sur $\mathcal{H}_2$, de type positif et \`a support compact telle que 
\begin{itemize}
\item[$\bullet$] pour tout $i\in{I}$, $\varphi_i^{(0)}\leq{1}$ o\`u $\phi_i^{(0)}$ est la restriction de $\varphi_i$ \`a $\mathcal{H}_2^{(0)}$
\item[$\bullet$] $\varphi_i$ converge vers un uniform\'ement sur tout sous espace compact de $\mathcal{H}_2$
\end{itemize}
Pour tout $i$ dans $I$, on note $\psi_i=\varphi_i\circ{f}$ la fonction d\'efinie sur $\mathcal{H}_1$. D'apr\`es le lemme $\ref{PositivePropre}$, chaque fonction $\psi_i$ est continue, de type positif et \`a support compact dans $\mathcal{H}_1$. Pour tout $i$ dans $I$ et $x$ dans $\mathcal{H}_1^{(0)}$, on a $\psi_i^{(0)}(x)=\varphi_i^{(0)}(f(x))\leq{1}$. Soit $K$ un sous espace compact de $\mathcal{H}_1$, par continuit\'e de $f$, le sous espace $f(K)$ est compact dans $\mathcal{H}_2$; la suite $(\varphi_i)_{i\in{I}}$ convergeant uniform\'ement vers $1$ sur $f(K)$, pour tout r\'eel $\varepsilon$ strictement positif, il existe $i'$ dans $I$ tel que pour tout $i\geq{i'}$ et tout $k'$ dans $f(K)$, on a $\lvert{\varphi(k')-1}\lvert<\varepsilon$. Ainsi on a pour tout $i\geq{i'}$ et tout $k$ dans $K$
$$\lvert{\psi_i(k)-1}\lvert=\lvert{\varphi_i(f(k))-1}\lvert<\varepsilon$$
Le groupo\"{\i}de $\mathcal{H}_1$ est moyennable.
\end{proof}

On en arrive \`a la d\'emonstration du th\'eor\`eme $\ref{TheoEquivEspace}$

\begin{proof}[D\'emonstration du Th\'eor\`eme $\ref{TheoEquivEspace}$]\label{DemoTheo2.5}

On suppose qu'il existe un $\mathcal{G}$-espace localement compact $Y$ pour lequel l'application $\pi : Y\rightarrow{X}$ est continue, propre, surjective et ouverte et le groupo\"{\i}de $Y\rtimes\mathcal{G}$ est moyennable. On consid\`ere $M$ le sous espace de l'espace des mesures de Radon $R(Y)$ de $Y$ d\'efini par
$$M=\big\{\mu\in{R(Y)}\textrm{ : }\exists{x}\in{X}\textrm{, }Supp(\mu)\subset{Y_x}\textrm{ et }\mu(Y)=1\big\}$$
Suivant la proposition $\ref{EspaceEquivLocComp}$ , l'espace $M$ est localement compact et muni d'une action continue du groupo\"{\i}de $\mathcal{G}$. On note $\tilde{\pi} : M\rightarrow{X}$ l'application canonique et d'apr\`es la proposition $\ref{tildepicanonique}$ , l'application $\tilde{\pi}$ est continue, surjective, propre et ouverte. Le groupo\"{\i}de $Y\rtimes\mathcal{G}$ \'etant moyennable, la proposition $\ref{ProMoy}$ implique que le groupo\"{\i}de $M\rtimes\mathcal{G}$ est moyennable.
D'apr\`es la proposition $\ref{ProSectionCont}$, il existe une application $\tilde{\theta} : \beta_X\mathcal{G}\rightarrow{M}$ continue, propre et $\mathcal{G}$-\'equivariante.
\\
On note $\tilde{\theta}_{\mathcal{G}} : \beta_X\mathcal{G}\rtimes\mathcal{G}\rightarrow{M\rtimes\mathcal{G}}$ l'application d\'efinie, pour tout $(z,\gamma)$ dans $\beta_X\mathcal{G}\rtimes\mathcal{G}$, par 
$$\tilde{\theta}_{\mathcal{G}}(z,\gamma)=\big(\tilde{\theta}(z),\gamma\big)$$
L'application $\tilde{\theta}_{\mathcal{G}}$ est un homomorphisme de groupo\"{\i}des : en effet pour tout $z$ dans $\beta_X\mathcal{G}$ et $(\gamma,\eta)$ dans $\mathcal{G}^{(2)}$ avec $r(\gamma)=\tilde{s}(z)$, on a 
\begin{align*}
\tilde{\theta}_{\mathcal{G}}(z,\gamma)\tilde{\theta}_{\mathcal{G}}(z\gamma,\eta)&=\big(\tilde{\theta}(z),\gamma\big).\big(\tilde{\theta}(z\gamma),\eta\big)
\\
&=\big(\tilde{\theta}(z),\gamma\big).\big(\tilde{\theta}(z)\gamma,\eta\big)
\\
&=\big(\tilde{\theta}(z),\gamma\eta\big)
\\
&=\tilde{\theta}_{\mathcal{G}}(z,\gamma\eta)
\\
&=\tilde{\theta}_{\mathcal{G}}\big((z,\gamma).(z\gamma,\eta)\big)
\end{align*}
L'homomorphisme de groupo\"{\i}de $\tilde{\theta}_{\mathcal{G}}$ est clairement continu et propre, par composition d'applications continues et propres. D'apr\`es le lemme $\ref{LemmeMoyContPropre}$, puisque le groupo\"{\i}de $M\rtimes\mathcal{G}$ est moyennable et l'homomorphisme $\tilde{\theta}_{\mathcal{G}} : \beta_X\mathcal{G}\rtimes\mathcal{G}\rightarrow{M\rtimes\mathcal{G}}$ continu et propre, alors le groupo\"{\i}de $\beta_X\mathcal{G}\rtimes\mathcal{G}$ est moyennable.

\end{proof}

\subsection {Groupo\"{\i}de \'etale exact}\label{Partie7}
On dit qu'un groupo\"{\i}de $\mathcal{G}$ est exact si pour toute suite exacte $\mathcal{G}$-\'equivariante, 
$$0\longrightarrow{I}\longrightarrow{A}\longrightarrow{A/I}\longrightarrow{0}$$
de $\mathcal{G}$-alg\`ebres, la suite
$$0\longrightarrow{I\rtimes_r\mathcal{G}}\longrightarrow{A\rtimes_r\mathcal{G}}\longrightarrow{A/I\rtimes_r\mathcal{G}}\longrightarrow{0}$$ 
est exacte.
\\

On rappelle qu'un groupo\"{\i}de \'etale localement compact $\mathcal{G}\rightrightarrows{X}$ ($X$ est l'espace des unit\'es du groupo\"{\i}de $\mathcal{G}$) est moyennable \`a l'infini s'il existe un $\mathcal{G}$-espace localement compact $Y$, pour lequel l'application $\pi : Y\rightarrow{X}$ est continue, surjective, ouverte et propre, et tel que le groupo\"{\i}de $Y\rtimes\mathcal{G}$ est moyennable.

%\newtheorem{Theopopopo}{{\bfseries{Th\'eor\`eme}} \ref{TheoPrinc3}}
%\begin{Theopopopo}
\begin{Theo}{\label{TheoPrinc3}}
\itshape{Soit $\mathcal{G}$ un groupo\"{\i}de \'etale d'espace des unit\'es $X$. On consid\`ere les assertions suivantes :
\begin{enumerate}
\item[$(1)$] le groupo\"{\i}de $\mathcal{G}$ est moyennable \`a l'infini  
\item[$(2)$] le groupo\"{\i}de $\mathcal{G}$ agit moyennablement sur l'espace $\beta_X\mathcal{G}$
\item[$(3)$] le groupo\"{\i}de $\mathcal{G}$ est exact 
\item[$(4)$] la $C^\ast$-alg\`ebre $C^\ast_r(\mathcal{G})$ est exacte  
\end{enumerate} 
Alors on a $(1)\Rightarrow{(2)}\Rightarrow{(3)}\Rightarrow{(4)}$}
\\
\end{Theo}
%\end{Theopopopo}
\begin{proof}
On d\'emontre successivement chacune des implications dans l'ordre de l'\'enonc\'e : 
\\
(1)$\Rightarrow$(2) : il s'agit de la d\'emonstration \ref{DemoTheo2.5} du th\'eor\`eme $\ref{TheoEquivEspace}$. 
\\
(2)$\Rightarrow$(3) : on consid\`ere une suite exacte $0\rightarrow{I}\rightarrow{A}\rightarrow{A/I}\rightarrow{0}$ de $\mathcal{G}$-alg\`ebres. La suite 
$$0\longrightarrow{\pi^\ast{I}}\longrightarrow\pi^\ast{A}\longrightarrow\pi^\ast{A/I}\longrightarrow{0}$$
est alors une suite exacte de $C_0(Y)$-alg\`ebres munies d'une action du groupo\"{\i}de $Y\rtimes{\mathcal{G}}$. Le groupo\"{\i}de $Y\rtimes{\mathcal{G}}$ \'etant moyennable, le foncteur $B\rightarrow{B\rtimes_r{(Y\rtimes{\mathcal{G}}})}$ est exacte dans la cat\'egorie des $Y\rtimes{\mathcal{G}}$-alg\`ebres. On a la suite exacte suivante :
$$0 \rightarrow \pi^\ast{I}\rtimes_r(Y\rtimes\mathcal{G}) \rightarrow \pi^\ast{A}\rtimes_r(Y\rtimes{\mathcal{G}}) \rightarrow \pi^\ast{A/I}\rtimes_r(Y\rtimes{\mathcal{G}}) \rightarrow 0 $$
%Par les propri\'et\'es d'exactitude du foncteur $B\rightarrow{C^\ast(\mathcal{G},B)}$, la suite
%$$0\rightarrow{C^\ast(\mathcal{G},C_0(Y)\otimes{I})}\rightarrow{C^\ast(\mathcal{G},C_0(Y)\otimes{A})}\rightarrow{C^\ast(\mathcal{G},C_0(Y)\otimes{A/I})}\rightarrow{0}$$ 
%est une suite exacte.
%Par le lemme $\ref{LemmeExactitude}$, les $C^\ast$-alg\`ebres r\'eduites $C^\ast_r(\mathcal{G},C_0(Y)\otimes{I})$, $C^\ast_r(\mathcal{G},C_0(Y)\otimes{A})$ et $C^\ast_r(\mathcal{G},C_0(Y)\otimes{A/I})$ coincident avec les $C^\ast$-alg\`ebres pleines $C^\ast(\mathcal{G},C_0(Y)\otimes{I})$, $C^\ast(\mathcal{G},C_0(Y)\otimes{A})$ et $C^\ast(\mathcal{G},C_0(Y)\otimes{A/I})$ respectivement. On obtient alors le diagramme commutatif suivant
%\begin{eqnarray*}
%\xymatrix{
%0 \ar[r] & C^\ast_r(\mathcal{G},{I}) \ar[r] \ar@{^{(}->}[d]^{\phi_I} & C^\ast_r(\mathcal{G},{A}) \ar[r] \ar@{^{(}->}[d]^{\phi_A} & C^%\ast_r(\mathcal{G},{A/I}) \ar[r] \ar@{^{(}->}[d]^{\phi_{A/I}} & 0   
%\\ 
%0 \ar[r] & C^\ast_r(\mathcal{G},C_0(Y)\otimes{I}) \ar[r]  & C^\ast_r(\mathcal{G},C_0(Y)\otimes{A}) \ar[r]  & C^\ast_r(\mathcal{G},C_0(Y)\otimes{A/I}) \ar[r]  & 0 
%}
%\end{eqnarray*}
L'application $\pi : Y\rightarrow\mathcal{G}^{(0)}$ \'etant continue, surjective et propre, il existe un homomorphisme injectif de $C^\ast$- alg\`ebres $\phi_I : I\rtimes_r\mathcal{G}\hookrightarrow\pi^\ast{I}\rtimes_r{(Y\rtimes\mathcal{G})}$. On d\'efinit de m\^eme des homomorphismes injectifs $\phi_A : A\rtimes_r\mathcal{G}\hookrightarrow\pi^\ast{A}\rtimes_r{(Y\rtimes\mathcal{G})}$ et $\phi_{A/I} : A/I\rtimes_r\mathcal{G}\hookrightarrow\pi^\ast{A/I}\rtimes_r{(Y\rtimes\mathcal{G})}$.
\begin{eqnarray*}
\xymatrix{
0 \ar[r] & I\rtimes_r\mathcal{G} \ar[r]^{i_r} \ar@{^{(}->}[d]^{\phi_I} & A\rtimes_r\mathcal{G} \ar[r]^{q_r} \ar@{^{(}->}[d]^{\phi_A} & A/I\rtimes_r\mathcal{G} \ar[r] \ar@{^{(}->}[d]^{\phi_{A/I}} & 0   
\\ 
0 \ar[r] & \pi^\ast{I}\rtimes_r(Y\rtimes\mathcal{G}) \ar[r]  & \pi^\ast{A}\rtimes_r(Y\rtimes\mathcal{G}) \ar[r]  & \pi^\ast{A/I}\rtimes_r(Y\rtimes\mathcal{G}) \ar[r]  & 0 
}
\end{eqnarray*}
Pour montrer que la premi\`ere ligne est exacte, on consid\`ere $a$ dans $A\rtimes_r\mathcal{G}$ tel que $q_r(a)=0$ et $\phi_A(a)$ est dans $\pi^\ast{A}\rtimes_r(Y\mathcal{G})$. Soit $(u_i)_{i\in{I}}$ une approximation de l'unit\'e dans l'id\'eal $I$, alors $(\phi_I(u_i))_{i\in{I}}$ est une approximation de l'unit\'e dans l'alg\`ebre des multiplicateurs de $\pi^\ast{I}\rtimes_r(Y\rtimes\mathcal{G})$ et on a 
$$\phi_A(a)=\lim_i\phi_I(u_i)\phi_A(a)=\lim_i\phi_A(u_i.a)\in\phi_I(I\rtimes_r\mathcal{G})$$  
Comme $\phi_A$ est injective, l'\'el\'ement $a$ est dans $I\rtimes_r\mathcal{G}$ et la suite 
$$0 \longrightarrow I\rtimes_r\mathcal{G} \longrightarrow A\rtimes_r\mathcal{G} \longrightarrow A/I\rtimes_r\mathcal{G} \longrightarrow 0 $$
est exacte et le foncteur $B\rightarrow{B\rtimes_r\mathcal{G}}$ est exact sur la cat\'egorie des $\mathcal{G}$-alg\`ebres.
\\
(3)$\Rightarrow$(4) : on consid\`ere la suite exacte de $C^\ast$-alg\`ebres suivantes
$$0\longrightarrow{I}\longrightarrow{A}\longrightarrow{A/I}\longrightarrow{0}$$
La $C^\ast$-alg\`ebre $C_0(\mathcal{G}^{(0)})$ \'etant nucl\'eaire, on a alors
$$0\longrightarrow{I\otimes{C_0(\mathcal{G}^{(0)})}}\longrightarrow{A\otimes{C_0(\mathcal{G}^{(0)})}}\longrightarrow{A/I\otimes{C_0(\mathcal{G}^{(0)})}}\longrightarrow{0}$$
est une suite de $C_0(\mathcal{G}^{(0)})$-alg\`ebres telle que la collection d'applications 
$$id_\gamma : C_0(\mathcal{G}^{(0)},I)(s(\gamma))\rightarrow{C_0(\mathcal{G}^{(0)},I)(r(\gamma))}$$
qui \`a tout $i$ dans $C_0(\mathcal{G}^{(0)},I)(s(\gamma))=I$ associe $id_\gamma(i)=i$ dans $I=C_0(\mathcal{G}^{(0)},I)(s(\gamma))$ d\'efinit une action continue de $\mathcal{G}$ sur $I\otimes{C_0(\mathcal{G}^{(0)})}$. On obtient une suite exacte de $\mathcal{G}$-alg\`ebres et le foncteur $\rtimes_r\mathcal{G}$ \'etant exact, par hypoth\`ese, on a 
$$0\longrightarrow{I\otimes{C_0(\mathcal{G}^{(0)})}\rtimes_r\mathcal{G}}\longrightarrow{A\otimes{C_0(\mathcal{G}^{(0)})}\rtimes_r\mathcal{G}}\longrightarrow{A/I\otimes{C_0(\mathcal{G}^{(0)})}\rtimes_r\mathcal{G}}\longrightarrow{0}$$
correspondant \`a la suite 
$$0\longrightarrow{I\otimes{C_r^\ast(\mathcal{G})}}\longrightarrow{A\otimes{C_r^\ast(\mathcal{G})}}\longrightarrow{A/I\otimes{C_r^\ast(\mathcal{G})}}\longrightarrow{0}$$
est exacte.
\\
\end{proof}

%Il est l\'egitime de se demander si l'exactitude de $C^*$-alg\`abre r\'eduite $C_r^*(\mathcal{G})$ implique la moyennabilit\'e \`a l'infini du grouo\"{\i}de $\mathcal{G}$ ou encore la moyennabilit\'e du groupo\"{\i}de $\beta_X\mathcal{G}\rtimes\mathcal{G}$. Dans le cas d'un groupe discret, Anantharaman-Delaroche donne une r\'eponse affirmative \`a cette question dans l'article \cite{Anantharaman2}. 

\section{{$C^\ast$-alg\`ebre $C_r^\ast({\mathcal{G}})$ et approximation de la repr\'esentation r\'eguli\`ere}}

Il est l\'egitime de se demander si l'exactitude de $C^*$-alg\`ebre r\'eduite $C_r^*(\mathcal{G})$ implique \`a son tour la moyennabilit\'e \`a l'infini du groupo\"{\i}de $\mathcal{G}$ ou encore la moyennabilit\'e du groupo\"{\i}de $\beta_X\mathcal{G}\rtimes\mathcal{G}$. 
\\

Dans  \cite{KirchbergWassermann99}, Kirchberg et Wassermann donne l'\'equivalence entre l'exactitude d'un groupe discret et l'exactitude de sa $C^*$-alg\`ebre r\'eduite de ce groupe. Dans \cite{Ozawa00}%(voir aussi \cite{GuentnerKaminker02})
, Ozawa prouve l'\'equivalence entre l'exactitude de la $C^*$-alg\`ebre r\'eduite et la moyennabilit\'e de l'action par translation \`a gauche du groupe sur son compactifi\'e de Stone-Cech.
\\

Anantharaman-Delaroche donne une r\'eponse affirmative \`a cette question dans \cite{Anantharaman2} dans le cadre plus large des groupes localement compacts et s\'epar\'es en utilisant une condition suffisante sur le groupe localement compact et s\'epar\'e qu'elle appelle {\itshape{propri\'et\'e $(W)$}} et qui est une condition plus faible que la moyennabilit\'e int\'erieure : 
\begin{Def}
On dit qu'un groupe localement compact et s\'epar\'e poss\`ede la {\itshape{propri\'et\'e $(W)$}} si pour tout compact  $K$ de $G$ et tout r\'eel $\varepsilon$ strictement positif, il existe une fonction $f$ continue, born\'ee, de type positif et \`a support proprement support\'e sur $G\times{G}$ telle que  $\big\lvert{f(g,g)-1}\big\lvert\leq\varepsilon$ pour tout \'el\'ement $g$ dans $K$.
\end{Def}
et obtient le r\'esultat suivant 
\begin{Theo}[\cite{Anantharaman2},Th\'eor\`eme 7.3]
Soit $G$ un groupe localement compact et s\'epar\'e. Si le groupe $G$ poss\`ede la {\itshape{propri\'et\'e $(W)$}} et si la $C^*$-alg\`ebre $C^*_r(G)$ est exacte, alors le groupe $G$ est moyennable \`a l'infini.
\end{Theo}
Il est facile de prouver que les groupes discrets poss\`edent la  {\itshape{propri\'et\'e $(W)$}} en consid\'erant la fonction caract\'eristique sur la diagonale $\Delta_G$ de l'espace $G\times{G}$ ce qui permet de retrouver le r\'esultat  de Ozawa \cite{Ozawa00} : 
$$C^*_r(G)\textrm{ exact}\iff{ G}\textrm{ moyennable \`a l'infini}\iff{\beta{G}\rtimes{G}\textrm{ moyennable}}$$ 

Dans cette section, on consid\`ere un groupo\"{\i}de \'etale localement compact et $\sigma$-compact s\'epar\'e $\mathcal{G}\rightrightarrows{X}$ tel que la $C^*$-alg\`ebre r\'eduite $C^*_r(\mathcal{G})$ est exacte et on note $\nu$ le syst\`eme de Haar pour $\mathcal{G}$ constitu\'e des mesures de comptage des $s$-fibres de $\mathcal{G}$ et $\mathcal{L}\big(L^2(\mathcal{G},\nu)\big)$ la $C_0(X)$-alg\`ebre des op\'erateurs $C_0(X)$-lin\'eaires qui admettent un adjoint, sur le $C_0(X)$-module de Hilbert $L^2(\mathcal{G},\nu)$. L'id\'ee est de travailler sur des $C_0(X)$-alg\`ebres continues et exactes afin d'utiliser un r\'esultat de \cite{Blanchard97} de plongement $C_0(X)$-lin\'eaire de telles  $C_0(X)$-alg\`ebres dans la $C_0(X)$-alg\`ebre nucl\'eaire $C_0(X)\otimes\mathcal{O}_2$, o\`u $\mathcal{O}_2$ est l'alg\`ebre de Cuntz. Comme la $C^*$-alg\`ebre $C_r^*(\mathcal{G})$ n'est pas munie d'une structure de $C_0(X)$-alg\`ebre, on construit une $C_0(X)$-alg\`ebre, not\'ee $\mathcal{A}$, qui est la plus petite sous-$C^*$-alg\`ebre de $\mathcal{L}\big(L^2(\mathcal{G},\nu)\big)$ contenant l'image de $C^*_r(\mathcal{G})$ dans $\mathcal{L}\big(L^2(\mathcal{G},\nu)\big)$ par la repr\'esentation r\'eguli\`ere $\lambda$ et l'image de $C_0(X)$ par la structure de $C_0(X)$-alg\`ebre de $\mathcal{L}\big(L^2(\mathcal{G},\nu)\big)$. On montre alors que l'exactitude de la $C^*$-alg\`ebre $C^*_r(\mathcal{G})$ implique l'exactitude de la $C_0(X)$-alg\`ebre $\mathcal{A}$. Afin d'utiliser le th\'eor\`eme de \cite{Blanchard97}, on suppose que la $C_0(X)$-alg\`ebre $\mathcal{A}$ est continue et on se restreint au cas o\`u l'espace des unit\'es $X$ du groupo\"{\i}de est compact. On travaille donc sur une classe plus faible de groupo\"{\i}des \'etales localement compacts et $\sigma$-compacts qui satisfont ces deux conditions. D\`es lors que l'on a utilis\'e le th\'eor\`eme de plongement $C(X)$-lin\'eaire des $C(X)$-alg\`ebres continues et exactes dans la $C(X)$-alg\`ebre nucl\'eaire $C(X)\otimes\mathcal{O}_2$, on va pouvoir utiliser les propri\'et\'es de $C(X)$-nucl\'earit\'e pour obtenir une approximation par des factorisations \`a travers les $C(X)$-alg\`ebres $C(X)\otimes{}M_n(\CC)$ de l'application identit\'e sur $\mathcal{A}$ et donc de la repr\'esentation r\'eguli\`ere de $C^*_r(\mathcal{G})$. C'est par le biais de cette approximation par des factorisations de la repr\'esentation r\'eguli\`ere de la $C^*$-alg\`ebre exacte $C^*_r(\mathcal{G})$ que nous pouvons montrer la moyennabilit\'e de l'action de $\mathcal{G}$ sur $\beta_X\mathcal{G}$. On obtient alors l'implication $(4)\Rightarrow{(2)}$ du th\'eor\`eme \ref{TheoPrinc3} pour la classe des groupo\"{\i}des \'etales localement compacts et $\sigma$-compacts ayant la propri\'et\'e $(W)$, dont l'espace des unit\'es est compact et tels que la $C(X)$-alg\`ebre induite $\mathcal{A}$ est continue.

\subsection  {Approximation de l'unit\'e et op\'erateurs de rangs finis}
Dans cette partie, on utilise les r\'esultats de l'article d'Arveson \cite{Arveson77} sur les $C^*$-alg\`ebres que l'on adapte au cas des $C_0(X)$-alg\`ebres, dont les \'el\'ements sont consid\'er\'es comme des op\'erateurs sur un $C_0(X)$-module de Hilbert. 
\\

Pour $H$ un espace de Hilbert s\'eparable, on rappelle qu'un op\'erateur quasidiagonal $T$ est un op\'erateur born\'e sur l'espace de Hilbert s\'eparable $H$ pour lequel il existe une suite $(F_n)_{n\in\NN}$ de projections de rang fini, qui converge fortement vers l'identit\'e et v\'erifiant 
$$\lim_{n\rightarrow{\infty}}\lVert{F_nT-TF_n}\lVert=0$$
%\begin{Rem}
Cette d\'efinition \'equivaut \`a l'existence d'une suite $(E_n)_{n\in\NN}$ de projections de rang fini deux \`a deux orthogonales telle que $\sum_{n\in\NN}E_n=1$ et $T$ est une perturbation compacte de l'op\'erateur $\sum_{n\in\NN}E_nTE_n$.
%\end{Rem}
\\
Dans l'article \cite{Arveson77}, l'auteur construit, dans le cadre plus g\'en\'eral des op\'erateurs born\'es $T : H\rightarrow{H}$ (non n\'ecessairement quasidiagonaux), une suite $(F_n)_{n\in\NN}$ d'op\'erateurs positifs de rang fini, qui converge fortement vers l'op\'erateur identit\'e et v\'erifiant la relation, pour tout $T$ dans $L(H)$,
$$\lim_{n\in\NN}\lVert{F_nT-TF_n}\lVert=0$$
%\begin{Rem}
Comme dans le cas des op\'erateurs quasidiagonaux, l'existence d'une telle suite $(F_n)_{n\in\NN}$ permet de construire  une suite $(E_n)_{n\in\NN}$ d'op\'erateurs positifs de rang fini dans $L(H)$ tel que $\sum_{n\in\NN}E_n^2=1$ et pour laquelle, tout op\'erateur $T$ est une perturbation compacte de $\sum_{n\in\NN}E_nTE_n$.
%\end{Rem}
\vspace{+2mm}
\\

Le principal objectif de cette  partie consiste \`a montrer que pour toute $C_0(X)$-alg\`ebre $\mathcal{A}$, consid\'er\'ee comme sous $C_0(X)$-alg\`ebre de la $C_0(X)$-alg\`ebre des op\'erateurs d'un $C_0(X)$-module de Hilbert $\mathcal{H}$,  et tout id\'eal $\mathcal{K}$ de $\mathcal{A}$ d'espace nul trivial, il existe une suite $(E_n)_{n\in{\NN}}$ d'\'el\'ements positifs dans $\overline{\mathcal{K}}$ v\'erifiant 
\begin{itemize}
\vspace{+1mm}
\item[(a)] $E_n\in\overline{\mathcal{K}}\quad\textrm{et}\quad\sum_nE_n^2=Id_{\mathcal{H}}$
\vspace{+1mm}
\item[(b)] $A-\sum_nE_nAE_n$ est dans $\mathcal{K}$, pour tout $A$ dans $\mathcal{A}$
\vspace{+1mm}
\end{itemize}
On montre aussi que pour toute partie finie $\mathcal{F}$ de $\mathcal{A}$, il est possible de choisir les \'el\'ements $E_n$ de sorte que, pour tout \'el\'ement $A$ de $\mathcal{F}$, la norme $\lVert{A-\sum_nE_nAE_n}\lVert$ soit aussi petite qu'on le souhaite. 
\vspace{+3mm}
\\

On consid\`ere $\mathcal{A}$ une $C_0(X)$-alg\`ebre et $\mathcal{K}$ un id\'eal bilat\`ere, non n\'ecessairement ferm\'e, dans $\mathcal{A}$. 
\begin{Def}
Une approximation de l'unit\'e pour $\mathcal{K}$ est une suite croissante $(e_\lambda)_{\lambda\in\Lambda}$ d'\'el\'ements positifs de la boule unit\'e de l'id\'eal $\mathcal{K}$ tels que $\lim_\lambda\lVert{e_{\lambda}k-k}\lVert=0$ pour tout $k$ dans $\mathcal{K}$.
De plus, si $(e_\lambda)_{\lambda\in\Lambda}$ satisfait la relation $\lim_{\lambda}\lVert{e_\lambda{a}-ae_\lambda}\lVert=0$, pour tout $a$ dans $\mathcal{A}$, alors la suite est quasicentrale.
\end{Def}
Dans le cadre des $C_0(X)$-alg\`ebres, on obtient un th\'eor\`eme sur l'existence d'approximation de l'unit\'e quasicentrale, analogue au cas des $C^*$-alg\`ebres.
\begin{Theo}{\label{TheoArv}}
Tout id\'eal $\mathcal{K}$ dans une $C_0(X)$-alg\`ebre $\mathcal{A}$ poss\`ede une approximation de l'unit\'e quasicentrale. Si de plus $\mathcal{A}$ est s\'eparable, cette unit\'e approch\'ee quasicentrale peut \^etre index\'ee sur $\NN$ et on a $u_n\leq{u_{n+1}}$, pour tout $n$ dans $\NN$.
\end{Theo}
\begin{proof}
L'id\'eal $\mathcal{K}$ de la $C(X)$-alg\`ebre $\mathcal{A}$ est aussi un id\'eal pour la structure de $C^*$-alg\`ebre sous jacente de $\mathcal{A}$; on applique alors la d\'emonstration de \cite{Arveson77}.
\end{proof}

\begin{Rem}
La preuve montre que l'on peut constuire, \`a partir d'une unit\'e approch\'ee $(e_\lambda)_{\lambda\in\Lambda}$ quelconque pour $\mathcal{K}$, une approximation de l'unit\'e quasicentrale, dont les \'el\'ements sont des combinaisons lin\'eaires convexes des \'el\'ements de $(e_\lambda)_{\lambda\in\Lambda}$.
\vspace{+4mm}
\end{Rem}

%On consid\`ere une $C^*$-alg\`ebre ${A}$  et ${K}$ un id\'eal bilat\`ere, non n\'ecessairement ferm\'e, dans ${A}$.
%On appelle unit\'e approch\'ee pour ${K}$, toute suite, ind\'ex\'ee par un ensemble $\Lambda$ ordonn\'e, $({u_\lambda})_{\lambda\in\Lambda}$ d'\'el\'ements positifs de la boule unit\'e de ${K}$ tels que $\lim_\lambda\|u_{\lambda}k-k\|=0$, pour tout $k$ dans la fermeture de $K$. Une unit\'e approch\'ee est dite s\'equentielle si $\Lambda=\NN$; dans une $C^*$-alg\`ebre s\'eparable, il existe une unit\'e approch\'ee s\'equentielle.
%\\
%De plus la suite $(u_\lambda)_{\lambda\in\Lambda}$ sera dite quasicentrale si pour tout ${a}$ dans ${A}$, on a $\lim_\lambda\|u_{\lambda}a-au_\lambda\|=0$.
%\begin{Theo}
%Tout id\'eal $I$ dans une $C^*$-alg\`ebre $A$ poss\`ede une unit\'e approch\'ee quasicentrale. Si de plus $A$ est s\'eparable, cette unit\'es approch\'ees %quasicentrale peut \^etre choisie d\'enombrable et on a $u_n\leqslant{u_{n+1}}$, pour tout $n$ dans $\NN$.
%\end{Theo}
%\begin{Cor}
%Soit $\mathcal{A}$ une $C^*$-alg\`ebre s\'eparable d'op\'erateurs dans un espace de Hilbert s\'eparable, alors il %existe une suite croissante d'op\'erateurs de rangs finis et positifs $\{F_n\}_{n\in\mathbb{N}}$ telle que
%\[F_n\]
%\end{Cor}

%\subsection  {Op\'erateurs positifs de rangs finis}

Soit $B$ une $C^*$-alg\`ebre et $\mathcal{H}$ un module hilbertien sur $B$.

\begin{Def}
Soit $\mathcal{H}$ un $B$-module hilbertien s\'eparable et $T$ un op\'erateur $B$-lin\'eaire et poss\'edant un adjoint sur $\mathcal{H}$; on dit que $T$ est un op\'erateur positif si pour tout \'el\'ement $\xi$ dans $\mathcal{H}$ on a $\langle{T\xi,\xi}\rangle$ est positif dans $B$.
\end{Def}

\begin{Notation} On adoptera les notations suivantes en ce qui concerne les op\'erateurs de rang fini : 
\begin{itemize}
\item[$\bullet$] On appelle op\'erateur de rang $1$, tout endomorphisme sur le $B$-module hilbertien $\mathcal{H}$ qui s'\'ecrit sous la forme $\theta_{x,y}$, avec $x$ et $y$ dans $\mathcal{H}$, o\`u pour tout $\xi$ dans $\mathcal{H}$, on a $\theta_{x,y}(\xi)=x\langle{{y,\xi}}\rangle$.
\item[$\bullet$] On appelle op\'erateur de rang fini sur le $B$-module hilbertien $\mathcal{H}$ est un endomorphisme de $B$-module $T : \mathcal{H}\rightarrow\mathcal{H}$, qui est une combinaison lin\'eaire d'op\'erateurs de rang un.
\item[$\bullet$] L'ensemble des op\'erateurs de rang fini sur un $B$-module hilbertien $\mathcal{H}$ est un id\'eal dans $\mathcal{L}(\mathcal{H})$, not\'e $\mathcal{K}_0$, dont la fermeture, not\'ee $\mathcal{K}$, est l'ensemble des op\'erateurs compacts de $\mathcal{L}(\mathcal{H})$.
\end{itemize} 
\vspace{+3mm}
\end{Notation}
En utilisant le th\'eor\`eme \ref{TheoArv} , on obtient le corollaire suivant :
\begin{Cor}
Soit $\mathcal{H}$ un $C(X)$-module de Hilbert s\'eparable.
\begin{enumerate}
\item[(i)] Il existe une suite croissante $(F_\lambda)_{\lambda\in\Lambda}$ d'op\'erateurs positifs de rang fini qui converge fortement vers l'op\'erateur identit\'e de $\mathcal{H}$ et v\'erifiant la relation $\lim_{\lambda}\lVert{F_{\lambda}T-TF_\lambda}\lVert=0$, pour tout op\'erateur $C(X)$-lin\'eaire $T$ dans $\mathcal{L}\big(\mathcal{H}\big)$.
\item[(ii)] Pour $\mathcal{A}$ une $C(X)$-alg\`ebre s\'eparable d'op\'erateurs sur un champ continu d'espaces de Hilbert s\'eparables $\mathcal{H}$, il existe une suite $(F_n)_{n\in\NN}$ croissante d'op\'erateurs positifs et de rang fini qui converge fortement vers l'identit\'e et v\'erifiant la relation $\lim_n\lVert{F_nA-AF_n}\lVert=0$, pour tout $A$ dans $\mathcal{A}$.
\end{enumerate}
\end{Cor}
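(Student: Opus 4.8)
\emph{Esquisse de d\'emonstration.} L'id\'ee est d'appliquer le th\'eor\`eme \ref{TheoArv} \`a la $C(X)$-alg\`ebre $\mathcal{L}(\mathcal{H})$ et \`a l'id\'eal $\mathcal{K}=\mathcal{K}(\mathcal{H})$ des op\'erateurs compacts, en reprenant la strat\'egie d'Arveson \cite{Arveson77} pour les espaces de Hilbert. La premi\`ere \'etape, la seule qui ne soit pas purement formelle, consiste \`a se donner une approximation de l'unit\'e $(u_\lambda)$ de $\mathcal{K}$ form\'ee d'op\'erateurs positifs de rang fini dans la boule unit\'e, croissante et convergeant fortement vers $\mathrm{Id}_{\mathcal{H}}$. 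L'espace $\mathcal{H}$ \'etant s\'eparable, le th\'eor\`eme de stabilisation de Kasparov \cite{Kasparov80} permet de le r\'ealiser comme sous-module facteur direct de $\ell^2(C(X))$; sur ce dernier les op\'erateurs $\sum_{k\le n}\theta_{\delta_k,\delta_k}$ conviennent, et leurs compress\'es sur $\mathcal{H}$ restent positifs, de rang fini, croissants, fortement convergents vers $\mathrm{Id}_{\mathcal{H}}$, tout en formant une approximation de l'unit\'e de $\mathcal{K}(\mathcal{H})$.

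Pour $(i)$, par la proposition \ref{ProAlgMod} la $C(X)$-alg\`ebre $\mathcal{L}(\mathcal{H})$ admet $\mathcal{K}$ pour id\'eal. On applique le th\'eor\`eme \ref{TheoArv} au couple $\mathcal{K}\subset\mathcal{L}(\mathcal{H})$ : d'apr\`es la remarque qui le suit, on fabrique \`a partir de $(u_\lambda)$ une approximation de l'unit\'e quasicentrale $(F_\lambda)$ de $\mathcal{K}$ dont les \'el\'ements sont des combinaisons convexes des $u_\lambda$, donc encore des op\'erateurs positifs de rang fini dans la boule unit\'e, la famille restant croissante. Comme tout \'el\'ement de $\mathcal{L}(\mathcal{H})$ est automatiquement $C(X)$-lin\'eaire, la quasicentralit\'e s'\'ecrit $\lim_\lambda\lVert F_\lambda T-TF_\lambda\rVert=0$ pour tout $T$ dans $\mathcal{L}(\mathcal{H})$. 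Enfin, $(F_\lambda)$ \'etant une approximation de l'unit\'e d'un id\'eal contenant les $u_\mu$, et $u_\mu\to\mathrm{Id}_{\mathcal{H}}$ fortement, on a pour $\xi$ dans $\mathcal{H}$
$$\lVert F_\lambda\xi-\xi\rVert\le\lVert F_\lambda\rVert\,\lVert\xi-u_\mu\xi\rVert+\lVert(F_\lambda u_\mu-u_\mu)\xi\rVert+\lVert u_\mu\xi-\xi\rVert\le 2\lVert u_\mu\xi-\xi\rVert+\lVert(F_\lambda u_\mu-u_\mu)\xi\rVert,$$
et l'on conclut en fixant $\mu$ assez grand puis $\lambda$ assez grand; donc $F_\lambda\to\mathrm{Id}_{\mathcal{H}}$ fortement.

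Pour $(ii)$, la $C(X)$-alg\`ebre $\mathcal{A}$ est s\'eparable et l'on veut une suite index\'ee par $\NN$. On choisit $(u_n)_{n\in\NN}$ comme ci-dessus et l'on note $\mathcal{B}$ la sous-$C(X)$-alg\`ebre de $\mathcal{L}(\mathcal{H})$ engendr\'ee par $\mathcal{A}$ et les $u_n$ : elle est s\'eparable, et $\mathcal{K}\cap\mathcal{B}$ en est un id\'eal contenant tous les $u_n$. La partie s\'eparable du th\'eor\`eme \ref{TheoArv}, appliqu\'ee au couple $\mathcal{K}\cap\mathcal{B}\subset\mathcal{B}$, fournit une approximation de l'unit\'e quasicentrale $(F_n)_{n\in\NN}$, croissante, dont les \'el\'ements sont des combinaisons convexes des $u_n$, donc positifs de rang fini dans la boule unit\'e. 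La quasicentralit\'e relativement \`a $\mathcal{B}$ donne $\lim_n\lVert F_nA-AF_n\rVert=0$ pour tout $A$ dans $\mathcal{A}\subset\mathcal{B}$, et l'argument de convergence forte de $(i)$ s'applique mot pour mot.

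La difficult\'e principale est ainsi concentr\'ee dans l'\'etape pr\'eliminaire : v\'erifier qu'en compressant les projections finies standard de $\ell^2(C(X))$ sur le sous-module $\mathcal{H}$ on conserve simultan\'ement la positivit\'e, le rang fini, la croissance, la convergence forte et le caract\`ere d'approximation de l'unit\'e de $\mathcal{K}(\mathcal{H})$. Ce point acquis, tout le reste n'est qu'une mise en forme $C(X)$-lin\'eaire du th\'eor\`eme \ref{TheoArv} et de la remarque qui le suit, exactement comme dans \cite{Arveson77}.
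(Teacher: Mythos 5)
Your proposal is correct and follows essentially the same route as the paper: apply Theorem \ref{TheoArv} (the quasicentral approximate unit theorem) to the ideal of finite-rank operators inside the relevant $C(X)$-algebra of operators, so that the resulting quasicentral approximate unit, being built from convex combinations of positive finite-rank contractions, inherits positivity, finite rank and strong convergence to the identity. The only difference is one of detail, not of method: you make explicit, via Kasparov stabilization and compression of the standard finite projections of $\ell^2(C(X))$, the construction of the initial increasing approximate unit of positive finite-rank operators, and you pass to the separable subalgebra generated by $\mathcal{A}$ and the $u_n$ for part (ii), two steps the paper leaves implicit in its one-line proof.
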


\begin{proof}
Quitte \`a remplacer $\mathcal{A}$ par la $C(X)$-alg\`ebre des perturbations compactes d'op\'erateurs de $\mathcal{A}$, on consid\`ere l'id\'eal $\mathcal{K}$ des op\'erateurs de rang fini dans la $C(X)$-alg\`ebre s\'eparable $\mathcal{A}$ et on applique le th\'eor\`eme $\ref{TheoArv}$ pour obtenir la suite voulue.
\\
\end{proof}
Dans le th\'eor\`eme qui suit, on consid\`ere la $C(X)$-alg\`ebre $\mathcal{A}$ comme une sous $C(X)$-alg\`ebre d'op\'erateurs de l'ensemble des op\'erateurs born\'es, $C(X)$-lin\'eaires et poss\'edant un adjoint sur un $C(X)$-module hilbertien $\mathcal{H}_{C(X)}$. Soit $\mathcal{I}$ un id\'eal de $\mathcal{A}$, on appelle espace nul de $\mathcal{I}$, l'espace d\'efini par $\big\{\xi\in{\mathcal{H}_{C(X)}}\textrm{ : }T(\xi)=0\textrm{, }\forall{T}\in\mathcal{I}\big\}$; on dira que l'espace nul de l'id\'eal $\mathcal{I}$ est trivial, s'il est r\'eduit au vecteur nul.
\vspace{+1mm}
\begin{Theo}{\label{TheoCool}}
Soit $\mathcal{K}$ un id\'eal dans $\mathcal{A}$ dont l'espace nul est trivial. Il existe une suite $(E_n)_{n\in\mathbb{N}}$ d'op\'erateurs positifs dans la fermeture $\overline{\mathcal{K}}$ (pour la topologie de la norme de $\mathcal{K}$) telle que pour tout $n$ dans $\NN$, on a $E_n^2$ est dans l'id\'eal ${\mathcal{K}}\textrm{, }\sum_{n\geq{1}}{E_n^2}=1$ et $A-\sum{E_nAE_n}$ est dans l'id\'eal ${\mathcal{K}}\textrm{, }$pour tout ${A}$ dans $\mathcal{A}$.
\\
De plus, en fixant $\varepsilon\geq{0}$ et $\mathcal{F}\subset\mathcal{A}$ un sous ensemble fini, on peut choisir la suite $(E_n)_{n\in\mathbb{N}}$ de sorte que $\|A-\sum{E_nAE_n}\lVert<{\varepsilon}$, pour tout $A$ dans $\mathcal{F}$
\end{Theo}
\vspace{+2mm}
\begin{Rem}{\label{RemCool}}
Avant de s'attaquer \`a la d\'emonstration du th\'eor\`eme, on peut faire les remarques suivantes :
\vspace{+1mm}

$\mathbf{a)}$ Si une telle suite $(E_n)_{n\in\mathbb{N}}$ existe, alors $\sum{E_nAE_n}$ est bien d\'efinie par l'hypoth\`ese $\sum{E_n^2}=1$ qui implique la convergence forte de la somme.
\\
En effet, on consid\`ere le $C(X)$-module hilbertien $\mathcal{H}':=\bigoplus_{n\in\NN}\mathcal{H}_{C(X)}$ et on note $i'$ la repr\'esentation de $\mathcal{A}$ dans l'ensemble des op\'erateurs $\mathcal{L}\big(\mathcal{H}'\big)$ d\'efinie par
%\begin{align*}
%\lambda' :\textrm{ } & \mathcal{A}\longrightarrow\mathcal{L}\big(\bigoplus_{n\in\NN}\mathcal{H}_{C(X)}\big) \\
 $i'(A)={\oplus_{n\in\NN}{A}}$.
%\end{align*}
On peut alors d\'efinir une isom\'etrie, en posant
\begin{align*}
V :  \textrm{ } \mathcal{H}_{C(X)} & \longrightarrow{\oplus_{n\in\NN}\mathcal{H}_{C(X)}}=\mathcal{H}'\\
  \xi & \longrightarrow{\oplus_{n\in\NN}{E_n(\xi)}}
\end{align*}
On note $P_n : \mathcal{H'}\rightarrow\mathcal{H'}$ la projection sur les $n$ premi\`eres composantes de $\mathcal{H'}=\oplus_{n\in\NN}\mathcal{H}_{C(X)}$ et $V_n:=P_nV$; alors pour tout ${A}$ dans $\mathcal{A}$, on a la convergence dans la topologie forte
$$
\xymatrix{
\sum_{k=1}^n{E_{k}AE_{k}}=V^*P_{n}i'(A)P_{n}V \ar[rr] & & {V^*i'(A)V=\sum_{k=1}^{+\infty}}E_kAE_k
}
$$

$\mathbf{b)}$ L'application $l(A)=\sum E_nAE_n$ pouvant s'\'ecrire sous la forme $l(A)=V^*i'(A)V$, elle est compl\`etement positive et son image est dans la fermeture forte $\overline{\mathcal{A}}^{forte}$ de  $\mathcal{A}$. Puisque l'\'el\'ement $A-l(A)$ est dans ${{\mathcal{K}}}$ qui est un id\'eal de ${\mathcal{A}}$, alors l'application $l$ est \`a image dans $\mathcal{A}$.
\vspace{+2mm}
\end{Rem}

Pour d\'emontrer  le th\'eor\`eme \ref{TheoCool}, on utilise le lemme suivant d\'emontr\'e dans \cite{Arveson77} : 
\begin{Lemma}{\label{LemCool}}
Soit $\varepsilon$ un r\'eel strictement positif et $f$ une fonction continue sur l'intervalle $[0,1]$ \`a valeurs complexes et qui soit nulle en $0$.
Alors il existe un r\'eel $\delta$ strictement positif tel que pour tout \'el\'ement $a$ et tout \'el\'ement positif $e$ dans la boule unit\'e de $\mathcal{A}$, on a
$\|ae-ea\|\leqslant{\delta}$ implique ${\|af(e)-f(e)a\|\leqslant{\varepsilon}}$.
\end{Lemma}

\begin{proof}[D\'emonstration du Th\'eor\`eme $\ref{TheoCool}$]
On va utiliser l'existence de l'approximation de l'unit\'e dans $\mathcal{K}$ pour construire la suite $(E_n)_{n\in\mathbb{N}}$
\\

a) Soit $\varepsilon$ strictement positif, alors, d'apr\`es le lemme $\ref{LemCool}$, il existe $(\delta_n)_{n\in\mathbb{N}}$ suite d\'ecroissante de r\'eels positifs tendant vers $0$ telle que pour tout $a$ et $E$ dans la boule unit\'e de $\mathcal{A}$, avec $E$ positif, on a
$$\|Ea-aE\|\leq{\delta_n}\Rightarrow\|E^{1/2}a-aE^{1/2}\|\leq{\varepsilon/2^{n+1}}$$
\\
On consid\`ere une suite croissante $(\mathcal{F}_n)_{n\in\NN}$ de sous ensembles finis de $\mathcal{A}$ telle que la r\'eunion $\bigcup\mathcal{F}_n$ est dense dans la boule unit\'e de $\mathcal{A}$. (On supposera de plus que $\mathcal{F}_1$ contient un nombre fini d'\'el\'ements de norme 1)
\\
On a vu qu'il existe $(u_n)_{n\in\mathbb{N}}$ une approximation de l'unit\'e quasicentrale dans $\mathcal{K}$ et (quitte \`a consid\'erer une sous suite), on peut supposer que pour tout ${A}$ dans $\mathcal{F}_{n+1}$,
$$\|u_nA-Au_n\|\leq{\delta_{n}/2}$$
On a, en posant $u_0=0$
$$\left\{
    \begin{array}{ll}
        \forall{A\in\mathcal{F}_1}\textrm{,}\qquad\|u_1A-Au_1\|<\delta_1
        \\
        \forall{A}\in\mathcal{F}_n\textrm{,}\qquad\|(u_n-u_{n-1})A-A(u_n-u_{n-1})\|<\delta_{n-1}
    \end{array}
\right.
$$

b/ Par calcul fonctionnel continu pour la fonction $f(x)=x^{1/2}$, on pose, pour tout $n$ dans $\NN$, $E_n=(u_n-u_{n-1})^{1/2}$. Alors pour $\varepsilon\geq{0}$ et une suite d\'ecroissante de r\'eels postifs tendant vers z\'ero $(\delta_n)_{n\in\mathbb{N}}$ appropri\'ee, on a pour tout ${A}$ dans $\mathcal{F}_n$
$$\|E_nA-AE_n\|\leq{\varepsilon/2^n}$$

L'\'el\'ement $E_n^2=u_n-u_{n-1}$ est dans $\mathcal{K}$; comme $(u_n)_{n\in\mathbb{N}}$ est une unit\'e approch\'ee pour $\mathcal{K}$ (dont l'espace nul est trivial), la suite $(u_n)_{n\in\mathbb{N}}$ converge fortement vers $1$, ce qui implique l'\'egalit\'e
$$\sum_{n=1}^{+\infty}E_n^2=1$$

c/ Il reste \`a montrer que pour tout \'el\'ement $A$ dans $\mathcal{A}$, l'image $l(A)$ est une perturbation compacte de l'op\'erateur $A$.
Or pour tout ${A}$ dans $\bigcup\mathcal{F}_n$, on a par construction $\sum\|E_nA-AE_n\|<+\infty$.
D'apr\`es la remarque $\ref{RemCool}$ , combin\'ee avec l'\'egalit\'e suivante  $$A-l(A)=A-\sum_{n\geq{1}}{E_nAE_n}=\sum_{n\geq{1}}(AE_n^2-E_nAE_n)=\sum_{n\geq{1}}(AE_n-E_nA)E_n$$
on a alors $A-l(A)\in{\mathcal{K}}$, pour tout $A$ dans $\bigcup\mathcal{F}_n$.
\\
Or pour tout ${A}$ dans $\mathcal{A}$, il existe une suite $(A_k)_{k\in\NN}$ d'\'el\'ements de $\bigcup\mathcal{F}_n$, qui converge en norme vers $A$. D'apr\`es la remarque $\ref{RemCool}$ , $A-l(A)$ est bien d\'efinie et la suite $\big(A_k-l(A_k)\big)_{k\in\NN}$ converge en norme vers $A-l(A)$. Comme $A_k-l(A_k)$ est compact pour tout $k$ entier, on en d\'eduit que pour tout $A$ dans $\mathcal{A}$, l'\'el\'ement $A-l(A)$ est dans ${\mathcal{K}}$.
\\
\end{proof}
\vspace{+2mm}
\begin{Def}
Soit $\mathcal{A}$ une $C(X)$-alg\`ebre, consid\'er\'ee comme sous-$C(X)$-alg\`ebre de la $C(X)$-alg\`ebre des op\'erateurs born\'es, $C(X)$-lin\'eaires admettant un adjoint sur un $C(X)$-module de Hilbert s\'eparable. On suppose l'existence d'une suite $(E_n)_{n\in\mathbb{N}}$ d'op\'erateurs positifs, de rang fini satisfaisant $\sum_nE_n^2=1$. On pose $\omega : \mathcal{A}\rightarrow\mathcal{A}$ l'application $C(X)$-lin\'eaire positive qui, \`a tout $a$ de $\mathcal{A}$, associe l'op\'erateur $\sum_{n\in\NN}E_naE_n$
L'application $\omega : \mathcal{A}\rightarrow\mathcal{A}$ est dite application localisante pour $\mathcal{A}$ si pour tout ${a}$ dans $\mathcal{A}$, l'op\'erateur $a-\omega(a)$ est compact.
\end{Def}

\subsection  {Repr\'esentation r\'eguli\`ere et op\'erateurs de rang fini}

On a vu dans la section $\ref{Rappel}$, la construction de $C^*$-alg\`ebres associ\'ees \`a un groupo\"{\i}de localement compact et s\'epar\'e muni d'un syst\`eme de Haar.  Pour simplifier les notations, on note dans la suite $\mathcal{L}(L^2(\mathcal{G},\nu))$  pour la $C_0(X)$-alg\`ebre des op\'erateurs $\mathcal{L}_{C_0(X)}\big(L^2(\mathcal{G},\nu)\big)$.  
\vspace{+1mm}
\\

La repr\'esentation r\'eguli\`ere est le $\ast$-homomorphisme injectif de $C^*$-alg\`ebres $\lambda : C_r^*(\mathcal{G})\rightarrow{\mathcal{L}\big(L^2(\mathcal{G},\nu)\big)}$ et, d'apr\`es la proposition $\ref{ProAlgMod}$, la $C^*$-alg\`ebre $\mathcal{L}\big(L^2(\mathcal{G},\nu)\big)$ est munie d'une structure de $C_0(X)$-alg\`ebre. L'image par la repr\'esentation $\lambda$ de $C_r^*(\mathcal{G})$ est une sous-$C^*$-alg\`ebre de $\mathcal{L}\big(L^2(\mathcal{G},\nu)\big)$, qui n'est pas n\'ecessairement stable par la structure de $C_0(X)$-module de $\mathcal{L}\big(L^2(\mathcal{G},\nu)\big)$.
%c'est-\`a-dire qu'elle n'est pas $C(X)$-lin\'eaire; en effet il n'y a pas n\'ecessairement d'\'egalit\'e entre $\lambda\big(\phi_A(f)a\big)$ et $\phi_{\mathcal{L}(E)}(f)\lambda(a)$, car on a
%\begin{align*}
%&\lambda\big(\phi_A(f).a\big)\xi(\gamma)=\int_{\mathcal{G}_x}\big(\phi_A(f)a\big)(\gamma\eta^{-1})\xi(\eta)d\nu_x(\eta)=\int_{\mathcal{G}_x}a(\gamma\eta^{-1})f\big(r(\eta)\big)\xi(\eta)d\nu_x(\eta)\\
%&\big(\phi_{\mathcal{L}(E)}(f) \lambda(a)\big)\xi(\gamma)=\lambda(a)\big(\xi.f\big)(\gamma)=\int_{\mathcal{G}_x}a(\gamma\eta^{-1})\xi(\eta)f\big(s(\eta)\big)d\nu_x(\eta)
%\end{align*}
%L'id\'ee qui suit consiste \`a consid\'erer une $C^*$-alg\`ebre $\mathcal{A}$ qui  
\vspace{+2mm}
\\
Dans cette partie on d\'efinit une sous-$C_0(X)$-alg\`ebre de $\mathcal{L}(L^2(\mathcal{G},\nu))$, que l'on note $\mathcal{A}$, \`a partir de la $C^*$-alg\`ebre r\'eduite $C_r^*(\mathcal{G})$. On montre, via la proposition \ref{ProConsExa}, que cette construction conserve la propri\'et\'e d'exactitude. L'id\'ee est de remplacer la sous-$C^*$-alg\`ebre $C_r^*(\mathcal{G})$ exacte par la sous-$C_0(X)$-alg\`ebre $\mathcal{A}$ de mani\`ere \`a pouvoir utiliser un th\'eor\`eme de \cite{Blanchard97} qui donne l'existence d'un plongement $C_0(X)$-lin\'eaire de toute $C_0(X)$-alg\`ebre continue et exacte dans la $C_0(X)$-alg\`ebre $C_0(X)\otimes\mathcal{O}_2$. On termine cette partie en appliquant le th\'eor\`eme \ref{TheoCool} \`a la $C_0(X)$-alg\`ebre $\mathcal{A}$ et l'id\'eal des op\'erateurs compacts : la suite d'op\'erateurs positifs compacts que l'on obtient, joue un r\^ole central dans l'approximation par factorisation de l'application identit\'e de $\mathcal{A}$. 
\vspace{+2mm}

\begin{Def}{\label{DefAlgGra}}
On note $\mathcal{A}=C^*\big(\lambda(C^*_r(\mathcal{G})),\phi(C_0(X))\big)$, la plus petite sous $C^*$-alg\`ebre d'op\'erateurs de $L^2(\mathcal{G},\nu)$ contenant $\phi\big(C_0(X)\big)$ et $\lambda\big(C^*_r(\mathcal{G})\big)$. 
\end{Def}
Pour tout $a$ dans $C^*_r(\mathcal{G})$, l'op\'erateur $\lambda(a)$ sur $L^2(\mathcal{G},\nu)$ est $C(X)$-lin\'eaire. La $C^*$-alg\`ebre $\mathcal{A}$ est engendr\'ee par les \'el\'ements $\sum_{i\in{I}}{\phi(f_i).\lambda(a_i)}$ o\`u la somme est finie et $a_i$ est un \'el\'ement de $C^*_r(\mathcal{G})$ et la fonction $f_i$ est dans $C(X)$.

\begin{Pro}
La $C^*$-alg\`ebre $\mathcal{A}$ est une sous-$C_0(X)$-alg\`ebre de la $C_0(X)$-alg\`ebre $\mathcal{L}\big(L^2(\mathcal{G},\nu)\big)$. 
\end{Pro}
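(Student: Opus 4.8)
We must show that $\mathcal{A} = C^*\big(\lambda(C^*_r(\mathcal{G})),\phi(C_0(X))\big)$, viewed as a sub-$C^*$-algebra of $\mathcal{L}(L^2(\mathcal{G},\nu))$, is a sub-$C_0(X)$-algebra of the $C_0(X)$-algebra $\mathcal{L}(L^2(\mathcal{G},\nu))$. By Proposition~\ref{ProAlgMod}, the structure on $\mathcal{L}(L^2(\mathcal{G},\nu))$ comes from the non-degenerate homomorphism $\phi : C_0(X)\to \mathcal{Z}\big(M(\mathcal{L}(L^2(\mathcal{G},\nu)))\big)$ given by $\phi(f)(T)(\xi)=T(\xi.f)=T(\xi).f$. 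So "sub-$C_0(X)$-algebra" means: $\mathcal{A}$ is stable under multiplication by the central multipliers $\phi(f)$, $f\in C_0(X)$, and the induced homomorphism $C_0(X)\to \mathcal{Z}(M(\mathcal{A}))$ is still non-degenerate.

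**Plan.** First I would check stability of $\mathcal{A}$ under $\phi(C_0(X))$. Since $\phi$ lands in $\mathcal{Z}(M(\mathcal{L}(L^2(\mathcal{G},\nu))))$, the operator $\phi(f)$ commutes with every element of $\mathcal{L}(L^2(\mathcal{G},\nu))$; in particular $\phi(f)\lambda(a)=\lambda(a)\phi(f)$ for all $a\in C^*_r(\mathcal{G})$ and $\phi(f)\phi(g)=\phi(fg)\in\phi(C_0(X))$. The $*$-algebra generated by $\lambda(C^*_r(\mathcal{G}))$ and $\phi(C_0(X))$ therefore consists of finite sums $\sum_i \phi(f_i)\lambda(a_i)$, exactly as noted in the paragraph preceding the proposition; multiplying such an element on the left or right by $\phi(f)$ again gives a finite sum $\sum_i \phi(ff_i)\lambda(a_i)$ of the same shape. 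Taking norm closures, $\mathcal{A}$ is stable under multiplication by $\phi(f)$ for every $f\in C_0(X)$, i.e. $\phi(C_0(X))\mathcal{A}\subseteq\mathcal{A}$ and likewise $\mathcal{A}\phi(C_0(X))\subseteq\mathcal{A}$. Hence $\phi$ restricts to a $*$-homomorphism $C_0(X)\to \mathcal{Z}(M(\mathcal{A}))$ (it lands in the centre because it was already central in the larger multiplier algebra, and multipliers of $\mathcal{A}$ are obtained by restriction).

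**Non-degeneracy.** The remaining point is that this restricted homomorphism $C_0(X)\to\mathcal{Z}(M(\mathcal{A}))$ is non-degenerate, i.e. $\phi(C_0(X))\mathcal{A}$ is dense in $\mathcal{A}$. For this I would take an approximate unit $(e_\lambda)_{\lambda\in\Lambda}$ of $C_0(X)$ — for instance the sequence $(e_n)_{n\in\NN}$ constructed earlier from a $\sigma$-compact exhaustion of $X$ — and show $\phi(e_n)T\to T$ in norm for every generator $T=\phi(f)\lambda(a)$. Since $\phi(e_n)\phi(f)=\phi(e_nf)\to\phi(f)$ in the multiplier algebra (as $e_nf\to f$ in $C_0(X)$ and $\phi$ is isometric on $C_0(X)$, being injective), and $\phi(e_n)$ commutes with $\lambda(a)$, we get $\phi(e_n)T=\phi(e_nf)\lambda(a)\to \phi(f)\lambda(a)=T$. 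The estimate passes to finite sums and then, by a standard $3\varepsilon$ argument using $\|\phi(e_n)\|\le 1$, to all of $\mathcal{A}$. Therefore $\overline{\phi(C_0(X))\mathcal{A}}=\mathcal{A}$, the homomorphism $\phi|_{C_0(X)}$ is non-degenerate, and $\mathcal{A}$ is a $C_0(X)$-algebra whose structural homomorphism is the restriction of that of $\mathcal{L}(L^2(\mathcal{G},\nu))$; in particular the inclusion $\mathcal{A}\hookrightarrow\mathcal{L}(L^2(\mathcal{G},\nu))$ is $C_0(X)$-linear, so $\mathcal{A}$ is a sub-$C_0(X)$-algebra.

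**Where the difficulty lies.** The argument is essentially formal once one has the description of the dense $*$-subalgebra of $\mathcal{A}$ as finite sums $\sum_i\phi(f_i)\lambda(a_i)$; the only genuinely substantive ingredient is the centrality of $\phi(C_0(X))$ in $M(\mathcal{L}(L^2(\mathcal{G},\nu)))$, which is already established in Proposition~\ref{ProAlgMod}, together with the fact that multiplying a sum of that form by $\phi(f)$ stays in that form. The mild subtlety to be careful about is that the $*$-subalgebra generated by $\lambda(C^*_r(\mathcal{G}))$ and $\phi(C_0(X))$ really does close up to finite sums of the stated type (using commutation to push all the $\phi$'s to one side and the relation $\phi(f)\phi(g)=\phi(fg)$ to collapse products of $\phi$'s); once this normal form is verified, stability under $\phi(C_0(X))$ and non-degeneracy of the restricted action follow immediately, and no further estimates beyond the routine approximate-unit argument are needed.
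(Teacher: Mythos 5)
The paper states this proposition without any proof, so there is nothing to compare your argument against; judged on its own merits, it contains a genuine gap at the density step, and the gap is not merely cosmetic. You reduce everything to the claim that the finite sums $\sum_i\phi(f_i)\lambda(a_i)$ form a dense $*$-subalgebra of $\mathcal{A}$, from which stability under $\phi(C_0(X))$ and non-degeneracy would follow from the estimate $\phi(e_nf)\to\phi(f)$. But $\mathcal{A}$ is by definition the smallest $C^*$-algebra \emph{containing} $\lambda(C^*_r(\mathcal{G}))$ and $\phi(C_0(X))$, so it contains each $\lambda(a)$ on its own; $\lambda(a)$ is not of the form $\sum_i\phi(f_i)\lambda(a_i)$, and when $X$ is not compact it need not lie in the norm closure of such sums. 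The missing statement is precisely that $\phi(e_n)\lambda(a)\to\lambda(a)$ in norm, and this is false in general.

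Concretely, let $\mathcal{G}=\NN\times\NN$ be the pair groupoid over the discrete space $X=\NN$ (\'etale, locally compact, $\sigma$-compact, Hausdorff). Then $C^*_r(\mathcal{G})=\mathcal{K}\big(\ell^2(\NN)\big)$, the module $L^2(\mathcal{G},\nu)$ identifies with $C_0\big(\NN,\ell^2(\NN)\big)$, $\lambda(a)$ acts as the constant field $x\mapsto a$, and $\phi(f)$ multiplies the fibre at $x$ by $f(x)$; hence $\lVert\lambda(a)-\phi(e_n)\lambda(a)\rVert=\sup_{x}\lvert 1-e_n(x)\rvert\,\lVert a\rVert=\lVert a\rVert$ for every $n$, and $\overline{\phi(C_0(X))\mathcal{A}}$, which consists of fields vanishing at infinity, does not contain $\lambda(a)$. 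The structural homomorphism restricted to $\mathcal{A}$ is therefore degenerate, and $\mathcal{A}$ is not a $C_0(X)$-algebra in the sense of \cite{Williams07}. The statement thus requires $X$ compact --- a hypothesis the paper only imposes explicitly a little later in the section, though the systematic use of $C(X)$ rather than $C_0(X)$ around this proposition suggests it is already intended here. Under that hypothesis your argument becomes correct but essentially trivial: $1\in C(X)$ gives $\phi(1)=\mathrm{id}$, centrality of $\phi(C(X))$ is Proposition \ref{ProAlgMod}, every generator $\lambda(a)=\phi(1)\lambda(a)$ is already of your normal form, and non-degeneracy is automatic for a unital structural map. You should either add the compactness hypothesis explicitly or abandon the density claim, since no correct argument exists in the non-compact case.
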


\begin{Rem}
Nous consid\'erons \`a partir de maintenant la $C_0(X)$-alg\`ebre $\mathcal{A}$ comme une sous-$C_0(X)$-alg\`ebre de la $C_0(X)$-alg\`ebre des op\'erateurs $\mathcal{L}\big(L^2(\mathcal{G},\nu)\big)$.
%On notera encore $\lambda$ l'injection $C(X)$-lin\'eaire (donc la $C(X)$-repr\'esentation fid\`ele) de $\mathcal{A}$ dans le $C(X)$-module hilbertien $\mathcal{L}\big(L^2(\mathcal{G},\nu)\big)$, qui associe \`a un \'el\'ement de $\mathcal{A}$ son op\'erateur canoniquement associ\'e dans $\mathcal{L}\big(L^2(\mathcal{G},\nu)\big)$.
\end{Rem}

\begin{Def}
Soit $\mathcal{B}$ une $C_0(X)$-alg\`ebre. On dit que $\mathcal{B}$ une $C_0(X)$-alg\`ebre continue si le champ semi-continu sup\'erieurement de $C^*$-alg\`ebres sur $X$ est continu, c'est-\`a-dire si pour tout $b$ dans $\mathcal{B}$, la fonction $x\rightarrow{\lVert{b_x}\lVert}$ est continue.
\end{Def}

\begin{Notation}
On dit que le groupo\"{\i}de $\mathcal{G}\rightrightarrows{X}$ v\'erifie la condition $\mathbf{(Cont)}$ si la $C_0(X)$-alg\`ebre associ\'ee $\mathcal{A}$ introduite dans la d\'efinition $\ref{DefAlgGra}$ est continue. On suppose \`a partir de maintenant que le groupo\"{\i}de \'etale $\mathcal{G}\rightrightarrows{X}$ v\'erifie la condition $\mathbf{(Cont)}$ c'est-\`a-dire que la $C_0(X)$-alg\`ebre $\mathcal{A}$ est continue. 
\vspace{+4mm}
\end{Notation}

La proposition qui suit permet de voir que la propri\'et\'e d'exactitude de la $C^*$-alg\`ebre r\'eduite $C^*_r(\mathcal{G})$ est transf\'er\'ee \`a la $C^*$-alg\`ebre $\mathcal{A}$ :
\begin{Pro}{\label{ProConsExa}}
Si la $C^*$-alg\`ebre $C^*_r(\mathcal{G})$ est exacte, alors la $C(X)$-alg\`ebre $\mathcal{A}$ est exacte.
\end{Pro}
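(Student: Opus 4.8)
The plan is to realise $\mathcal{A}$ as a $C^\ast$-subalgebra of a quotient of a minimal tensor product of an exact $C^\ast$-algebra with a nuclear one, and then to invoke the standard permanence properties of exactness. The only non-formal ingredient will be Kirchberg's theorem that a quotient of an exact $C^\ast$-algebra is exact; everything else is routine.

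First I would identify the two subalgebras that generate $\mathcal{A}$. Since the regular representation $\lambda : C^\ast_r(\mathcal{G})\to\mathcal{L}\big(L^2(\mathcal{G},\nu)\big)$ is injective (cf. the remark following Proposition~\ref{ProAlgMod}), the image $B:=\lambda\big(C^\ast_r(\mathcal{G})\big)$ is $\ast$-isomorphic to $C^\ast_r(\mathcal{G})$, hence exact by hypothesis. The image $D:=\phi\big(C_0(X)\big)$ of the structure homomorphism of the $C_0(X)$-algebra $\mathcal{L}\big(L^2(\mathcal{G},\nu)\big)$ lies in the centre of $\mathcal{L}\big(L^2(\mathcal{G},\nu)\big)$, so it commutes with $B$; being commutative, $D$ is nuclear. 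Writing $1$ for the unit of $\mathcal{L}\big(L^2(\mathcal{G},\nu)\big)$, I would adjoin it and set $B^{+}:=B+\CC 1$ and $D^{+}:=D+\CC 1$; these are unital $C^\ast$-subalgebras, $B^{+}$ is still exact and $D^{+}$ is still nuclear.

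Next I would produce the quotient map. Because $D^{+}$ is nuclear, the maximal and minimal norms on $B^{+}\otimes D^{+}$ coincide, and since $B^{+}$ and $D^{+}$ are commuting unital $C^\ast$-subalgebras of $\mathcal{L}\big(L^2(\mathcal{G},\nu)\big)$, the universal property of the maximal tensor product furnishes a $\ast$-homomorphism
\[
\pi : B^{+}\otimes_{\min} D^{+} \longrightarrow \mathcal{L}\big(L^2(\mathcal{G},\nu)\big),\qquad \pi(b\otimes d)=bd .
\]
The image of $\pi$ is a $C^\ast$-subalgebra containing $B=\pi(B^{+}\otimes 1)$ and $D=\pi(1\otimes D^{+})$, hence it contains $\mathcal{A}=C^\ast\big(\lambda(C^\ast_r(\mathcal{G})),\phi(C_0(X))\big)$ (Définition~\ref{DefAlgGra}). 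Thus $\mathcal{A}$ is a $C^\ast$-subalgebra of the quotient $\pi\big(B^{+}\otimes_{\min}D^{+}\big)$ of $B^{+}\otimes_{\min}D^{+}$.

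Finally I would conclude by permanence: $B^{+}\otimes_{\min}D^{+}$ is exact, being the minimal tensor product of an exact $C^\ast$-algebra with a nuclear one; by Kirchberg's theorem its quotient $\pi\big(B^{+}\otimes_{\min}D^{+}\big)$ is exact; and exactness passes to $C^\ast$-subalgebras, so $\mathcal{A}$ is exact as a $C^\ast$-algebra, which, together with the $C_0(X)$-algebra structure $\mathcal{A}$ inherits from $\mathcal{L}\big(L^2(\mathcal{G},\nu)\big)$, is exactly the statement of Proposition~\ref{ProConsExa} and is what is needed in the sequel (for Blanchard's $C_0(X)$-linear embedding theorem). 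The main obstacle to watch is the passage through a quotient: exactness does \emph{not} survive extensions in general, so one genuinely needs Kirchberg's quotient theorem here; the two facts making the argument go through are the centrality of $\phi(C_0(X))$ (which is what lets $B$ and $D$ be tensored) and the nuclearity of $C_0(X)$ (which is what makes the maximal tensor norm descend to give $\pi$). No separability assumption on $\mathcal{G}$ is required, since each of these permanence properties holds for arbitrary $C^\ast$-algebras.
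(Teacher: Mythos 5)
Your proposal is correct and follows essentially the same route as the paper: both realise $\mathcal{A}$ as a quotient of $C^\ast_r(\mathcal{G})\otimes C(X)$ (exact, being the tensor product of an exact algebra with a nuclear one) and invoke Kirchberg's theorem that quotients of exact $C^\ast$-algebras are exact, your version merely being more explicit about why the multiplication map $b\otimes d\mapsto bd$ is well defined (commuting ranges plus nuclearity of $D^{+}$) and inserting a harmless passage to a subalgebra. The one caveat concerns your closing claim that no separability is needed: the version of Kirchberg's quotient theorem the paper cites (Blackadar IV.3.4.19) is stated for \emph{separable} exact $C^\ast$-algebras, and the non-separable case is genuinely more delicate, so the (implicit) separability of $\mathcal{A}$ should be retained rather than dismissed.
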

\begin{proof}
On note $A$ la $C^*$-alg\`ebre $C^*_r(\mathcal{G})$ et on suppose que $A$ est exacte; la $C^*$-alg\`ebre $C(X)$ \'etant nucl\'eaire, $A\otimes{C(X)}$ est exacte. On d\'efinit un homomorphisme $\pi : A\otimes{C(X)}\rightarrow\mathcal{A}$ d\'efini pour les \'el\'ements de la forme
$\sum_i{a_i\otimes{f_i}}$ par 
$$\pi\big(\sum_i{a_i\otimes{f_i}}\big)=\sum_i{\phi(f_i).\lambda(a_i)}$$ 
Cet homomorphisme est surjectif et on a un isomorphisme de $C^*$-alg\`ebres de $A\otimes{C(X)}/I$ dans $\mathcal{A}$, o\`u $I:=\ker({\pi})$. D'apr\`es un r\'esultat de Kirchberg (voir une version (IV.3.4.19) de [Blac]) affirmant que tout quotient de $C^*$-alg\`ebre s\'eparable exacte est exact, la $C(X)$-alg\`ebre $\mathcal{A}$ est exacte.
\\
\end{proof}

%\subsection  {Th\'eor\`eme de stabilisation de Kasparov}
On rappelle ci-dessous le th\'eor\`eme de stabilisation de Kasparov pour les modules de Hilbert : 
\begin{Def}
 A toute $C^*$-alg\`ebre $B$, on associe le module de Hilbert standard sur $B$, que  l'on note $\mathcal{H}_{B}$ d\'efini par 
$$\mathcal{H}_{B}=\bigg\{(b_k)\in\prod_{k\in\NN}B\textrm{ : }\sum_{k\in\NN}b_k^*b_k\textrm{ converge dans }B\bigg\}$$
\end{Def}
\begin{Theo}[\textbf{Stabilisation de Kasparov}]{\label{TheoStabKas}}
Pour tout $B$-module de Hilbert $\mathcal{F}$, d\'enombrablement engendr\'e, il existe un isomorphisme de $B$-module de Hilbert entre $\mathcal{F}\oplus{\mathcal{H}_B}$ et $\mathcal{H}_B$.
\end{Theo}
\vspace{+2mm}
On trouve une d\'emonstartion du th\'eor\`eme \ref{TheoStabKas} dans \cite{Kasparov80} ou encore dans \cite{Lance95} %\cite{ManuilovTroitsky05} 
: il s'agit d'une variante de l'orthogonalisation de Gram-Schmidt. A l'aide du th\'eor\`eme de stabilisation de Kasparov \ref{TheoStabKas} , on prouve la proposition suivante
\begin{Pro}
Il existe un homomorphisme de $C(X)$-alg\`ebres injectif de la $C(X)$-alg\`ebre $\mathcal{L}\big(L^2(\mathcal{G},\nu)\otimes{l^2(\NN)}\big)$ dans $\mathcal{L}\big(\mathcal{H}_{C(X)}\big)$
\end{Pro}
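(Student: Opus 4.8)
Le plan est de d\'eduire le plongement directement du th\'eor\`eme de stabilisation de Kasparov \ref{TheoStabKas}, appliqu\'e au $C(X)$-module de Hilbert $\mathcal{F}:=L^2(\mathcal{G},\nu)\otimes l^2(\NN)$. Je commencerais par observer que, sous les hypoth\`eses courantes sur $\mathcal{G}$, le $C(X)$-module $L^2(\mathcal{G},\nu)$ est d\'enombrablement engendr\'e (une famille d\'enombrable de fonctions denses dans $C_c(\mathcal{G})$ en fournit un syst\`eme de g\'en\'erateurs), donc $\mathcal{F}=L^2(\mathcal{G},\nu)\otimes l^2(\NN)$ l'est \'egalement, un syst\`eme de g\'en\'erateurs \'etant donn\'e par les $e_i\otimes\delta_j$ o\`u $(e_i)$ engendre $L^2(\mathcal{G},\nu)$ et $(\delta_j)$ est la base canonique de $l^2(\NN)$. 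Le th\'eor\`eme \ref{TheoStabKas} avec $B=C(X)$ fournit alors un isomorphisme de $C(X)$-modules de Hilbert
$$U : \mathcal{F}\oplus\mathcal{H}_{C(X)}\xrightarrow{\sim}\mathcal{H}_{C(X)}.$$

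Un tel $U$ \'etant en particulier $C(X)$-lin\'eaire (tout op\'erateur adjoignable l'est), la conjugaison $\mathrm{Ad}(U) : S\mapsto USU^\ast$ est un isomorphisme de $C(X)$-alg\`ebres de $\mathcal{L}\big(\mathcal{F}\oplus\mathcal{H}_{C(X)}\big)$ sur $\mathcal{L}\big(\mathcal{H}_{C(X)}\big)$. Il reste \`a plonger $\mathcal{L}(\mathcal{F})$ dans $\mathcal{L}\big(\mathcal{F}\oplus\mathcal{H}_{C(X)}\big)$ : je prendrais l'application "coin" $\iota(T):=T\oplus 0$, d\'efinie par $\iota(T)(\xi\oplus\eta)=T(\xi)\oplus 0$, qui est clairement un $\ast$-homomorphisme injectif. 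Le seul point \`a v\'erifier est sa $C(X)$-lin\'earit\'e : tout op\'erateur adjoignable sur $\mathcal{F}$ \'etant automatiquement $C(X)$-lin\'eaire et l'alg\`ebre $\mathcal{L}(\mathcal{F})$ \'etant unif\`ere, le multiplicateur central associ\'e \`a $f\in C(X)$ est $f\cdot\mathrm{id}_{\mathcal{F}}$ ; on a donc $\iota\big((f\cdot\mathrm{id}_{\mathcal{F}})T\big)=(f\cdot\mathrm{id}_{\mathcal{F}\oplus\mathcal{H}_{C(X)}})\iota(T)$ pour tout $T\in\mathcal{L}(\mathcal{F})$ et tout $f\in C(X)$, ce qui est exactement la $C(X)$-lin\'earit\'e de $\iota$. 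La compos\'ee $\mathrm{Ad}(U)\circ\iota : \mathcal{L}\big(L^2(\mathcal{G},\nu)\otimes l^2(\NN)\big)\to\mathcal{L}\big(\mathcal{H}_{C(X)}\big)$ est alors l'homomorphisme de $C(X)$-alg\`ebres injectif recherch\'e.

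Je ne m'attends pas \`a un obstacle v\'eritable ici : le th\'eor\`eme de Kasparov fait tout le travail. Les seuls points demandant un peu de soin sont la v\'erification que $\mathcal{F}$ est d\'enombrablement engendr\'e (condition d'application de \ref{TheoStabKas}) et la compatibilit\'e, \`a chaque \'etape, des structures de $C(X)$-module plut\^ot que des seules structures de $C^\ast$-alg\`ebre sous-jacentes, c'est-\`a-dire le fait que $\iota$, $\mathrm{Ad}(U)$ et leur compos\'ee sont bien des morphismes de $C(X)$-alg\`ebres. C'est pr\'ecis\'ement pour pouvoir invoquer la stabilisation (somme directe avec $\mathcal{H}_{C(X)}$) que l'on a pr\'ealablement tensoris\'e $L^2(\mathcal{G},\nu)$ par $l^2(\NN)$.
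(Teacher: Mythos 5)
Votre d\'emonstration est correcte et suit essentiellement la m\^eme voie que celle du texte : engendrement d\'enombrable de $L^2(\mathcal{G},\nu)\otimes l^2(\NN)$, stabilisation de Kasparov, puis plongement de $\mathcal{L}\big(L^2(\mathcal{G},\nu)\otimes l^2(\NN)\big)$ dans $\mathcal{L}\big(\mathcal{H}_{C(X)}\big)$. Vous explicitez simplement davantage le dernier pas (plongement en coin $T\mapsto T\oplus 0$ suivi de la conjugaison par l'unitaire de stabilisation), ce que le texte laisse implicite.
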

\begin{proof}
Comme $l^2(\NN)$ est un espace de Hilbert s\'eparable donc d\'enombrablement engendr\'e $(e_n)_{n\in\NN}$ et
$L^2(\mathcal{G},\nu)$ est un $C(X)$-module de Hilbert d\'enombrablement engendr\'e, alors le $C(X)$-module de Hilbert $L^2(\mathcal{G},\nu)\otimes{l^2(\NN)}$ est d\'enombrablement engendr\'e. En appliquant le th\'eor\`eme $\ref{TheoStabKas}$ de stabilisation de Kasparov au $C(X)$-module de Hilbert $L^2(\mathcal{G},\nu)\otimes{l^2(\NN)}$ d\'enombrablement engendr\'e, il existe un isomorphisme de $C(X)$-module hilbertien entre $L^2(\mathcal{G},\nu)\otimes{l^2(\NN)}\oplus\mathcal{H}_{C(X)}$ et $\mathcal{H}_{C(X)}$.  L'homomorphisme de $C(X)$-module de Hilbert 
$$i : L^2(\mathcal{G},\nu)\otimes{l^2(\NN)}\hookrightarrow{L^2(\mathcal{G},\nu)\otimes{l^2(\NN)}\oplus{\mathcal{H}_{C(X)}}}$$ \'etant injectif, il existe un plongement $C(X)$-lin\'eaire $\mathcal{L}(L^2(\mathcal{G},\nu)\otimes{l^2(\NN)})$ dans $\mathcal{L}(\mathcal{H}_{C(X)})$.
\end{proof}

%\subsection  {Op\'erateurs de rang fini}

On termine cette partie en appliquant le th\'eor\`eme \ref{TheoCool} \`a la $C(X)$-alg\`ebre d'op\'erateurs $\mathcal{A}$. On note $\mathcal{K}_0$ l'id\'eal des op\'erateurs de rang fini sur le $C(X)$-module de Hilbert $\mathcal{L}(\mathcal{H}_{C(X)})$, d\'ecrit auparavant. On suppose, quitte \`a remplacer $\mathcal{A}$ par la $C(X)$-alg\`ebre des perturbations compactes des \'el\'ements de $\mathcal{A}$, que l'ensemble des op\'erateurs compacts $\mathcal{K}$ est dans $\mathcal{A}$.
\\
On consid\`ere $(e_n)_{n\in\NN}$, la suite dans $\mathcal{L}\big(C(X)\otimes{l^2(\NN)}\big)$, telle que pour tout entier $n$, on a $e_n=1\otimes{p_n}$, avec $p_n$ la projection sur les $n$ premi\`eres composantes de $l^2(\NN)$. On obtient une approximation de l'unit\'e dans l'id\'eal $\mathcal{K}_0$, constitu\'ee de projections.  
On va ainsi pouvoir construire une approximation de l'unit\'e quasicentrale $(u_n)$ dans l'id\'eal $\mathcal{K}_0$ de $\mathcal{A}$ c'est-\`a-dire une suite croissante $(u_n)_{n\in\NN}$ d'\'el\'ements positifs de la boule unit\'e de $\mathcal{K}$, telle que pour tout $a$ dans $\mathcal{A}$, $\lim_{n\rightarrow{\infty}}{\big\lVert{au_n-u_na}\big\lVert}=0$ c'est-\`a-dire $$\lim_{n\rightarrow{\infty}}\sup_{x\in{X}}\big\lVert{a(x)u_n(x)-u_n(x)a(x)}\big\lVert=0$$

\begin{Lemma}
L'id\'eal $\mathcal{K}_0$ des op\'erateurs de rang fini dans la $C(X)$-alg\`ebre $\mathcal{A}$ poss\`ede une approximation de l'unit\'e quasicentrale $(u_n)_{n\in\NN}$, obtenue \`a partir de combinaisons lin\'eaires convexes d'\'el\'ements de l'approximation de l'unit\'e $(e_n)_{n\in\NN}$ de $\mathcal{K}_0$. 
\end{Lemma}

\begin{proof}
La preuve est directe en utilisant le th\'eor\`eme \ref{TheoArv}.
\end{proof}

\begin{Lemma}
L'espace nul de l'id\'eal $\mathcal{K}_0$ est trivial.
\end{Lemma}

\begin{Pro}\label{ProSuiAlgRem}
Il existe une suite d'op\'erateurs $(E_n)_{n\in\NN}$ positifs dans l'ensemble des op\'erateurs de rang fini $\mathcal{K}_0$, telle que $\sum_{n\in\NN}E_n^2=1$ et pour tout $A$ dans $\mathcal{A}$, on a $\sum_{n\in\NN}E_nAE_n$ est une pertubation compacte de $A$. De plus, pour tout r\'eel strictement positif $\varepsilon$ et tout toute partie finie (ou m\^eme compacte) $\mathcal{F}$ dans $\mathcal{A}$, on peut choisir $(E_n)_{n\in\NN}$ de sorte que $\lVert{A-\sum_{n\in\NN}E_nAE_n}\lVert<\varepsilon$, pour tout $A$ dans $\mathcal{F}$.
\end{Pro}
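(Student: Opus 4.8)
Le plan est de déduire la proposition directement du théorème \ref{TheoCool}, appliqué à la $C(X)$-algèbre $\mathcal{A}$ et à son idéal $\mathcal{K}_0$ des opérateurs de rang fini. Les deux lemmes qui précèdent fournissent exactement les hypothèses requises : $\mathcal{K}_0$ admet une approximation de l'unité quasicentrale $(u_n)_{n\in\NN}$, formée de combinaisons linéaires convexes des projections $e_n = 1\otimes p_n$, et son espace nul est trivial ; on a de plus supposé que $\mathcal{A}$ contient l'idéal $\mathcal{K}$ des opérateurs compacts, de sorte que $\mathcal{K}_0$ est bien un idéal de $\mathcal{A}$. Le théorème \ref{TheoCool} produit alors, en posant $E_n = (u_n - u_{n-1})^{1/2}$ (avec $u_0 = 0$), une suite $(E_n)_{n\in\NN}$ d'opérateurs positifs telle que $E_n^2 = u_n - u_{n-1}\in\mathcal{K}_0$ pour tout $n$, telle que $\sum_{n\geq 1}E_n^2 = 1$ au sens de la convergence forte (cf. remarque \ref{RemCool}), et telle que $A - \sum_n E_n A E_n\in\mathcal{K}_0\subseteq\mathcal{K}$ pour tout $A\in\mathcal{A}$ : cette dernière propriété est exactement l'assertion que $\sum_n E_n A E_n$ est une perturbation compacte de $A$.

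Pour l'assertion quantitative, je distinguerais le cas d'une partie finie et celui d'une partie compacte $\mathcal{F}$. Lorsque $\mathcal{F}$ est fini, la seconde moitié du théorème \ref{TheoCool} donne directement, pour tout $\varepsilon > 0$, un choix de $(E_n)$ tel que $\lVert A - \sum_n E_n A E_n \rVert < \varepsilon$ pour tout $A\in\mathcal{F}$. Lorsque $\mathcal{F}$ n'est supposé que compact, j'utiliserais l'écriture $l(A) = \sum_n E_n A E_n = V^* i'(A) V$ de la remarque \ref{RemCool}, où $V$ est une isométrie et $i'$ un $\ast$-homomorphisme ; comme $V^* V = \sum_n E_n^2 = 1$, l'application $l$ est unifère et complètement positive, donc contractante, d'où $\lVert A - l(A) \rVert \leq 2\lVert A - A' \rVert + \lVert A' - l(A') \rVert$ pour tous $A, A'\in\mathcal{A}$. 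En recouvrant le compact $\mathcal{F}$ par un $(\varepsilon/3)$-réseau fini $\{A_1, \dots, A_k\}$ et en appliquant le cas fini à ce réseau avec la tolérance $\varepsilon/3$, on obtient $\lVert A - l(A) \rVert < \varepsilon$ uniformément sur $\mathcal{F}$.

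Il n'y a pas d'obstacle profond ici : la proposition est essentiellement une spécialisation du théorème \ref{TheoCool}. Le seul point à expliciter est que les $E_n$ restent de rang fini, et non seulement compacts comme le garantit a priori l'énoncé de \ref{TheoCool}. Cela résulte du choix concret de l'approximation de l'unité : les $e_i = 1\otimes p_i$ sont croissantes, deux à deux commutantes et de rang fini, donc $u_n - u_{n-1}$ est une combinaison linéaire finie, à coefficients positifs, des projections mutuellement orthogonales $e_j - e_{j-1}$, et $E_n = (u_n - u_{n-1})^{1/2}$ s'obtient alors par calcul fonctionnel coefficient par coefficient et demeure dans $\mathcal{K}_0$. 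Ce point acquis, la réunion des deux lemmes préliminaires et du théorème \ref{TheoCool} achève la démonstration.
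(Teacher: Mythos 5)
Votre d\'emonstration suit essentiellement la m\^eme voie que celle du texte : on applique le th\'eor\`eme \ref{TheoCool} \`a la $C(X)$-alg\`ebre $\mathcal{A}$ et \`a l'id\'eal $\mathcal{K}_0$ des op\'erateurs de rang fini, dont les deux lemmes pr\'ec\'edents garantissent qu'il poss\`ede une unit\'e approch\'ee quasicentrale et un espace nul trivial. Vous ajoutez deux pr\'ecisions utiles que le texte traite plus bri\`evement : le fait que chaque $E_n$ reste de rang fini (justifi\'e chez vous via la structure explicite des projections $e_n=1\otimes p_n$, l\`a o\`u le texte se contente d'invoquer que $E_n^2$ l'est) et le passage d'une partie finie \`a une partie compacte $\mathcal{F}$ au moyen d'un $(\varepsilon/3)$-r\'eseau et de la contractivit\'e de l'application unif\`ere compl\`etement positive $l$, point que la d\'emonstration du texte omet enti\`erement.
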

\begin{proof}
L'ensemble $\mathcal{K}_0$ des op\'erateurs de rang fini sur le $C(X)$-module de Hilbert $\mathcal{H}_{C(X)}$ est un id\'eal bilat\`ere dans la $C(X)$-alg\`ebre $\mathcal{A}$, dont l'espace nul est r\'eduit au vecteur nul de $\mathcal{H}_{C(X)}$, et on a construit une unit\'e approch\'ee dans $\mathcal{K}_0$ quasicentrale pour $\mathcal{A}$. D'apr\`es le th\'eor\`eme \ref{TheoCool} il existe une suite $(E_n)_{n\in\NN}$ d'op\'erateurs positifs dans l'ensemble des op\'erateurs compacts $\mathcal{K}$, (qui est la fermeture pour la norme de l'id\'eal $\mathcal{K}_0$) v\'erifiant pour tout entier $n$, $E_n^2$ est un op\'erateur de rang fini, $\sum_nE_n^2=1$ et pour tout op\'erateur $A$ de $\mathcal{A}$, l'op\'erateur $A-\sum_nE_nAE_n$ est compact. De plus, pour tout entier $n$, l'op\'erateur $E_n$ est de rang fini car $E_n^2$ l'est.
\\
%\vspace{+1mm}
\end{proof}
La suite d'op\'erateurs positifs compacts $(E_n)_{n\in\NN}$ donn\'ee par la proposition \ref{ProSuiAlgRem} nous permet de construire dans les prochaines \'etapes une application localisante dans $\mathcal{A}$ et des projections dans $\mathcal{L}(\mathcal{H}_{C(X)})$.
\vspace{+1mm}

\subsection  {$C(X)$-nucl\'earit\'e et exactitude}

On consid\`ere \`a partir de maintenant et ceci jusqu'\`a la fin de cette section que l'espace des unit\'es $X$ du groupo\"{\i}de \'etudi\'e est compact. Dans cette partie, on s'int\'eresse aux relations entre l'exactitude et la nucl\'earit\'e dans la cat\'egorie des $C(X)$-alg\`ebres continues.  
\\
On rappelle tout d'abord les notions de nucl\'earit\'e pour les $C(X)$-alg\`ebres. On d\'efinit l'alg\`ebre de Cuntz $\mathcal{O}_2$ puis on montre que la $C(X)$-alg\`ebre continue $C(X)\otimes\mathcal{O}_2$ est $C(X)$-nucl\'eaire. On utilise alors un r\'esultat de Kirchberg qui apparait dans l'appendice de \cite{Blanchard97} pour d\'efinir un homomorphisme injectif de $C(X)$-alg\`ebres $\pi : \mathcal{A}\to{C(X)\otimes\mathcal{O}_2}$. On se sert alors de la $C(X)$-nucl\'earit\'e de $C(X)\otimes\mathcal{O}_2$ pour obtenir les premi\`eres approximations par factorisations de l'homomorphisme identit\'e $id_\mathcal{A}$. 
\\

Dans la th\'eorie des $C^*$-alg\`ebres, la nucl\'earit\'e d'une $C^*$-alg\`ebre $B$ peut \^etre caract\'eris\'ee par l'existence d'approximations par les applications compl\`etement positives de rang fini de l'homomorphisme identit\'e de $B$. On dit que l'homomorphisme identit\'e est nucl\'eaire. Cette d\'efinition s'\'etend aux applications lin\'eaires compl\`etement positives.
\\   
On d\'efinit une notion analogue de $C(X)$-nucl\'earit\'e pour les applications $C(X)$-lin\'eaires compl\`etement positives  entre $C(X)$-alg\`ebres.
\begin{Def}
a) Soient $A_1$ et $A_2$ deux $C(X)$-alg\`ebres. Une application $C(X)$-lin\'eaire compl\`etement positive $\theta : A_1\rightarrow{A_2}$ est dite $C(X)$-nucl\'eaire si et seulement si pour tout sous ensemble compact $K$ dans $A_1$ et tout r\'eel strictement positif $\varepsilon$, il existe un entier $n$ et des applications $C(X)$-lin\'eaires, compl\`etement positives et contractantes $\phi_n : A_1\rightarrow{C(X)\otimes{M_n(\CC)}}$ et $\psi_n : C(X)\otimes{M_n(\CC)}\rightarrow{A_2}$ telles que pour tout $a$ dans $K$, on ait 
$$\big\lVert{\theta(a)-\psi_n\circ\phi_n(a)}\big\lVert<\varepsilon$$
b) Une $C(X)$-alg\`ebre $A$ est dite $C(X)$-nucl\'eaire si l'homomorphisme identit\'e est $C(X)$-nucl\'eaire.
\end{Def}
\vspace{+2mm}
\begin{Def}
On appelle $C^*$-alg\`ebre de Cuntz unif\`ere la $C^*$-alg\`ebre  engendr\'ee par deux isom\'etries $s_1\textrm{, }s_2$ satisfaisant la relation
$1 = s_1s^*_1 + s_2s^*_2$. On note $\mathcal{O}_2$ cette $C^*$-alg\`ebre.
\end{Def}
\vspace{+2mm}
\begin{Pro}
La $C(X)$-alg\`ebre $C(X)\otimes{\mathcal{O}_2}$ est $C(X)$-nucl\'eaire. 
\end{Pro}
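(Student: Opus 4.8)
Le plan est de d\'eduire la $C(X)$-nucl\'earit\'e de $C(X)\otimes\mathcal{O}_2$ de la nucl\'earit\'e classique de l'alg\`ebre de Cuntz $\mathcal{O}_2$, en tensorisant par l'identit\'e de $C(X)$ une approximation compl\`etement positive ponctuelle en norme de $\mathrm{id}_{\mathcal{O}_2}$. Comme $C(X)$ est nucl\'eaire, il n'y a qu'une seule $C^\ast$-norme sur le produit tensoriel alg\'ebrique $C(X)\odot\mathcal{O}_2$, de sorte que l'\'ecriture $C(X)\otimes\mathcal{O}_2$ est sans ambigu\"{\i}t\'e. On utiliserait la nucl\'earit\'e de $\mathcal{O}_2$ sous la forme suivante : il existe des suites d'applications compl\`etement positives contractantes $\phi_n : \mathcal{O}_2\rightarrow{M_{k_n}(\CC)}$ et $\psi_n : M_{k_n}(\CC)\rightarrow\mathcal{O}_2$ telles que $\lVert\psi_n(\phi_n(a))-a\rVert\to{0}$ pour tout $a$ dans $\mathcal{O}_2$.

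On poserait alors
$$\Phi_n:=\mathrm{id}_{C(X)}\otimes\phi_n : C(X)\otimes\mathcal{O}_2\longrightarrow{C(X)\otimes{M_{k_n}(\CC)}},\qquad \Psi_n:=\mathrm{id}_{C(X)}\otimes\psi_n : C(X)\otimes{M_{k_n}(\CC)}\longrightarrow{C(X)\otimes\mathcal{O}_2}.$$
Le produit tensoriel minimal \'etant fonctoriel pour les applications compl\`etement positives, $\Phi_n$ et $\Psi_n$ sont compl\`etement positives; elles sont contractantes parce que la norme minimale est une norme crois\'ee; et elles sont $C(X)$-lin\'eaires puisque le premier facteur tensoriel est l'identit\'e de $C(X)$. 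Via l'identification $C(X)\otimes{M_{k_n}(\CC)}=M_{k_n}(C(X))$, ce sont des factorisations \`a travers des $C(X)$-alg\`ebres du type $C(X)\otimes{M_n(\CC)}$, comme l'exige la d\'efinition de la $C(X)$-nucl\'earit\'e.

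Ensuite on v\'erifierait la propri\'et\'e d'approximation. On a $\Psi_n\circ\Phi_n=\mathrm{id}_{C(X)}\otimes(\psi_n\circ\phi_n)$, donc sur un tenseur \'el\'ementaire $f\otimes{a}$,
$$\big\lVert\Psi_n(\Phi_n(f\otimes{a}))-f\otimes{a}\big\rVert=\big\lVert{f\otimes\big(\psi_n(\phi_n(a))-a\big)}\big\rVert\leq\lVert{f}\rVert_\infty\,\lVert\psi_n(\phi_n(a))-a\rVert\longrightarrow{0}.$$
Par lin\'earit\'e et in\'egalit\'e triangulaire, $\Psi_n\circ\Phi_n$ convergerait ponctuellement en norme vers l'identit\'e sur la sous-$\ast$-alg\`ebre dense des sommes finies de tenseurs \'el\'ementaires; les $\Psi_n\circ\Phi_n$ \'etant uniform\'ement born\'ees, cette convergence s'\'etendrait \`a tout $C(X)\otimes\mathcal{O}_2$. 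Enfin, une suite uniform\'ement born\'ee d'applications convergeant ponctuellement en norme converge uniform\'ement sur les compacts : \'etant donn\'es $K\subset{C(X)\otimes\mathcal{O}_2}$ compact et $\varepsilon>0$, on recouvre $K$ par un nombre fini de boules de rayon $\varepsilon/4$ centr\'ees en $b_1,\dots,b_m\in{K}$, on choisit $n_0$ tel que $\lVert\Psi_n(\Phi_n(b_j))-b_j\rVert<\varepsilon/4$ pour tout $j$ et tout $n\geq{n_0}$, et pour $b\in{K}$ et un $j$ v\'erifiant $\lVert{b-b_j}\rVert<\varepsilon/4$ on obtient $\lVert\Psi_n(\Phi_n(b))-b\rVert\leq{2}\lVert{b-b_j}\rVert+\lVert\Psi_n(\Phi_n(b_j))-b_j\rVert<\varepsilon$, ce qui est exactement la condition de la d\'efinition de la $C(X)$-nucl\'earit\'e pour $\theta=\mathrm{id}_{C(X)\otimes\mathcal{O}_2}$.

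Le seul point demandant un peu de soin sera cette derni\`ere \'etape d'\'equicontinuit\'e qui transforme la convergence ponctuelle en norme en convergence uniforme sur les compacts; tout le reste --- fonctorialit\'e du produit tensoriel minimal pour les applications compl\`etement positives, $C(X)$-lin\'earit\'e et contractivit\'e des applications tensoris\'ees, densit\'e des tenseurs \'el\'ementaires --- est de la v\'erification de routine, et l'ingr\'edient non trivial, \`a savoir la nucl\'earit\'e de $\mathcal{O}_2$, est classique et peut \^etre invoqu\'e tel quel.
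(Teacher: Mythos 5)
Your proof is correct and follows essentially the same route as the paper: both deduce the result from the nuclearity of $\mathcal{O}_2$ by tensoring completely positive contractive factorizations through matrix algebras with $\mathrm{id}_{C(X)}$, and both obtain uniformity on a compact subset via a finite net and the triangle inequality. The only organizational difference is that the paper first extracts an $\varepsilon/3$-net $f_1,\dots,f_p$ of the given compact set, identifies $C(X)\otimes\mathcal{O}_2$ with $C(X,\mathcal{O}_2)$ and applies the nuclearity of $\mathcal{O}_2$ to the compact subset $\bigcup_{j}\overline{f_j(X)}$ of $\mathcal{O}_2$, whereas you fix once and for all a pointwise-approximating sequence (which implicitly uses the separability of $\mathcal{O}_2$) and only then run the net argument on the compact set.
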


%\begin{proof}
%La $C^*$-alg\`ebre $\mathcal{O}_2$ \'etant nucl\'eaire, il existe des suites $(\tilde\varphi_i)_{i\in\NN}$ et $(\tilde\psi_i)_{i\in\NN}$ telles que pour tout $i$ entier, $\tilde\varphi_i : \mathcal{O}_2\rightarrow{M_{i}(\CC)}$ et $\tilde\psi_i : M_i(\CC)\rightarrow{\mathcal{O}_2}$ sont des applications lin\'eaires compl\`etement positives v\'erifiant pour tout $x$ dans $\mathcal{O}_2$, la relation $\lim_{i\rightarrow{\infty}}\lVert{x-\tilde\psi_i\circ\tilde\varphi_i}(x)\lVert=0$. On pose alors pour tout $i$ entier, $\varphi_i=id_{C(X)}\otimes{\tilde\varphi_i} : C(X)\otimes\mathcal{O}_2\rightarrow{C(X)\otimes{M_i(\CC)}}$ et $\psi_i=id_{C(X)}\otimes{\tilde\psi_i} : C(X)\otimes{M_i(\CC)}\rightarrow{C(X)\otimes\mathcal{O}_2}$. On obtient alors deux suites $(\varphi_i)_{i\in\NN}$ et $(\psi_i)_{i\in\NN}$ d'applications $C(X)$-lin\'eaires compl\`etement positives v\'erifiant, pour tout $f$ dans $C(X)\otimes\mathcal{O}_2$
%$$\lim_{i\rightarrow{\infty}}\lVert{f-\psi_i\circ\varphi_i(f)}\lVert=0$$
%Ainsi la $C(X)$-alg\`ebre $C(X)\otimes\mathcal{O}_2$ est $C(X)$-nucl\'eaire.
%\end{proof}

\begin{proof}
La $C^*$-alg\`ebre $\mathcal{O}_2$ \'etant nucl\'eaire, pour tout sous espace compact $K$ de $\mathcal{O}_2$ et tout r\'eel strictement positif $\varepsilon$, il existe un entier $n$ et des applications lin\'eaires continues, compl\`etement positives et contractantes, not\'ees $\varphi : \mathcal{O}_2\rightarrow{M_n(\CC)}$ et $\psi : M_n(\CC)\rightarrow\mathcal{O}_2$ v\'erifiant pour tout $o$ dans $K$, 
$$\lVert{o-\psi\circ\varphi(o)}\lVert<\varepsilon$$
L'alg\`ebre $\mathcal{O}_2\otimes{C(X)}$ est une $C(X)$-alg\`ebre continue (c'est un champ continu trivial de $C^*$-alg\`ebres sur $X$ de fibre constante $\mathcal{O}_2$). On consid\`ere $\mathcal{K}$ un sous espace compact de $\mathcal{O}_2\otimes{C(X)}$ et $\varepsilon$ un r\'eel strictement positif. Pour tout $f$ dans $\mathcal{K}$, on pose $U_f$ le voisinage ouvert d\'efini par $U_f=\big\{g\in\mathcal{O}_2\otimes{C(X)}\textrm{ : }\lVert{f-g}\lVert<\varepsilon/3\big\}$. On obtient alors un recrouvrement ouvert de $\mathcal{K}$. Par compacit\'e, il existe une famille finie $\{f_1,...,f_p\}$ d'\'el\'ements de $\mathcal{K}$, tels que $\mathcal{K}\subset{\cup_{j=1}^pU_{f_j}}$. Pour tout $f$ dans $\mathcal{K}$, il existe $j$ dans $\{1,...,p\}$ tel que pour tout $x$ dans $X$, on a 
$$\lVert{f(x)-f_j(x)}\lVert<\varepsilon/3$$ 
On peut identifier la $C(X)$-alg\`ebre $\mathcal{O}_2\otimes{C(X)}$ avec la $C(X)$-alg\`ebre $C(X,\mathcal{O}_2)$ et pour tout $j$ dans $\{1,...,p\}$, on a $\overline{f_j(K)}$ est un sous espace compact de $\mathcal{O}_2$. La r\'eunion finie $K=\cup_{j=1}^p\overline{f_j(K)}$ est un sous espace compact de $\mathcal{O}_2$, et il existe donc $n$ un entier et des contractions lin\'eaires continues et compl\`etement positives, not\'ees $\tilde\varphi : \mathcal{O}_2\rightarrow{M_n(\CC)}$ et $\tilde\psi : M_n(\CC)\rightarrow\mathcal{O}_2$, v\'erifiant $\lVert{f_j(x)-\tilde\psi\circ\tilde\varphi(f_j(x))}\lVert<\varepsilon/3$, pour tout $x$ dans $X$ et tout $j$ dans $\{1,...,p\}$. On pose $\varphi : \mathcal{O}_2\otimes{C(X)}\rightarrow{M_n\otimes{C(X)}}$ application $C(X)$-lin\'eaire compl\`etement positive et contractante d\'efinie par $\varphi:=\tilde\varphi\otimes{id_{C(X)}}$ et $\psi : M_n\otimes{C(X)}\rightarrow\mathcal{O}_2\otimes{C(X)}$ application $C(X)$-lin\'eaire compl\`etement positive et contractante d\'efinie par $\psi:=\tilde\psi\otimes{id_{C(X)}}$ et on a pour tout  $j$ dans $\{1,...,p\}$, 
\begin{align*}
\big\lVert{f_j-\psi\circ\phi(f_j)}\big\lVert&=\sup_{x\in{X}}\big\lVert{f_j(x)-(\tilde\psi\otimes{id_{C(X)}})\circ(\tilde\varphi\otimes{id_{C(X)}})(f_j)(x)}\big\lVert
\\
&=\sup_{x\in{X}}\big\lVert{f_j(x)\tilde\psi\circ\tilde\varphi(f_j(x))}\big\lVert<\frac{\varepsilon}{3}
\end{align*}
Pour tout $f$ dans $\mathcal{K}$, il existe $j$ dans $\{1,...,p\}$ tel que pour tout $x$ dans $X$, on a $\lVert{f(x)-f_j(x)}\lVert<\varepsilon/3$ et on a 
\begin{align*}
\big\lVert{f-\psi\circ\varphi(f)}\big\lVert&=\big\lVert{f-f_j+f_j-\psi\circ\varphi(f_j)+\psi\circ\varphi(f_j)-\psi\circ\varphi(f)}\big\lVert
\\
&\leq\big\lVert{f-f_j}\big\lVert+\big\lVert{f_j-\psi\circ\varphi(f_j)}\big\lVert+\big\lVert{\psi\circ\varphi(f_j)-\psi\circ\varphi(f)}\big\lVert
\\
&<\frac{\varepsilon}{3}+\frac{\varepsilon}{3}+\big\lVert{\psi\circ\varphi(f_j-f)}\big\lVert<\frac{\varepsilon}{3}+\frac{\varepsilon}{3}+\big\lVert{\psi\circ\varphi}\big\lVert\big\lVert{f_j-f}\big\lVert
\\
&<\frac{\varepsilon}{3}+\frac{\varepsilon}{3}+\frac{\varepsilon}{3}<\varepsilon
\end{align*}
On a donc prouv\'e que pour tout r\'eel strictement positif et tout sous ensemble compact $\mathcal{K}$ de $\mathcal{O}_2\otimes{C(X)}$, il existe un entier $n$ et deux applications $\varphi : \mathcal{O}_2\otimes{C(X)}\rightarrow{M_n\otimes{C(X)}}$ et $\psi : M_n\otimes{C(X)}\rightarrow\mathcal{O}_2\otimes{C(X)}$, $C(X)$-lin\'eaires continues, compl\`etement positives et contractantes telles que, pour tout $f$ dans $\mathcal{K}$, on a 
$$\lVert{f-\psi\circ\varphi(f)}\lVert<\varepsilon$$
Donc la $C(X)$-alg\`ebre $\mathcal{O}_2\otimes{C(X)}$ est $C(X)$-nucl\'eaire.
\\
\end{proof}
\vspace{+2mm}
%\begin{Rem}
%On a montr\'e l'existence de suites $(\varphi_n)_{n\in\NN}$ et $(\psi_n)_{n\in\NN}$, o\`u pour tout $n$ dans $\NN$,  $\varphi_n : C(X)\otimes{\mathcal{O}_2}\rightarrow{C(X)\otimes{M_n(\CC)}}$ et $\psi_n : C(X)\otimes{M_n(\CC)}\rightarrow{C(X)\otimes{\mathcal{O}_2}}$ sont des applications $C(X)$-lin\'eaires compl\`etement positives v\'erifiant, pour tout $f$ dans $C(X)\otimes{\mathcal{O}_2}$,
%$$\lim_{n\rightarrow{\infty}}\big\lVert{f-\psi_n\circ\varphi_n(f)}\big\lVert=0$$
%\end{Rem}

Dans [Blan], l'auteur donne une caract\'erisation des $C(X)$-alg\`ebres continues et nucl\'eaires, \'etendue au cas des $C(X)$-alg\`ebres continues et exactes par Kirchberg (voire appendice de [Blan]). Il d\'emontre le th\'eor\`eme suivant :  
\begin{Theo}[\cite{Blanchard97}]{\label{TheoBlan}}
Soit $\mathcal{B}$ une $C(X)$-alg\`ebre continue et s\'eparable. La $C^*$-alg\`ebre $\mathcal{B}$ est exacte si et seulement si il existe un plongement $C(X)$-lin\'eaire de $\mathcal{B}$ dans la $C(X)$-alg\`ebre $C(X)\otimes\mathcal{O}_2$.
\end{Theo}
On applique alors le th\'eor\`eme $\ref{TheoBlan}$ \`a la $C(X)$-alg\`ebre continue et exacte $\mathcal{A}$ et on obtient le corollaire suivant :
\begin{Cor}{\label{TheoBlanCor}}
Il existe un homomorphisme injectif de $C(X)$-alg\`ebres $\pi : \mathcal{A}\to{C(X)\otimes\mathcal{O}_2}$.
\end{Cor}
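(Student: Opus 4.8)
The plan is to obtain this as an immediate application of Theorem \ref{TheoBlan} to the $C(X)$-algebra $\mathcal{A}$, so the entire task reduces to checking that $\mathcal{A}$ meets the hypotheses of that theorem: it should be a separable, continuous, exact $C(X)$-algebra. Three of these four points are already available in the text, and I would simply assemble them.

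First I would recall that $\mathcal{A}$ was shown earlier to be a sub-$C(X)$-algebra of $\mathcal{L}\big(L^2(\mathcal{G},\nu)\big)$, so it is indeed a $C(X)$-algebra, and that its continuity is precisely the standing assumption $\mathbf{(Cont)}$ that we have imposed on the groupoid $\mathcal{G}$. Next, for exactness I would invoke Proposition \ref{ProConsExa}: under the running hypothesis that $C^*_r(\mathcal{G})$ is exact, the $C(X)$-algebra $\mathcal{A}$ is exact. The one point deserving a sentence of justification is separability: since $\mathcal{G}$ is \'etale, localement compact and $\sigma$-compact, the algebra $C^*_r(\mathcal{G})$ is separable, and $C_0(X)$ is separable as well because $X$ is $\sigma$-compact; as $\mathcal{A}=C^*\big(\lambda(C^*_r(\mathcal{G})),\phi(C_0(X))\big)$ is generated by these two separable subalgebras, it is itself separable.

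Having verified that $\mathcal{A}$ is a separable continuous exact $C(X)$-algebra, Theorem \ref{TheoBlan} applies and produces a $C(X)$-linear embedding, i.e.\ an injective homomorphism of $C(X)$-algebras $\pi : \mathcal{A}\to C(X)\otimes\mathcal{O}_2$, which is exactly the assertion of the corollary. I do not expect any genuine obstacle here: everything has been arranged in the preceding subsections precisely so that this corollary drops out of Blanchard's theorem; the only mildly non-trivial verification is the separability of $\mathcal{A}$, which is handled by the $\sigma$-compactness hypotheses on $\mathcal{G}$ and $X$.
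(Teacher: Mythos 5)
Votre démonstration suit exactement la même voie que celle de l'article : on applique le théorème \ref{TheoBlan} à la $C(X)$-algèbre $\mathcal{A}$, qui est continue par la condition $\mathbf{(Cont)}$ et exacte par la proposition \ref{ProConsExa}. Votre vérification explicite de la séparabilité de $\mathcal{A}$ (via la $\sigma$-compacité de $\mathcal{G}$ et de $X$) est un complément bienvenu que le texte passe sous silence alors que le théorème de Blanchard--Kirchberg la requiert.
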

\begin{proof}
La $C(X)$-alg\`ebre $\mathcal{A}$ \'etant continue et exacte, d'apr\`e le th\'eor\`eme $\ref{TheoBlan}$, il existe un homomorphisme injectif de $C(X)$-alg\`ebres $\pi : \mathcal{A}\to{C(X)\otimes\mathcal{O}_2}$.
\\
\end{proof}
%\begin{Rem}
%On suppose, \`a partir de maintenant que la $C^*$-alg\`ebre $C^*_r(\mathcal{G})$ est exacte et la $C(X)$-alg\`ebre $\mathcal{A}=C^*\big(\lambda(C^*_r(G)),\phi(C(X))\big)$ est continue et exacte.
%De plus, par le th\'eor\`eme pr\'ec\'edent, il existe un plongement $C(X)$-lin\'eaire, not\'e $\pi$, de la $C(X)$-alg\`ebre continue et exacte $\mathcal{A}$ dans $C(X)\otimes\mathcal{O}_2$
%$$\pi : \mathcal{A}\hookrightarrow{C(X)\otimes{\mathcal{O}_2}}$$
%\end{Rem}
La proposition qui suit permet de plonger la $C(X)$-alg\`ebre continue $C(X)\otimes\mathcal{O}_2$ dans $\mathcal{L}\big(\mathcal{H}_{C(X)}\big)$ :
\begin{Pro}
Il existe un homomorphisme injectif de $C(X)$-alg\`ebres de $\mathcal{O}_2\otimes{C(X)}$ dans $\mathcal{L}\big(\mathcal{H}_{C(X)}\big)$.
\end{Pro}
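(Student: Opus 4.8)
On se ramène à amplifier une représentation fidèle de l'algèbre de Cuntz. L'algèbre $\mathcal{O}_2$ étant unitale et séparable, le théorème de Gelfand fournit une représentation fidèle $\rho : \mathcal{O}_2\hookrightarrow\mathcal{B}\big(l^2(\NN)\big)=\mathcal{L}\big(l^2(\NN)\big)$ sur l'espace de Hilbert séparable $l^2(\NN)$. On considère alors le $C(X)$-module de Hilbert $l^2(\NN)\otimes C(X)$, produit tensoriel externe du module hilbertien $l^2(\NN)$ (sur $\CC$) et du module $C(X)$ (sur lui-m\^eme, $X$ \'etant compact). L'id\'ee est de faire agir $\mathcal{O}_2\otimes C(X)$ sur ce module via $\rho$ sur le facteur $l^2(\NN)$ et la multiplication sur le facteur $C(X)$, puis d'identifier $l^2(\NN)\otimes C(X)$ \`a $\mathcal{H}_{C(X)}$.

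Concr\`etement, l'homomorphisme cherch\'e est la compos\'ee de trois fl\`eches $C(X)$-lin\'eaires. D'abord, la fonctorialit\'e du produit tensoriel minimal donne un $\ast$-homomorphisme $\rho\otimes\mathrm{id}_{C(X)} : \mathcal{O}_2\otimes_{\textrm{min}}C(X)\rightarrow\mathcal{B}\big(l^2(\NN)\big)\otimes_{\textrm{min}}C(X)$, injectif car $\rho$ l'est, et $C(X)$-lin\'eaire puisque la structure de $C(X)$-alg\`ebre porte sur le second facteur. Ensuite, on dispose du $\ast$-homomorphisme canonique $\mathcal{B}\big(l^2(\NN)\big)\otimes_{\textrm{min}}C(X)\rightarrow\mathcal{L}\big(l^2(\NN)\otimes C(X)\big)$ envoyant $T\otimes f$ sur l'op\'erateur $\xi\otimes g\mapsto T\xi\otimes fg$ (d'adjoint $T^\ast\otimes\bar f$), encore $C(X)$-lin\'eaire. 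Enfin, le $C(X)$-module $l^2(\NN)\otimes C(X)$ \'etant d\'enombrablement engendr\'e, il s'identifie \`a $\mathcal{H}_{C(X)}$ (en choisissant une base orthonorm\'ee de $l^2(\NN)$) ; \`a d\'efaut, le th\'eor\`eme de stabilisation de Kasparov \ref{TheoStabKas} donne un isomorphisme $\big(l^2(\NN)\otimes C(X)\big)\oplus\mathcal{H}_{C(X)}\cong\mathcal{H}_{C(X)}$, et l'inclusion en coin associ\'ee produit un $\ast$-homomorphisme injectif et $C(X)$-lin\'eaire $\mathcal{L}\big(l^2(\NN)\otimes C(X)\big)\hookrightarrow\mathcal{L}\big(\mathcal{H}_{C(X)}\big)$, exactement comme dans la proposition pr\'ec\'edente.

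La compos\'ee de ces trois fl\`eches est un $\ast$-homomorphisme de $C(X)$-alg\`ebres de $\mathcal{O}_2\otimes C(X)$ dans $\mathcal{L}\big(\mathcal{H}_{C(X)}\big)$, et il reste \`a en v\'erifier l'injectivit\'e. Le point principal est l'injectivit\'e de l'homomorphisme interm\'ediaire $\mathcal{B}\big(l^2(\NN)\big)\otimes_{\textrm{min}}C(X)\rightarrow\mathcal{L}\big(l^2(\NN)\otimes C(X)\big)$ : on peut l'obtenir par un calcul fibre par fibre, en identifiant $\mathcal{B}\big(l^2(\NN)\big)\otimes_{\textrm{min}}C(X)$ \`a $C\big(X,\mathcal{B}(l^2(\NN))\big)$, dont la norme est le supremum des normes des valeurs, et en observant que l'op\'erateur image a pour valeur en $x$ l'op\'erateur de $\mathcal{B}(l^2(\NN))$ correspondant ; ou bien en invoquant le fait standard que le produit tensoriel externe induit un plongement isom\'etrique de $\mathcal{L}(E)\otimes_{\textrm{min}}\mathcal{L}(F)$ dans $\mathcal{L}(E\otimes F)$. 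Une fois ce point acquis, l'injectivit\'e de la compos\'ee r\'esulte de ce que $\rho$, donc $\rho\otimes\mathrm{id}_{C(X)}$, est fid\`ele, et de ce que l'inclusion en coin issue de \ref{TheoStabKas} est injective. L'obstacle est ainsi essentiellement de nature technique (manipulation des produits tensoriels de modules hilbertiens) et la d\'emonstration se r\'eduit \`a l'assemblage de ces r\'esultats.
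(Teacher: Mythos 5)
Votre démonstration est correcte et suit essentiellement la même route que celle du papier : représentation fidèle de $\mathcal{O}_2$ sur $l^2(\NN)$ par Gelfand--Naimark, tensorisation par $\mathrm{id}_{C(X)}$, action sur $l^2(\NN)\otimes C(X)$ puis identification de ce module avec $\mathcal{H}_{C(X)}$. Vous explicitez simplement davantage l'injectivité de la flèche intermédiaire $\mathcal{B}(l^2(\NN))\otimes_{\mathrm{min}}C(X)\rightarrow\mathcal{L}\big(l^2(\NN)\otimes C(X)\big)$ et le recours éventuel à la stabilisation de Kasparov, là où le papier invoque directement la $\sigma$-unitalité de $C(X)$ pour conclure $\mathcal{L}\big(l^2(\NN)\otimes C(X)\big)\cong\mathcal{L}(\mathcal{H}_{C(X)})$.
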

\begin{proof}
La $C^*$-alg\`ebre de Cuntz $\mathcal{O}_2$ \'etant s\'eparable, il existe, par le th\'eor\`eme de Gelfand-Naimark, un homomorphisme injectif $i : \mathcal{O}_2\hookrightarrow{L(l^2(\NN))}$. On a alors 
$$i\otimes{id_{C(X)}} : \mathcal{O}_2\otimes{C(X)}\hookrightarrow{L(l^2(\NN))\otimes{C(X)}}\hookrightarrow{\mathcal{L}\big(l^2(\NN)\otimes{C(X)}\big)}$$ 
$C(X)$ \'etant une $C^*$-alg\`ebre $\sigma$-unitale, on a $\mathcal{L}\big(l^2(\NN)\otimes{C(X)}\big)\cong\mathcal{L}(\mathcal{H}_{C(X)})$.\\
Il existe donc un homomorphisme injectif de $C(X)$-alg\`ebres de $\mathcal{O}_2\otimes{C(X)}$ dans $\mathcal{L}(\mathcal{H}_{C(X)})$.
\\
\end{proof}

\begin{Rem}
Le probl\`eme que nous devons \'etudier, \`a savoir l'approximation de la repr\'esentation r\'eguli\`ere de $C_r^*(\mathcal{G})$, peut se r\'esumer grossi\`erement par le diagramme suivant : 
\begin{eqnarray*}
\xymatrix{\mathcal{A} \ar@{^{(}->}[rr] \ar@{^{(}->}[dd]^\pi  && \mathcal{L}\big(L^2(\mathcal{G},\nu)\otimes{l^2(\NN)}\big) \ar@{^{(}->}[r] & \mathcal{L}\big(\mathcal{H}_{C(X)})  
\\ 
 & & & 
\\
C(X)\otimes\mathcal{O}_2 \ar[rr]^{id} \ar@{.>}[rd]  & & C(X)\otimes\mathcal{O}_2 \ar@{^{(}->}[r] & \mathcal{L}\big(\mathcal{H}_{C(X)}\big)  \ar@/_2pc/@{<.>}[uu]_{?} 
\\
 & C(X)\otimes{M_n(\CC)} \ar@{.>}[ru] } & &
\end{eqnarray*}

\end{Rem}

\subsection  {Etude de $E_n$ : projection et application localisante}
On utilise la suite $(E_n)_{n\in\NN}$ d'op\'erateurs positifs compacts obtenus dans la proposition \ref{ProSuiAlgRem} pour construire des projections $P_{E_n}$ dans $\mathcal{L}(\mathcal{H}_{C(X)})$ et une application localisante $l : \mathcal{A}\to\mathcal{A}$ d\'efinie par $l(A)=\sum_nE_nAE_n$. L'int\'er\^et d'introduire ces objets est la d\'emonstration de la proposition \ref{ProAppComPos} . 
\vspace{+1mm}
\\

On rappelle un th\'eor\`eme classique pour les op\'erateurs sur un module hilbertien, dont l'image est ferm\'ee (voir \cite{Lance95} ou \cite{ManTro05}) 
\begin{Pro}{\label{ProLance}}
Soit $B$ une $C^*$-alg\`ebre, $\mathcal{E}$ et $\mathcal{F}$ des $B$-modules hilbertiens.  Soit $t\in\mathcal{L}(\mathcal{E},\mathcal{F})$ \`a image ferm\'ee, alors on a 
\begin{enumerate}
\item[a)] Le noyau $\ker(t)$ est un sous module de $\mathcal{E}$ qui admet un sous module suppl\'ementaire.
\item[b)] L'image $Im(t)$ est un sous module de $\mathcal{F}$ qui admet un sous module suppl\'ementaire.
\item[c)] $t^\ast\in\mathcal{L}(\mathcal{F},\mathcal{E})$ a une image ferm\'ee.
\end{enumerate} 
\end{Pro}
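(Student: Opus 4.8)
\emph{Esquisse de démonstration.} L'approche consiste à établir une décomposition polaire $t=u\,(t^{*}t)^{1/2}$ dans $\mathcal{L}(\mathcal{E},\mathcal{F})$ à l'aide du calcul fonctionnel continu dans les $C^{*}$-algèbres $\mathcal{L}(\mathcal{E})$ et $\mathcal{L}(\mathcal{F})$. Le point de départ est un lemme spectral : si $\mathrm{Im}(t)$ est fermée, alors $0$ est un point isolé du spectre de l'opérateur positif $t^{*}t\in\mathcal{L}(\mathcal{E})$ (ou bien $t^{*}t$ est inversible), autrement dit il existe $\varepsilon>0$ tel que $\sigma(t^{*}t)\subset\{0\}\cup[\varepsilon,+\infty)$. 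Ce lemme permet de définir une fonction $\chi$ continue sur $\sigma(t^{*}t)$, valant $0$ en $0$ et $1$ sur $[\varepsilon,+\infty)\cap\sigma(t^{*}t)$ (continue précisément à cause de la lacune spectrale), puis de poser $q:=\chi(t^{*}t)\in\mathcal{L}(\mathcal{E})$, le projecteur support de $t^{*}t$.

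Pour le point (a), on remarque d'abord que $\ker t=\ker(t^{*}t)$, car $t^{*}tx=0$ entraîne $\langle tx,tx\rangle=\langle t^{*}tx,x\rangle=0$. Ensuite $\lambda\,(1-\chi(\lambda))$ s'annule sur $\sigma(t^{*}t)$, donc $t^{*}t(1-q)=0$ et $(1-q)\mathcal{E}\subset\ker(t^{*}t)$ ; réciproquement, comme $0$ est isolé dans $\sigma(t^{*}t)$, la fonction $h$ définie par $h(\lambda)=\chi(\lambda)/\lambda$ pour $\lambda\geq\varepsilon$ et $h(0)=0$ est continue sur $\sigma(t^{*}t)$, d'où $q=t^{*}t\,h(t^{*}t)$ et $\ker(t^{*}t)\subset(1-q)\mathcal{E}$. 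Ainsi $\ker t=(1-q)\mathcal{E}$ admet pour supplémentaire orthogonal le sous-module fermé $q\mathcal{E}$ de $\mathcal{E}$.

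Pour (c) puis (b), on prend $f$ continue sur $\sigma(t^{*}t)$ avec $f(0)=0$ et $f(\lambda)=\lambda^{-1/2}$ pour $\lambda\geq\varepsilon$, et on pose $u:=t\,f(t^{*}t)\in\mathcal{L}(\mathcal{E},\mathcal{F})$. Un calcul fonctionnel élémentaire donne $u^{*}u=q$, $uq=u$, donc $q':=uu^{*}$ est une projection de $\mathcal{L}(\mathcal{F})$, et $t=u\,(t^{*}t)^{1/2}$ (on utilise $t(1-q)=0$, obtenu comme ci-dessus, et $f(t^{*}t)(t^{*}t)^{1/2}=q$). On en tire $t^{*}=(t^{*}t)^{1/2}u^{*}$, d'où $\mathrm{Im}(t^{*})\subset\mathrm{Im}\bigl((t^{*}t)^{1/2}\bigr)$ ; l'inclusion inverse résulte de $(t^{*}t)^{1/2}=(t^{*}t)^{1/2}q=t^{*}u$. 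Or $(t^{*}t)^{1/2}$, de spectre contenu dans $\{0\}\cup[\varepsilon^{1/2},+\infty)$, est inversible sur $q\mathcal{E}$ (d'inverse borné donné par le calcul fonctionnel) et s'annule sur $(1-q)\mathcal{E}$, donc $\mathrm{Im}\bigl((t^{*}t)^{1/2}\bigr)=q\mathcal{E}$ est fermée : c'est (c), ce qui identifie aussi le supplémentaire de $\ker t$ du point (a) à $\overline{\mathrm{Im}(t^{*})}=\mathrm{Im}(t^{*})$. Le point (b) s'en déduit par symétrie : $t^{*}$ ayant désormais une image fermée, le même argument appliqué à $t^{*}$ donne $\mathrm{Im}(t)=\mathrm{Im}(t^{**})=q'\mathcal{F}$, sous-module fermé de $\mathcal{F}$ admettant $\ker(t^{*})$ pour supplémentaire orthogonal.

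Le principal obstacle est le lemme spectral initial : dans le cas des espaces de Hilbert il s'obtient en restreignant $t$ au supplémentaire orthogonal de son noyau, mais ce supplémentaire n'est pas encore disponible dans le cadre des modules hilbertiens. On le contourne en se ramenant au fait suivant de $C^{*}$-algèbres : pour $a\geq0$ dans une $C^{*}$-algèbre $B$, la sous-algèbre $aBa$ est fermée si et seulement si $0$ est isolé dans $\sigma(a)$ ou $a$ est inversible ; on l'applique à $B=\mathcal{L}(\mathcal{E}\oplus\mathcal{F})$ et à l'opérateur $(\xi,\eta)\mapsto(0,t\xi)$, dont l'image dans $\mathcal{E}\oplus\mathcal{F}$ est fermée si et seulement si $\mathrm{Im}(t)$ l'est. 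S'agissant d'un résultat classique, on pourra aussi simplement renvoyer à \cite{Lance95} ou \cite{ManTro05}.
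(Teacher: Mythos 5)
Le texte de l'article ne d\'emontre pas cette proposition : il la cite comme r\'esultat classique en renvoyant \`a \cite{Lance95} et \cite{ManTro05}. Votre sch\'ema reproduit pour l'essentiel la d\'emonstration standard (celle du th\'eor\`eme 3.2 de \cite{Lance95}) : une fois acquis le fait que $0$ est isol\'e dans $\sigma(t^{*}t)$, vos calculs fonctionnels donnant $\ker t=(1-q)\mathcal{E}$, $u^{*}u=q$, $t=u(t^{*}t)^{1/2}$, $\mathrm{Im}(t^{*})=\mathrm{Im}\big((t^{*}t)^{1/2}\big)=q\mathcal{E}$ puis $\mathrm{Im}(t)=q'\mathcal{F}$ sont corrects et livrent bien a), c) puis b).

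Le point faible est pr\'ecis\'ement le lemme spectral, que vous identifiez vous-m\^eme comme l'obstacle principal mais que votre contournement ne r\`egle pas. D'une part l'op\'erateur $T(\xi,\eta)=(0,t\xi)$ n'est pas positif, donc le crit\`ere reliant la fermeture de $aBa$ \`a l'isolement de $0$ dans $\sigma(a)$ pour $a\geq 0$ ne s'y applique pas tel quel (il faudrait le poser pour $a=\lvert T\rvert$). D'autre part, et c'est le vrai probl\`eme, ce crit\`ere porte sur la fermeture d'une partie de la $C^{*}$-alg\`ebre $B=\mathcal{L}(\mathcal{E}\oplus\mathcal{F})$, alors que l'hypoth\`ese porte sur la fermeture du sous-module $\mathrm{Im}(t)\subset\mathcal{F}$ : vous n'\'etablissez aucun pont entre ces deux notions, et c'est exactement l\`a que r\'eside toute la difficult\'e. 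L'argument standard passe par le th\'eor\`eme de l'application ouverte : $t:\mathcal{E}\to\mathrm{Im}(t)$ \'etant une surjection d'espaces de Banach, il existe $K>0$ tel que tout $y\in\mathrm{Im}(t)$ s'\'ecrive $y=tx$ avec $\lVert x\rVert\leq K\lVert y\rVert$ ; en \'ecrivant $\lVert tz\rVert^{2}=\lVert\langle t^{*}tz,x\rangle\rVert\leq K\lVert t^{*}tz\rVert\,\lVert tz\rVert$, on obtient $\lVert tz\rVert\leq K\lVert t^{*}tz\rVert$ pour tout $z$, puis en testant cette in\'egalit\'e sur $z=g(t^{*}t)\xi$ avec $g$ concentr\'ee autour d'une valeur spectrale $\lambda_{0}>0$ on en d\'eduit $\lambda_{0}\geq K^{-2}$, c'est-\`a-dire $\sigma(t^{*}t)\subset\{0\}\cup[K^{-2},+\infty)$. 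Une fois ce lemme ainsi \'etabli, le reste de votre d\'emonstration est complet.
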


\begin{Pro}
Pour tout entier $n$ dans $\NN$, l'image de l'op\'erateur $E_n$ est ferm\'ee dans $\mathcal{H}_{C(X)}$ et admet un sous module suppl\'ementaire.
\end{Pro}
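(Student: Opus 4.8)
\end{Pro}

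\begin{proof}
Le plan est de montrer que chaque $E_n$ co\"{\i}ncide, \`a un facteur positif inversible sur son image pr\`es, avec une projection de rang fini de $\mathcal{L}(\mathcal{H}_{C(X)})$ : son image sera alors le module image de cette projection, donc ferm\'ee et facteur direct. La subtilit\'e est qu'un op\'erateur positif de rang fini sur un module de Hilbert n'a pas n\'ecessairement une image ferm\'ee -- la multiplication par $t\mapsto t$ sur le $C([0,1])$-module de Hilbert $C([0,1])$ fournit un contre-exemple, son image $t\,C([0,1])$ \'etant dense mais non ferm\'ee dans $\{h\in C([0,1]) : h(0)=0\}$. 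Il faut donc exploiter le fait que les $E_n$ de la proposition \ref{ProSuiAlgRem} s'\'ecrivent comme combinaisons \`a coefficients \emph{scalaires} de projections deux \`a deux orthogonales.

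On commencerait par revenir \`a la construction de $E_n=(u_n-u_{n-1})^{1/2}$, o\`u $(u_n)_{n\in\NN}$ est l'approximation de l'unit\'e quasicentrale de $\mathcal{K}_0$ dans $\mathcal{A}$ obtenue via le th\'eor\`eme \ref{TheoArv}, chaque $u_n$ \'etant une combinaison lin\'eaire convexe \emph{finie} des projections $e_k=1\otimes p_k$ : $u_n=\sum_k\lambda_k^{(n)}e_k$ avec $\lambda_k^{(n)}\in[0,1]$, $\sum_k\lambda_k^{(n)}=1$, et seulement un nombre fini de $\lambda_k^{(n)}$ non nuls (ce qui est d'ailleurs coh\'erent avec le fait, \'etabli dans \ref{ProSuiAlgRem}, que $E_n^2\in\mathcal{K}_0$). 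La suite $(e_k)_{k\in\NN}$ \'etant croissante, on pose $e_0=0$ et $q_k:=e_k-e_{k-1}$ : les $q_k$ sont alors des projections deux \`a deux orthogonales de somme forte $\mathrm{Id}_{\mathcal{H}_{C(X)}}$, et une sommation d'Abel donne $u_n=\sum_k\mu_k^{(n)}q_k$ avec $\mu_k^{(n)}:=\sum_{j\geq k}\lambda_j^{(n)}\in[0,1]$ (scalaires, nuls \`a partir d'un certain rang).

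On en d\'eduirait $E_n^2=u_n-u_{n-1}=\sum_k c_k^{(n)}q_k$ avec $c_k^{(n)}:=\mu_k^{(n)}-\mu_k^{(n-1)}$; en comprimant par $q_k$ et en utilisant que $u_n-u_{n-1}\geq 0$ (la suite $(u_n)$ est croissante) on obtient $c_k^{(n)}\geq 0$ pour tout $k$, et seuls un nombre fini d'entre eux sont non nuls. Le calcul fonctionnel continu de $x\mapsto x^{1/2}$ appliqu\'e \`a l'op\'erateur diagonal $\sum_k c_k^{(n)}q_k$ fournit $E_n=\sum_k\sqrt{c_k^{(n)}}\,q_k$. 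On pose $S_n:=\{k : c_k^{(n)}>0\}$ (fini), $Q_n:=\sum_{k\in S_n}q_k$ (une projection de rang fini) et $D_n:=\sum_{k\in S_n}(c_k^{(n)})^{-1/2}q_k\in\mathcal{L}(\mathcal{H}_{C(X)})$, qui est born\'e (et auto-adjoint) comme somme finie. L'orthogonalit\'e des $q_k$ entra\^{\i}ne imm\'ediatement $Q_nE_n=E_nQ_n=E_n$ et $D_nE_n=E_nD_n=Q_n$.

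Il resterait \`a conclure : de $E_n=Q_nE_n$ on tire $\mathrm{Im}(E_n)\subset Q_n\mathcal{H}_{C(X)}$, et de $Q_n=E_nD_n$ on tire $Q_n\mathcal{H}_{C(X)}=\mathrm{Im}(Q_n)\subset\mathrm{Im}(E_n)$, d'o\`u $\mathrm{Im}(E_n)=Q_n\mathcal{H}_{C(X)}$ : c'est un sous-module ferm\'e, image de la projection $Q_n$, qui admet $(1-Q_n)\mathcal{H}_{C(X)}$ pour sous-module suppl\'ementaire -- on pourrait aussi n'invoquer que la fermeture de l'image et appliquer la proposition \ref{ProLance}. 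Le principal obstacle est la premi\`ere \'etape : ici la structure de $C(X)$-module n'intervient pas, c'est la \emph{scalarit\'e} des coefficients $\lambda_k^{(n)}$ (cons\'equence de la construction d'Arveson, et non d'un choix ``fibre par fibre'') qui est indispensable, faute de quoi $E_n$ ne serait pas, \`a un facteur inversible pr\`es, une projection, et son image pourrait ne pas \^etre ferm\'ee.
\end{proof}
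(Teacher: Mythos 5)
Votre d\'emonstration est correcte et repose sur la m\^eme observation de fond que celle du texte : l'op\'erateur $E_n^2=u_n-u_{n-1}$ est une combinaison lin\'eaire finie, \`a coefficients scalaires positifs, de projections deux \`a deux orthogonales, et poss\`ede donc un spectre fini. La diff\'erence tient \`a la mani\`ere d'en tirer la conclusion. Le texte affirme sans d\'etail que $\sigma(E_n^2)$ est fini (c'est pr\'ecis\'ement la sommation d'Abel que vous explicitez, \`a partir de $u_n=\sum_k\lambda_k^{(n)}e_k$ et $q_k=e_k-e_{k-1}$), puis invoque le crit\`ere spectral de fermeture de l'image -- un op\'erateur positif d'un module de Hilbert a une image ferm\'ee d\`es que $0$ est isol\'e dans son spectre -- avant d'appliquer la proposition \ref{ProLance} pour obtenir le sous-module suppl\'ementaire. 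Vous court-circuitez ces deux \'etapes en identifiant directement $\mathrm{Im}(E_n)$ \`a l'image de la projection de rang fini $Q_n=\sum_{k\in S_n}q_k$ au moyen des relations $Q_nE_n=E_n$ et $E_nD_n=Q_n$ : la fermeture et le suppl\'ementaire $(1-Q_n)\mathcal{H}_{C(X)}$ en d\'ecoulent sans autre th\'eor\`eme. Votre version est donc plus autonome, et elle a le m\'erite de souligner, par le contre-exemple de la multiplication par $t$ sur le $C([0,1])$-module $C([0,1])$, que la positivit\'e et le rang fini ne suffisent pas \`a eux seuls : c'est la scalarit\'e des coefficients issus de la construction d'Arveson qui rend l'argument valide, point que la r\'edaction du texte laisse implicite.
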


\begin{proof}
Pour tout entier $n$, l'op\'erateur $E_n$ est positif donc $E_n^\ast{E_n}=E_n^2$. Par d\'efinition, on a $E_n^2=u_{n}-u_{n-1}$ et le spectre $\sigma(E_n^2)$ est un ensemble fini, donc $\sigma(E_n^2)\smallsetminus\{0\}$ est ferm\'ee. L'op\'erateur $E_n$ a une image ferm\'ee dans $\mathcal{H}_{C(X)}$.
\\
En appliquant la proposition ${\ref{ProLance}}$ \`a l'op\'erateur $E_n$, on en d\'eduit que l'image $Im(E_n)$ admet un sous module suppl\'ementaire dans $\mathcal{H}_{C(X)}$.
\\
\end{proof}

\begin{Rem}
On note $P_{E_n} : \mathcal{H}_{C(X)}\rightarrow{\mathcal{H}_{C(X)}}$ la projection associ\'ee au sous module $Im(E_n)$. 
\end{Rem}

\begin{Lemma}
Pour tout $n$ dans $\NN$, il existe un entier $N_n$ tel que l'image de l'op\'erateur $E_n$ soit incluse dans $C(X)\otimes\CC^{N_n}$.
\end{Lemma}
\begin{proof}
Pour tout entier $n$, on a, par construction, $E_n^2=u_{n+1}-u_n$. Or chaque $u_n$ est une combinaison lin\'eaire convexe d'\'el\'ements de la suite $(e_n)_{n\in\NN}$. Les op\'erateurs $E_n^2$ peuvent alors s'\'ecrire sous la forme $E_n^2=\sum_{n\in\NN}\lambda_ne_n$, o\`u $\lambda_n$ est non nul pour un nombre fini de $n$ et v\'erifient $\sum_n\lambda_n=0$. Puisque chaque op\'erateur $E_n^2$ est diagonal et de rang fini, il existe un entier positif $N_n$ tel que l'image de $E_n$ soit inclus dans ${C(X)}\otimes{\CC^{N_n}}$.
\\
\end{proof}

\begin{Rem}
On pose $\check{\mathcal{H}}:=\oplus_{n\in\NN}Im(E_n)$. L'espace $\check{\mathcal{H}}$ est muni d'une structure de $C(X)$-module hilbertien s\'eparable (voir \cite{Lance95} pour la construction g\'en\'erale) : on d\'efinit un \'el\'ement de $\check{\mathcal{H}}$ comme une suite $(x_n)_{n\in\NN}$, o\`u $x_n$ est dans $E_n$, et v\'erifiant $\sum_n\langle{x_n,x_n}\rangle$ convergence dans $C(X)$ et la structure de $C(X)$-module est \'evidente (voir \cite{Lance95} pour la construction g\'en\'erale). 
\end{Rem}

\begin{Rem}
On a par construction de $\check{\mathcal{H}}$ un isomorphisme de $C(X)$-module hilbertien entre $\check{\mathcal{H}}$ et $\mathcal{H}_{C(X)}$
\end{Rem}

\begin{Lemma}
Pour tout $n$ dans $\NN$, il existe $N_n$ entier, tel que $P_{E_n}(\mathcal{H}_{C(X)})$ soit isomorphe au $C(X)$-module de Hilbert $C(X)\otimes{\CC}^{N_n}$.
\end{Lemma}

%\begin{proof}
%
%\end{proof}

\begin{Lemma}
Soit $V : \mathcal{H}_{C(X)}\rightarrow\check{\mathcal{H}}$ l'application qui \`a tout vecteur $\xi$ de $\mathcal{H}_{C(X)}$ associe le vecteur $V(\xi)=\oplus_{n\in\NN}E_n(\xi)$. L'application $V : \mathcal{H}_{C(X)}\rightarrow\check{\mathcal{H}}$ est une isom\'etrie. 
\end{Lemma}
\begin{proof}
Pour tout $\xi$ dans $\mathcal{H}_{C(X)}$, on a 
\begin{align*}
\lVert{V(\xi)}\lVert^2&=\lVert{\oplus_{n\in\NN}}E_n(\xi)\lVert^2=\sum_{n\in\NN}\big\langle{E_n(\xi),E_n(\xi)}\big\rangle
=\sum_{n\in\NN}\big\langle{E_n^2(\xi),\xi}\big\rangle
\\
&=\bigg\langle{\sum_{n\in\NN}E_n^2(\xi),\xi}\bigg\rangle
=\big\langle{\xi,\xi}\big\rangle=\lVert{\xi}\lVert^2
\end{align*}
Pour tout $\eta=\oplus_{n\in\NN}\eta_n$ de $\check{\mathcal{H}}$
et tout $\xi$ dans $\mathcal{H}_{C(X)}$, on a 
\begin{align*}
\langle{V\xi,\eta}\rangle&=\big\langle{\oplus_{n\in\NN}E_n\xi,\oplus_{n\in\NN}}\eta_n\big\rangle=\sum_{n\in\NN}\big\langle{E_n\xi,\eta_n}\big\rangle
=\sum_{n\in\NN}\big\langle{\xi,E^\ast_n\eta_n}\big\rangle
\\
&=\bigg\langle{\xi,\sum_{n\in\NN}E_n\eta_n}\bigg\rangle=\langle{\xi,V^\ast\eta}\rangle
\end{align*}
Son adjoint $V^\ast$ est d\'efini pour tout $\eta=\oplus_{n\in\NN}\eta_n$ de $\check{\mathcal{H}}$ par 
$V^*(\eta)=\sum_{n\in\NN}E_n(\eta_n)$.
\\
\end{proof}

%\subsection  {Perturbation compacte et champ de formes vectorielles}
L'application $l : \mathcal{A}\rightarrow\mathcal{A}$ d\'efinie pour tout $A$ dans $\mathcal{A}$ par $l(A)=\sum_{n\in\NN}E_nAE_n$ est telle que $A-l(A)$ est un op\'erateur compact, pour tout $A$ dans $\mathcal{A}$.

\begin{Def}{\label{DefDelta}}
On pose, pour tout $n$ entier, l'application $\delta_n : \mathcal{A}\rightarrow{P_{E_n}(\mathcal{H}_{C(X)})}$ d\'efinie, pour tout $A$ dans $\mathcal{A}$, par $\delta_n(A)=P_{E_n}AP_{E_n}$. On dit que $\delta_n(A)$ est la compression de l'op\'erateur $A$ sur le sous module $P_{E_n}\big(\mathcal{H}_{C(X)}\big)$.
\\
On pose $\delta=\oplus_{n\in\NN}\delta_n$ l'application sur $\mathcal{A}$ \`a valeurs dans les op\'erateurs $\mathcal{L}(\check{\mathcal{H}})$.
\end{Def}

\begin{Pro}{\label{ProAppComPos}}
Il existe un $C(X)$-module hilbertien s\'eparable $\check{\mathcal{H}}$, une isom\'etrie $V : \mathcal{H}_{C(X)}\rightarrow{\check{\mathcal{H}}}$ et une application $C(X)$-lin\'eaire, compl\`etement positive $\delta : \mathcal{A}\rightarrow\mathcal{L}(\check{\mathcal{H}})$ tels que pour tout ${A}$ dans ${\mathcal{A}}$, l'op\'erateur $A$ est une perturbation compacte de ${V^*\delta(A)V}$.
\end{Pro}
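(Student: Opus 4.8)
The plan is to assemble the objects that have already been constructed and to reduce the statement to the compactness property of the localising map. First I would take $\check{\mathcal{H}}=\bigoplus_{n\in\NN}\mathrm{Im}(E_n)$, the separable Hilbert $C(X)$-module introduced above, the isometry $V:\mathcal{H}_{C(X)}\to\check{\mathcal{H}}$ given by $V(\xi)=\bigoplus_{n\in\NN}E_n(\xi)$ with adjoint $V^\ast\big(\bigoplus_n\eta_n\big)=\sum_nE_n(\eta_n)$, and the map $\delta=\bigoplus_{n\in\NN}\delta_n$ of Definition \ref{DefDelta}, where $\delta_n(A)=P_{E_n}AP_{E_n}$. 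I would first check that $\delta(A)$ is a well-defined element of $\mathcal{L}(\check{\mathcal{H}})$: each $\delta_n(A)$ maps $\mathrm{Im}(E_n)=P_{E_n}(\mathcal{H}_{C(X)})$ into itself with norm at most $\lVert A\rVert$, so the direct sum is bounded by $\lVert A\rVert$ and adjointable with $\delta(A)^\ast=\delta(A^\ast)$. The map $\delta$ is $C(X)$-linear, since each $E_n$ and hence each projection $P_{E_n}$ is a $C(X)$-linear operator and the elements of $\mathcal{A}$ are $C(X)$-linear; and it is completely positive, because every compression $\delta_n$ is completely positive and a direct sum of completely positive maps with values in the $\mathcal{L}(\mathrm{Im}(E_n))$ is completely positive with values in $\mathcal{L}\big(\bigoplus_n\mathrm{Im}(E_n)\big)=\mathcal{L}(\check{\mathcal{H}})$.

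The key computation is the identification $V^\ast\delta(A)V=\sum_{n\in\NN}E_nAE_n$. Since each $E_n$ is positive with closed range, one has $\mathrm{Im}(E_n)=(\ker E_n)^{\perp}$, and therefore $P_{E_n}E_n=E_n=E_nP_{E_n}$. Hence, for $\xi$ in $\mathcal{H}_{C(X)}$, $\delta(A)(V\xi)=\bigoplus_nP_{E_n}AP_{E_n}E_n\xi=\bigoplus_nP_{E_n}AE_n\xi$, and applying $V^\ast$ gives $V^\ast\delta(A)V\xi=\sum_nE_nP_{E_n}AE_n\xi=\sum_nE_nAE_n\xi$, the series converging in the strong topology by Remark \ref{RemCool}. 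Thus $V^\ast\delta(A)V=l(A)$, the localising map associated with $(E_n)_{n\in\NN}$.

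Finally, I would invoke Proposition \ref{ProSuiAlgRem} (equivalently Theorem \ref{TheoCool} applied to the ideal $\mathcal{K}_0$ of finite-rank operators, whose null space is trivial): for every $A$ in $\mathcal{A}$ the operator $A-\sum_nE_nAE_n=A-l(A)$ is compact. Combining this with the identity of the previous paragraph, $A-V^\ast\delta(A)V$ is compact for every $A$ in $\mathcal{A}$, which is exactly the assertion. The only delicate points are the verification that $\delta$ is adjointable and completely positive — routine once the direct-sum structure is noted — and the orthogonality identities $P_{E_n}E_n=E_nP_{E_n}=E_n$, which rely only on the closedness of $\mathrm{Im}(E_n)$ established earlier; I do not expect any substantial obstacle beyond this bookkeeping.
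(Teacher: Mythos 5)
Your proposal is correct and follows essentially the same route as the paper: same $\check{\mathcal{H}}=\bigoplus_n\mathrm{Im}(E_n)$, same isometry $V$, same $\delta=\bigoplus_n\delta_n$, the identity $V^\ast\delta(A)V=\sum_nE_nAE_n=l(A)$, and the conclusion via the localising property of $l$ (Theorem \ref{TheoCool} / Proposition \ref{ProSuiAlgRem}). Your justification of the intermediate step through $P_{E_n}E_n=E_n=E_nP_{E_n}$ is in fact slightly more careful than the paper's, which passes directly from $E_n\delta_n(A)E_n=E_nAE_n$ to the displayed computation.
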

\begin{proof}
Le $C(X)$-module hilbertien s\'eparable $\check{\mathcal{H}}$, l'isom\'etrie $V$ et l'application $C(X)$-lin\'eaire compl\`etement positive ont \'et\'e construits auparavant. 
De plus, on a pour tout entier ${n}$ et tout ${A}$ dans $\mathcal{A}$, la relation $E_n\delta_n(A)E_n=E_nAE_n$, par d\'efinition de $\delta_n(A)$. Ainsi on a  
\begin{align*}
V^*\delta(A)V=V^*\big(\oplus_{n\in{\NN}}AE_n\big)=\sum_{n\in{\NN}}E_nAE_n=l(A)
\end{align*}
L'application $l$ \'etant localisante, alors pour tout $A$ dans $\mathcal{A}$ l'op\'erateur $A$ est une perturbation compacte de $V^*\delta(A)V$.
\\
\end{proof}

\subsection  {Champ de formes vectorielles}
Dans cette partie, on s'int\'eresse aux applications $C(X)$-lin\'eaires et compl\`etement positives $\delta_i : \mathcal{A}\to{M_{n_i}(C(X))}$, d\'efinies pour tout entier $i$ dans \ref{DefDelta} : on associe \`a $\delta_i$ une application $\delta'_i : M_{n_i}(\mathcal{A})\to{C(X)}$ \'egalement $C(X)$-lin\'eaire et compl\`etement positive qui est un $C(X)$-\'etat et on montre que $\delta'_i$ peut \^etre approch\'ee par des $C(X)$-\'etats vectoriels d\'efinis eux-m\^eme \`a l'aide de la $C(X)$-repr\'esentation fid\`ele $\pi_{n_i} : M_{n_i}(\mathcal{A})\to\mathcal{L}(\mathcal{H}_{C(X)}^{n_i})$ induite par $\pi : \mathcal{A}\to{\mathcal{L}(\mathcal{H}_{C(X)})}$ obtenue au corollaire $\ref{TheoBlanCor}$. On utilisera les r\'esultats d'approximation d'un \'etat d'une $C^*$-alg\`ebre par des sommes \'etats vectoriels que l'on peut retrouver dans le livre \cite{Dixmier69}.
\\

On consid\`ere la $C(X)$-repr\'esentation fid\`ele $\pi : \mathcal{A}\hookrightarrow\mathcal{L}({\mathcal{H}}_{C(X)})$, obtenue \`a partir du plongement $C(X)$-lin\'eaire $\pi : \mathcal{A}\rightarrow{C(X)\otimes\mathcal{O}_2}$. Pour tout $i$ dans $\NN$, on note $\pi_{n_i}:=\pi\otimes{id}: \mathcal{A}\otimes{M_{n_i}}\rightarrow{\mathcal{L}\big(\mathcal{H}_{C(X)}\big)\otimes{M_{n_i}}}$.

\begin{Lemma}
Pour tout $i$ dans $\NN$, l'application $\pi_{n_i} : M_{n_i}(\mathcal{A})\rightarrow\mathcal{L}\big(\mathcal{H}^{n_i}_{C(X)}\big)$ est un champ de repr\'esentations fid\`eles.
\end{Lemma}
\begin{proof}
Soit $i$ un entier. L'application $\pi_{n_i} : M_{n_i}(\mathcal{A})\rightarrow\mathcal{L}\big(\mathcal{H}^{n_i}_{C(X)}\big)$ est \'evidemment un homomorphisme de $C(X)$-alg\`ebres. Il s'agit de montrer pour tout $x$ dans $X$, que l'homomorphisme $\pi_{n_i,x} : M_{n_i}(\mathcal{A}_x)\rightarrow{\mathcal{L}\big((\mathcal{H}_{C(X)}^{n_i})_x\big)}$ est une repr\'esentation fid\`ele de la fibre en $x$. On a, pour tout $x$ dans $X$ et tout $A$ dans $M_{n_i}(\mathcal{A})$,
\begin{align*}
\lVert{\pi_{n_i,x}(a_x)}\lVert&=\inf{\big\{\lVert{\pi_{n_i}(a)-f.\pi_{n_i}(a)}\lVert\textrm{, }f\in{C_x(X)}}\big\}
\\
&=\inf{\big\{\lVert{\pi_{n_i}(a-f.a)}\lVert\textrm{, }f\in{C_x(X)}\big\}}
\\
&=\inf{\big\{\lVert{a-f.a}\lVert\textrm{, }f\in{C_x(X)}}\big\}
\\
&=\lVert{a_x}\lVert
\end{align*} 
On a montr\'e, pour tout $x$ dans $X$, que $\lVert{\pi_{n_i,x}}\lVert=1$, c'est-\`a-dire que $\pi_{n_i,x}$ est une repr\'esentation fid\`ele de la fibre $M_{n_i}(\mathcal{A}_x)$ dans la $C^*$-alg\`ebre des op\'erateurs de l'espace de Hilbert $\mathcal{L}\big((\mathcal{H}^{n_i}_{C(X)})_x\big)$. Ainsi $\pi_{n_i}$ est un champ de repr\'esentations fid\`eles.
\\
\end{proof}

%\begin{Rem}
%On obtient une repr\'esentation $\pi_{n_i}$ fid\`ele de la $C(X)$-alg\`ebre continue $M_{n_i}(A)$ dans l'alg\`ebre des op\'erateurs du $C(X)$-module hilbertien $\mathcal{L}\big(({\mathcal{H}_{C(X)}})^{n_i}\big)$.
%\end{Rem}

%\subsection  {Champ continu de formes vectorielles}

On rappelle un r\'esultat classique pour les \'etats, dans le cadre g\'en\'eral des $C^*$-alg\`ebres dont les d\'etails se trouvent dans \cite{Dixmier69}. 
\\
Soit $B$ une $C^*$-alg\`ebre et $H$ un espace de Hilbert s\'eparable. On consid\`ere une repr\'esentation fid\`ele de $B$ dans l'alg\`ebre des op\'erateurs de $H$, que l'on note $\pi : B\hookrightarrow{L({H})}$. 
Pour tout vecteur $\xi$ de $H$, on note $\omega_\xi$ la forme lin\'eaire continue positive sur $B$, d\'efinie par la repr\'esentation $\pi$ de $B$ et le vecteur $\xi$, c'est-\`a-dire l'application lin\'eaire continue positive d\'efinie pour tout $b$ dans $B$, par 
$$\omega_\xi(b)=\langle{\xi,\pi(b)\xi}\rangle$$
Une telle forme lin\'eaire sera appel\'ee forme vectorielle. Si la repr\'esentation $\pi$ est non d\'eg\'en\'er\'ee et si le vecteur $\xi$ de $H$ est unitaire, alors la forme lin\'eaire $\omega_\xi$ est un \'etat et on parle alors d'\'etat vectoriel sur $B$.

\begin{Theo}{\label{TheoLimFaible}}\cite{Dixmier69}
Soient $H$ un espace de Hilbert, $K$ l'ensemble des op\'erateurs compacts sur $H$, $B$ une  sous-$C^*$-alg\`ebre d'op\'erateurs de $L({H})$ et $\phi$ est un \'etat sur $B$ nul sur $B\cap{{K}}$. Alors $\phi$ est limite $\ast$-faible d'\'etats vectoriels de $B$.
\end{Theo}

%\begin{Rem}
%\end{Rem}

\begin{Def}\cite{Blanchard96}
Soit $\mathcal{B}$ une $C(X)$-alg\`ebre. On appelle champ continu d'\'etats (ou $C(X)$-\'etat) une application $C(X)$-lin\'eaire positive $\varphi$ de $\mathcal{B}$ dans $C(X)$ telle que, pour tout $x$ dans $X$, l'application $\varphi_x = e_x\circ\varphi$ soit un \'etat sur $\mathcal{B}_x$.
\end{Def}

\begin{Def}
Soit $\mathcal{B}$ une $C(X)$-alg\`ebre continue et $\pi : \mathcal{B}\rightarrow\mathcal{L}(\mathcal{H})$ une $C(X)$-repr\'esentation fid\`ele dans la $C(X)$-alg\`ebre des op\'erateurs sur le $C(X)$-module hilbertien $\mathcal{H}$. Un $C(X)$-\'etat vectoriel $\varphi : \mathcal{B}\rightarrow{C(X)}$ est un $C(X)$-\'etat pour lequel il existe un vecteur $\xi$ dans $\mathcal{H}$, tel que $\xi_x$ est unitaire dans $\mathcal{H}_x$ pour tout $x$ dans $X$ et v\'erifiant $\varphi(b)=\langle\xi,\pi(b)\xi\rangle$ pour tout $b$ dans $\mathcal{B}$.
\end{Def}

%Soit $i\in\NN$ fix\'e et $\delta_i : \mathcal{A}\rightarrow\mathcal{L}\big(C(X)\otimes{\CC^{n_i}}\big)$ l'application obtenue par compression des \'el\'ements de $\mathcal{A}$ sur le sous espace $E_i\big(\mathcal{H}_{C(X)}\big)$. 

%\begin{Lemma}{\label{LemmeCorrespondence}}
%Pour tout entier $n$ et toute $C(X)$-alg\`ebre $A$, il existe une correspondance entre les applications $C(X)$-lin\'eaires compl\`etement positives de ${A}$ dans $M_{n}(\CC)\otimes{C(X)}$ et les applications $C(X)$-lin\'eaires compl\`etement positives de $M_{n}(\CC)\otimes{A}$ dans $C(X)$.
%\end{Lemma}

%\begin{proof}
%Si $\phi : A\rightarrow{M_{n}(\CC)\otimes{C(X)}}$ est une application $C(X)$-lin\'eaire compl\`etement positive, elle peut s'ecrire $\phi(a)=\sum{e_{pq}\otimes{\phi_{pq}(a)}}$ et en posant $\phi' : M_{n}\otimes{A}\rightarrow{C(X)}$, on a $\phi'(a_{pq})=\phi'(\sum{e_{pq}\otimes{a_{pq}}}):=\sum{\phi_{pq}(a_{pq})}$ on obtient une application $C(X)$-lin\'eaire compl\`etement positive.
%\\
%Inversement, si $\psi : M_{n}\otimes{A}\rightarrow{C(X)}$ est une application $C(X)$-lin\'eaire compl\`etement positive, elle peut s'\'ecrire $\psi(a_{pq})=\psi(\sum{e_{pq}\otimes{a_{pq}}})=\sum{\psi_{pq}(a_{pq})}$ et en posant $\psi' : A\rightarrow{M_n\otimes{C(X)}}$, on a $\psi'(a):=\sum{e_{pq}\otimes{\psi_{pq}(a)}}$.
%\\
%\end{proof}

%\begin{Rem}
%On a un isomorphisme entre $\mathcal{L}(C(X)\otimes{\CC^{n}})$ et $C(X)\otimes{M_n(\CC)}$.
%\end{Rem}

\begin{Pro}
Pour tout $i$ dans ${\NN}$, l'application $\delta_i : \mathcal{A}\rightarrow{M_{n_i}(C(X))}$ est $C(X)$-lin\'eaire et compl\`etement positive.
\end{Pro}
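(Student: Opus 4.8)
Il s'agit de montrer que pour tout entier $i$, l'application $\delta_i : \mathcal{A}\rightarrow M_{n_i}\big(C(X)\big)$, définie en $\ref{DefDelta}$ par $\delta_i(A)=P_{E_i}AP_{E_i}$ (après identification de $P_{E_i}\big(\mathcal{H}_{C(X)}\big)$ avec $C(X)\otimes\CC^{N_i}=C(X)^{n_i}$ comme $C(X)$-module hilbertien via le lemme précédent), est $C(X)$-linéaire et complètement positive. Le plan est d'observer que $\delta_i$ se factorise sous la forme d'une compression : $\delta_i(A)=P_{E_i}\,\pi(A)\,P_{E_i}$ où $\pi : \mathcal{A}\hookrightarrow\mathcal{L}(\mathcal{H}_{C(X)})$ est la $C(X)$-représentation fidèle et $P_{E_i}\in\mathcal{L}(\mathcal{H}_{C(X)})$ est la projection $C(X)$-linéaire sur le sous-module $Im(E_i)$.

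D'abord je vérifierais la $C(X)$-linéarité. Comme $\mathcal{A}$ est une sous-$C(X)$-algèbre de $\mathcal{L}(\mathcal{H}_{C(X)})$, les opérateurs $A$, ainsi que $E_i$ et donc $P_{E_i}$, sont $C(X)$-linéaires ; l'action de $C(X)$ sur $\mathcal{L}(\mathcal{H}_{C(X)})$ étant centrale (elle provient de l'homomorphisme $\phi$ dans $\mathcal{Z}\big(M(\mathcal{L}_{C_0(X)})\big)$ de la proposition $\ref{ProAlgMod}$), on a pour tout $f$ dans $C(X)$ et tout $A$ dans $\mathcal{A}$
$$\delta_i(f\cdot A)=P_{E_i}(f\cdot A)P_{E_i}=f\cdot\big(P_{E_i}AP_{E_i}\big)=f\cdot\delta_i(A),$$
puisque $f$ commute avec $P_{E_i}$ et $A$. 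L'identification $P_{E_i}\big(\mathcal{H}_{C(X)}\big)\cong C(X)^{n_i}$ étant un isomorphisme de $C(X)$-modules, la structure de $C(X)$-module sur $M_{n_i}\big(C(X)\big)\cong\mathcal{L}_{C(X)}\big(C(X)^{n_i}\big)$ correspond bien, et $\delta_i$ est $C(X)$-linéaire.

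Ensuite, pour la complète positivité, je raisonnerais par amplification : pour tout entier $m\geq 1$, l'application $\delta_i\otimes id_{M_m}$ s'écrit $A\mapsto (P_{E_i}\otimes I_m)\,(\pi\otimes id_{M_m})(A)\,(P_{E_i}\otimes I_m)$ sur $M_m(\mathcal{A})$, c'est-à-dire une compression par la projection $P_{E_i}\otimes I_m$ de la $\ast$-représentation $\pi\otimes id_{M_m}$. Une compression $T\mapsto PTP$ d'un homomorphisme de $C^*$-algèbres (ou plus généralement d'une application complètement positive) par une projection est toujours complètement positive : si $T=S^*S\geq 0$ alors $PTP=(SP)^*(SP)\geq 0$, et cette propriété est stable par amplification. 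Comme $\pi$ est un $\ast$-homomorphisme (donc complètement positif) et que $P_{E_i}$ est une projection de $\mathcal{L}(\mathcal{H}_{C(X)})$, chaque $\delta_i$ est complètement positive. Le point qui demande un peu de soin — et que je traiterais explicitement — est la bonne identification entre la compression $P_{E_i}\mathcal{L}(\mathcal{H}_{C(X)})P_{E_i}$ et $M_{n_i}\big(C(X)\big)$ comme $C(X)$-algèbres, de sorte que « complètement positif » au sens des $C(X)$-algèbres coïncide avec « complètement positif » au sens des $C^*$-algèbres sous-jacentes ; ceci résulte du lemme assurant que $P_{E_i}\big(\mathcal{H}_{C(X)}\big)$ est isomorphe comme $C(X)$-module hilbertien à $C(X)\otimes\CC^{N_i}$, ce qui induit l'isomorphisme de $C(X)$-algèbres $\mathcal{L}_{C(X)}\big(P_{E_i}(\mathcal{H}_{C(X)})\big)\cong M_{n_i}\big(C(X)\big)$ recherché. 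Je n'anticipe pas d'obstacle sérieux : l'énoncé est essentiellement formel une fois la bonne présentation de $\delta_i$ comme compression d'une représentation adoptée.
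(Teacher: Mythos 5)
Votre démonstration est correcte et suit essentiellement la même voie que celle du papier : la $C(X)$-linéarité découle dans les deux cas de l'écriture $\delta_i(A)=P_{E_i}AP_{E_i}$ avec $P_{E_i}$ et $A$ opérateurs $C(X)$-linéaires sur $\mathcal{H}_{C(X)}$, et la positivité complète du fait qu'une compression par une projection est complètement positive. La seule différence est de présentation : le papier vérifie ce dernier point par le calcul explicite $\sum_{p,q}m_p^{\ast}\delta_i(A_p^{\ast}A_q)m_q=\big(\sum_p A_pP_{E_i}m_p\big)^{\ast}\big(\sum_q A_qP_{E_i}m_q\big)\geq 0$ (critère standard de positivité complète), là où vous invoquez le lemme général sur les compressions et leurs amplifications.
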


\begin{proof}
Soit $i$ un entier. L'application $\delta_i : \mathcal{A}\rightarrow{M_{n_i}}\otimes{C(X)}$ est $C(X)$-lin\'eaire car, pour tout $A$ dans $\mathcal{A}$, on a $\delta_i(A)=P_{E_i}AP_{E_i}$, o\`u $P_{E_i}$ et $A$ (vu comme op\'erateurs sur $\mathcal{H}_{C(X)}$) sont $C(X)$-lin\'eaires.
\\
Pour montrer que $\delta_i$ est compl\`etement positive, on montre que pour tout entier $n$, tout $(A_1,...,A_n)$ dans $\mathcal{A}$ et tout $(m_1,...,m_n)$ dans $M_{n_i}\otimes{C(X)}$, on a 
$$S_n=\sum_{p,q=1}^nm_p^\ast\delta_i(A_p^\ast{A_q})m_q\geq{0}$$
Or pour tout entier $n$, tout $(A_1,...,A_n)$ dans $\mathcal{A}$ et tout $(m_1,...,m_n)$ dans $M_{n_i}\otimes{C(X)}$, on a 
\begin{align*}
S_n&=\sum_{p,q=1}^nm_p^\ast\delta_i(A_p^\ast{A_q})m_q=\sum_{p,q=1}^nm_p^\ast\big({P}_{E_i}A_p^\ast{A_q}P_{E_i}\big)m_q=\sum_{p,q=1}^n\big(A_p{P}_{E_i}m_p\big)^\ast{A_q}P_{E_i}m_q
\\
&=\sum_{p=1}^n\sum_{q=1}^n\big(A_p{P}_{E_i}m_p\big)^\ast{A_q}P_{E_i}m_q=\sum_{p=1}^n\bigg(\big(A_pP_{E_i}m_p\big)^\ast\big({\sum_{q=1}^n}A_qP_{E_i}m_q\big)\bigg)
\\
&=\bigg(\sum_{p=1}^n(A_pP_{E_i}m_p)\bigg)^\ast\bigg(\sum_{q=1}^n(A_qP_{E_i}m_q)\bigg)\geq{0}
\end{align*}
\vspace{+2mm}
\end{proof}

\begin{Lemma}{\label{LemmeCorrespondence}}
Pour tout entier $n$ et toute $C(X)$-alg\`ebre $A$, il existe une correspondance entre les applications $C(X)$-lin\'eaires compl\`etement positives de ${A}$ dans $M_{n}(\CC)\otimes{C(X)}$ et les applications $C(X)$-lin\'eaires compl\`etement positives de $M_{n}(\CC)\otimes{A}$ dans $C(X)$.
\end{Lemma}
\begin{proof}
On trouve un preuve dans \cite{Blackadar06}  ($\mathrm{II}.6.9.11$).
\end{proof}
\begin{Rem}
On associe \`a $\delta_i : \mathcal{A}\rightarrow{M_{n_i}}\otimes{C(X)}$, par la correspondance donn\'ee dans le lemme $\ref{LemmeCorrespondence}$ , $\delta_i' : M_{n_i}(\mathcal{A})\rightarrow{C(X)}$ l'application $C(X)$-lin\'eaire compl\`etement positive. 
On va montrer que $\delta_i' : M_{n_i}(\mathcal{A})\rightarrow{C(X)}$ est un $C(X)$-\'etat et de mani\`ere analogue au th\'eor\`eme \ref{TheoLimFaible} sur les \'etats, on prouve, pour tout $i$ dans $\NN$, que le $C(X)$-\'etat $\delta_i' : M_{n_i}(\mathcal{A})\rightarrow{C(X)}$ est limite $\ast$-faible de $C(X)$-\'etats vectoriels.
\end{Rem}

\begin{Pro}
L'application $\delta_i'$ est un un champ continu de formes lin\'eaires continues et positives. 
\end{Pro}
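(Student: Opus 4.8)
The plan is to reduce the statement to the classical, finite-dimensional correspondence between completely positive maps into a matrix algebra and states on matrix amplifications, then run it fibrewise. First I would recall that, by the preceding proposition, $\delta_i : \mathcal{A} \to M_{n_i}(C(X))$ defined by $\delta_i(A) = P_{E_i} A P_{E_i}$ is $C(X)$-linear and completely positive, where $M_{n_i}(C(X))$ is identified with $\mathcal{L}_{C(X)}\big(P_{E_i}(\mathcal{H}_{C(X)})\big)$ through the isomorphism $P_{E_i}(\mathcal{H}_{C(X)}) \cong C(X)\otimes\CC^{n_i}$ established earlier. Feeding $\delta_i$ into Lemma \ref{LemmeCorrespondence} produces $\delta_i' : M_{n_i}(\mathcal{A}) \to C(X)$, which is again $C(X)$-linear and completely positive: it is given explicitly on a matrix $[A_{pq}]_{p,q=1}^{n_i}\in M_{n_i}(\mathcal{A})$ by the appropriate sum of the coefficients of the $\delta_i(A_{pq})$. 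In particular $\delta_i'$ is positive.

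Next I would pass to the fibres of the continuous $C(X)$-algebra $\mathcal{A}$, which is legitimate thanks to condition $\mathbf{(Cont)}$. Being $C(X)$-linear, both $\delta_i$ and $\delta_i'$ descend to completely positive maps $\delta_{i,x} : \mathcal{A}_x \to M_{n_i}(\CC)$ and $\delta_{i,x}' : M_{n_i}(\mathcal{A}_x) \to \CC$ over each $x\in X$, and the correspondence of Lemma \ref{LemmeCorrespondence} commutes with evaluation at $x$ (here one uses only that $e_x$ is a $\ast$-homomorphism and that compression by $P_{E_i}\in\mathcal{L}_{C(X)}(\mathcal{H}_{C(X)})$ is $C(X)$-linear). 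Hence $\delta_{i,x}'$ is precisely the positive linear functional on $M_{n_i}(\mathcal{A}_x)$ associated to the completely positive map $\delta_{i,x}$. A positive linear functional on a $C^\ast$-algebra is automatically bounded, hence continuous, so $(\delta_{i,x}')_{x\in X}$ is indeed a field of continuous positive linear forms. Its continuity as a field is built in: for every $b\in M_{n_i}(\mathcal{A})$ one has $\delta_i'(b)\in C(X)$ with $\delta_i'(b)(x)=\delta_{i,x}'(b_x)$, so $x\mapsto\delta_{i,x}'(b_x)$ is continuous.

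Finally, to upgrade this — as announced in the preceding remark — to a genuine $C(X)$-\'etat, I would check the normalisation. Since $X$ is compact, $\phi(1_{C(X)})$ is the identity operator on $L^2(\mathcal{G},\nu)$, so $\mathcal{A}$ is unital; moreover $\delta_i(1_{\mathcal{A}}) = P_{E_i}^2 = P_{E_i}$, which is exactly the identity of $\mathcal{L}_{C(X)}\big(P_{E_i}(\mathcal{H}_{C(X)})\big)=M_{n_i}(C(X))$. Thus $\delta_i$ is unital, and under the correspondence $\delta_i'(1_{M_{n_i}(\mathcal{A})})=1_{C(X)}$, so each $\delta_{i,x}'$ is a state on $M_{n_i}(\mathcal{A}_x)$.

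I expect the only genuinely delicate point to be the careful bookkeeping around Lemma \ref{LemmeCorrespondence}: writing down the precise form of the bijection between $C(X)$-linear completely positive maps $\mathcal{A}\to M_{n_i}(C(X))$ and those $M_{n_i}(\mathcal{A})\to C(X)$, and verifying that it is natural with respect to evaluation at points of $X$ so that $\delta_{i,x}'=(\delta_{i,x})'$. Once that is set up, the remainder is the classical $C^\ast$-algebraic statement together with the elementary observation that $P_{E_i}$ acts as the identity on its own range, and no real obstacle remains.
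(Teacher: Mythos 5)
Your argument is correct and takes essentially the same route as the paper's (very terse) proof: $\delta_i'$ is the $C(X)$-linear, completely positive map produced from $\delta_i$ by the correspondence lemma, and each fibre $\delta_{i,x}'=e_x\circ\delta_i'$ is a positive, hence automatically bounded, linear form on $M_{n_i}(\mathcal{A})_x\cong M_{n_i}(\mathcal{A}_x)$, with continuity of the field built in because $\delta_i'(b)\in C(X)$. Your extra normalisation check (that $\delta_i$ is unital, so each $\delta_{i,x}'$ is a state) goes beyond what the proposition itself asserts but is harmless and consistent with the surrounding remarks.
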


\begin{proof}
L'application $\delta'_i$ est une application $C(X)$-lin\'eaire (compl\`etement) positive de $M_{n_i}(\mathcal{A})$ dans $C(X)$. De plus pour tout $x$ dans ${X}$, l'application $\delta'_{i,x}:=e_x\circ{\delta'_i}$ est une forme lin\'eaire continue et positive sur $M_{n_i}(\mathcal{A})_x\cong{M_{n_i}(\mathcal{A}_x)}$.
\\
\end{proof}

Pour prouver que $\delta_i'$ est limite $\ast$-faible de $C(X)$-\'etats, on va proc\'eder en deux \'etapes : 
\begin{itemize}
\item[\textbf{1)}] on fait une \'etude locale de $\delta_i$ en chaque point $x$ de $X$ et on utilise les r\'esultats connus de formes lin\'eaires et continues pour les $C^*$-alg\`ebres 
\item[\textbf{2)}] on utilise une partition de l'unit\'e pour revenir dans le cadre global des champs continus de formes lin\'eaires
\vspace{+2mm}
\\
\end{itemize}

\textbf{Etape 1)} Pour $i$ dans $\NN$ et $x$ dans ${X}$ fix\'es, on consid\`ere la forme lin\'eaire continue et positive d\'efinie par $\delta'_{i,x} : M_{n_i}(\mathcal{A}_x)\rightarrow{\CC}$, obtenue par la composition $e_x\circ\delta'_i$.
\\
\begin{Pro}{\label{ProChap3}}
Pour tout r\'eel $\varepsilon$ strictement positif et toute partie finie $\mathcal{F}$ de ${M_{n_i}(\mathcal{A})}$, il existe ${\xi(i,x)}$ dans $({\mathcal{H}_{C(X)}})^{n_i}_x$ tel que pour tout ${A}$ dans $\mathcal{F}$, on ait
$$\big|\delta_{i,x}'(A_x)-\omega_{\xi(i,x)}(\pi_{n_i,x}(A_x))\big|<\varepsilon$$
\end{Pro}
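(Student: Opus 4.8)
The plan is to reduce the statement to the classical approximation theorem for states on a $C^*$-algebra (Theorem \ref{TheoLimFaible}, taken from \cite{Dixmier69}), applied fibrewise at the fixed point $x$.

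First I would fix $i$ and $x$ and consider the $C^*$-algebra $B:=M_{n_i}(\mathcal{A}_x)$ together with the faithful representation $\pi_{n_i,x}:M_{n_i}(\mathcal{A}_x)\to\mathcal{L}\big((\mathcal{H}_{C(X)})^{n_i}_x\big)$, which is an honest representation on a separable Hilbert space because of the preceding lemma establishing that $\pi_{n_i}$ is a field of faithful representations. The linear functional $\delta'_{i,x}=e_x\circ\delta'_i$ is, by the proposition just proved, a continuous positive linear functional on $B$; a short computation shows it is a state, since $\delta_i'$ comes from $\delta_i(A)=P_{E_i}AP_{E_i}$ with $P_{E_i}$ a projection, so $\delta_i'$ is unital up to the normalisation built into the correspondence of Lemma \ref{LemmeCorrespondence}. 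Next I would check that $\delta'_{i,x}$ vanishes on $B\cap\mathcal{K}$, where $\mathcal{K}$ denotes the compact operators on $(\mathcal{H}_{C(X)})^{n_i}_x$: this is exactly where the localising property of the operators $E_n$ enters. Indeed $\delta_i(A)=P_{E_i}AP_{E_i}$ and $P_{E_i}$ is a finite-rank projection, so on the fibre the relevant cutdown identifies $\delta'_{i,x}$ with a vector-state-type functional supported on the finite-rank piece; but Theorem \ref{TheoLimFaible} requires vanishing on compacts, so I must instead invoke that $\delta'_{i,x}$ is, up to the identification $V^*\delta(\cdot)V$, the localising map whose defect $A-l(A)$ is compact, which forces $\delta'_{i,x}$ to be stable under compact perturbations and hence (after subtracting the compact part and using that the residual functional is genuinely a state annihilating $B\cap\mathcal{K}$) to fall under the hypotheses of the Dixmier theorem.

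Having verified the hypotheses, Theorem \ref{TheoLimFaible} gives that $\delta'_{i,x}$ is a $\ast$-weak limit of vector states of $B$, i.e. states of the form $\omega_\eta\big(\pi_{n_i,x}(\cdot)\big)$ with $\eta$ a unit vector in $(\mathcal{H}_{C(X)})^{n_i}_x$. By the very definition of the $\ast$-weak topology, for the given finite set $\mathcal{F}\subset M_{n_i}(\mathcal{A})$ and the given $\varepsilon>0$ there is a single such vector $\xi(i,x)$ in $(\mathcal{H}_{C(X)})^{n_i}_x$ with
$$\big|\delta'_{i,x}(A_x)-\omega_{\xi(i,x)}\big(\pi_{n_i,x}(A_x)\big)\big|<\varepsilon$$
for every $A$ in $\mathcal{F}$ (one applies the convergence to the finitely many elements $A_x$, $A\in\mathcal{F}$, which define a basic $\ast$-weak neighbourhood). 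This is precisely the asserted inequality.

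The main obstacle I expect is the verification that $\delta'_{i,x}$ annihilates $B\cap\mathcal{K}$ (equivalently, that it is a state of the correct ``pure at infinity'' type): the construction of $\delta_i=P_{E_i}(\cdot)P_{E_i}$ with $P_{E_i}$ of finite rank makes $\delta_i$ itself look supported on a finite-rank corner, and one has to argue carefully — using that $A-\sum_nE_nAE_n$ is compact and that the sum $\sum_nE_n^2=1$ converges strongly — that when one reassembles the field $\delta=\oplus_n\delta_n$ and compresses, the resulting fibre functional $\delta'_{i,x}$ does vanish on compacts, or alternatively that one should first subtract off the compact part of each $A$ and work modulo $\mathcal{K}$. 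The remaining steps (statehood of $\delta'_{i,x}$, separability of the Hilbert space, unpacking the $\ast$-weak convergence on a finite set) are routine.
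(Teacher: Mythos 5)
Votre plan global coïncide avec celui du texte : fixer la fibre en $x$, voir $M_{n_i}(\mathcal{A}_x)$ comme sous-alg\`ebre d'op\'erateurs via la repr\'esentation fid\`ele $\pi_{n_i,x}$, appliquer le th\'eor\`eme \ref{TheoLimFaible} pour obtenir $\delta'_{i,x}$ comme limite $\ast$-faible de formes vectorielles, puis d\'eplier la convergence $\ast$-faible sur la partie finie $\{A_x : A\in\mathcal{F}\}$. Mais il y a une lacune r\'eelle dans la fa\c{c}on dont vous v\'erifiez l'hypoth\`ese du th\'eor\`eme de Dixmier. Vous tentez de montrer directement que $\delta'_{i,x}$ s'annule sur $B\cap\mathcal{K}$, en invoquant la propri\'et\'e localisante des $E_n$; or cette annulation est fausse en g\'en\'eral : puisque $\delta_i(A)=P_{E_i}AP_{E_i}$ avec $P_{E_i}$ de rang fini, un op\'erateur $A$ de rang fini \`a support dans l'image de $P_{E_i}$ v\'erifie $\delta_i(A)=A\neq{0}$, donc la fonctionnelle associ\'ee ne s'annule certainement pas sur les compacts. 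Le fait que $A-\sum_nE_nAE_n$ soit compact ne donne aucune stabilit\'e de $\delta'_{i,x}$ par perturbation compacte (c'est l'application globale $l=\sum_nE_n\cdot E_n$ qui approche l'identit\'e modulo $\mathcal{K}$, pas la compression individuelle $\delta_i$), et votre paragraphe \`a ce sujet reste, comme vous le reconnaissez vous-m\^eme, non conclusif.

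La r\'esolution du texte est diff\'erente et beaucoup plus simple : on remplace la repr\'esentation par une repr\'esentation de multiplicit\'e infinie (\og quitte \`a tensoriser par $l^2(\NN)$ \fg), de sorte que $\mathcal{A}_x\cap\mathcal{K}\big((\mathcal{H}^{n_i}_{C(X)})_x\big)=\{0\}$; l'hypoth\`ese \og $\phi$ nulle sur $B\cap\mathcal{K}$ \fg\ du th\'eor\`eme \ref{TheoLimFaible} devient alors vide, et il n'y a rien \`a v\'erifier. C'est ce pas qui manque dans votre proposition : sans lui, l'argument ne passe pas, et la piste que vous sugg\'erez (soustraire \og la partie compacte \fg\ de chaque $A$ et travailler modulo $\mathcal{K}$) changerait la fonctionnelle que l'on approche et ne d\'emontrerait plus l'in\'egalit\'e annonc\'ee.
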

\begin{proof}

%On applique le th\'eor\`eme [...] en consid\'erant l'\'etat $\delta'_{i,x} : M_{n_i}(\mathcal{A})\rightarrow{\CC}$ et la repr\'esentation fid\`ele $\pi_{n_i,x} : M_{n_i}(\mathcal{A}_x)\rightarrow{\mathcal{L}\big((\mathcal{H}_{C(X)}^{n_i})_x\big)}$. Il s'agit de montrer que 
%\\  a) $\delta_{i,x}$ nul sur les compacts
%\\  b) $\pi_{n_i,x}$ est fid\`ele
%\\
%Ainsi l'\'etat $\delta'_{i,x}$ est limite, pour la topologie $\ast$-faible, d'\'etats vectoriels.

En utilisant la repr\'esentation fid\`ele $\pi_{n_i,x}$, on consid\`ere $M_{n_i}(\mathcal{A}_x)$ comme sous-$C^*$-alg\`ebre de $\mathcal{L}\big((\mathcal{H}_{C(X)}^{n_i})_x\big)$. Quitte \`a tensoriser par $l^2(\NN)$, on peut supposer que $\mathcal{A}_x\cap{\mathcal{K}\big((\mathcal{H}_{C(X)}^{n_i})_x\big)}$ est r\'eduit \`a l'\'el\'ement nul. L'application $\delta_{i,x}' : M_{n_i}(\mathcal{A}_x)\rightarrow{\CC}$ est une forme lin\'eaire continue et positive. 
\\
%Puisque $\pi_{n_i} : M_{n_i}(\mathcal{A})\rightarrow{\mathcal{L}(\mathcal{H}_{C(X)})}$ est un champ continu de repr\'esentations fid\`ele, il induit une repr\'esentation fid\`ele de la $C^*$-alg\`ebre $M_{n_i}(\mathcal{A})$ que l'on note $\pi_{n_i,x} : M_{n_i}(\mathcal{A}_x)\rightarrow{\mathcal{L}\big((\mathcal{H}_{C(X)})_x\big)}$. 
%\\
En appliquant le th\'eor\`eme $\ref{TheoLimFaible}$ , on en conclut que $\delta'_{i,x}$ est limite $\ast$-faible de formes vectorielles. Ainsi pour tout $\varepsilon>0$, toute partie finie $\mathcal{F}$ de $M_{n_i}(\mathcal{A})$, il existe un vecteur $\xi(i,x)$ dans $\big(\mathcal{H}^{n_i}_{C(X)}\big)_x$ tel que, pour tout $A$ dans $\mathcal{F}$,
 $$\big|\delta_{i,x}'(A_x)-\omega_{\xi(i,x)}(\pi_{n_i,x}(A_x))\big|<\varepsilon$$
\end{proof}

%\paragraph{Etude locale :}
%Soit $i\in\NN\textrm{ et }x\in{X}$ fix\'es.
%\\
%On a pour tout $\varepsilon>0$ et toute partie finie $\mathcal{F}\subset{M_{n_i}(\mathcal{A})}$, l'existence d'un vecteur unitaire $\xi({i},x)\in({\mathcal{H}_{C(X)}})_x^{n_i}$ v\'erifiant pour tout ${A}$ dans $\mathcal{F}$,
%$$\big|\delta'_{i,x}(A_x)-\omega_{\xi({i},x)}(\pi_{n_i,x}(A_x))\big|\leq{\varepsilon}$$
\begin{Rem}
Puisque $\mathcal{H}_{C(X)}^{n_i}$ est un $C(X)$-module de Hilbert, il existe suffisamment de sections au sens o\`u, pour tout $\eta\in{\big({\mathcal{H}^{n_i}_{C(X)}}\big)_x}$, il existe $\tilde\eta\in{\big({\mathcal{H}^{n_i}_{C(X)}}\big)}$ tel que $\eta=\tilde\eta_x$.
\\
Il existe un vecteur $\xi^{(x)}_i\in{\big({\mathcal{H}^{n_i}_{C(X)}}\big)}$ tel que $\xi_{i,x}^{(x)}=\xi({i},x)$.
\end{Rem}
\begin{Lemma}{\label{LemChap3}}
Pour tout r\'eel $\varepsilon$ strictement positif et toute partie finie $\mathcal{F}$ de ${M_{n_i}(\mathcal{A})}$, il existe un voisinage ouvert $U_{i,x}$ de $x$ dans ${X}$, tel que pour tout $y$ dans $U_{i,x}$ et tout $A$ dans $\mathcal{F}$, on a les in\'egalit\'es suivantes 
\begin{align*}
\left\{\begin{array}{rl}
&\big|\delta'_{i,x}(A_x)-\delta'_{i,y}(A_y)\big|<\varepsilon
\\
&\big|\omega_{\xi^{(x)}_{i,x}}(A_x)-\omega_{\xi^{(x)}_{i,y}}(A_y)\big|<\varepsilon  \end{array} \right.
\end{align*}
\end{Lemma}
\begin{proof}
En effet, la premi\`ere in\'egalit\'e r\'esulte du fait que $\delta'_i$ est un $C(X)$-\'etat c'est-\`a-dire un champ continu d'\'etats sur $X$. La seconde in\'egalit\'e s'obtient en consid\'erant la fonction continue $f:=\omega_{\xi_i^{(x)}}(A)=\langle\xi^{(x)}_i;\pi_i(A)\xi^{(x)}_i\rangle$ de ${C(X)}$. On a alors pour tout ${\varepsilon>0}$ un voisinage ouvert $V_x$ de $x$ tel que pour tout ${y}\in{V_x}$ on a
\begin{eqnarray*}
|f(x)-f(y)| &=& \big|\big\langle\xi^{(x)}_i,\pi_{n_i}(A)\xi^{(x)}_i\big\rangle(x)-\big\langle\xi^{(x)}_i,\pi_{n_i}(A)\xi^{(x)}_{i}\big\rangle(y)\big| \\
&=& \big|\big\langle\xi^{(x)}_{i,x},\pi_{n_i,x}(A_x)\xi^{(x)}_{i,x}\big\rangle-\big\langle\xi^{(x)}_{i,y},\pi_{n_i,y}(A_y)\xi^{(x)}_{i,y}\big\rangle\big|<\varepsilon
\end{eqnarray*}
\end{proof}

\begin{Pro}
Pour tout r\'eel $\varepsilon$ strictement positif et toute partie finie $\mathcal{F}$ de ${M_{n_i}(\mathcal{A})}$, il existe un voisinage ouvert $U_{i,x}$ de $x$ dans ${X}$, tel que pour tout $y$ dans $U_{i,x}$ et tout $A$ dans $\mathcal{F}$, on a 
$$\big\lvert{\delta_{i,y}'(A_y)-\omega_{\xi_{i,y}^{(x)}}(A_y)}\big\lvert<\varepsilon$$
\end{Pro}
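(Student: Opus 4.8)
Le plan est de combiner directement la proposition \ref{ProChap3} et le lemme \ref{LemChap3}, reli\'es par l'in\'egalit\'e triangulaire suivant le classique d\'ecoupage en $\varepsilon/3$. Aucune construction nouvelle n'est n\'ecessaire : tout le travail a d\'ej\`a \'et\'e fait dans les deux \'enonc\'es pr\'ec\'edents, il ne reste qu'\`a les assembler.

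D'abord, $i$ dans $\NN$ et $x$ dans $X$ \'etant fix\'es, j'appliquerais la proposition \ref{ProChap3} avec le r\'eel $\varepsilon/3$ et la partie finie $\mathcal{F}$ de $M_{n_i}(\mathcal{A})$ : on obtient un vecteur $\xi(i,x)$ dans $\big(\mathcal{H}_{C(X)}^{n_i}\big)_x$ tel que, pour tout $A$ dans $\mathcal{F}$, $\big\lvert{\delta_{i,x}'(A_x)-\omega_{\xi(i,x)}(\pi_{n_i,x}(A_x))}\big\lvert<\varepsilon/3$. Comme $\mathcal{H}_{C(X)}^{n_i}$ est un $C(X)$-module de Hilbert, on rel\`eve $\xi(i,x)$ en une section $\xi_i^{(x)}$ de $\mathcal{H}_{C(X)}^{n_i}$ telle que $\xi_{i,x}^{(x)}=\xi(i,x)$, de sorte que l'in\'egalit\'e pr\'ec\'edente devient
$$\big\lvert{\delta_{i,x}'(A_x)-\omega_{\xi_{i,x}^{(x)}}(A_x)}\big\lvert<\frac{\varepsilon}{3}\qquad\textrm{pour tout }A\textrm{ dans }\mathcal{F}.$$

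Ensuite, j'appliquerais le lemme \ref{LemChap3} \`a ce m\^eme $\varepsilon/3$, \`a cette m\^eme partie finie $\mathcal{F}$ et \`a la section $\xi_i^{(x)}$ : on obtient un voisinage ouvert $U_{i,x}$ de $x$ dans $X$ tel que, pour tout $y$ dans $U_{i,x}$ et tout $A$ dans $\mathcal{F}$,
$$\big\lvert{\delta'_{i,x}(A_x)-\delta'_{i,y}(A_y)}\big\lvert<\frac{\varepsilon}{3}\qquad\textrm{et}\qquad\big\lvert{\omega_{\xi^{(x)}_{i,x}}(A_x)-\omega_{\xi^{(x)}_{i,y}}(A_y)}\big\lvert<\frac{\varepsilon}{3}.$$
Il ne reste plus qu'\`a \'ecrire, pour $y$ dans $U_{i,x}$ et $A$ dans $\mathcal{F}$,
\begin{align*}
\big\lvert{\delta_{i,y}'(A_y)-\omega_{\xi_{i,y}^{(x)}}(A_y)}\big\lvert
&\leq\big\lvert{\delta_{i,y}'(A_y)-\delta_{i,x}'(A_x)}\big\lvert+\big\lvert{\delta_{i,x}'(A_x)-\omega_{\xi_{i,x}^{(x)}}(A_x)}\big\lvert
\\
&\quad+\big\lvert{\omega_{\xi_{i,x}^{(x)}}(A_x)-\omega_{\xi_{i,y}^{(x)}}(A_y)}\big\lvert
\\
&<\frac{\varepsilon}{3}+\frac{\varepsilon}{3}+\frac{\varepsilon}{3}=\varepsilon,
\end{align*}
ce qui est pr\'ecis\'ement l'in\'egalit\'e cherch\'ee.

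Le seul point m\'eritant attention est de s'assurer que le lemme \ref{LemChap3} s'applique bien \`a la section $\xi_i^{(x)}$ produite par la proposition \ref{ProChap3} -- c'est l'objet du relevage d'un vecteur d'une fibre en une section globale -- et, si l'on avait \'enonc\'e le lemme \ref{LemChap3} avec deux voisinages distincts pour ses deux in\'egalit\'es, de les remplacer par leur intersection, qui reste un voisinage ouvert de $x$ puisque $\mathcal{F}$ est fini. Tout le reste n'est que l'in\'egalit\'e triangulaire.
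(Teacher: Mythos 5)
Votre preuve est correcte et suit exactement la même démarche que celle du texte : on applique la proposition \ref{ProChap3} et le lemme \ref{LemChap3} avec le réel $\varepsilon/3$ et la partie finie $\mathcal{F}$, puis on conclut par l'inégalité triangulaire. La précision sur le relèvement du vecteur $\xi(i,x)$ en une section globale $\xi_i^{(x)}$ est un point que le texte traite dans la remarque précédant le lemme \ref{LemChap3}, donc rien ne manque.
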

\begin{proof} 
On utilise la propri\'et\'e \ref{ProChap3} et le lemme \ref{LemChap3} pour le r\'eel $\varepsilon/3$ et la famille finie $\mathcal{F}$. On a pour tout $A$ dans $\mathcal{F}$ et pour tout $y\in{U_{i,x}}$
\begin{eqnarray*}
D&=&\lvert{\delta'_{i,y}(A)-\omega_{\xi^{(x)}_{i,y}}(A_y)}\lvert 
\\
&=& \big\lvert\delta'_{i,y}(A_y)-\delta'_{i,x}(A_x)+\delta'_{i,x}(A_x)-\omega_{\xi^{(x)}_{i,x}}(A_x)+\omega_{\xi^{(x)}_{i,x}}(A_x)-\omega_{\xi^{(x)}_{i,y}}(A_y)\big\lvert 
\\
&<& \big\lvert\delta'_{i,y}(A_y)-\delta'_{i,x}(A_x)\big\lvert+\big\lvert\delta'_{i,x}(A_x)-\omega_{\xi^{(x)}_{i,x}}(A_x)\big\lvert+\big\lvert\omega_{\xi^{(x)}_{i,y}}(A_x)-\omega_{\xi^{(x)}_{i,y}}(A_y)\big\lvert<3\times\frac{\varepsilon}{3} 
\\
&<& \varepsilon
\end{eqnarray*}
\end{proof}

\begin{Rem}
Pour tout r\'eel $\varepsilon$ strictement positif et toute partie finie $\mathcal{F}$ de ${M_{n_i}(\mathcal{A})}$, il existe un vecteur ${\xi^{(x)}_i}$ dans $({\mathcal{H}^{n_i}_{C(X)}})$ et un voisinage ouvert de $x$, not\'e ${U_{i,x}}$, tel que $\xi^{(x)}_{i,y}$ est unitaire dans $\big({\mathcal{H}^{n_i}_{C(X)}}\big)_y$ pour tout $y\in{U_{i,x}}$ et v\'erifiant pour tout ${y}$ dans ${U_{i,x}}$ et tout ${A}$ dans ${\mathcal{F}}$
$$\big\lvert\delta_{i,y}'(A)-\omega_{\xi^x_i(y)}(A)\big\lvert<\varepsilon$$
\end{Rem}
$$$$

\textbf{Etape 2)} On cherche maintenant \`a revenir aux champs continus de formes lin\'eaires et continues sur l'espace $X$. 
On fixe $i$ un entier dans $\NN$, un r\'eel $\varepsilon$ strictement positif et une partie finie $\mathcal{F}$ dans ${M_{n_i}(\mathcal{A})}$.
\\
Pour l'op\'erateur $\delta'_{i} : M_{n_i}(A)\rightarrow{C(X)}$, on peut, pour tout $x$ dans ${X}$, r\'eit\'erer la d\'emarche pr\'ec\'edente assurant l'existence d'un voisinage ouvert $U_{i,x}$ de $x$ dans ${X}$ et d'un vecteur $\xi^{(x)}_{i}$ dans ${({\mathcal{H}_{C(X)}})^{n_i}}$ tel que pour tout $y$ dans $U_{i,x}$, le vecteur $\xi^{(x)}_{i,y}$est unitaire dans $\big({\mathcal{H}^{n_i}_{C(X)}}\big)_y$. On consid\`ere $\tilde{\mathcal{U}}_i=\{U_{i,x}\}_{x\in{X}}$ le recouvrement ouvert de $X$. 
\\
\begin{Rem}
Comme $X$ est un espace topologique suppos\'e compact, il existe un ensemble fini $F_i$ dans $X$ tel que $\mathcal{U}_i=\{U_{i,\alpha}\}_{\alpha\in{F_i}}$ soit un recouvrement ouvert fini de $X$. On note $\{\xi^\alpha_{i}\}_{\alpha\in{F_i}}$ les vecteurs de $({\mathcal{H}_{C(X)}})^{n_i}$. 
\\
On pose $\{\phi_{i,\alpha}\}_{\alpha\in{F_i}}$ une partition de l'unit\'e associ\'ee \`a ce recouvrement d'ouverts, o\`u $\phi_{i,\alpha} : X\rightarrow{[0,1]}$ fonctions continues \`a support compact dans $U_{i,\alpha}$ v\'erifiant l'\'egalit\'e $\sum_{\alpha\in{F}}\phi_{i,\alpha}(x)=1$, pour tout $x$ dans $X$.
\end{Rem}

\begin{Lemma}
Il existe une application $C(X)$-lin\'eaire $\omega_i : M_{n_i}(\mathcal{A})\rightarrow{C(X)}$ telle que, pour tout $y$ dans $X$ et tout $A$ dans $\mathcal{F}$, on a $\big|\delta'_{i,y}(A)-\omega_{i}(A)(y)\big|<\varepsilon$
\end{Lemma}

\begin{proof}
On pose $\omega_i:=\sum_{\alpha\in{F_i}}{\phi_{i,\alpha}\omega_{\xi^\alpha_{i}}}$, application ${C(X)}$-lin\'eaire de $M_{n_i}(\mathcal{A})$ dans $C(X)$ et on a
\begin{align*}
\big\lvert\delta'_{i}(A)(y)-\omega_{i}(A)(y)\big\lvert&=\bigg\lvert\sum_{\alpha\in{F_i}}\phi_{i,\alpha}(y)\delta'_{i,y}(A)-\sum_{\alpha\in{F_i}}\phi_{i,\alpha}(y)\omega_{\xi^\alpha_{i}}(A)(y)\bigg\lvert 
\\  
&<\sum_{\alpha\in{F_i}}\phi_{i,\alpha}(y)\big\lvert\delta'_{i,y}(A)-\omega_{\xi^\alpha_{i}}(A)(y)\big\lvert<\varepsilon
\end{align*}
\end{proof}
On consid\`ere $F_i$ de la forme $\{1,...,m_i\}$ et on note $\xi_i:=\big(\sqrt{\phi_{i,1}}\xi_{i}^1;...;\sqrt{\phi_{i,m_i}}\xi_{i}^{m_i}\big)$,  \'el\'ement de $({\mathcal{H}_{C(X)}^{n_i}})^{m_i}$. En effet, on a
$$\big\langle\xi_i,\xi_i\big\rangle=\sum_{\alpha=1}^{m_i}\big\langle\sqrt{\phi_{i,\alpha}}\xi_{i}^\alpha,\sqrt{\phi_{i,\alpha}}\xi_{i}^\alpha\big\rangle=\sum_{\alpha=1}^{m_i}\phi_{i,\alpha}\big\langle\xi_{i}^\alpha,\xi_{i}^\alpha\big\rangle\in{C(X)}$$
\begin{Lemma}
L'application $\omega_i$ est un champ continu de formes lin\'eaires positives vectorielles.
\end{Lemma}
\begin{proof}
Si on pose $\omega_{\xi_{i}}:=\sum_{\alpha=1}^{m_i}\phi_{i,\alpha}\big\langle\xi_{i}^\alpha,\pi_{n_i}(A)\xi_{i}^\alpha\big\rangle$, on obtient un $C(X)$-\'etat vectoriel tel que
$$\omega_{\xi_{i}}=\omega_i$$
\end{proof}
\begin{Rem}
Pour tout entier ${i}$ dans $\NN$, tout r\'eel $\varepsilon$ strictement positif et toute partie finie $\mathcal{F}$ de ${M_{n_i}(A)}$, on a obtenu une approximation de chacun des $C(X)$-\'etats $\delta_i'$ par un $C(X)$-\'etat vectoriel $\omega_{\xi_i}$, o\`u $\xi_i$ est dans $({\mathcal{H}^{n_i}_{C(X)}})^{m_i}$.
\end{Rem}

%$\bullet\textrm{ }$\underline{{\scshape{Approximation :} }}
On a montr\'e que pour tout entier ${i}$, l'application $\delta'_i : M_{n_i}(\mathcal{A})\rightarrow{C(X)}$ est limite $\ast$-faible de $C(X)$-\'etats c'est-\`a-dire que pour tout r\'eel ${\varepsilon}$ strictement positif et toute partie finie ${\mathcal{F}}$ de ${M_{n_i}(\mathcal{A})}$ il existe un vecteur $\xi_i$ dans ${\big({\mathcal{H}^{n_i}_{C(X)}}\big)^{m_i}}$ tel que pour tout ${A}$ dans ${\mathcal{F}}$
$$\big\|\delta'_i(A)-\omega_{\xi_i}(A)\big\|<\varepsilon$$

\subsection  {Approximation par factorisation}{\label{ApproxiFacto}}

Pour tout entier $i$, on a construit le vecteur $\xi_i=\big(\sqrt{\phi_{i,1}}\xi_{i}^1;...;\sqrt{\phi_{i,m_i}}\xi_{i}^{m_i}\big)$ dans le $C(X)$-module hilbertien $\oplus_{k=1}^{m_i}\oplus_{j=1}^{n_i}\mathcal{H}_{C(X)}$. On adoptera les notations suivantes, en supposant que $\xi_i$ s'\'ecrive sous la forme 
%$$\xi_i = \left (
  %\begin{array}{cccc} 
  %\sqrt{\phi_{i,1}}\begin{pmatrix}
   %\xi_i^{1,1} \\ \vdots \\ \xi_i^{n_i,1}
   %\end{pmatrix},
%& \sqrt{\phi_{i,2}}\begin{pmatrix}
   %\xi_i^{1,2} \\ \vdots \\ \xi_i^{n_i,2}
   %\end{pmatrix},
%& \cdots ,   
%&\sqrt{\phi_{i,m}}\begin{pmatrix}
   %\xi_i^{1,m} \\ \vdots \\ \xi_i^{n_i,m}
   %\end{pmatrix} 
   %\end{array}
   %\right )$$
  
$$  \xi_i= \left (
   \begin{array}{cccc} 
   \begin{pmatrix}
   \xi_i^{1,1} \\ \vdots \\ \xi_i^{n_i,1}
   \end{pmatrix},
& \begin{pmatrix}
   \xi_i^{1,2} \\ \vdots \\ \xi_i^{n_i,2}
   \end{pmatrix},
& \cdots 
&\begin{pmatrix}
   \xi_i^{1,m_i} \\ \vdots \\ \xi_i^{n_i,m_i}
   \end{pmatrix} 
   \end{array}
   \right )$$
o\`u chaque $\xi_i^{p,q}$ est un \'el\'ement de $\mathcal{H}_{C(X)}$.
\\
On note, pour $k$ entier, $\xi_i^{(k)}$ l'\'el\'ement de $\big(\mathcal{H}_{C(X)}\big)^{m_i}$, d\'efini par 
%$$\xi_i^{(j)}=\left ( \begin{array}{cccc}
  % \xi_i^{j,1},&\xi_i^{j,2},&\cdots,&\xi_i^{j,m}
  % \end{array} \right )$$
 $\xi_i^{(j)}=(\xi_i^{j,1}, \xi_i^{j,2},..., \xi_i^{j,m_i})$.
   
%\begin{Rem}On adoptera les notations suivantes :
%\begin{itemize}
%\item[(i)] Dans la construction pr\'ec\'edente, tout vecteur unitaire $\xi_i\in\big({\mathcal{H}^{n_i}_{C(X)}}\big)^{m}$ s'\'ecrit $$\xi_i=(\sqrt{\phi_{i,1}}\xi^{1}_{i},...,\sqrt{\phi_{i,m}}\xi_{i}^m)$$

%\item[(ii)] $\forall\alpha\in{\{1,...,m\}}$, on a $$\xi_i^\alpha=(\xi_i^{\alpha,1},...,\xi_i^{\alpha,n_i})\in{{\mathcal{H}^{n_i}_{C(X)}}}$$

%\item[(iii)] On notera $p_k$ la projection sur la $k$-i\`eme composante c'est-\`a-dire $p_k(\xi_i^\alpha)=(0,..,\xi_i^{\alpha,k},0,..0)\in{\big({\mathcal{H}^{n_i}_{C(X)}}\big)}$
%\item[(iv)] On pose pour tout $k\in\{1,...,m\}$
%$$\xi_i^{(k)}=(\sqrt{\phi_{i,1}}p_k(\xi_i^1);...;\sqrt{\phi_{i,m}}p_k(\xi_{i}^m);0;...)\in\big(C(X)\otimes\tilde{\mathcal{H}}\big)^{n_i}\otimes{l^2(\NN)}$$
%\end{itemize}
%\end{Rem}
\begin{Def}
On note, pour tout $i$ entier, $V_i : C(X)^{n_i}\rightarrow{\big({\mathcal{H}^{n_i}_{C(X)}}\big)^{m_i}}$, l'application lin\'eaire d\'efinie par $V_i(f_1,...,f_{n_i})=\oplus_{k=1}^{m_i}\oplus_{j=1}^{n_i}f_j\xi_i^{j,k}=\oplus_{j=1}^{n_i}f_j\xi_i^{(j)}$
\end{Def}
\begin{Pro}
L'application $V_i : C(X)^{n_i}\rightarrow{\big({\mathcal{H}^{n_i}_{C(X)}}\big)^{m_i}}$ est une application $C(X)$-lin\'eaire, born\'ee et poss\'edant un adjoint.
\end{Pro}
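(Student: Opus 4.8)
The statement asserts that the map
$$V_i:C(X)^{n_i}\longrightarrow\big((\mathcal{H}^{n_i}_{C(X)})^{m_i}\big),\qquad V_i(f_1,\dots,f_{n_i})=\bigoplus_{j=1}^{n_i}f_j\xi_i^{(j)}$$
is $C(X)$-linear, bounded, and adjointable. The $C(X)$-linearity is immediate from the formula: for $g\in C(X)$, $V_i(gf_1,\dots,gf_{n_i})=\bigoplus_j (gf_j)\xi_i^{(j)}=g\cdot\bigoplus_j f_j\xi_i^{(j)}=g\cdot V_i(f_1,\dots,f_{n_i})$, since the $C(X)$-module structure on $(\mathcal{H}^{n_i}_{C(X)})^{m_i}$ acts componentwise and $g$ commutes past the fixed vectors $\xi_i^{j,k}\in\mathcal{H}_{C(X)}$. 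So the only real content is boundedness and adjointability, and in the Hilbert-module setting the standard route is to exhibit a candidate adjoint and check the defining identity; by the remark recalled earlier in the paper, any map between Hilbert modules that admits an adjoint is automatically bounded and $C(X)$-linear, so producing the adjoint essentially finishes everything at once.

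First I would write down the candidate adjoint. Since $V_i$ is built from left multiplication by the vectors $\xi_i^{j,k}$, its adjoint should be the corresponding "inner product against $\xi_i^{j,k}$" map. Concretely, for $\eta=\bigoplus_{k=1}^{m_i}\bigoplus_{j=1}^{n_i}\eta^{j,k}\in(\mathcal{H}^{n_i}_{C(X)})^{m_i}$ (with each $\eta^{j,k}\in\mathcal{H}_{C(X)}$), I would define
$$W_i(\eta)=\Big(\sum_{k=1}^{m_i}\langle\xi_i^{1,k},\eta^{1,k}\rangle,\ \dots,\ \sum_{k=1}^{m_i}\langle\xi_i^{n_i,k},\eta^{n_i,k}\rangle\Big)\in C(X)^{n_i}.$$
This lands in $C(X)^{n_i}$ because each $\langle\xi_i^{j,k},\eta^{j,k}\rangle$ is an element of $C(X)$ (the inner product of $\mathcal{H}_{C(X)}$ takes values in $C(X)$) and a finite sum of such stays in $C(X)$. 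Then I would verify the adjunction identity: for $(f_1,\dots,f_{n_i})\in C(X)^{n_i}$ and $\eta$ as above,
$$\langle V_i(f_1,\dots,f_{n_i}),\eta\rangle=\sum_{k=1}^{m_i}\sum_{j=1}^{n_i}\langle f_j\xi_i^{j,k},\eta^{j,k}\rangle=\sum_{j=1}^{n_i}\overline{f_j}\,\Big(\sum_{k=1}^{m_i}\langle\xi_i^{j,k},\eta^{j,k}\rangle\Big)=\langle(f_1,\dots,f_{n_i}),W_i(\eta)\rangle,$$
using conjugate-linearity of the inner product in the first variable (so $\langle f_j\xi,\eta\rangle=\overline{f_j}\langle\xi,\eta\rangle=f_j^*\langle\xi,\eta\rangle$ in the commutative algebra $C(X)$) and the fact that the inner product on $C(X)^{n_i}$ is $\langle(a_j),(b_j)\rangle=\sum_j a_j^* b_j$. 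This shows $W_i=V_i^*$, hence $V_i\in\mathcal{L}(C(X)^{n_i},(\mathcal{H}^{n_i}_{C(X)})^{m_i})$, and in particular $V_i$ is bounded.

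The proof is essentially routine bookkeeping — the main thing to be careful about is matching the indexing conventions set up just before the statement (the decomposition of $\xi_i$ into blocks $\xi_i^{(j)}=(\xi_i^{j,1},\dots,\xi_i^{j,m_i})$ and the identification of $(\mathcal{H}^{n_i}_{C(X)})^{m_i}$ with $\bigoplus_{k=1}^{m_i}\bigoplus_{j=1}^{n_i}\mathcal{H}_{C(X)}$), so that the sums in $V_i$ and in $W_i$ pair up correctly term by term. There is no genuine obstacle here: boundedness does not even need to be estimated separately, since adjointability delivers it for free. One could alternatively give a direct norm bound $\lVert V_i(f_1,\dots,f_{n_i})\rVert\le\big(\sum_{j,k}\lVert\xi_i^{j,k}\rVert^2\big)^{1/2}\max_j\lVert f_j\rVert$, but exhibiting $V_i^*$ is cleaner and is what the subsequent factorization argument will want anyway.
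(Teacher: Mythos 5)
Your proof is correct and takes essentially the same route as the paper: you produce the same candidate adjoint $V_i^*(\eta)=\big(\langle\xi_i^{(j)},\eta^{(j)}\rangle\big)_{1\leq j\leq n_i}=\big(\sum_{k}\langle\xi_i^{j,k},\eta^{j,k}\rangle\big)_j$ and verify the adjunction identity, with the $C(X)$-linearity being immediate. The only divergence is that the paper additionally carries out a direct norm estimate $\lVert V_i(f_1,\dots,f_{n_i})\rVert^2\leq\lVert(f_1,\dots,f_{n_i})\rVert^2$, whereas you obtain boundedness for free from adjointability (a fact the paper itself recalls in its preliminaries on Hilbert modules); both are valid, and your handling of the conjugates in the pairing is if anything more careful than the paper's own computation.
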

\begin{proof}
L'application $V_i$ est $C(X)$-lin\'eaire. Pour montrer qu'elle est born\'ee on consid\`ere $(f_1,...,f_{n_i})$ un \'el\'ement de $C(X)^{n_i}$. On a alors
\begin{align*}
\big\lVert{V_i(f_1,...,f_{n_i})}\big\lVert^2&=\sup_{x\in{X}}\big\lvert{\langle{\oplus_{j=1}^{n_i}\xi_i^{(j)}f_j,\oplus_{j=1}^{n_i}\xi_i^{(j)}f_j}\rangle}(x)\big\lvert
\\
&=\sup_{x\in{X}}\bigg\lvert{\sum_{j=1}^{n_i}\lvert{f_j}\lvert^2(x)}\langle{\xi_i^{(j)},\xi_i^{(j)}}\rangle(x)\bigg\lvert=\sup_{x\in{X}}{\sum_{j=1}^{n_i}\lvert{f_j(x)}\lvert^2}\lvert{g_j(x)}\lvert^2
\\
&\leq{\big\lVert{(f_1,...,f_{n_i})}\big\lVert^2}{\big\lVert{(g_1,...,g_{n_i})}\big\lVert^2}\leq{\big\lVert{(f_1,...,f_{n_i})}\big\lVert^2}
\end{align*}
L'application $C(X)$-lin\'eaire $V_i$ est donc born\'ee.
\\
On montre que $V_i^*: {\big({\mathcal{H}_{C(X)}^{n_i}}\big)^{m_i}}\rightarrow{C(X)^{n_i}}$ est d\'efinie, pour tout $\eta$ dans $\big({\mathcal{H}_X^{n_i}}\big)^{m_i}$, par $$V_i^*(\eta)=\big(\langle{\xi_i^{(j)},\eta^{(j)}}\rangle\big)_{1\leqslant{j}\leqslant{n_i}}$$
En effet, pour tout $(f_1,...,f_{n_i})$ dans $C(X)^{n_i}$ et tout $\eta$ dans $(\mathcal{H}_X^{n_i})^{m_i}$, on a 
\begin{align*}
\big\langle{V_i(f_1,...,f_{n_i}),\eta}\big\rangle&=\bigg(\sum_{k=1}^{m_i}\sum_{q=1}^{n_i}\big\langle{\xi_i^{j,k}f_j,\eta^{j,k}f_j}\big\rangle\bigg)=\sum_{j=1}^{n_i}f_j\sum_{k=1}^m\langle{\xi_i^{j,k},\eta^{j,k}}\rangle
\\
&=\sum_{j=1}^{n_i}f_j\langle{\xi_i^{(j)},\eta^{(j)}}\rangle=\bigg\langle{(f_1,...,f_{n_i}),\big(\langle{\xi_i^{(j)},\eta^{(j)}}\rangle\big)_{1\leqslant{j}\leqslant{n_i}}}\bigg\rangle
\\
&=\big\langle{(f_1,...,f_{n_i}),V_i^*(\eta)}\big\rangle
\end{align*}
L'application $C(X)$-lin\'eaire born\'ee $V_i$ admet un op\'erateur adjoint. 
\\
De plus pour $(f_1,...,f_{n_i})$ dans ${C(X)^{n_i}}$, on a 
\begin{align*}
V_i^*V_i^{}(f_1,...,f_{n_i})&=V_i^*\bigg(\oplus_{j=1}^{n_i}f_j\xi_i^{(j)}\bigg)=\big(\big\langle\oplus_{k=1}^{m}\xi_i^{j,k},f_j\xi_i^{(j)}\big\rangle\big)_{1\leqslant{j}\leqslant{n_i}}
\\
&=\big(f_j\big\langle\xi_i^{(j)},\xi_i^{(j)}\big\rangle\big)_{1\leqslant{j}\leqslant{n_i}}=\bigg(f_j\sum_{k=1}^m\langle{\xi_i^{j,k},\xi_i^{j,k}}\rangle\bigg)_{1\leqslant{j}\leqslant{n_i}}
\\
&=(g_1f_1,...,g_{n_i}f_{n_i})
\end{align*}
o\`u chaque $g_j$ est une fonction continue positive non nulle sur $X$ d\'efinie, pour tout entier $j$, par $g_j=\langle{\xi_i^{(j)},\xi_i^{(j)}}\rangle$. 
\\
L'application $V_i : C(X)^{n_i}\rightarrow{\big({\mathcal{H}^{n_i}_{C(X)}}\big)^{m_i}}$ est dans $\mathcal{L}_{C(X)}\big(C(X)^{n_i},\big({\mathcal{H}^{n_i}_{C(X)}}\big)^{m_i}\big)$
\end{proof}
%On consid\`ere $\pi_i : \mathcal{A}\rightarrow\mathcal{L}\big({\mathcal{H}_{C(X)}}^{n_i}\big)$ comme d\'efinie auparavant.
%\\
\begin{Rem}
Par le lemme \ref{LemmeCorrespondence} donnant la correspondance entre les applications $C(X)$-lin\'eaires compl\`etement positives $f: \mathcal{A}\rightarrow{M_{n_i}(C(X))}$ et les applications $C(X)$-lin\'eaires compl\`etement positives $f : M_{n_i}(\mathcal{A})\rightarrow{C(X)}$, on associe \`a l'application $\omega_{\xi_i} : M_{n_i}(\mathcal{A})\rightarrow{C(X)}$, l'application C(X)-lin\'eaire et compl\`etement positive $\tilde\omega_{\xi_i} : \mathcal{A}\rightarrow{M_{n_i}(C(X))}$.
\end{Rem}
\begin{Lemma}
L'application $C(X)$-lin\'eaire et compl\`etement positive $\tilde\omega_{\xi_i} : \mathcal{A}\rightarrow{M_{n_i}(C(X))}$ est d\'efinie pour tout $A$ dans $\mathcal{A}$ par $$\tilde\omega_{\xi_i}(A)=\bigg(\big\langle{\oplus_{k=1}^{m_i}\xi_i^{p,k},\oplus_{k=1}^{m_i}\big(\pi(A)\xi_i^{q,k}\big)}\big\rangle\bigg)_{1\leqslant{p,q}\leqslant{n_i}}$$
\end{Lemma}
\begin{proof}
L'application \'etant $C(X)$-lin\'eaire et compl\`etement positive sur $M_{n_i}(\mathcal{A})$, on peut, pour toute matrice $A=(A_{p,q})$ dans $M_{n_i}(\mathcal{A})$, \'ecrire $\omega_{\xi_i}(A)$ sous la forme $\sum_{p,q=1}^{n_i}\omega_{p,q}(A_{p,q})$. L'application $C(X)$-lin\'eaire et compl\`etement positive $\tilde\omega_{\xi_i}$ est d\'efini, pour tout $A$ dans $\mathcal{A}$, par la formule
$$\tilde\omega_{\xi_i}(A)=\sum_{p=1}^{n_i}\sum_{q=1}^{n_i}e_{p,q}\otimes\omega_{p,q}(A)$$
Pour toute matrice $A=(A_{p,q})$ dans $M_{n_i}(\mathcal{A})$, on a 
\begin{align*}
\omega_{\xi_i}(A)&=\big\langle{\xi_i,\big(\oplus_{k=1}^m\pi_{n_i}(A)\big)\xi_i}\big\rangle
\\
&=\big\langle{\oplus_{k=1}^{m}\oplus_{j=1}^{n_i}\xi_i^{j,k},\oplus_{k=1}^{m}\big(\pi_{n_i}(A)\oplus_{j=1}^{n_i}\xi_i^{j,k}\big)}\big\rangle
\\
&=\sum_{k=1}^m\big\langle{\oplus_{j=1}^{n_i}\xi_i^{j,k},\pi_{n_i}\big((A_{p,q})\big)\oplus_{j=1}^{n_i}\xi_i^{j,k}}\big\rangle
\\
&=\sum_{k=1}^m\big\langle{\oplus_{j=1}^{n_i}\xi_i^{j,k},\oplus_{p=1}^{n_i}\big(\sum_{q=1}^{n_i}\pi(A_{p,q})\xi_i^{q,k}\big)}\big\rangle=\sum_{k=1}^m\sum_{p=1}^{n_i}\big\langle{\xi_i^{p,k},\sum_{q=1}^{n_i}\pi(A_{p,q})\xi_i^{q,k}}\big\rangle
\\
&=\sum_{p=1}^{n_i}\sum_{q=1}^{n_i}\big\langle{\oplus_{k=1}^m\xi_i^{p,k},\oplus_{k=1}^{m}\pi(A_{p,q})\xi_i^{q,k}}\big\rangle=\sum_{p=1}^{n_i}\sum_{q=1}^{n_i}\omega_{p,q}(A_{p,q})
\end{align*}
o\`u $\omega_{p,q}(A_{p,q})=\big\langle{\oplus_{k=1}^{m}\xi_i^{p,k},\oplus_{k=1}^m\pi(A_{p,q})\xi_i^{q,k}}\big\rangle$. L'application $C(X)$-lin\'eaire et compl\`etement positive $\tilde\omega_{\xi_i}$ est donc d\'efinie par
$$\tilde\omega_{\xi_i}(A)=\sum_{p=1}^{n_i}\sum_{q=1}^{n_i}e_{p,q}\otimes\big\langle{\oplus_{k=1}^{m}\xi_i^{p,k},\oplus_{k=1}^m\pi(A)\xi_i^{q,k}}\big\rangle$$

\end{proof}

\begin{Pro}
Pour toute partie finie $\mathcal{F}\subset{\mathcal{A}}$ il existe un vecteur $\xi_i$ dans $\big(\mathcal{H}^{n_i}_{C(X)}\big)^{m}$ tel que pour tout ${A}$ dans ${\mathcal{F}}$
$$\delta_i(A)-\big(\big\langle\xi^{(p)}_{i},(\pi(A)\otimes{id_{m_i}})(\xi^{(q)}_i)\big\rangle\big)_{1\leqslant{p,q}\leqslant{n_i}}$$
est un op\'erateur compact dans $M_{n_i}\otimes{C(X)}$; de plus pour un r\'eel strictement positif $\varepsilon$, on peut choisir $\xi_i$ de sorte que, pour tout $A$ dans $\mathcal{F}$, la norme  de cet op\'erateur compact soit inf\'erieure \`a $\frac{\varepsilon}{2^{n_i}}$.
\end{Pro}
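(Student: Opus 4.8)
The goal is to show that the $C(X)$-state $\delta_i' : M_{n_i}(\mathcal{A})\to C(X)$, or equivalently the completely positive map $\delta_i : \mathcal{A}\to M_{n_i}(C(X))$, is approximated in the $*$-weak sense by the vector $C(X)$-state attached to the isometry $V_i$, and that the error term lands in the compact operators. The starting point is the previous subsection: we already know that for every $\varepsilon>0$ and every finite $\mathcal{F}\subset M_{n_i}(\mathcal{A})$ there is a vector $\xi_i\in(\mathcal{H}^{n_i}_{C(X)})^{m_i}$ with $\lVert\delta_i'(A)-\omega_{\xi_i}(A)\rVert<\varepsilon$ for all $A\in\mathcal{F}$. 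Translating back through the correspondence of Lemma~\ref{LemmeCorrespondence}, this says precisely that $\delta_i(A)$ and $(\langle\xi_i^{(p)},(\pi(A)\otimes id_{m_i})\xi_i^{(q)}\rangle)_{p,q}$ differ by at most $\varepsilon$ in norm on $\mathcal{F}$ — so the approximation statement with the bound $\varepsilon/2^{n_i}$ is obtained simply by running that construction with $\varepsilon$ replaced by $\varepsilon/2^{n_i}$. The genuinely new content is the claim that the difference is \emph{compact}, not merely small.

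First I would set up the notation carefully: write $\delta_i(A)=P_{E_i}AP_{E_i}$ viewed in $M_{n_i}(C(X))$ via the identification $P_{E_i}(\mathcal{H}_{C(X)})\cong C(X)\otimes\CC^{N_i}$ (here $N_i=n_i$ in the running index convention), and write the vector-state operator $\omega_{\xi_i}$-version as $V_i^*\,(\pi(A)\otimes id_{m_i})\,V_i$ using the adjointable isometry $V_i:C(X)^{n_i}\to(\mathcal{H}^{n_i}_{C(X)})^{m_i}$ constructed just above. Both maps are $C(X)$-linear and completely positive from $\mathcal{A}$ into the finite-rank corner $M_{n_i}(C(X))$. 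The key structural fact to exploit is that $\delta_i$ factors through the localising map: by Proposition~\ref{ProAppComPos} and the definition of $\delta=\oplus_n\delta_n$, we have $V^*\delta(A)V=l(A)$ with $A-l(A)$ compact for all $A\in\mathcal{A}$; and $E_i\delta_i(A)E_i=E_iAE_i$. The plan is to show that the two candidate expressions for the $i$-th block differ by an operator that is itself supported on the finite-rank projection $P_{E_i}$ (hence automatically compact, being finite rank over $C(X)$), up to a genuinely compact remainder coming from the fact that $\pi$ and the regular representation $\lambda$ agree on $\mathcal{A}$ only modulo compacts.

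The technical heart is therefore: both $\delta_i(A)$ and its vector-state approximant are finite-rank operators over $C(X)$ — the first because $P_{E_i}$ has finite rank, the second because $\xi_i$ lies in a finite direct sum of copies of $\mathcal{H}_{C(X)}$ and $V_i$ has range in a finitely generated submodule, so $V_i^*(\pi(A)\otimes id_{m_i})V_i$ is an operator on $C(X)^{n_i}$, which is automatically ``compact'' as an operator between finitely generated Hilbert modules. Thus their difference is a difference of two finite-rank operators over $C(X)$, hence lies in $\mathcal{K}_0\subset M_{n_i}(C(X))$, which is what ``compact in $M_{n_i}\otimes C(X)$'' means here. So the compactness assertion is in fact immediate once one observes that \emph{both} terms individually are finite rank over $C(X)$; no cancellation argument is needed. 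Combining this with the norm bound from the previous subsection (applied with $\varepsilon/2^{n_i}$, over the finite set $\mathcal{F}$ and the finitely many matrix units generating it in $M_{n_i}(\mathcal{A})$) yields the statement.

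The main obstacle I anticipate is bookkeeping rather than conceptual: one must check that the $*$-weak approximation produced earlier, which a priori only controls $\lVert\delta_i'(A)-\omega_{\xi_i}(A)\rVert$ as scalars in $C(X)$, genuinely transfers under the Lemma~\ref{LemmeCorrespondence} correspondence to a norm estimate on the $M_{n_i}(C(X))$-valued maps $\delta_i$ and $\widetilde\omega_{\xi_i}$ with a controlled constant — the correspondence is isometric on positive maps but one should verify the finite set $\mathcal{F}\subset\mathcal{A}$ matches up with an appropriate finite set in $M_{n_i}(\mathcal{A})$ (namely $\{e_{p,q}\otimes A : A\in\mathcal{F},\ 1\le p,q\le n_i\}$). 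The other point to be careful about is that ``$\delta_i(A)-\widetilde\omega_{\xi_i}(A)$ is compact in $M_{n_i}\otimes C(X)$'' is being used in the sense of the ideal of finite-rank-approximable operators over $C(X)$; I would state this precisely and note that finite-rank operators over $C(X)$ between finitely generated modules are trivially in this ideal, so the assertion reduces to the two observations above. Once these identifications are in place the proof is short.
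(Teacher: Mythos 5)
Your proposal is correct and follows essentially the same route as the paper: the paper likewise forms the finite set $\tilde{\mathcal{F}}=\{A\otimes e_{p,q}\,:\,A\in\mathcal{F},\ 1\le p,q\le n_i\}$ in $M_{n_i}(\mathcal{A})$, applies the $\ast$-weak approximation of $\delta_i'$ by $\omega_{\xi_i}$ with the tolerance rescaled by the number $n_i^2$ of matrix entries, and sums the entrywise estimates to get the norm bound, while the compactness of the difference is left to the observation (which you make explicit) that both $\delta_i(A)=P_{E_i}AP_{E_i}$ and $V_i^*(\pi(A)\otimes id_{m_i})V_i$ are finite-rank over $C(X)$. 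The only point to tighten is the one you already flag yourself: the tolerance must be divided by $n_i^2$ (not merely replaced by $\varepsilon/2^{n_i}$) before summing over the matrix units.
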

\begin{proof}
On pose $\tilde{\mathcal{F}}=\{A\otimes{e_{p,q}}\textrm{, }A\in\mathcal{F}\textrm{, }1\leq p,q\leq{n_i}\}$ est une partie finie de $M_{n_i}(\mathcal{A})$. Par le travail pr\'ec\'edent, il existe un \'el\'ement $\xi_i\in{(\mathcal{H}^{n_i}_{C(X)})^{m}}$ tel que pour tout $\tilde{A}$ dans $\tilde{\mathcal{F}}$, on a $$\lVert{\delta_i'(\tilde{A})-\omega_{\xi_i}(\tilde{A})}\lVert<\frac{\varepsilon}{n_i^22^{i}}$$
et pour tout $\tilde{A}$ dans $M_{n_i}(\mathcal{A})$, on a l'\'egalit\'e
\begin{align*}
\delta_i'(\tilde{A})-\omega_{\xi_i}(\tilde{A})&=\sum_{p,q=1}^{n_i}\delta_{i,p,q}(\tilde{A}_{p,q})-\sum_{p,q=1}^{n_i}\big\langle{\xi_i^{(p)},(\pi(\tilde{A}_{p,q})\otimes{id_{m_i}})(\xi_i^{(q)})}\big\rangle
\\
&=\sum_{p,q=1}^{n_i}\bigg(\delta_{i,p,q}(\tilde{A}_{p,q})-\big\langle{\xi_i^{(p)},(\pi(\tilde{A}_{p,q})\otimes{id_{m_i}})(\xi_i^{(q)})}\big\rangle\bigg)
\end{align*} 

Pour tout $A$ dans $\mathcal{A}$, 
$$\delta_i(A)-\tilde{\omega}_{\xi_i}(A)=\sum_{p,q=1}^{n_i}e_{p,q}\otimes\big(\delta_{i,p,q}(A)-\langle{\xi_i^{(p)},(\pi({A})\otimes{id_{m_i}})(\xi_i^{(q)})}\rangle\big)$$ donc on a 
$${\big\lVert{\delta_i(A)-\tilde{\omega}_{\xi_i}(A)}\big\lVert}\leq\sum_{p,q=1}^{n_i}\big\lVert{\delta_{i,p,q}(A)-\langle{\xi_i^{(p)},(\pi({A})\otimes{id_{m_i}})(\xi_i^{(q)})}\rangle}\big\lVert$$
%$$\max_{p,q}\big\{\big\lVert{\delta_{i,p,q}(A)-\langle{\xi_i^{(p)},(\pi({A})\otimes{id_{m_i}})(\xi_i^{(q)})}\rangle}\big\lVert\big\}\leq{\big\lVert{\delta_i(A)-\tilde{\omega}_{\xi_i}(A)}\big\lVert}\leq\sum_{p,q}\big\lVert{\delta_{i,p,q}(A)-\langle{\xi_i^{(p)},(\pi({A})\otimes{id_{m_i}})(\xi_i^{(q)})}\rangle}\big\lVert$$

Pour tout $A$ dans $\mathcal{F}$ et tout $p,q$ dans $\{1,...,n_i\}$, la matrice $A\otimes{e_{p,q}}$ est dans $\tilde{\mathcal{F}}$, on a alors 
\begin{align*}
M&=\big\lVert{\delta_{i,p,q}(A)-\big\langle{\xi_i^{(p)},(\pi(A)\otimes{id_{m_i}})(\xi_i^{(q)})}\big\rangle}\big\lVert
\\
&=\bigg\lVert{\delta'_i(A\otimes{e_{p,q}})-\sum_{p,q}\langle{\xi_i^{(p)},(\pi(A\otimes{e_{p,q}})\otimes{id_{m_i}})(\xi_i^{(q)})}\rangle}\bigg\lVert
\\
&<\frac{\varepsilon}{n_i^22^{i}}
\end{align*}
Donc pour tout $\varepsilon>0$ et toute partie finie $\mathcal{F}$ dans $\mathcal{A}$, on a pour tout $A$ dans $\mathcal{F}$, 
$$\big\lVert{\delta_i(A)-\tilde{\omega}_{\xi_i}(A)}\big\lVert<\frac{\varepsilon}{2^{i}}$$

\end{proof}

\begin{Rem}
Pour tout $\xi_i\in{\big({\mathcal{H}^{n_i}_{C(X)}}\big)^{m}}$ et tout ${A}$ dans ${\mathcal{A}}$, on a 
$$\big(\big\langle\xi_i^{(j)},(\pi(A)\otimes{id_{m_i}})(\xi_i^{(j)})\big\rangle\big)_{1\leqslant{j}\leqslant{n_i}}=V_i^*\big((\pi(A)\otimes{id_{n_i}})\otimes{id_{m_i}}\big)V_i$$
\end{Rem}
\begin{Cor}
Pour tout r\'eel strictement positif $\varepsilon$ et toute partie finie $\mathcal{F}$ incluse dans ${\mathcal{A}}$, il existe ${V_i}$ dans  $\mathcal{L}_{C(X)}\big(C(X)^{n_i},{({\mathcal{H}^{n_i}_{C(X)}})^{m_i}}\big)$, telle que pour tout  ${A}$ dans $\mathcal{F}$
$$\big\lVert\delta_i(A)-V_i^*\big((\pi(A)\otimes{id_{n_i}})\otimes{id_{m_i}}\big)V_i^{}\big\lVert<\frac{\varepsilon}{2^{i}}$$
\end{Cor}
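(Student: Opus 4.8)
La preuve est essentiellement un assemblage des r\'esultats \'etablis dans cette sous-section; il n'y a pas de difficult\'e conceptuelle nouvelle. Le plan est le suivant. On fixe $\varepsilon > 0$ et une partie finie $\mathcal{F} \subset \mathcal{A}$. D'abord, j'appliquerais la proposition qui pr\'ec\`ede \`a $\varepsilon$ et \`a $\mathcal{F}$ : elle fournit un vecteur $\xi_i$ dans $\big(\mathcal{H}^{n_i}_{C(X)}\big)^{m_i}$ tel que, pour tout $A$ dans $\mathcal{F}$, l'op\'erateur $\delta_i(A) - \tilde\omega_{\xi_i}(A)$ soit compact dans $M_{n_i}\otimes C(X)$ et de norme strictement inf\'erieure \`a $\varepsilon/2^{i}$.

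Ensuite, je consid\'ererais l'application $V_i : C(X)^{n_i} \to \big(\mathcal{H}^{n_i}_{C(X)}\big)^{m_i}$ associ\'ee \`a ce vecteur $\xi_i$ par la d\'efinition donn\'ee plus haut; on a d\'ej\`a v\'erifi\'e que $V_i$ appartient \`a $\mathcal{L}_{C(X)}\big(C(X)^{n_i}, \big(\mathcal{H}^{n_i}_{C(X)}\big)^{m_i}\big)$ et calcul\'e explicitement son adjoint $V_i^*$. Le point clef est alors l'identit\'e
$$\tilde\omega_{\xi_i}(A) = V_i^*\big((\pi(A)\otimes id_{n_i})\otimes id_{m_i}\big)V_i,$$
valable pour tout $A$ dans $\mathcal{A}$ : c'est exactement la remarque qui pr\'ec\`ede, obtenue en comparant la formule explicite du lemme donnant $\tilde\omega_{\xi_i}(A) = \sum_{p,q} e_{p,q}\otimes\big\langle \oplus_{k=1}^{m_i}\xi_i^{p,k}, \oplus_{k=1}^{m_i}\pi(A)\xi_i^{q,k}\big\rangle$ avec l'expression de $V_i^*$ \'etablie dans l'\'etude pr\'ealable de $V_i$.

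En combinant ces deux points, on obtient pour tout $A$ dans $\mathcal{F}$
$$\big\lVert \delta_i(A) - V_i^*\big((\pi(A)\otimes id_{n_i})\otimes id_{m_i}\big)V_i \big\rVert = \big\lVert \delta_i(A) - \tilde\omega_{\xi_i}(A)\big\rVert < \frac{\varepsilon}{2^{i}},$$
ce qui est l'in\'egalit\'e voulue. Le seul endroit qui demande un peu de soin --- et que je traiterais explicitement --- est la v\'erification de l'identit\'e encadr\'ee ci-dessus : il faut s'assurer que l'ordre dans lequel interviennent les sommes directes (celle sur l'indice $k\in\{1,\dots,m_i\}$ provenant de la partition de l'unit\'e, d'une part, et celle portant les indices de $M_{n_i}$, d'autre part) est le m\^eme dans la construction de $V_i$ et dans celle de $\tilde\omega_{\xi_i}$. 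Une fois ce probl\`eme d'indexation r\'egl\'e, tout le reste n'est qu'une substitution directe de la d\'efinition de $V_i$ dans $V_i^*\big((\pi(A)\otimes id_{n_i})\otimes id_{m_i}\big)V_i$ suivie d'une comparaison terme \`a terme avec la formule du lemme donnant $\tilde\omega_{\xi_i}$.
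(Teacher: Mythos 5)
Votre démonstration est correcte et suit exactement le chemin implicite du papier : on applique la proposition précédente pour obtenir $\xi_i$ avec $\lVert\delta_i(A)-\tilde\omega_{\xi_i}(A)\rVert<\varepsilon/2^i$ sur $\mathcal{F}$, puis on substitue l'identité $\tilde\omega_{\xi_i}(A)=V_i^*\big((\pi(A)\otimes id_{n_i})\otimes id_{m_i}\big)V_i$ donnée par la remarque qui précède le corollaire. Votre mise en garde sur la cohérence des indexations des sommes directes (l'indice $k$ de la partition de l'unité contre les indices matriciels $p,q$) est pertinente, d'autant que la remarque du papier n'écrit que les termes diagonaux, mais cela ne constitue pas un écart de méthode.
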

\paragraph{Approximation :} 
on va maintenant pouvoir donner une approximation de l'application identit\'e de $\mathcal{A}$ ce qui induit alors une approximation de la repr\'esentation r\'eguli\`ere du groupo\"{\i}de $\mathcal{G}$ localement compact $\sigma$-compact et \'etale. On note dans la suite $\mathcal{H}^\infty_{C(X)}:=\oplus_{i\in\NN}\big(\mathcal{H}^{n_i}_{C(X)}\big)^{m_i}$. On consid\`ere 
$\delta=\oplus_{i\in\NN}\delta_i : \mathcal{A}\rightarrow{\oplus_{i\in\NN}}M_{n_i}\otimes{C(X)}$
et on pose  
$$W : \oplus_{i\in\NN}C(X)^{n_i}\longrightarrow\oplus_{i\in\NN}\big(\mathcal{H}^{n_i}_{C(X)}\big)^{m_i}$$
d\'efini par $W:=\oplus_{i\in\NN}V_i$.
On note $\pi_{\infty}$ la repr\'esentation fid\`ele d\'efinie par
\begin{eqnarray*}
\pi_\infty : &\mathcal{A}& \longrightarrow{\mathcal{L}\big(\mathcal{H}^{\infty}_{C(X)}\big)} \\
&A& \longrightarrow{\oplus_{i\in\NN}\big((\pi(A)\otimes{id_{n_i}})\otimes{id_{m_i}}\big)}
\end{eqnarray*}

\begin{Pro}
Pour tout $\varepsilon>0$ et toute partie finie $\mathcal{F}\subset{\mathcal{A}}$, on a
\begin{enumerate}
\item[a)] il existe une application ${W} : \mathcal{H}^\infty_{C(X)}\rightarrow{\mathcal{H}^\infty_{C(X)}}$ qui est $C(X)$-lin\'eaire et adjointable tel que $\delta(A)-W^*\big(\pi_\infty(A)\big)W$ est compact pour tout ${A}$ dans $\mathcal{F}$ et v\'erifie
$$\big\lVert\delta(A)-W^*\big(\pi_\infty(A)\big)W\big\lVert<{\varepsilon}$$
\item[b)] il existe une application $\tilde{W} : \mathcal{H}^\infty_{C(X)}\rightarrow{\mathcal{H}^\infty_{C(X)}}$ $C(X)$-lin\'eaire et adjointable tel que pour tout ${A}$ dans $\mathcal{F}$
$A-\tilde{W}^*\big(\pi_\infty(A)\big)\tilde{W}$ est compact et v\'erifie
$$\big\lVert{A}-\tilde{W}^*\big(\pi_\infty(A)\big)\tilde{W}\big\lVert<{\varepsilon}$$
\end{enumerate}
\vspace{+2mm}
\end{Pro}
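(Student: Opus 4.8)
Le plan est de recoller les constructions des deux sous-sections précédentes, l'ordre des quantificateurs étant ici le point à surveiller : on fixe d'emblée $\varepsilon>0$ et une partie finie $\mathcal{F}\subset\mathcal{A}$, puisque la suite $(E_n)_{n\in\NN}$, les compressions $\delta_i$, puis les vecteurs $\xi_i$ en dépendront. On commencerait par appliquer la proposition \ref{ProSuiAlgRem} pour obtenir une suite $(E_n)_{n\in\NN}$ d'opérateurs positifs de rang fini avec $\sum_n E_n^2=1$ et $\|A-\sum_n E_nAE_n\|<\varepsilon/2$ pour tout $A$ dans $\mathcal{F}$. Ce choix fixe les projections de rang fini $P_{E_i}$, les sous-modules $P_{E_i}(\mathcal{H}_{C(X)})\cong C(X)^{n_i}$, les compressions $\delta_i:\mathcal{A}\to M_{n_i}(C(X))$, l'application localisante $l(A)=\sum_n E_nAE_n$ et $\delta=\oplus_{i\in\NN}\delta_i$ de la définition \ref{DefDelta}. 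D'après la proposition \ref{ProAppComPos} et l'égalité $V^*\delta(A)V=l(A)$, l'opérateur $A-V^*\delta(A)V$ est alors compact et de norme $<\varepsilon/2$ sur $\mathcal{F}$, où $V:\mathcal{H}_{C(X)}\to\check{\mathcal{H}}$ est l'isométrie déjà construite.

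Pour prouver $(a)$, on appliquerait le corollaire d'approximation par factorisation établi juste avant : pour chaque entier $i$, il fournit une contraction $C(X)$-linéaire adjointable $V_i\in\mathcal{L}_{C(X)}\big(C(X)^{n_i},(\mathcal{H}^{n_i}_{C(X)})^{m_i}\big)$ telle que $\|\delta_i(A)-V_i^*((\pi(A)\otimes id_{n_i})\otimes id_{m_i})V_i\|<\varepsilon/2^i$ pour tout $A$ dans $\mathcal{F}$. On pose $W:=\oplus_{i\in\NN}V_i$ : comme chaque $V_i$ est une contraction, $W$ est $C(X)$-linéaire, adjointable et contractante de $\oplus_{i\in\NN}C(X)^{n_i}$ dans $\mathcal{H}^\infty_{C(X)}=\oplus_{i\in\NN}(\mathcal{H}^{n_i}_{C(X)})^{m_i}$, d'adjoint $W^*=\oplus_i V_i^*$, et $W^*\pi_\infty(A)W=\oplus_i V_i^*((\pi(A)\otimes id_{n_i})\otimes id_{m_i})V_i$. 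L'opérateur $\delta(A)-W^*\pi_\infty(A)W$ est donc diagonal par blocs, son $i$-ème bloc étant de norme $<\varepsilon/2^i$, d'où $\|\delta(A)-W^*\pi_\infty(A)W\|\leq\sup_i\varepsilon/2^i<\varepsilon$ ; chaque bloc est une différence d'opérateurs de rang fini sur $C(X)^{n_i}$ (car $P_{E_i}$ est de rang fini), donc compact, et comme les normes des blocs tendent vers $0$ la somme diagonale par blocs est limite en norme d'opérateurs compacts, donc compacte. En identifiant $\oplus_i C(X)^{n_i}$ et $\mathcal{H}^\infty_{C(X)}$ via le théorème de stabilisation de Kasparov \ref{TheoStabKas}, on obtient le $W:\mathcal{H}^\infty_{C(X)}\to\mathcal{H}^\infty_{C(X)}$ annoncé.

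Pour $(b)$, on poserait $\tilde W:=WV$, composée d'applications $C(X)$-linéaires bornées adjointables, et l'on écrirait
\[ A-\tilde W^*\pi_\infty(A)\tilde W=\big(A-V^*\delta(A)V\big)+V^*\big(\delta(A)-W^*\pi_\infty(A)W\big)V. \]
Le premier terme du second membre est compact de norme $<\varepsilon/2$ par la première étape, et le second est le conjugué par la contraction $V$ de l'opérateur compact obtenu en $(a)$, donc compact de norme $\leq\|\delta(A)-W^*\pi_\infty(A)W\|<\varepsilon/2$ ; par addition, $A-\tilde W^*\pi_\infty(A)\tilde W$ est compact de norme $<\varepsilon$ pour tout $A$ dans $\mathcal{F}$.

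Les difficultés ne sont pas dans les estimations mais dans la comptabilité : il faut fixer $\varepsilon$ et $\mathcal{F}$ avant de choisir $(E_n)$, puis avant de choisir les $\xi_i$, sans quoi les dépendances se mordent la queue ; il faut manier avec soin les identifications des $C(X)$-modules hilbertiens dénombrablement engendrés en jeu ($\check{\mathcal{H}}$, $\oplus_i C(X)^{n_i}$, $\mathcal{H}^\infty_{C(X)}$, $\mathcal{H}_{C(X)}$), tous isomorphes par \ref{TheoStabKas}, pour réaliser $W$ et $\tilde W$ comme endomorphismes de $\mathcal{H}^\infty_{C(X)}$ ; et il faut vérifier, dans une $C(X)$-algèbre d'opérateurs sur un module hilbertien, qu'une somme diagonale par blocs d'opérateurs compacts dont les normes tendent vers $0$ demeure compacte, analogue module du fait classique, qui provient de ce que chaque troncature finie est un opérateur module de rang fini.
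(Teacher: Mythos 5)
Votre preuve est correcte et suit essentiellement la m\^eme voie que celle du texte : on forme $W=\oplus_{i}V_i$ \`a partir des op\'erateurs $V_i$ fournis par le corollaire d'approximation des compressions $\delta_i$, puis on compose avec l'isom\'etrie $V$ de la proposition \ref{ProAppComPos} pour obtenir $\tilde{W}$. Vos seuls \'ecarts sont des raffinements utiles : la v\'erification explicite de la compacit\'e de $\delta(A)-W^*\pi_\infty(A)W$ (limite en norme de troncatures de rang fini, que le texte ne d\'etaille pas), la majoration de la norme diagonale par blocs par le supremum des $\varepsilon/2^i$ plut\^ot que par leur somme (qui, telle quelle, ne donne pas strictement $\varepsilon$), et l'ordre de composition $\tilde{W}=WV$, qui est celui qui fait effectivement aboutir le calcul $\tilde{W}^*\pi_\infty(A)\tilde{W}=V^*\big(W^*\pi_\infty(A)W\big)V$.
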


\begin{proof}
Soit $\varepsilon>0$ et $\mathcal{F}$ une partie finie de $\mathcal{A}$.
\vspace{+2mm}
\\
a) Pour tout $i$ dans $\NN$, il existe un op\'erateur born\'e $V_i : C(X)^{n_i}\rightarrow{\big(\mathcal{H}^{n_i}_{C(X)}\big)^m}$, tel que pour tout $A$ dans $\mathcal{F}$, on a $\lVert{\delta_i(A)-V_i^\ast{\oplus\pi(A)}V_i}\lVert<\frac{\varepsilon}{2^{i}}$.
\\
On pose $W : \oplus_{i\in\NN}C(X)^{n_i}\rightarrow{\oplus_{i\in\NN}\big(\mathcal{H}^{n_i}_{C(X)}\big)^m}$ d\'efini, pour tout $(f_i)_{i\in\NN}$ dans $\oplus_{i\in\NN}C(X)^{n_i}$ par $W((f_i)_{i\in\NN})=\oplus_{i\in\NN}V_i(f_i)$. $W$ est un op\'erateur d\'efini sur $\oplus_{i\in\NN}C(X)^{n_i}$, qui admet un adjoint $W^\ast=\oplus_{i\in\NN}V_i^\ast$, donc continu par le th\'eor\`eme du graphe ferm\'e. On a alors 
\begin{align*}
\lVert{\delta(A)-W^\ast\pi_\infty(A)W}\lVert&=\lVert{\oplus_{i\in\NN}\delta_i(A)-\oplus_{i\in\NN}V_i^\ast\oplus\pi(A)V_i}\lVert
\\
&\leq{\sum_{i\in\NN}\Vert{\delta_i(A)-V_i^\ast\oplus\pi(A)V_i}\lVert}
<{\sum_{i\in\NN}\frac{\varepsilon}{2^{i}}}
\\
&<\varepsilon
\vspace{+2mm}
\end{align*}
b) Il existe $V : \rightarrow{ }$ tel que ${A}-V^*\delta(A)V$ est compact et $\lVert{A}-V^*\delta(A)V\lVert<\varepsilon$ donc en notant $\tilde{W}=VW$, on a  $A-\tilde{W}^*\big(\pi_\infty(A)\big)\tilde{W}$ est compact et v\'erifie
$$\|{A}-\tilde{W}^*\big(\pi_\infty(A)\big)\tilde{W}\|\leq{\varepsilon}$$
\end{proof}

\begin{Theo}{\label{TFF}}
Pour toute famille finie $\mathcal{F}$ dans $\mathcal{A}$ et tout r\'eel $\varepsilon$ strictement positif, il existe un entier $n$ et des applications $\varphi : \mathcal{A}\rightarrow{M_n\otimes{C(X)}}$ et $\psi : M_n\otimes{C(X)}\rightarrow{\mathcal{L}\big(\mathcal{H}_{C(X)}^\infty\big)}$, chacune $C(X)$-lin\'eaire, compl\`etement positive et contractante, v\'erifiant pour tout $A$ dans $\mathcal{F}$
$$\big\lVert{A-\psi\circ\varphi(A)}\big\lVert<\varepsilon$$
\end{Theo}

\begin{proof}
On consid\`ere une famille finie $\mathcal{F}$ dans $\mathcal{A}$ et $\varepsilon$ un r\'eel strictement positif. Par la proposition pr\'ec\'edente, il existe un op\'erateur $W$ dans $\mathcal{L}(\mathcal{H}_{C(X)}^{\infty})$, tel que pour tout $A$ dans $\mathcal{F}$, 
$$\lVert{A-W^\ast\pi_\infty(A)W}\lVert<\frac{\varepsilon}{2}$$
Par la $C(X)$-nucl\'earit\'e de la $C(X)$-alg\`ebre $C(X)\otimes\mathcal{O}_2$, on a montr\'e, pour l'ensemble fini $\mathcal{F}$ de $\mathcal{A}$ et $\varepsilon$ le r\'eel strictement positif choisis, l'existence d'un entier $n$ et d'applications $\tilde\varphi : \mathcal{A}\rightarrow{C(X)\otimes{M_n(\CC)}}$ et $\tilde\psi : C(X)\otimes{M_n(\CC)}\rightarrow{\mathcal{L}\big(\mathcal{H}_{C(X)}\big)}$, $C(X)$-lin\'eaires compl\`etement positives et contractantes v\'erifiant, pour tout $A$ dans $\mathcal{F}$,
$$\big\lVert{\pi(A)-\tilde\psi\circ\tilde\varphi(A)}\big\lVert<\frac{\varepsilon}{2\lVert{W}\lVert^2}$$
On pose $\varphi:=\tilde\varphi$ une application de $\mathcal{A}$ dans $M_n\otimes{C(X)}$ et $\psi:=W^\ast(\tilde\psi\otimes{id_{l^2(\NN)}})W$ une application de $M_n\otimes{C(X)}$ dans $\mathcal{L}(\mathcal{H}_{C(X)}^\infty)$. Ce sont des applications $C(X)$-lin\'eaires compl\`etement positives et contractantes et pour tout $A$ dans $\mathcal{F}$, on a 
\begin{align*}
\big\lVert{A-\psi\circ\varphi(A)}\big\lVert&=\big\lVert{A-W^\ast\pi_\infty(A)W+W^\ast\pi_\infty(A)W-\psi\circ\varphi(A)}\big\lVert
\\
&\leq{\big\lVert{A-W^\ast\pi_\infty(A)W}\big\lVert+\big\lVert{W^\ast\pi_\infty(A)W-\psi\circ\varphi(A)}\big\lVert
}
\\
&<\frac{\varepsilon}{2}+\big\lVert{W^\ast\pi_\infty(A)W-W^\ast(\tilde\psi\otimes{id_{l^2(\NN)}})(\tilde\varphi(A))W}\big\lVert
\\
&<\frac{\varepsilon}{2}+\big\lVert{W^\ast\big(\pi_\infty(A)-(\tilde\psi\otimes{id_{l^2(\NN)}})(\tilde\varphi(A))\big)W}\big\lVert
\\
&<\frac{\varepsilon}{2}+\big\lVert{W}\big\lVert^2\big\lVert{\pi_\infty(A)-(\tilde\psi\otimes{id_{l^2(\NN)}})(\tilde\varphi(A))}\big\lVert
\\
&<\frac{\varepsilon}{2}+\big\lVert{W}\big\lVert^2\big\lVert{\big(\pi(A)-\tilde\psi\circ\tilde\varphi(A))\big)\otimes{id_{l^2(\NN)}}}\big\lVert
\\
&<\frac{\varepsilon}{2}+\big\lVert{W}\big\lVert^2\big\lVert{\pi(A)-\tilde\psi\circ\tilde\varphi(A))}\big\lVert
\\
&<{\frac{\varepsilon}{2}+\big\lVert{W}\big\lVert^2\frac{\varepsilon}{2\lVert{W}\lVert^2}}
\\
&<\varepsilon
\end{align*}
On a montr\'e que pour toute famille finie $\mathcal{F}$ dans $\mathcal{A}$ et tout $\varepsilon$ r\'eel strictement positif, il existe un entier $n$ et des applications $\varphi : \mathcal{A}\rightarrow{M_n\otimes{C(X)}}$ et $\psi : M_n\otimes{C(X)}\rightarrow{\mathcal{L}\big(\mathcal{H}_{C(X)}^\infty\big)}$, chacune $C(X)$-lin\'eaire, compl\`etement positive et contractante, v\'erifiant pour tout $A$ dans $\mathcal{F}$
$$\big\lVert{A-\psi\circ\varphi(A)}\big\lVert<\varepsilon$$
\end{proof}
\begin{Rem}
On peut \'etendre le r\'esultat du th\'eor\`eme $\ref{TFF}$  \`a des familles $\mathcal{F}$ compactes (et non plus seulement finie) de $\mathcal{A}$. En effet en consid\'erant $\mathcal{F}$ une famille compacte dans $\mathcal{A}$ et $\varepsilon$ un r\'eel strictement positif, on peut pour chaque \'el\'ement $a$ de $\mathcal{F}$ consid\'erer un voisinage ouvert $U_a$ dans $\mathcal{A}$ de sorte que pour tout $a'$ dans $U_a$, on a $\lVert{a-a'}\lVert<\varepsilon/3$. On obtient ainsi un recouvrement ouvert de $\mathcal{A}$ duquel on peut extraire un sous recouvrement fini $\mathcal{U}=\{U_{a_i}\}_{i=1}^n$. On consid\`ere $\mathcal{F}_1$ le sous ensemble fini de $\mathcal{F}$ constitu\'e des $m$ \'el\'ements $a_i$, pour $i$ variant de $1$ \`a $m$. D'apr\`es le th\'eor\`eme $\ref{TFF}$ , il existe un entier $n$ et des applications $\varphi : \mathcal{A}\rightarrow{M_n\otimes{C(X)}}$ et $\psi : M_n\otimes{C(X)}\rightarrow{\mathcal{L}\big(\mathcal{H}_{C(X)}^\infty\big)}$, chacune $C(X)$-lin\'eaire, compl\`etement positive et contractante, v\'erifiant pour tout $i$ dans $\{1,...,m\}$,
$$\big\lVert{a_i-\psi\circ\varphi(a_i)}\big\lVert<\varepsilon/3$$
Pour tout $a$ dans $\mathcal{F}$, il existe $a_i$ dans $\mathcal{F}'$, tel que $\Vert{a-a_i}\lVert<\varepsilon/3$, et on a 
\begin{align*}
\big\lVert{a-\psi\circ\varphi(a)}\big\lVert&=\big\lVert{a-a_i+a_i-\psi\circ\varphi(a_i)+\psi\circ\varphi(a_i)-\psi\circ\varphi(a)}\big\lVert
\\
&\leq{\big\lVert{a-a_i}\big\lVert+\big\lVert{a_i-\psi\circ\varphi(a_i)}\big\lVert+\big\lVert{\psi\circ\varphi(a_i)-\psi\circ\varphi(a)}\big\lVert}
\\
&<\varepsilon/3+\varepsilon/3+\varepsilon/3
\\
&<\varepsilon
\end{align*}
\end{Rem}
\begin{Def}
Soit $\mathcal{G}$ un groupo\"{\i}de s\'epar\'e, localement compact, $B$ une $C_0(X)$-alg\`ebre et $\phi : C^\ast_r(\mathcal{G})\rightarrow{B}$ une application $C_0(X)$-lin\'eaire compl\`etement positive. L'application $\phi$ est dite \`a support compact s'il existe un sous espace compact $K$ de $\mathcal{G}$ tel que pour toute fonction $f$ dans $C_c(\mathcal{G})$ v\'erifiant $Supp(f)\cap{K}=\emptyset$, on a $\phi(f)=0$.   
\end{Def}

\begin{Pro}
Soit	$\varphi : \mathcal{A}\rightarrow{M_n\otimes{C(X)}}$ une application $C(X)$-lin\'eaire et compl\`etement positive, alors pour tout r\'eel $\varepsilon$ strictement positif et tout famille finie $\mathcal{F}$ dans $\mathcal{A}$, il existe une application $C(X)$-lin\'eaire, compl\`etement positive et \`a support compact $\varphi' : \mathcal{A}\rightarrow{M_n\otimes{C(X)}}$ telle que pour tout $a$ dans $\mathcal{F}$, 
$$\lVert{\varphi(a)-\varphi'(a)}\lVert<\varepsilon$$
\end{Pro}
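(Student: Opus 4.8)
The plan is to exhibit $\varphi'$ as a matrix amplification of a $C(X)$-vector functional built from vectors of $L^2(\mathcal{G},\nu)$ that are compactly supported in $\mathcal{G}$; such maps have compact support almost by construction. The starting point is the elementary computation that, for $\eta,\zeta\in C_c(\mathcal{G})$ with $S:=Supp(\eta)\cup Supp(\zeta)$, the $C(X)$-valued coefficient $a\mapsto\langle\eta,\lambda(a)\zeta\rangle$ on $C_c(\mathcal{G})$ vanishes on every $f$ with $Supp(f)\cap K=\emptyset$, where $K:=\{\gamma\delta^{-1}:\gamma,\delta\in S,\ s(\gamma)=s(\delta)\}$ is compact (a continuous image of a closed subset of $S\times S$); this uses only that $\mathcal{G}$ is \'etale, so $\nu$ is the family of counting measures on the $s$-fibres and the convolution kernel of $\lambda(f)$ is carried by $Supp(f)$. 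The same holds, with the same $K$, for finite sums of such coefficients, for amplifications by $\ell^2(\NN)$, and for $n\times n$ matrix-valued versions; hence every completely positive $C(X)$-linear map $\mathcal{A}\to M_n\otimes C(X)$ of the form $a\mapsto\big(\sum_\alpha\langle\xi_i^\alpha,(a\otimes 1)\xi_j^\alpha\rangle\big)_{i,j}$ with the $\xi_i^\alpha$ having $C_c(\mathcal{G})$-components has compact support in the sense of the definition above.

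First I would put $\varphi$ into this form. Normalizing (legitimate since $X$ compact makes $M_n\otimes C(X)$ unital, so $\|\varphi\|=\|\varphi(1)\|<\infty$), I may assume $\varphi$ contractive; let $\varphi^\flat:M_n(\mathcal{A})\to C(X)$ be the positive $C(X)$-linear functional associated to it by Lemma~\ref{LemmeCorrespondence}. Representing $\mathcal{A}$ on $L^2(\mathcal{G},\nu)$ and amplifying by $\ell^2(\NN)$ — the same device used in the proof of Proposition~\ref{ProChap3} to make the intersection with the compact operators trivial fibrewise — I would run on $\varphi^\flat$ the $C(X)$-analogue of Dixmier's approximation theorem: by Theorem~\ref{TheoLimFaible} applied fibrewise (the fibrewise statement holds for positive functionals vanishing on the compacts, not just for states), followed by the localisation and partition-of-unity argument of the subsection ``Champ de formes vectorielles'' (which uses only compactness of $X$), $\varphi^\flat$ is a $\ast$-weak limit of $C(X)$-vector states $\omega_\xi$, with $\xi$ a unit section of a $C(X)$-module of the form $\big((L^2(\mathcal{G},\nu)\otimes\ell^2(\NN))^n\big)^m$.

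To finish, given $\varepsilon>0$ and the finite set $\mathcal{F}$, approximate each $a\in\mathcal{F}$ within $\delta$ by a finite sum $\sum_\ell\phi(g_\ell)\lambda(f_\ell)$, $f_\ell\in C_c(\mathcal{G})$, $g_\ell\in C(X)$ (possible since $\mathcal{A}$ is the closed linear span of such elements), and collect the finitely many $f_\ell$. By $\ast$-weak convergence along the finite subset $\{e_{ij}\otimes\lambda(f_\ell)\}$ of $M_n(\mathcal{A})$, pick one $\omega_\xi$ with $\|\varphi^\flat(e_{ij}\otimes\lambda(f_\ell))-\omega_\xi(e_{ij}\otimes\lambda(f_\ell))\|$ as small as desired; then, by density, replace $\xi$ by a vector $\xi'$ whose $L^2(\mathcal{G},\nu)$-components lie in $C_c(\mathcal{G})$, which perturbs the associated vector functional uniformly on bounded sets. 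Let $\varphi'$ be the completely positive $C(X)$-linear map $\mathcal{A}\to M_n\otimes C(X)$ associated to $\omega_{\xi'}$ by Lemma~\ref{LemmeCorrespondence}. By the first paragraph $\varphi'$ has compact support in $\mathcal{G}$; and since $\varphi,\varphi'$ are $C(X)$-linear with $\|\varphi\|,\|\varphi'\|$ bounded, combining the two $\delta$-estimates on $\mathcal{F}$ with the Dixmier estimate gives $\|\varphi(a)-\varphi'(a)\|<\varepsilon$ for $a\in\mathcal{F}$ after choosing $\delta$ and the Dixmier error small enough, and one removes the normalization at the end.

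The hard part will be the middle step: transporting the ``$C(X)$-state as a $\ast$-weak limit of $C(X)$-vector states'' machinery from the $C(X)\otimes\mathcal{O}_2$-representation used earlier to the regular representation of $\mathcal{A}$ on $L^2(\mathcal{G},\nu)\otimes\ell^2(\NN)$ — in particular, controlling the kernels of the fibre representations $\mathcal{A}_x\to\mathcal{L}\big((L^2(\mathcal{G},\nu)\otimes\ell^2(\NN))_x\big)$ and the possible failure of $\lambda(C^\ast_r(\mathcal{G}))$ to meet the compact operators trivially, which is precisely what the $\ell^2(\NN)$-amplification and the compactness of $X$ (finite partitions of unity) are there to handle. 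By contrast, the verification that compactly supported vectors yield maps of compact support is the short direct computation of the first paragraph.
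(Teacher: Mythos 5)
Your proposal is correct in outline and ends at the same normal form as the paper's own proof: both arguments produce $\varphi'(a)=\big[\langle\xi_i,\lambda(a)\xi_j\rangle\big]$ for vectors $\xi_1,\dots,\xi_n$ with components in $C_c(\mathcal{G})$, and both finish with the same density argument to truncate the vectors. The routes to that form differ. The paper applies the Stinespring dilation theorem for Hilbert $C(X)$-modules to write $\varphi(a)=\big[\langle e_i,\pi_\varphi(a)e_j\rangle\big]$ and then asserts, in a single unproved sentence, that after passing to a multiple of the faithful representation $\lambda$ these coefficients are approximated by coefficients $\langle\xi_i,\lambda(a)\xi_j\rangle$ --- a $C(X)$-analogue of Fell's weak-containment theorem. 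You instead pass to the functional $\varphi^\flat$ on $M_n(\mathcal{A})$ via Lemma~\ref{LemmeCorrespondence} and rerun the fibrewise Glimm lemma (Theorem~\ref{TheoLimFaible}) together with the partition-of-unity globalisation already developed for the $\delta_i'$, with $\lambda\otimes\mathrm{id}_{\ell^2(\NN)}$ in place of the $\mathcal{O}_2$-representation. The two realisations of this middle step carry the same mathematical content; yours is longer but buys two things the paper omits: the explicit verification that vectors with $C_c(\mathcal{G})$-components give a map of compact support in the sense of the definition (your first paragraph, correct up to the precise description of the compact set $K$, which only matters through its compactness), and an honest identification of the one genuinely delicate point, namely that the fibrewise use of Theorem~\ref{TheoLimFaible} requires the amplified fibre representations of $M_n(\mathcal{A}_x)$ to be faithful, or at least requires $\varphi^\flat_x$ to vanish on their kernels. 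That difficulty is equally present, though unacknowledged, in the paper's one-line approximation of the Stinespring coefficients, so your argument is not weaker than the paper's on this point; but a complete write-up along your lines would still have to settle it, either fibrewise or by arguing globally as in the classical Fell theorem.
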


\begin{proof}
L'application $\varphi :  \mathcal{A}\rightarrow{M_n\otimes{C(X)}}$ \'etant $C(X)$-lin\'eaire et compl\`etement positive, alors par le th\'eor\`eme de Stinespring pour les $C(X)$-modules de Hilbert, il existe un $C(X)$-module $F_\varphi$, un $\ast$-homomorphisme $\pi_\varphi :  \mathcal{A}\rightarrow{\mathcal{L}(F_\varphi)}$ et un op\'erateur $v_\varphi$  dans $\mathcal{L}(M_n\otimes{C(X)},F_\varphi)$ tel que pour tout $a$ dans $\mathcal{A}$, on a $\varphi(a)=v_\varphi^\ast\pi_\varphi(a)v_\varphi$ c'est-\`a-dire qu'il existe des vecteurs $e_1,...,e_n$ dans $F_\varphi$ tels que 
$$\varphi(a)=v_\varphi^\ast\pi_\varphi(a)v_\varphi=\big[\langle{e_i,\pi_\varphi(a)e_j}\rangle\big]$$
On a un repr\'esentation fid\`ele de $\lambda : \mathcal{A}$ dans $L^2(\mathcal{G},\nu)$ et pour tout $\eta>0$, quitte \`a prendre un multiple de la repr\'esentation fid\`ele $\lambda$, il existe des vecteurs $\xi_1,...,\xi_n$ dans $L^2(\mathcal{G},\nu)$ tels que pour tout $a$ dans $F$
$$\big\lvert\langle{e_i,\pi_\varphi(a)e_j}\rangle-\langle{\xi_i,\lambda(a)\xi_j}\rangle\big\lvert<\eta$$
On peut supposer que les fonctions $\xi_i$ sont \`a support compact et en posant $\varphi'(a)=\big[\langle{\xi_i,\lambda(a)\xi_j}\rangle\big]$, on obtient une application $C(X)$-lin\'eaire, compl\`etement positive et \`a support compact $\varphi' : \mathcal{A}\rightarrow{M_n\otimes{C(X)}}$ telle que pour tout $a$ dans $\mathcal{F}$, 
$$\lVert{\varphi(a)-\varphi'(a)}\lVert<\varepsilon$$
\end{proof}

%\subsection {Cas g\'en\'eral : $X$ localement compact}{\label{SecComp}}
%
%$$?????????????????$$
\subsection{Moyennabilit\'e}
On consid\`ere l'espace produit $\mathcal{G}\times\mathcal{G}$ et pour $i=1,2$, on note $p_i : \mathcal{G}\times\mathcal{G}\rightarrow{\mathcal{G}}$ la projection sur la $i$-i\`eme composante. Soit $F$ un sous espace ferm\'e de l'espace $\mathcal{G}\times\mathcal{G}$, on dit que $F$ est $(p_1,p_2)$-propre si pour tout sous espace compact $K$ de $\mathcal{G}$, les ensembles $F\cap{(K\times\mathcal{G})}$ et $F\cap{\mathcal{G}\times{K}}$ sont compacts. 
\begin{Def}
On dit que le groupo\"{\i}de $\mathcal{G}$ a la propri\'et\'e $(W)$ si pour tout sous espace compact $K$ de $\mathcal{G}$ et pour tout r\'eel $\varepsilon$ strictement positif, il existe une fonction $h : \mathcal{G}\times{\mathcal{G}}\rightarrow\CC$ continue, born\'ee, de type positif et \`a support $(p_1,p_2)$-propre v\'erifiant pour tout $\gamma$ dans $K$,
$$\lvert{h(\gamma,\gamma)-1}\lvert<\varepsilon$$
\end{Def}
%We say that a transformation group (X, G) has property (W) if for every compact subset K of X ? G and for every ? > 0 there exists a continuous bounded positive type, properly supported, function h on (X ? G) ? (X ? G) such that |h(x,t,x,t) ? 1| ² ? for all (x,t) ? K.
On consid\`ere $h : \mathcal{G}\times\mathcal{G}\rightarrow\CC$ une fonction continue, born\'ee, \`a support $(p_1,p_2)$-propre et de type positif. Pour tout $\gamma$ dans $\mathcal{G}$, on note $h_\gamma$ la fonction continue sur $\mathcal{G}$ et \`a support compact d\'efinie par $h_\gamma(\eta):=h(\gamma,\eta)$ pour tout $\eta$ dans $\mathcal{G}$.
\begin{Lemma}{\label{LemTypPosFon}}
Soit $h : \mathcal{G}\times\mathcal{G}\rightarrow\CC$ une fonction continue, born\'ee, \`a support $(p_1,p_2)$-propre et de type positif. L'application $\gamma\rightarrow{\lambda(h_\gamma)}$ continue sur $\mathcal{G}$ \`a valeurs dans $C^\ast_r(\mathcal{G})$ est de type positif.
\end{Lemma}
\begin{proof}
On doit prouver que pour tout entier $n$, tout $x$ dans $X$, tout $\gamma_1,...,\gamma_n$ dans $\mathcal{G}^x$, tout $\xi_1,...,\xi_n$ dans $L^2(\mathcal{G})$, on a, pour tout $y$ dans $X$
$$\sum_{i,j=1}^n\big\langle{\xi_i,\lambda(h_{\gamma_i^{-1}\gamma^{}_j})\xi_j}\big\rangle(y)\geq0$$
On peut supposer, pour tout $i$ dans $N=\{1,...,n\}$, que les fonctions $\xi_i$ sont dans $C_c(\mathcal{G})$. On consid\`ere $F:=\cup_{i=1}^nSupp(\xi_i\lvert_{\mathcal{G}_y})$ l'ensemble fini de $\mathcal{G}_y$. On a alors 
\begin{align*}
A&=\sum_{i,j=1}^n\big\langle{\xi_i,\lambda(h_{\gamma_i^{-1}\gamma^{}_j})\xi_j}\big\rangle(y)=\sum_{i,j=1}^n\sum_{\alpha\in\mathcal{G}_y}\overline{\xi_i(\alpha)}\big(h_{\gamma_i^{-1}\gamma^{}_j}\star\xi_j\big)({\alpha})
\\
&=\sum_{i,j=1}^n\sum_{\alpha\in\mathcal{G}_y}\sum_{\beta\in\mathcal{G}^{r(\alpha)}}\overline{\xi_i(\alpha)}h_{\gamma_i^{-1}\gamma^{}_j}(\alpha\beta^{-1})\xi_j(\beta)
\\
&=\sum_{i,j=1}^n\sum_{r\in{F}}\sum_{r'\in{F}}\overline{\xi_i(\alpha_r)}h(\gamma_i^{-1}\gamma^{}_j,\alpha_{r}^{}\alpha_{r'}^{-1})\xi_j(\alpha_{r'})
\end{align*}
Pour tout $i$ dans $N=\{1,...,n\}$ et $r$ dans $F$, on pose $\lambda_{i,r}:=\xi_i(\alpha_r)$, $\sigma_{i,r}:=\gamma_i$ et $\tau_{i,r}:=\alpha_r^{-1}$, et on a 
\begin{align*}
A&=\sum\limits_{\substack{i\in{N} \\ r\in{F}}}\sum\limits_{\substack{j\in{N} \\ r'\in{F}}}\overline{\lambda_{i,r}}\lambda_{j,r'}h(\sigma_{i,r}^{-1}\sigma_{j,r'}^{},\tau_{i,r}^{-1}\tau_{j,r'}^{})\geq{0}
\end{align*}
car par d\'efinition, l'application $h : \mathcal{G}\times\mathcal{G}\rightarrow\CC$ est de type positif.
\\
\end{proof}

On suppose que le groupo\"{\i}de $\mathcal{G}$ a la propri\'et\'e $(W)$. Soit $n$ un entier, on consid\`ere le sous espace compact $K_n$ de $\mathcal{G}$. Il existe une fonction $h : \mathcal{G}\times\mathcal{G}\rightarrow\CC$ continue, born\'ee, de type positif et \`a support $p_1,p_2$-propre v\'erifiant pour tout $\gamma$ dans $K_n$, 
$$\lvert{h(\gamma,\gamma)-1}\lvert<(2n)^{-1}$$ 
Pour tout $\beta$ dans $\mathcal{G}$, on note $h_\beta : \mathcal{G}\rightarrow{\CC}$ la fonction d\'efinie, pour tout $\alpha$ dans $\mathcal{G}$, par  $h_\beta(\alpha):=h(\beta,\alpha)$ qui est continue et \`a support compact sur $\mathcal{G}$. On consid\`ere $\mathcal{F}=\{\lambda(h_\beta)\textrm{ : }\beta\in{K_n}\}$ qui est une partie compacte de $C_r^\ast(\mathcal{G})$. D'apr\`es l'approximation par factorisation de la repr\'esentation r\'eguli\`ere faite dans $\ref{ApproxiFacto}$ , 
 il existe une application $C(X)$-lin\'eaire, compl\`etement positive et contractante $\phi_n : \mathcal{A}\rightarrow{\mathcal{L}\big(\mathcal{H}_{C(X)}^\infty\big)}$  telle que pour tout $\beta$ dans ${K_{n}}$, 
$$\big\lVert{\phi_n(h_\beta)-id_\mathcal{A}(h_\beta)}\big\lVert<(2n)^{-1}$$
et l'application $\phi_n$ est \`a support dans un compact $C_n$ de $\mathcal{G}$.
\\

Pour tout $\gamma$ dans $\mathcal{G}$, on consid\`ere $U_\gamma$ un voisinage ouvert de $\gamma$ sur lequel les restrictions des applications but et source sont hom\'eomorphismes sur leur image respective. On obtient ainsi un recouvrement d'ouvert $\{U_\gamma\}_{\gamma\in\mathcal{G}}$ de l'espace $\mathcal{G}$ duquel on peut extraire un sous recouvrement localement fini $\mathcal{U}'=\{U_{\gamma_i}\}_{i\in{I}}$, o\`u $I$ est d\'enombrable. On note $\mathcal{U}:=\{U_{\gamma_i}\times{U_{\gamma_i}}\}_{i\in{I}}$ le recouvrement ouvert localement fini de la diagonale $\Delta_{\mathcal{G}}$ de $\mathcal{G}\times\mathcal{G}$. On lui associe une partition de l'unit\'e $\{\psi_i\}_{i\in{I}}$, o\`u chaque fonction $\psi_i$ est continue, positive et \`a support compact dans l'ouvert $U_{\gamma_i}\times{U_{\gamma_i}}$ telles que 
$$\chi_{\Delta_\mathcal{G}}\leq\sum_{i\in{I}}\psi_i\leq{1}$$
On pose $\psi(\alpha,\beta)=\sum_{i\in{I}}\psi_i(\alpha,\beta)$ la fonction d\'efinie sur $\mathcal{G}\times\mathcal{G}$ continue et \`a support dans $\cup_{i\in{I}}(U_{\gamma_i}\times{U_{\gamma_i}})$.
Pour tout $\gamma$ dans $\mathcal{G}$, on note $f_\gamma : \mathcal{G}\rightarrow{\RR}$ d\'efinie par $f_\gamma(\alpha)=\psi(\gamma,\alpha)$ est une fonction continue et \`a support compact dans $W_\gamma=\cup_{i\in{I_\gamma}}{U_{\gamma_i}}$, o\`u $I_\gamma=\{i\in{I} : \gamma\in{U_{\gamma_i}}\}$ est une partie finie de $I$. La restriction des applications but et source \`a chaque $W_\gamma$ est un hom\'eomorphisme sur leur image respective.  
\\
Comme $\mathcal{G}$ est $\sigma$-compact, on suppose qu'il existe une suite $(K_n)_{n\in\NN}$ croissante de compacts de $\mathcal{G}$ telle que $K_n\subset{\mathring{K}_{n+1}}$ pour tout $n$ entier et $\bigcup_{n\in\NN}K_n=\mathcal{G}$.

 \begin{Pro}
Pour tout $n$ dans $\NN$, il existe une fonction $\mu_n$ continue sur $\mathcal{G}\ast_r\mathcal{G}$, \`a valeurs r\'eelles, de type positif et \`a support dans un tube telle que pour tout $(\gamma,\eta)$ dans le tube $T_{K_{n}}$, on a 
$$\lvert{\mu_n(\gamma,\eta)-1}\lvert<1/n$$
\vspace{-5mm}
\end{Pro}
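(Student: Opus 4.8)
The plan is to assemble $\mu_n$ from three pieces that are already in hand: the property $(W)$ kernel attached to $K_n$, the compactly supported completely positive approximation of the regular representation constructed in Section~\ref{ApproxiFacto}, and the bisection-adapted partition of unity $\{f_\gamma\}$ introduced just above. The first supplies the ``$\approx 1$ on the diagonal'' content, the second supplies finiteness and positivity, and the third is what makes the outcome continuous across the \'etale (hence fibrewise discrete) structure of $\mathcal{G}$.

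First I would invoke property $(W)$ for a compact set large enough to contain $K_n$ together with the compact $C_n$ supporting $\phi_n$ and the supports of the relevant $f_\gamma$, and for $\varepsilon=(2n)^{-1}$, obtaining $h:\mathcal{G}\times\mathcal{G}\to\CC$ continuous, bounded, of positive type and with $(p_1,p_2)$-proper support, such that $|h(\gamma,\gamma)-1|<(2n)^{-1}$ on that compact. Setting $h_\beta(\alpha)=h(\beta,\alpha)\in C_c(\mathcal{G})$, Lemma~\ref{LemTypPosFon} gives that $\beta\mapsto\lambda(h_\beta)$ is a continuous positive-type function on $\mathcal{G}$ with values in $C^*_r(\mathcal{G})\subset\mathcal{A}$. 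Next, applying the factorization of Section~\ref{ApproxiFacto} to $\mathcal{F}=\{\lambda(h_\beta):\beta\in K_n\}$ and to $\varepsilon=(2n)^{-1}$ yields an integer $N$, a compactly supported $C(X)$-linear completely positive contraction $\varphi':\mathcal{A}\to M_N\otimes C(X)$ of the form $\varphi'(a)=\big[\langle\xi_i,\lambda(a)\xi_j\rangle\big]$ with $\xi_1,\dots,\xi_N\in C_c(\mathcal{G})$ supported in a common compact $D$, and a $C(X)$-linear completely positive contraction $\psi:M_N\otimes C(X)\to\mathcal{L}(\mathcal{H}^\infty_{C(X)})$, so that $\phi_n:=\psi\circ\varphi'$ is supported in $C_n$ and $\|\phi_n(\lambda(h_\beta))-\lambda(h_\beta)\|<(2n)^{-1}$ for every $\beta\in K_n$.

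I would then define $\mu_n$ on $\mathcal{G}\ast_r\mathcal{G}$ by pushing the operator-valued positive-type function $\beta\mapsto\phi_n(\lambda(h_\beta))$ through a fibrewise state localized near the unit by means of the bisection bumps $f_\gamma$: for $(\gamma,\eta)$ with $x=r(\gamma)=r(\eta)$ one pairs $\phi_n(\lambda(h_{\gamma^{-1}\eta}))(x)$, a positive element in the fibre, against the section of $\mathcal{H}^\infty_{C(X)}$ built from $\xi_1,\dots,\xi_N$ cut down by $\sqrt{f_\gamma}$ on the left and by $\sqrt{f_\eta}$ on the right and evaluated at $x$ (a Gram-type expression). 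Positivity of the resulting scalar kernel on the pair groupoid of $r$-fibres is then automatic: for $\gamma_1,\dots,\gamma_m$ in a common $r$-fibre and scalars $\lambda_i$, the expression $\sum_{i,j}\overline{\lambda_i}\lambda_j\,\mu_n(\gamma_i,\gamma_j)$ is, up to pairing with a fixed vector, the value of the completely positive map $\phi_n$ on the positive element $\sum_{i,j}\overline{\lambda_i}\lambda_j\,\lambda(h_{\gamma_i^{-1}\gamma_j})$ of $C^*_r(\mathcal{G})$ provided by Lemma~\ref{LemTypPosFon}, hence $\ge 0$; realness is arranged by choosing $h$ and the $\xi_i$ real, or by symmetrizing; and contractivity of $\varphi'$ together with $h(\gamma,\gamma)\approx 1$ places the diagonal values in the unit ball after a harmless renormalization.

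It remains to control the support, the estimate and the continuity. The $(p_1,p_2)$-properness of $\operatorname{supp}h$ and the compactness of $D$ and $C_n$ confine the data so that $\mu_n(\gamma,\eta)=0$ unless $\gamma^{-1}\eta$ lies in a fixed compact set, i.e.\ $\operatorname{supp}\mu_n$ is contained in a tube. For $(\gamma,\eta)\in T_{K_n}$ one has $\gamma^{-1}\eta\in K_n$; replacing $\phi_n(\lambda(h_{\gamma^{-1}\eta}))$ by $\lambda(h_{\gamma^{-1}\eta})$ costs at most $(2n)^{-1}$, and by the choice of localization the remaining term is within $(2n)^{-1}$ of $h(\gamma^{-1}\eta,\gamma^{-1}\eta)$, hence within $1/n$ of $1$, so $|\mu_n(\gamma,\eta)-1|<1/n$. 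Continuity of $\mu_n$ on $\mathcal{G}\ast_r\mathcal{G}$ is the delicate point and the main obstacle: since the $r$-fibres of an \'etale groupoid are discrete and move, the translation and fibrewise evaluation entering the definition of $\mu_n$ are not a priori continuous, and it is precisely the partition of unity $\{f_\gamma\}$, whose supports $W_\gamma$ are bisections on which $r$ and $s$ are homeomorphisms, that turns the pieces entering $\mu_n$ into genuine continuous sections over $X$; the work is to verify this locally on each box $U_{\gamma_i}\times U_{\gamma_i}$ while simultaneously keeping $\operatorname{supp}\mu_n$ inside one fixed compact tube and checking that the off-diagonal estimate really does propagate from the diagonal bound furnished by property $(W)$. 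Here one also uses the standing assumption that $\mathcal{A}$ is a continuous $C(X)$-algebra (condition $\mathbf{(Cont)}$), so that the fibrewise evaluations $a\mapsto a(x)$ depend continuously on $a$ and $x$.
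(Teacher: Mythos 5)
Your proposal takes essentially the same route as the paper: the property $(W)$ kernel $h$ at tolerance $(2n)^{-1}$, the positive-type operator-valued map $\beta\mapsto\lambda(h_\beta)$ of Lemme~\ref{LemTypPosFon}, the compactly supported completely positive approximation $\phi_n$ of the identity on the compact set $\{\lambda(h_\beta):\beta\in K_n\}$, and the bisection bumps $f_\gamma$, assembled into the pairing $\mu_n(\gamma,\eta)=\langle f^\ast_\gamma,\phi_n(h_{\gamma^{-1}\eta})(f^\ast_\eta)\rangle(r(\gamma))$, with positivity from complete positivity of $\phi_n$, tube support from $(p_1,p_2)$-properness of $\operatorname{supp}h$ together with compactness of $C_n$, and the estimate split exactly as you indicate into the $\|(\phi_n-\mathrm{id})(h_{\gamma^{-1}\eta})\|$ error and the diagonal value of $h$. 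The one step you flag as remaining, continuity of $\mu_n$, is handled in the paper simply by continuity of $\gamma\mapsto f_\gamma$ and $\beta\mapsto h_\beta$ on the bisections $W_\gamma$, which is the verification you describe.
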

\begin{proof}
On pose $\mu_n$ la fonction d\'efinie sur $\mathcal{G}\ast_r\mathcal{G}$ qui, \`a tout couple $(\gamma,\eta)$ de $\mathcal{G}\ast_r\mathcal{G}$, associe $\mu_n(\gamma,\eta)=\big\langle{f^\ast_\gamma,\phi_n(h_{\gamma^{-1}\eta})(f^\ast_\eta)}\big\rangle(x)$, o\`u $x=r(\gamma)=r(\eta)$. 
\\

Les applications $\gamma\rightarrow{f_\gamma}$ et $\beta\rightarrow{h_\beta}$ \'etant continue sur $\mathcal{G}$, alors $\mu_n : \mathcal{G}\ast_r\mathcal{G}\rightarrow\RR$ est continue comme composition d'applications continues.
\\

Le support $C_n$ de l'application $\phi_n$ un sous espace compact de $\mathcal{G}$ et la fonction $h$ est \`a support $(p_1,p_2))$-propre donc on a $\mathrm{Supp}(h)\cap{\mathcal{G}\times{C_n}}$ est compact. On en d\'eduit alors que $\{\alpha\in\mathcal{G}\textrm{ : }\phi_n(h_\alpha)\neq0\}$ est compact et que la fonction $\mu_n$ est \`a support dans un tube.  
%\vspace{+2mm}
%la fonction $\mu_n$ est \`a support dans un tube $T$; en effet, l'application $\phi_n$ construite est \`a support compact dans $K_{r_n}$ et pour tout $(\gamma,\eta)$ en dehors de $T_{K_{r_n+1}}$, le support de $f^\ast_\gamma\star{f_\eta}$ est contenu dans $\mathcal{G}\smallsetminus{K_{r_n}}$. Ainsi pour tout $(\gamma,\eta)$ dans $\mathcal{G}\ast_r\mathcal{G}\smallsetminus{T_{K_{r_n+1}}}$, on a $\phi_n(f^\ast_\gamma\star{f_\eta})=0$, donc $\mu_n$ est \`a support dans le tube $T_{K_{r_n+1}}$. 
\\

On montre que les fonctions $\mu_n$ sont de type positif : l'application $\phi_n$ \'etant compl\`etement positive, on a alors 
%$$\big\langle{f^\ast_\gamma;\phi_n(h_\beta)(f^{\ast}_\eta)}\big\rangle(x)\geq\big\langle{f^\ast_\gamma;\phi_n(f^\ast_\gamma\star{f^{}_\eta})(f^{\ast}_\eta)}\big\rangle(x)$$
pour tout entier $n$, tout $(z_k)_{{1\leq{k}\leq{n}}}$ dans $\mathbb{C}$, pour tout $x$ dans $X$ et tout $(\gamma_p)_{1\leq{p}\leq{n}}$ dans $\mathcal{G}^x$, on a 
\begin{align*}
A&=\sum_{p,q=1}^n\overline{z_p}z_q\mu_n(\gamma_p,\gamma_q)
\\
&=\sum_{p,q=1}^n\overline{z_p}z_q\big\langle{f_{\gamma_p}^\ast;\phi_n\big(h_{\gamma_p^{-1}\gamma_q}\big)f^\ast_{\gamma_q}}\big\rangle(x)
\\
&=\sum_{p,q=1}^n\big\langle{z_pf_{\gamma_p}^\ast;\phi_n\big(h_{\gamma_p^{-1}\gamma_q}\big)(z_qf^\ast_{\gamma_q})}\big\rangle(x)
\end{align*}
or l'application $\gamma\to\lambda(h_\gamma)$ est de type positif d'apr\`es le lemme \ref{LemTypPosFon} donc la matrice $[\lambda(h_{\gamma^{-1}_p\gamma_q})]_{1\leq{p,q}\leq{n}}$ est positive et comme l'application $\phi$ est compl\`etement positive alors la matrice $\big[\phi(h_{\gamma^{-1}_p\gamma_q})\big]_{1\leq{p,q}\leq{n}}$ est positive et la fonction $\mu_n$ est donc de type positif pour tout entier $n$.
\\

Pour prouver que la suite $(\mu_n)_{n\in\NN}$ converge uniform\'ement vers un sur les tubes,  on a,  pour tout couple $(\gamma,\eta)$ dans le tube $T_{K_n}$, 
\begin{align*}
B&=\lvert{\mu_n(\gamma,\eta)-1}\lvert=\big\lvert{\langle{f^\ast_\gamma,\phi_n(h_{\gamma^{-1}\eta})(f^\ast_\eta)}\rangle}(x)-1\big\lvert
\\
&\leq\big\lvert\langle{f^\ast_\gamma,(\phi_n-id_{\mathcal{A}})(h_{\gamma^{-1}\eta})(f^\ast_\eta)}\rangle(x)\big\lvert+\big\lvert\langle{f^\ast_\gamma,id_{\mathcal{A}}(h_{\gamma^{-1}\eta})(f^\ast_\eta)}\rangle(x)-1\big\lvert
\\
&\leq{\big\lVert{(\phi_n-id_{\mathcal{A}})(h_{\gamma^{-1}\eta})}\big\lVert}+\bigg\lvert{\sum_{\alpha\in{\mathcal{G}_x}}\overline{f^\ast_\gamma(\alpha)}(h_{\gamma^{-1}\eta}\star{f^\ast_\eta})(\alpha)-1}\bigg\lvert
\\
&\leq{\big\lVert{(\phi_n-id_{\mathcal{A}})(h_{\gamma^{-1}\eta})}\big\lVert}+\bigg\lvert{\overline{f^\ast_\gamma(\gamma^{-1})}\sum_{\beta\in\mathcal{G}^{r(\gamma)}}h_{\gamma^{-1}\eta}(\gamma^{-1}\beta){f^\ast_\eta}(\beta^{-1})-1}\bigg\lvert
\\
&\leq{\big\lVert{(\phi_n-id_{\mathcal{A}})(h_{\gamma^{-1}\eta})}\big\lVert}+\big\lvert{h_{\gamma^{-1}\eta}(\gamma^{-1}\eta){f^\ast_\eta}(\eta^{-1})-1}\big\lvert
\\
&\leq{\big\lVert{(\phi_n-id_{\mathcal{A}})(h_{\gamma^{-1}\eta})}\big\lVert}+\big\lvert{h_{\gamma^{-1}\eta}(\gamma^{-1}\eta)-1}\big\lvert
%&\leq{\bigg\lvert{\sum_{\alpha\in\mathcal{G}_x}\overline{f^\ast_\gamma(\alpha)}(\phi_n-id_\mathcal{A})(h_\beta^{})(f_\eta^\ast)(\alpha)}\bigg\lvert}
%\\
%&\leq\sum_{\alpha\in\mathcal{G}_x}\big\lvert{(\phi_n-id_\mathcal{A})(h_\beta^{})(f_\eta^\ast)(\alpha)}\big\lvert
%\\
%&\leq{\big\lVert{(\phi_n-id_\mathcal{A})(h_\beta^{})(f_\eta^\ast)}\big\lVert_2^2}
%\\
%&\leq\big\lVert{(\phi_n-id_\mathcal{A})(h^{}_\beta)}\big\lVert\big\lVert{f_\eta^\ast}\big\lVert_2^2
\\
&<(2n)^{-1}+(2n)^{-1}
\\
&<1/n
\end{align*}
\end{proof}

\begin{Theo}
Soit $\mathcal{G}\rightrightarrows{X}$ un groupo\"{\i}de \'etale localement compact $\sigma$-compact et s\'epar\'e ayant un espace des unit\'es $X$ compact, satisfaisant la propri\'et\'e (W) et la condition $\mathbf{(Cont)}$. Si la $C^\ast$-alg\`ebre $C^\ast_r(\mathcal{G})$ est exacte, alors le groupo\"{\i}de $\beta_X\mathcal{G}\rtimes\mathcal{G}$ est moyennable. 
\end{Theo}

\begin{proof}
Pour prouver que groupo\"{\i}de $\beta_X\mathcal{G}\rtimes\mathcal{G}$ est moyennable, il suffit de montrer que pour tout r\'eel strictement positif $\varepsilon$ et tout sous espace compact $K$ de $\beta_X\mathcal{G}\rtimes{\mathcal{G}}$, il existe une fonction $\nu_{\varepsilon,K}$ dans $C_c(\beta_X\mathcal{G}\rtimes\mathcal{G})$, continue, de type positif et \`a support compact telle que pour tout $(z,\gamma)$ dans $K$, on a $$\lvert{\nu_{\varepsilon,K}(\gamma,\eta)-1}\lvert<\varepsilon$$
Pour cela on va utiliser la $\sigma$-compacit\'e de $\mathcal{G}$ : il existe une suite $(K_n)_{n\in\NN}$ croissante de compacts de $\mathcal{G}$ telle que $K_n\subset{\mathring{K}_{n+1}}$ pour tout $n$ entier et $\bigcup_{n\in\NN}K_n=\mathcal{G}$. 
%On montre alors pour tout $n$ dans $\NN$ il existe $\nu_n$ dans $C_c(\beta_X\mathcal{G}\rtimes\mathcal{G})$ continue, de type positif et \`a support compact telle que pour tout $(z,\gamma)$ dans $K_n$, on a 
%$$\lvert{\nu_{n}(\gamma,\eta)-1}\lvert<1/n$$
\\

On a montr\'e que pour tout $n$ dans $\NN$, il existe une fonction $\mu_n : \mathcal{G}\ast_{r,r}\mathcal{G}\rightarrow\RR$ de type positif, \`a support dans le tube $T_{K_{n+1}}$ telle que pour tout couple $(\gamma,\eta)$ dans $T_{K_n}$, on a $\lvert{\mu_n(\gamma,\eta)-1}\lvert<n^{-1}$. 
\\
On note $p_n : \mathcal{G}^{(2)}\rightarrow\RR$ la fonction continue d\'efinie pour tout couple $(\gamma,\eta)$ dans $\mathcal{G}^{(2)}$ par $p_n(\gamma,\eta)=\mu_n(\gamma,\gamma\eta)$. On remarque que si $\eta$ est en dehors du compact $K_{n+1}$ alors pour tout $\gamma$ dans $\mathcal{G}_{r(\eta)}$, on a $p_n(\gamma,\eta)=0$.
\\
Pour tout $\gamma$ dans $K_{n}$, on consid\`ere un voisinage ouvert $U_{n}^\gamma$ de $\gamma$ inclus dans $\mathring{K}_{n}$ de sorte que les applications source et but restreintes \`a $U_{n}^\gamma$ soient des hom\'eomorphismes sur leur image respective. On peut alors d\'efinir un recouvrement ouvert fini du compact $K_{n}$ not\'e $\mathcal{U}_{n}=\{U_{n}^j\}_{j=1}^m$. L'application but \'etant un hom\'eomorphisme de $U_{n}^j$ sur $r(U_{n}^j)$ pour tout $j$ dans $\{1,...,m\}$, on note $\sigma_{n}^j : r(U_{n}^j)\rightarrow{U_{n}^j}$ une section de $r$. On note $\{\varphi_n^j\}_{j=1}^m$ une partition de l'unit\'e subordonn\'ee au recouvrement $\mathcal{U}_{n}$ telle que pour tout $\eta$ dans $K_{n}$, on a $\sum_{j=1}^m\varphi_{n}^j(\eta)=1$. 
\\
Pour tout $j$ dans $\{1,...,m\}$, on consid\`ere $p_n^j : \mathcal{G}^{(2)}\rightarrow\RR$ d\'efinie pour tout $(\gamma,\eta)$ par $p_n^j(\gamma,\eta)=p_n(\gamma,\eta){\varphi_{n,j}^{{1}/{2}}(\eta)}$ qui est \`a support dans $\mathcal{G}\ast_{s,r}{U_{n,j}}$
\\
On pose ${\nu}^j_n : \mathcal{G}\rightarrow\RR$ l'application d\'efinie pour tout $\gamma$ par 
$$
\nu^j_n(\gamma)=\left\{
    \begin{array}{ll}
        p_n^j\Big(\gamma,\sigma_{n}^j(s(\gamma))\Big) & \mbox{si } s(\gamma)\in{r(U_n^j)} \\
        0 & \mbox{si } s(\gamma)\notin\cup_{j=1}^mr(U_n^j)
    \end{array}
\right.
$$
On remarque que la fonction $\nu_n^j$ est continue, born\'ee sur $\mathcal{G}$ et pour tout $\gamma$ hors du compact $K_{n+1}$ on a $\nu_n^j(\gamma)=0$. Ainsi $\nu_n^j$ est dans la $C^\ast$-alg\`ebre $C_0^s$ donc dans $C_0(\beta_X\mathcal{G})$.
\\
On pose $\nu_n : \mathcal{G}^{(2)}\rightarrow\RR$ d\'efinie pour tout $(\gamma,\eta)$ par
$$
\nu_n(\gamma,\eta)=\left\{
    \begin{array}{ll}
        \sum_{j=1}^m\nu_n^j\Big(\gamma,\sigma_{n}^j(s(\gamma))\Big)\varphi^{1/2}_{n,j}(\eta), & \mbox{si } \eta\in{K_{n+1}} \\
        0, \textrm{ } \mbox{sinon } 
    \end{array}
\right.
$$
La fonction $\nu_n$ est dans $C_0(\beta_X\mathcal{G}\rtimes\mathcal{G})$ et prolonge par continuit\'e la fonction $p_n$; en effet, pour tout $(\gamma,\eta)$ dans $Supp(p_n)$, on a 
\begin{align*}
p_n(\gamma,\eta)-\nu_n(\gamma,\eta)&=p_n(\gamma,\eta)-\sum_{j=1}^m\nu_n^j\Big(\gamma,\sigma_{n}^j(s(\gamma))\Big)\varphi^{1/2}_{n,j}(\eta)
\\
&=p_n(\gamma,\eta)-\sum_{j=1}^mp_n^j\Big(\gamma,\sigma_{n}^j(s(\gamma))\Big)\varphi^{1/2}_{n,j}(\eta)
\\
&=p_n(\gamma,\eta)-\sum_{j=1}^mp_n\Big(\gamma,\sigma_{n}^j(s(\gamma))\Big)\varphi^{1/2}_{n,j}(\sigma_n^j(s(\gamma)))\varphi^{1/2}_{n,j}(\eta)
\\
&=p_n(\gamma,\eta)-\sum_{j=1}^mp_n\Big(\gamma,\eta\Big)\varphi^{1/2}_{n,j}(\eta)\varphi^{1/2}_{n,j}(\eta)
\\
&=p_n(\gamma,\eta)-p_n(\gamma,\eta)\sum_{j=1}^m\varphi_{n,j}(\eta)=0
\end{align*}
On montre que le support de chaque fonction $\nu_n$ est compact : le support de la fonction $\nu_n$ est dans $\beta_X\mathcal{G}\ast_{\tilde{s},r}K_{n+1}$ et $r(K_{n+1})$ est un sous espace de $X$ compact (par continuit\'e de l'application but). De plus l'application $\tilde{s} : \beta_X\mathcal{G}\rightarrow{X}$ \'etant propre, l'espace $\tilde{s}^{-1}(r(K_{n+1}))$ est compact dans $\beta_X\mathcal{G}$. Ainsi le support de $\nu_n$ qui est ferm\'e et inclus dans $\tilde{s}^{-1}(r(K_{n+1}))\ast_{\tilde{s},r}K_{n+1}$ est un compact de $\beta_X\mathcal{G}\rtimes\mathcal{G}$. Donc $\nu_n$ est \`a support compact.
\\

Puisque les fonctions $\mu_n$ sont de type positif, il est clair que les fonctions $\nu_n$ sont \'egalement de type positif.
\\ 

On montre maintenant que la suite $(\nu_n)_{n\in\NN}$ converge uniform\'ement sur les compacts de $\beta_X\mathcal{G}\rtimes\mathcal{G}$ : il suffit de consid\'erer le cas particulier $Z\ast{K}$ o\`u  $Z$ est un compact $\beta_X\mathcal{G}$ et $K$ un compact de $\mathcal{G}$. On note $\tilde{Z}=Z\cap\mathcal{G}$ le sous espace de $\mathcal{G}$ et on a alors pour tout $n$ entier
\begin{align*}
\sup_{(z,\eta)\in{Z\ast{K}}}\big\lvert{\nu_n(z,\eta)-1}\big\lvert&=\sup_{(\gamma,\eta)\in{\tilde{Z}\ast{K}}}\big\lvert{\nu_n(\gamma,\eta)-1}\big\lvert
\\
&=\sup_{(\gamma,\eta)\in\tilde{Z}\ast{K}}\big\lvert{\mu_n(\gamma,\gamma\eta)-1}\big\lvert
\\
&=\sup\big\{\lvert\mu_n(\gamma,\gamma')-1\lvert\textrm{ : }(\gamma,\gamma')\in{\tilde{Z}\ast\mathcal{G}}\textrm{ / }\gamma^{-1}\gamma'\in{K}\big\}
\end{align*}
et on en d\'eduit que la suite $(\nu_n)_{n\in\NN}$ converge uniform\'ement vers un sur les sous espaces compacts de $\beta_X\mathcal{G}\rtimes\mathcal{G}$. 
\vspace{+2mm}
\\
Ainsi le groupo\"{\i}de $\beta_X\mathcal{G}\rtimes\mathcal{G}$ est moyennable.
\\
\end{proof}

\bibliographystyle{amsalpha}
\bibliography{bibliographie}
%\addcontentsline{toc}{section}{Bibliographie}

\end{document}